 \documentclass[draft]{article}

\usepackage{amsmath,amsfonts,amsthm,amssymb,amscd,cancel,color,mathabx}
\usepackage{enumitem}
\usepackage{verbatim}
\usepackage[dvipdfmx]{graphicx}
\setlength{\textheight}{8in}
\setlength{\oddsidemargin}{-0.1in}
\setlength{\textwidth}{6in}
\setlength{\parindent}{0.75cm}

\binoppenalty=9999 \relpenalty=9999

\renewcommand{\Re}{\mathop{\rm Re}\nolimits}
\renewcommand{\Im}{\mathop{\rm Im}\nolimits}
\def\S{\mathhexbox278}

\theoremstyle{plain}
\newtheorem{theorem}{Theorem}[section]
\newtheorem{lemma}[theorem]{Lemma}
\newtheorem{proposition}[theorem]{Proposition}

\theoremstyle{definition}

\theoremstyle{remark}
\newtheorem{remark}[theorem]{Remark}
\newtheorem{notation}[theorem]{Notation}

\newtheorem{assumption}[theorem]{Assumption}

\newcommand{\R}{{\mathbb R}}

\newcommand{\Z}{{\mathbb Z}}

\newcommand{\N}{{\mathbb N}}

\def\im{{\rm i}}

\newcommand{\C}{\mathbb{C}}

\def\({\left(}
\def\){\right)}
\def\<{\left\langle}
\def\>{\right\rangle}
\newcommand{\sech}{{\mathrm{sech}}}

\newcommand{\rad}{{\mathrm{rad}}}    \newcommand{\diag}{{\mathrm{diag}}}

\numberwithin{equation}{section}

\setcounter{section}{0}
\begin{document}

\title{On the asymptotic stability   of  ground states of the pure power  NLS on the line at 3rd and 4th order Fermi Golden Rule
\normalsize \\
  }

\author{Scipio Cuccagna, Masaya Maeda }
\maketitle


\begin{abstract}Assuming   as hypotheses the results proved numerically by Chang et al. \cite{Chang} for the exponent $p\in (3,5)$,  we prove that some of  the ground states of the nonlinear Schr\"odinger equation (NLS) with pure power nonlinearity of exponent $p$ in the line are asymptotically stable for a certain set of values of the exponent $p$ where the FGR occurs by means of a discrete mode 3rd or 4th order power interaction with the continuous mode.  For the  3rd  the result is true for generic $p$  while for the 4th order case we assume that there are $p$'s satisfying Fermi Golden rule and the non-resonance condition of the threshold of the continuous spectrum of the linearization.
The argument is similar to our recent result valid for $p$ near 3 contained in \cite{CM24D1}.
\end{abstract}

\section{Introduction}

We consider  the pure    power focusing  Nonlinear Schr\"odinger Equation  (NLS) on the line
\begin{align}\label{eq:nls1}&
  \im \partial _t  u +\partial _x^2  u = -f(u)  \text{   where }    f(u)=|u| ^{p-1}u    \text{  with $p>1$. }
\end{align}
 We  consider only even solutions, eliminating translations  and simplifying the problem. In particular, we will study Equation \eqref{eq:nls1} in the space $H^ 1_\rad (\R )= H^ 1_\rad (\R ,\C ) $, of even functions in $H^ 1  (\R ,\C )$.
 As it is well known, the pure power NLS has a scaling property:
   if $u(t,x)$ solves Equation \eqref{eq:nls1}, then for any $\omega>0$ so does $\omega^{\frac{1}{p-1}}u(\omega t, \sqrt{\omega}x)$.
 Well known   is   the  existence of standing waves,  of  the form $u(t,x)=e^{\im t}\phi(x)$, with the explicit formula
 \begin{align} &
     \phi (x)= \phi_p (x) =  {\(\frac {p+1}2 \)^{\frac 1{p-1}}}{
 \sech   ^{\frac 2{p-1}}\(\frac{p-1}2 x\)} ,\label{eq:sol}
    \end{align}
    see formula (3.1) of Chang et al. \cite{Chang},
    and by scaling, setting $\phi _\omega (x)=\omega ^{\frac 1{p-1}} \phi (\sqrt{\omega }x) $,  standing waves $u(t,x)=e^{\im \omega t}\phi_{\omega}(x)$.

The variational characterization of $\phi_{\omega}$ is given by using the invariants of Equation \eqref{eq:nls1}.
For  Energy $\mathbf{E}$ and Mass $\mathbf{Q}$ given by
  \begin{align}\label{eq:energy}
& \mathbf{E}( {u})=\frac{1}{2}   \| u' \| ^2 _{L^2\( \R \) } -\int_{\R} F(u)    \,dx   \text {  where }  F(u)    =\dfrac{ |u|  ^{p+1}}{p+1}, \\&   \label{eq:mass} \mathbf{Q}( {u})=\frac{1}{2}   \| u  \| ^2 _{L^2\( \R \) },
\end{align}
it is known that
   $\phi _\omega$ is a minimizer of $\mathbf{E}$ under the constraint $\mathbf{Q}=\mathbf{Q}(\phi _\omega )=:\mathbf{q}(\omega )$. Notice that $\mathbf{q}(\omega ) =\omega ^{ \frac{2}{p-1}-\frac{1}{2}}   \mathbf{q}(1)$.
     We have $\nabla \mathbf{E} (\phi _\omega)= -\omega   \nabla \mathbf{Q} (\phi _\omega)  $ which reads also
\begin{align} \label{eq:static}
   - \phi _\omega ''+\omega \phi _\omega - f(\phi _\omega)=0   .
\end{align}
Consider now  for $\omega ,\delta \in \R_+:=(0,\infty)$ the set     $$\mathcal{U} (\omega  ,\delta  ) := \bigcup _{\vartheta _0\in \R }   e^{\im \vartheta _0}  D_{H ^1_\rad (\R )}({\phi}_{\omega },\delta  ), $$ where $D_X(u,r):=\{v\in X\ |\ \|u-v\|_X<r \}$.
The following was shown by Cazenave and Lions \cite{cazli}, see also Shatah  \cite{shatah} and Weinstein  \cite{W1}.

\begin{theorem}[Orbital Stability]  Let $p\in (1,5)$ and let $\omega _0 >0$. Then for any $\epsilon >0$  there exists a  $\delta >0$ such that for any  initial value    $u_0\in \mathcal{U} (\omega _0,\delta  ) $,   the corresponding solution  satisfies
$u\in C^0  \(  \R ,   \mathcal{U} (\omega _0,\epsilon  ) \) $.

\end{theorem}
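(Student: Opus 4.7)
The plan is to prove this by contradiction using the variational characterization of $\phi_{\omega_0}$ as a minimizer of $\mathbf{E}$ under the constraint $\mathbf{Q}=\mathbf{q}(\omega_0)$. Global existence in $H^1_\rad(\R)$ for $p\in(1,5)$ is standard ($H^1$-subcriticality), and along any solution both $\mathbf{E}$ and $\mathbf{Q}$ are conserved.

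Assume for contradiction that stability fails: there exist $\epsilon_0>0$, data $u_{n,0}\in H^1_\rad(\R)$ with $\inf_{\vartheta}\|u_{n,0}-e^{\im\vartheta}\phi_{\omega_0}\|_{H^1}\to 0$, and times $t_n\in\R$ such that the corresponding solutions $u_n$ satisfy $u_n(t_n)\notin \mathcal{U}(\omega_0,\epsilon_0)$. Continuity of $\mathbf{E},\mathbf{Q}$ on $H^1$ together with conservation yields
\[
\mathbf{E}(u_n(t_n))\to \mathbf{E}(\phi_{\omega_0}),\qquad \mathbf{Q}(u_n(t_n))\to \mathbf{q}(\omega_0).
\]
Choosing scalars $\lambda_n\to 1$ so that $v_n:=\lambda_n u_n(t_n)$ satisfies $\mathbf{Q}(v_n)=\mathbf{q}(\omega_0)$ exactly, one still has $\mathbf{E}(v_n)\to \mathbf{E}(\phi_{\omega_0})$, so $(v_n)$ is a minimizing sequence in $H^1_\rad(\R)$ for
\[
I(q):=\inf\{\mathbf{E}(v):v\in H^1_\rad(\R),\ \mathbf{Q}(v)=q\},\qquad I(\mathbf{q}(\omega_0))=\mathbf{E}(\phi_{\omega_0}).
\]

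The key step is to show that any such minimizing sequence is relatively compact in $H^1_\rad(\R)$ up to phase, via Lions' concentration-compactness principle. The explicit scaling $\mathbf{q}(\omega)=\omega^{(5-p)/(2(p-1))}\mathbf{q}(1)$ together with the computation $\mathbf{E}(\phi_\omega)=\omega^{(p+3)/(2(p-1))}\mathbf{E}(\phi_1)$ and the identity $\mathbf{E}(\phi_1)<0$ for $p\in(1,5)$ (via the Pohozaev identities applied to \eqref{eq:static}) gives
\[
I(q)=\mathbf{E}(\phi_1)\,\bigl(q/\mathbf{q}(1)\bigr)^{(p+3)/(5-p)},
\]
with negative coefficient and exponent strictly greater than $1$. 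This implies the strict subadditivity $I(q_1+q_2)<I(q_1)+I(q_2)$, ruling out vanishing and dichotomy. Hence along a subsequence there exist $y_n\in\R$ and $\psi\in H^1$ with $v_n(\cdot-y_n)\to \psi$ strongly in $H^1$; evenness of $v_n$ forces one to take $y_n=0$, so $\psi$ is an even minimizer and by the variational characterization $\psi=e^{\im\vartheta_0}\phi_{\omega_0}$. Therefore $u_n(t_n)\to e^{\im\vartheta_0}\phi_{\omega_0}$ in $H^1$, contradicting $u_n(t_n)\notin\mathcal{U}(\omega_0,\epsilon_0)$.

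The main obstacle is the concentration-compactness step, and specifically the exclusion of dichotomy: unlike higher-dimensional radiality, evenness on the line does not by itself prevent a minimizing sequence from splitting into two symmetric bumps escaping to $\pm\infty$ and each carrying half the mass. It is precisely the strict subadditivity of $I$ derived from its explicit power form that forbids such splitting, and a short additional argument using evenness then reduces the translation parameter $y_n$ to $0$ (equivalently forces the weak limit to be even). Once these variational facts are in hand, the stability conclusion follows without further difficulty.
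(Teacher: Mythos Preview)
The paper does not actually prove this theorem: it attributes the result to Cazenave--Lions \cite{cazli}, Shatah \cite{shatah}, and Weinstein \cite{W1}, and closes immediately with a \qed. Your sketch is essentially the Cazenave--Lions concentration--compactness argument, and it is correctly outlined; the explicit power computation for $I(q)$ and the resulting strict subadditivity are right, as is your observation that in one dimension evenness alone does not give compactness and one genuinely needs the subadditivity to exclude dichotomy.

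One minor point worth tightening: the sentence ``evenness of $v_n$ forces one to take $y_n=0$'' is slightly too quick. What one actually argues is that if $(y_n)$ were unbounded, then by evenness the mass of $v_n$ would also concentrate near $-y_n$, producing a dichotomy scenario already excluded; hence $(y_n)$ is bounded, and after passing to a subsequence $y_n\to y_0$, the limit $\psi$ is even about $-y_0$, which combined with the uniqueness (up to translation and phase) of the constrained minimizer forces $\psi=e^{\im\vartheta_0}\phi_{\omega_0}$. This is what you clearly intend, but as written the step reads as if evenness directly pins $y_n$ to zero.
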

\qed

In order to study the notion of asymptotic stability, like in finite dimension, it is useful to have information on the  \textit{linearization} of \eqref{eq:nls1} at $\phi _\omega$, which   has the following form,
\begin{align}\label{eq:lineariz2} \partial _t   \begin{pmatrix}
 r_1 \\ r_2
 \end{pmatrix}      &=  \mathcal{L}_{\omega }  \begin{pmatrix}
 r_1 \\ r_2
 \end{pmatrix}  \text{  with }    \mathcal{L}_\omega := \begin{pmatrix}
0 & L_{-\omega} \\ -L_{+\omega} & 0
\end{pmatrix} ,
    \end{align}
where    \begin{align}\label{eq:lin1}&
  L _{+\omega}:=- \partial _x^2   +\omega -p \phi _\omega ^{p-1},   \\& \label{eq:lin0} L _{-\omega}:=- \partial _x^2   +\omega-  \phi _\omega ^{p-1}.
\end{align}
The linearization is better seen in the context of functions  in $H^ 1_\rad (\R , \R ^2 ) $ rather than in $H^ 1_\rad (\R ,\C ) $, because it is $\R$--linear rather than $\C$--linear.
  We have
\begin{align}
  \label{scaling}  S_{\omega}u(x):=\omega^{1/4}u(\sqrt{\omega}x) \Longrightarrow   \mathcal{L}_{\omega} = \omega S_{\omega} \mathcal{L} S_{\omega}^{-1} \text{ for } \mathcal{L}=\mathcal{L}_1.
\end{align}
Thus, for the spectral properties of $L_{\pm\omega}$ and $\mathcal{L}_{\omega}$, it suffices to consider the case $\omega=1$.
\begin{proposition}\label{prop:knownspec}
Let $\mathcal{ L}=\mathcal{ L}_1$.
Then we have the following.
\begin{enumerate}
\item[(1)] Spectrum of $\mathcal{ L}$ is symmetric with respect to real and imaginary axis.
\item[(2)] Essential spectrum of $\mathcal{ L}$ is $(-\im \infty,-\im]\cup[\im ,\im \infty)$.
\item[(3)] $\mathcal{ L}$ has no eigenvalues $\im \lambda $ with $\lambda > 1$ or $\lambda<-1$.
That is, $\mathcal{L}$ has no embedded eigenvalues.
\item[(4)] For $p\in (2,5)\setminus\{3\}$, $\mathcal{ L}$ has exactly one eigenvalue of the form $\im \lambda(p)$  with $0<\lambda(p)<1$.
Further, we have $\lim_{p\to 3+}\lambda(p)=1$, $\lim_{p\to 5-}\lambda(p)=0$ and $\lambda(4)>1/2$.
\item[(5)] $\dim \(  \mathcal{L} -\im \lambda (p)   \) =1$, where $\lambda(p)$ is given in (4).
\end{enumerate}
\end{proposition}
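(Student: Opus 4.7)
The plan is to dispatch the five items in order of increasing difficulty.

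For (1) I would exploit the Hamiltonian structure $\mathcal L = JH$, with symplectic $J = \bigl(\begin{smallmatrix} 0 & 1 \\ -1 & 0 \end{smallmatrix}\bigr)$ and self-adjoint $H = \diag(L_{+},L_{-})$: from $J\mathcal L^{*} J^{-1} = -\mathcal L$ one gets $-\bar\lambda \in \sigma(\mathcal L)$ for every $\lambda \in \sigma(\mathcal L)$, while the real-coefficient nature of $L_{\pm}$ makes $\mathcal L$ commute with complex conjugation, giving $\bar\lambda \in \sigma(\mathcal L)$; together these produce the double symmetry. For (2) I apply Weyl's theorem: at $\phi = 0$ the operator reduces to the constant-coefficient $\mathcal L^{(0)}$, whose essential spectrum is computed by Fourier transform to be $(-\im\infty,-\im]\cup[\im,\im\infty)$, and the subtracted multiplicative term $\diag(p\phi^{p-1},\phi^{p-1})$ is a relatively compact perturbation thanks to the exponential decay of $\phi$.

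For (3) I would square the operator: $\mathcal L^{2} = -\diag(L_{-}L_{+}, L_{+}L_{-})$. An embedded eigenvalue $\im\mu$ of $\mathcal L$ with $|\mu| > 1$ would produce an embedded $L^{2}$ eigenvalue $\mu^{2} > 1$ of one of the fourth-order operators $L_{\pm}L_{\mp}$, whose essential spectrum is $[1,\infty)$. Because the coefficients are exponentially decaying and real analytic, such embedded eigenvalues are ruled out by a Kato/Agmon unique continuation argument at infinity; alternatively, a direct ODE analysis shows that solutions behave at $\pm\infty$ like linear combinations of $e^{\pm\im\sqrt{\mu^{2}-1}\,x}$ plus exponentially small corrections, and cannot decay at both ends simultaneously.

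Items (4) and (5) carry the real weight and I expect the main obstacle to lie here. Using the scaling \eqref{scaling} to fix $\omega = 1$, the eigenvalue equation $\mathcal L \psi = \im\lambda\psi$ becomes a coupled system with explicit sech-type potentials depending on $p$. At $p = 3$ the problem is integrable and the threshold $\pm\im$ supports a resonance but no $L^{2}$ eigenvalue; a bifurcation-from-the-edge analysis (Lyapunov--Schmidt in a suitable weighted space) pushes the resonance into the gap as $p$ is perturbed off $3$, yielding $\lambda(p) \to 1$ as $p \to 3^{+}$. At the $L^{2}$-critical exponent $p = 5$ the scaling symmetry enlarges the generalized kernel of $\mathcal L$, forcing the eigenvalue to collide with $0$ and giving $\lambda(p) \to 0$. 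The quantitative bound $\lambda(4) > 1/2$ is precisely where I would invoke the numerical results of Chang et al.\ \cite{Chang}, which the present paper explicitly admits as hypothesis. Finally, simplicity in (5) follows from ODE uniqueness: the eigenvalue system reduces, for $\lambda \in (0,1)$, to a fourth-order scalar equation admitting a one-dimensional subspace of solutions decaying at $+\infty$ and similarly at $-\infty$, and the evenness imposed by the ambient space $H^{1}_{\rad}$ then pins the eigenfunction down uniquely up to a scalar.
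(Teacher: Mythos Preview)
Your treatment of (1) and (2) is fine and matches the paper's brief pointers (to Krieger--Schlag and to Weyl via exponential decay). The difficulties are in (3)--(5), where your sketches contain genuine gaps.

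For (3), neither of your proposed routes closes. The ODE picture is wrong as stated: the fourth-order equation $L_{-}L_{+}u=\mu^{2}u$ has, for $\mu^{2}>1$, characteristic exponents $\pm\im\sqrt{\mu^{2}-1}$ \emph{and} $\pm\sqrt{\mu^{2}+1}$, so the space of solutions decaying at $+\infty$ is one-dimensional (pure exponential) on each side, and an even $L^{2}$ matching is not a priori obstructed by asymptotics alone. The Kato/Agmon unique-continuation mechanism you invoke is designed for self-adjoint scalar Schr\"odinger operators and does not transfer for free to the non-self-adjoint matrix operator $\mathcal L$. The paper instead cites the Perelman/Krieger--Schlag argument, which exploits a commutator/virial identity specific to the NLS linearization; that structure is what you are missing.

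For (4), your bifurcation-from-the-edge picture gives existence only for $p$ near $3$ and $5$; for the full range $p\in(2,5)\setminus\{3\}$ one needs the counting arguments of Chang et al.\ (their Theorems 3.7--3.8), which is what the paper cites. You also misread the status of $\lambda(4)>1/2$: this is \emph{not} among the numerical hypotheses the paper assumes---Theorem 3.8 of \cite{Chang} gives it rigorously (indeed $\lambda(4)>\sqrt{3}/2$). Only the monotonicity $\lambda'(p)<0$ is taken as a numerical input (Assumption~\ref{ass:eigenvalues1}).

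For (5), your ODE uniqueness argument is incorrect: for $0<\lambda<1$ all four characteristic exponents $\pm\sqrt{1\pm\lambda}$ are real, so the decaying-at-$+\infty$ subspace is two-dimensional, not one-dimensional, and evenness alone does not force a one-dimensional eigenspace. The paper's route is different: simplicity is proved by Coles--Gustafson for $|p-3|\ll1$ via the edge bifurcation, and then propagated to all $p\in(3,5)$ by continuity of the isolated eigenvalue (a simple eigenvalue cannot split without colliding with another part of the spectrum, which does not happen in the gap).
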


\begin{proof}
For (1), see  Krieger and Schlag \cite{KrSch}.
(2) follows from the exponential decay of $\phi^{p-1}$.
The proof of (3) is in  \cite[p.p.914-915]{KrSch}, following an argument  in Perelman \cite{perelman}.
(4) The existence of eigenvalue $\im \lambda$ satisfying $0<\lambda<1$ follows from Theorem 3.7 of Chang et al.\ \cite{Chang} and the uniqueness follows from Theorem 3.8 of  \cite{Chang}.
For $\lim_{p\to 5-}\lambda(p)=0$, see Corollary 2.8 of \cite{Chang} and for $\lim_{p\to 3+}\lambda(p)=1$, see Coles and Gustafson \cite{coles}.
The estimate $\lambda(4)>\sqrt{3}/2>1/2$ is given in Theorem 3.8 of \cite{Chang}.
For $|p-3|\ll1$, (5) is proved in \cite{coles} and for $p\in (3,5)$ it follows from continuity.
\end{proof}

We will prove later the following, which is essentially  already known.
\begin{proposition}[No Threshold Resonance]
   \label{prop:nonres}   Consider the following statement,
\begin{equation}\label{eq:nores111}
  \text{ there is   no nonzero bounded  solution of $\mathcal{L} u=\im   u$.}
\end{equation}
Consider the set
\begin{equation}\label{eq:defhatF}
  \widehat{{\mathbf{F}}} :=\{p \in ( 1, +\infty ) \ |\  \text{statement \eqref{eq:nores111} is true }\} .
\end{equation}
Then  $( 1,  +\infty ) \backslash   \widehat{{\mathbf{F}}}$ is a discrete subset of $( 1, +\infty )$.
\end{proposition}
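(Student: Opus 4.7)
The plan is to recast the existence of a nonzero bounded solution to $\mathcal{L} u = \im u$ as the vanishing of a pair of real-analytic functions on $(1,+\infty)$, and then invoke the identity theorem once those functions are shown not to vanish identically. First I would reduce the system $\mathcal{L} u = \im u$ for $u = (u_1, u_2)^T$ to a scalar fourth-order ODE: the row $L_{-1} u_2 = \im u_1$ gives $u_1 = -\im L_{-1} u_2$, and substitution into $-L_{+1} u_1 = \im u_2$ yields $L_{+1} L_{-1} u_2 = u_2$, which unfolds to
\begin{equation*}
u_2^{(4)} + (V_1 + V_2 - 2)\, u_2'' + 2 V_2'\, u_2' + (V_2'' - V_1 - V_2 + V_1 V_2)\, u_2 = 0,
\end{equation*}
with $V_1 := p\phi^{p-1}$ and $V_2 := \phi^{p-1}$ (here $\phi=\phi_1$), both real-analytic in $p\in(1,+\infty)$ and exponentially decaying in $x$. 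As $x \to +\infty$ this tends to the constant-coefficient equation $u^{(4)} - 2 u'' = 0$, whose characteristic roots are $\{0,0,\pm\sqrt{2}\}$; so the two bounded asymptotic modes are $1$ and $e^{-\sqrt{2}\, x}$.

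Next I would construct, by standard Volterra Jost-type integral equations exploiting the exponential decay of $V_1, V_2$, a basis $\{\Psi_1(\cdot,p), \Psi_2(\cdot,p)\}$ of the two-dimensional space of solutions bounded at $+\infty$, normalized so that $\Psi_1(x,p) \sim e^{-\sqrt{2}\, x}$ and $\Psi_2(x,p) \sim 1$ as $x \to +\infty$, with the values $\Psi_j^{(k)}(0,p)$, $k=0,1,2,3$, depending real-analytically on $p \in (1,+\infty)$. Because the coefficients of the ODE are even in $x$, any solution splits into even and odd parts, each again a solution, so a nonzero bounded solution exists iff a nonzero bounded even or odd one does. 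A solution is even iff $u_2'(0)=u_2'''(0)=0$, and odd iff $u_2(0)=u_2''(0)=0$; hence such solutions exist iff $W_e(p)=0$ or $W_o(p)=0$, where
\begin{align*}
W_e(p) &:= \Psi_1'(0,p)\,\Psi_2'''(0,p) - \Psi_2'(0,p)\,\Psi_1'''(0,p), \\
W_o(p) &:= \Psi_1(0,p)\,\Psi_2''(0,p) - \Psi_2(0,p)\,\Psi_1''(0,p),
\end{align*}
both real-analytic on $(1,+\infty)$. Consequently $(1,+\infty) \setminus \widehat{\mathbf{F}} = \{W_e = 0\} \cup \{W_o = 0\}$.

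The main obstacle is then to prove $W_e \not\equiv 0$ and $W_o \not\equiv 0$; for each, it suffices to exhibit a single value of $p$ where it does not vanish. The cleanest route is to invoke the authors' prior paper \cite{CM24D1}, where the non-resonance condition \eqref{eq:nores111} is verified on a nonempty open interval about $p=3$, so that both $W_e$ and $W_o$ are nonzero there, hence neither is identically zero. Alternatively, at $p=3$ the potentials $V_1 = 6\sech^2 x$ and $V_2 = 2\sech^2 x$ are P\"oschl--Teller, for which $L_{\pm 1}$ admit explicit Jost bases, and both $W_e(3)$ and $W_o(3)$ can be computed in closed form and shown to be nonzero. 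Either way, the identity theorem for real-analytic functions on the connected interval $(1,+\infty)$ forces each of $\{W_e=0\}$ and $\{W_o=0\}$ to be discrete in $(1,+\infty)$, hence so is their union, which is precisely $(1,+\infty)\setminus \widehat{\mathbf{F}}$.
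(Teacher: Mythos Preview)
Your overall strategy---encoding the resonance condition as the vanishing of a real-analytic function of $p$ and then invoking the identity theorem---matches the paper's. The reduction to the scalar fourth-order equation $L_{+}L_{-}u_2=u_2$ with the even/odd split is a legitimate variant of the paper's system-level Wronskian $\det D(p,0)$ from Section~\ref{sec:prop:nonres}, and your Jost construction is essentially the scalar shadow of Lemmas~\ref{lem:f3}--\ref{lem:f1}.

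The genuine gap is in your non-triviality step. Your ``alternative'' of evaluating at $p=3$ fails: the cubic NLS linearization \emph{does} have an even threshold resonance. From \eqref{eq:plwave1} one reads off that $f_1(x,0)=(1-\sech^2 x,\ -\sech^2 x)^\intercal$ is a nonzero, even, bounded solution of $Hu=u$, so in your notation $W_e(3)=0$, and you cannot conclude $W_e\not\equiv 0$ from this value. The paper confronts exactly this: it computes that $\det D(3,k)$ has a \emph{simple} zero at $k=0$, and then argues that for $0<|p-3|\ll 1$ the internal eigenvalue $\lambda(p)\in(0,1)$ supplied by \cite{coles} furnishes the unique nearby root $k(p)\in \im\R_+$ of $\det D(p,\cdot)$, which forces $\det D(p,0)\neq 0$. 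This bifurcation argument, tracking the zero in the two-variable function $D(p,k)$ rather than just $D(p,0)$, is what makes the paper's proof self-contained.

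Your primary route---citing \cite{CM24D1} or \cite{coles}---does salvage the argument, but with a correction: those references establish non-resonance only on the \emph{punctured} neighborhood $0<|p-3|\ll 1$, not on an interval containing $p=3$. That is still enough to exhibit points where both $W_e$ and $W_o$ are nonzero, so the identity theorem applies and the conclusion follows.
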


  Coles and Gustafson \cite{coles} prove that   \eqref{eq:nores111} is true for  $0<|p-3|\ll 1$. We will sketch   here  an alternative proof in Section \ref{sec:prop:nonres}  based on  Krieger and Schlag \cite{KrSch} in the context of the proof of  Proposition  \ref {prop:nonres}.

    Chang et al. \cite{Chang} show numerically that  the function $(3,5)\ni p\to \lambda (p) $   is  strictly decreasing.
In this paper, we assume the following.
\begin{assumption}\label{ass:eigenvalues1} We assume $\lambda'(p)<0$ for $p\in (3,5)$.
%
  \end{assumption}

\begin{remark}
\noindent  The operator $\mathcal{L}$ is analytic of type $(A)$ in $p\in (1,5)$. This implies that the function
$(3,5)\to \lambda (p)$ is analytic and so in particular is differentiable, see \cite[Ch. 12]{reedsimon}.
\end{remark}

  By (4) of Proposition \ref{prop:knownspec} and Assumption \ref {ass:eigenvalues1},
  \begin{align} &\nonumber
   \text{there are numbers  $4<p_2<p_3<p_4<5$ such that  $\lambda (p_n )= 1/n  $ for $n=2,3,4$  and} \\& \text{
$1/(n-1)>\lambda (p)>1/ n $ for $p_{n-1}<p<p _n$.} \label{eq:p1p2}
  \end{align}
  We call 3rd order Fermi Golden Rule (FGR)      the equations when    $1/2>\lambda (p)>1/3 $, that is for $p\in (p_2,p_3)$, and   4th order   FGR      the equations when    $1/3>\lambda (p)>1/4 $, that is for $p\in (p_3,p_4)$.

%


We will need a  FGR assumption, whose significance will be clear  below in Section \ref{sec:fgr}.
%
%
%
%
%
%
%
Let $n=3$ or $4$.
Let $\gamma_n(p)$ (defined for $p\in (p_{n-1},p_n)$) be the constant introduced in \eqref{eq:fgrgamma} and set
\begin{equation}\label{eq:deftildeF}
  \widetilde{F}_n:=\{p\in (p_{n-1},p_n)\ |\ \gamma_n(p)\neq 0\}.
\end{equation}

For the case $n=3$ we have the following result.
\begin{proposition}\label{prop:n3FGR}
Assume Assumption \ref{ass:eigenvalues1}.
Then,
$(p_2,p_3)\setminus \widetilde{F}_3$ is a discrete subset of    $(p_2,p_3)$.
\end{proposition}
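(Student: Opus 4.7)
The plan is to show that $\gamma_3\colon (p_2,p_3)\to \R$ is real-analytic and not identically zero; the zero set $(p_2,p_3)\setminus\widetilde{F}_3$ will then be discrete by the identity theorem for real-analytic functions of one real variable on a connected interval.

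For analyticity, the family $\{\mathcal{L}(p)\}_{p\in (1,5)}$ is analytic of type $(A)$ (as remarked after Assumption~\ref{ass:eigenvalues1}), and by Proposition~\ref{prop:knownspec}(5) the eigenvalue $\im\lambda(p)$ is simple, so Kato perturbation theory yields a real-analytic eigenprojection and, modulo a normalization, a real-analytic eigenfunction $\xi(p)$. Under Assumption~\ref{ass:eigenvalues1}, the resonant frequency $3\lambda(p)$ stays in the compact interval $(1,3/2)$, hence inside the essential spectrum of $\mathcal{L}(p)$, bounded away from the threshold $\pm\im$ and from the eigenvalues $\pm\im\lambda(p)$. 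Consequently the boundary-value resolvents $\lim_{\varepsilon\downarrow 0}(\mathcal{L}(p)-\im(3\lambda(p)\pm\varepsilon))^{-1}P_c$ exist and are real-analytic in $p$ by the limiting absorption principle of \cite{KrSch}. Since $\gamma_3(p)$ is the standard quadratic expression built from these objects in \eqref{eq:fgrgamma} — cubic in $\xi(p)$, with coefficients built from $\phi_p^{p-3}$ coming from the Taylor expansion of $f$ at $\phi_p$, all analytic in $p$ — it follows that $\gamma_3\in C^\omega(p_2,p_3)$.

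The main obstacle is then the non-triviality $\gamma_3\not\equiv 0$. A 3rd-order FGR coefficient has the form $\gamma_3(p)=c\,|T_3(p)|^2$ with $T_3(p)$ a distorted Fourier coefficient, at the continuous spectral point corresponding to $3\im\lambda(p)$, of the cubic expression in $\xi(p)$ with coefficients in $\phi_p^{p-3}$. To exclude $T_3\equiv 0$ on the connected interval $(p_2,p_3)$ it suffices to exhibit a single $p_\ast\in(p_2,p_3)$ with $T_3(p_\ast)\neq 0$, or to analyze a leading-order asymptotic regime — for instance $p\to p_2^+$, where $3\lambda(p)\to 3/2$ lies safely in the interior of the essential spectrum and the generalized eigenfunctions at that energy are explicit enough, via the distorted Fourier basis of \cite{KrSch}, to permit a direct non-vanishing check of the resulting oscillatory integral over $\R$. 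Given the explicit sech-type profile \eqref{eq:sol} for $\phi_p$ and the analogous control on $\xi(p)$, this is a concrete but non-trivial computation; once it is in hand, the identity theorem closes the argument. Analyticity is essentially mechanical, but ruling out an identical vanishing is an algebraic/geometric condition that must be verified by hand, as was done in the regime near $p=3$ in \cite{CM24D1}.
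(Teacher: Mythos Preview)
Your outline of the analyticity of $\gamma_3$ on $(p_2,p_3)$ is essentially correct in spirit: on that interval $2\lambda(p)\in(2/3,1)$ lies in the resolvent set of $H$, so $\xi_{(2,0)},\xi_{(0,2)}=(H-2\lambda(p))^{-1}(G_{(2,0)},-G_{(0,2)})^\intercal$ depend analytically on $p$, and the generalized eigenfunction $g_3(p)$ at energy $3\lambda(p)\in(1,3/2)$ is analytic via the Jost-function construction. So $p\mapsto\gamma_3(p)=\langle G_3(p),g_3(p)\rangle$ is real-analytic on $(p_2,p_3)$.

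The genuine gap is the non-vanishing. You defer it entirely (``a concrete but non-trivial computation''), and your two suggested routes---exhibiting some $p_*\in(p_2,p_3)$, or analyzing $p\to p_2^+$---are not carried out and are not obviously feasible: at no $p\in(p_2,p_3)$ are $\xi(p)$, $\xi_{(2,0)}(p)$, or $g_3(p)$ explicit, and nothing special happens at $p_2$. Your claim that $\gamma_3(p)=c\,|T_3(p)|^2$ is also not the form used here; in this paper $\gamma_3(p)=\langle G_3,g_3\rangle$ is a real inner product, not manifestly a square, so reducing to $T_3\not\equiv0$ is not automatic.

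The paper's argument is quite different and rests on the only value of $p$ where everything \emph{is} explicit: the integrable cubic case $p=3$, which lies well outside $(p_2,p_3)$ (recall $p_2>4$). One cannot simply evaluate $\gamma_3$ at $p=3$: there $\lambda(3)=1$, so $2\lambda(p)\to2$ enters the essential spectrum and $(H-2\lambda)^{-1}$ blows up, while $\phi^{p-3}\to1$ loses decay. The paper therefore splits $\gamma_3=\gamma_{3,1}+\gamma_{3,2}$ according to the $\phi^{p-2}$ and $\phi^{p-3}$ pieces of $G_{(3,0)}$, and analytically continues $\gamma_{3,1}$ into a wedge $\mathfrak{C}_{\theta_0\delta}\subset\C$ in the lower half-plane reaching from $(p_2,p_3)$ down to $p=3$, handling the singularity of $(H-2\lambda(p))^{-1}$ via the limiting absorption value $(-\partial_x^2-1-\im 0)^{-1}$ and the factorization $L_{-,p}L_{+,p}(S_p^*)^2=(S_p^*)^2L_{3,p}L_{2,p}$. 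The point is that the resulting holomorphic extension $\Gamma$ restricts to the real function $\gamma_3$ on $(p_2,p_3)$ but has $\lim_{p\to3}\Im\Gamma(p)\neq0$, computed explicitly from Kaup's Jost functions for the cubic NLS. That forces $\Gamma\not\equiv0$, hence the zero set of $\gamma_3$ is discrete. The non-vanishing check you flagged as the crux is thus achieved by continuation to the integrable point, not by any computation inside $(p_2,p_3)$.
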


We do not have an analogue of Proposition \ref{prop:n3FGR} for  $n=4$.
%
%
 We set now
\begin{equation}\label{eq:deftildeF}
  {F}_n:=  \widetilde{F}_n  \cap \(  ( 1,  +\infty ) \backslash   \widehat{{\mathbf{F}}}\) .
\end{equation}
%


\noindent
We set  $\xi[\omega] \in H^1(\R, \C^2) $ be  an appropriately normalized generator of $\ker (\mathcal{L} -\im \lambda (p))$, see Section \ref{sec:lin1}.

\begin{theorem} \label{thm:asstab} Let here $n=3$ or $4$.  Assume     Assumption  \ref{ass:eigenvalues1}.
Then
  for any $p\in F _n $
  and $\epsilon >0$   there exists a  $\delta >0$ such that for any  initial value    $u_0\in  D _{H^1_\rad (\R ) } ({\phi},\delta  ) $
there exist functions $ \vartheta \in C^1 \( \R , \R \)$, $\omega\in C^1(\R,(0,\infty))$ and  $ z \in C^1 \( \R , \C  \)$  and constants  $\omega _\pm >0$ such that    the  solution  of \eqref{eq:nls1} with initial datum  $u_0$  can be written as
 \begin{align} \label{eq:asstab1}
    & u(t)= e^{\im \vartheta (t)} \(   \phi _{\omega (t)} + z(t) \xi [\omega(t) ] + \overline{z}(t) \overline{\xi}[\omega(t) ] +\eta (t)\) \text{   with}
 \\&  \label{eq:asstab2}   \int _{\R } \|  e^{- \< x\>}   \eta (t ) \| _{H^1(\R )}^2 dt <  \epsilon  \text{  where }\< x\>:=\sqrt{1+x^2}, \\&  \label{eq:asstab20}
   \lim _{t\to \infty  }  \|  e^{- \< x\>}   \eta (t ) \| _{L^2(\R )}     =0 ,  \\&  \label{eq:asstab2}
   \lim _{t\to \infty}z(t)=0  \text{ and }  \\& \label{eq:asstab3}
   \lim _{t\to \pm  \infty}\omega (t)= \omega _\pm   .
\end{align}

\end{theorem}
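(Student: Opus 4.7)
The plan is to follow the standard modulation--Birkhoff normal form--Fermi Golden Rule scheme, adapted for the higher-order resonance $n\in\{3,4\}$. First, using the orbital stability theorem and the spectral picture of Proposition \ref{prop:knownspec}, I would decompose the solution in a neighborhood of the soliton manifold as
\begin{equation*}
u(t) = e^{\im \vartheta(t)}\bigl(\phi_{\omega(t)} + z(t)\xi[\omega(t)] + \overline{z}(t)\overline{\xi}[\omega(t)] + \eta(t)\bigr),
\end{equation*}
imposing symplectic orthogonality of $\eta$ to the generalized kernel of $\mathcal{L}_{\omega}$ and to $\xi[\omega], \overline{\xi}[\omega]$. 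The implicit function theorem fixes $\omega(t),\vartheta(t),z(t)$ as $C^1$ functions as long as $u$ remains in a tubular neighborhood, and projecting \eqref{eq:nls1} on the discrete and continuous parts yields modulation equations for $(\dot\omega,\dot\vartheta)$, an ODE for $z$ of the form $\im\dot z = \lambda(p)z + \text{(nonlinear)}$, and a Schr\"odinger-type equation $\im\partial_t\eta = \mathcal{L}_\omega \eta + N(z,\eta)$.

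Second, I would apply a sequence of Darboux/Birkhoff normal-form transformations to remove all non-resonant monomials $z^a\bar z^b$ ($a\neq b+1$) from the $z$-equation, and to remove from the $\eta$-equation all source terms of the form $z^a\bar z^b G_{ab}$ with $|a-b|\lambda(p)<1$, since these live in the point/discrete part of the linear resolvent. Because $\lambda(p)\in(1/n,1/(n-1))$, for $p\in F_n$ the lowest resonant source term on $\eta$ is $z^n G_n$ (with $n\lambda(p)>1$ lying in the essential spectrum of $-\im\mathcal{L}_\omega$), and the effective equation for $z$ becomes
\begin{equation*}
\im \dot z = \lambda(p)z + \alpha(p)|z|^2 z + \cdots + \text{(coupling with $\eta$)}.
\end{equation*}
The coefficient obtained after pushing forward the $z^nG_n$ source through the limiting absorption principle for $\mathcal{L}_\omega$ at the point $\im n\lambda(p)$ is exactly $\gamma_n(p)$, and the no-threshold-resonance hypothesis $p\in \widehat{\mathbf{F}}$ is precisely what makes the distorted Fourier transform and the relevant limiting absorption principle at $\pm\im$ well behaved (so that the normal form is bounded on weighted spaces).

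Third, having reached the normal form, I would run the standard Fermi Golden Rule differential inequality: testing the $z$-equation against $\bar z$ and isolating the dissipative term gives, modulo controlled error,
\begin{equation*}
\frac{d}{dt}\bigl(|z|^2 + (\text{corrections involving }\eta)\bigr) \le -\gamma_n(p)|z|^{2n} + \text{(error in }\eta\text{)},
\end{equation*}
while an energy/virial identity together with the smoothing estimate $\int \|e^{-\langle x\rangle}\eta\|_{H^1}^2\,dt \lesssim \|z\|_{L^{2n}}^{2n} + \|u_0\|^2$ closes the bootstrap. The smoothing and Kato-type local decay estimates for the linear propagator $e^{t\mathcal{L}_\omega}$ on the continuous spectral subspace are the analytic engine; these are established via the distorted Fourier transform for $\mathcal{L}$ built in Krieger--Schlag, and can be transferred to $\mathcal{L}_\omega$ through the scaling \eqref{scaling}. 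The modulation equation for $\omega$ is integrable in $t$ because $\dot\omega = O(|z|^2+\|e^{-\langle x\rangle}\eta\|^2)$, which gives $\omega(t)\to \omega_\pm$ and the $C^1$ regularity. The hypothesis $\gamma_n(p)>0$ (sign to be checked; if negative, the argument is applied to $-t$) combined with $\int|z|^{2n}dt<\infty$ and an elementary ODE argument yields $z(t)\to 0$.

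The main obstacle is the normal-form reduction itself at order $n=4$: one must iterate the Darboux transformations four times while keeping track of resonant and near-resonant divisors, show that the auxiliary elliptic equations $(\mathcal{L}_\omega - \im k\lambda)\Phi_k = z^a\bar z^b G_{ab}$ admit sufficiently regular and well-localized solutions for each $k<n$ (which requires $k\lambda\notin\sigma_{\mathrm{ess}}$ and no embedded eigenvalue — automatic from Proposition \ref{prop:knownspec}), and carefully identify the coefficient of the sole resonant monomial as $\gamma_n(p)$ in \eqref{eq:fgrgamma}. A secondary difficulty is the lack of an analogue of Proposition \ref{prop:n3FGR} when $n=4$, but this is absorbed into the definition of $F_4$ and does not affect the proof; the restriction to $p\in F_n$ guarantees both the non-degeneracy of the FGR and the absence of threshold resonance, which together make all the estimates above uniform.
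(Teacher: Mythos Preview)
Your outline is the classical Birkhoff normal-form scheme of \cite{Cu1}, whereas the paper takes a deliberately different route. In place of iterated Darboux transformations, the paper constructs a \emph{Refined Profile} $\phi[\omega,z]$ (Section~\ref{sec:refprof}): a polynomial-in-$(z,\bar z)$ deformation of $\phi_\omega$ that solves the static equation modulo the resonant term $z^nG_{(n,0)}+\bar z^nG_{(0,n)}$ and a higher-order remainder. Modulating against this enlarged profile, $u=e^{\im\vartheta}(\phi[\omega,z]+\eta)$, already delivers the discrete equations in normal form with no further coordinate changes; the paper explicitly presents this as an elementary substitute for the iterated transformations you propose. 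For the continuous component, the paper does not close on a single smoothing bound: it couples a first virial inequality of Kowalczyk--Martel--Mu\~noz type (Proposition~\ref{prop:1virial}, controlling $\|\eta\|_{L^2_t\Sigma_A}$) with Kato smoothing (Proposition~\ref{prop:smooth11}, controlling the more localized $\|\eta\|_{L^2_t\widetilde\Sigma}$) and an FGR functional $\mathcal J_{\mathrm{FGR}}=-\langle\im\eta,\chi_A(z^ng_{(n,0)}+\bar z^ng_{(0,n)})\rangle$ whose derivative produces $|z|^{2n}$ (Lemma~\ref{lem:FGR1}); these three estimates close a continuation argument (Proposition~\ref{prop:continuation}). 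Your scheme can in principle succeed, but the step you pass over in one line --- the asserted bound $\int\|e^{-\langle x\rangle}\eta\|_{H^1}^2\,dt\lesssim\|z\|_{L^{2n}}^{2n}+\|u_0\|^2$ --- is precisely where the work lies, and the paper remarks that purely dispersive or second-virial methods (Martel, Rialland) do not reach this regime away from $p=3$.

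One correction: in the normal-form picture the FGR damping coefficient arises as the imaginary part of the boundary resolvent at $n\lambda(p)$ paired with $G_n$, hence is proportional to $|\gamma_n(p)|^2\ge 0$. It is never of the wrong sign; the hypothesis $\gamma_n(p)\neq0$ already guarantees strict dissipation. Your ``if negative, apply to $-t$'' contingency is therefore unnecessary, and in any case reversing time would not yield the forward-time conclusions \eqref{eq:asstab20}--\eqref{eq:asstab3}.
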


\begin{remark}
If $F_4=\emptyset$, then the statement of Theorem 1.7 is trivial because the assumption cannot be satisfied.
We conjecture that as the case $n=3$, $(p_3,p_4)\setminus F_4$ is discrete.
\end{remark}

\begin{remark}
Fix $\epsilon>0$ arbitrary and take $\delta>0$ to be the constant given in Theorem \ref{thm:asstab}.
Suppose that $u_0$ satisfies
\begin{align}\label{eq:u0scale}
    \|u_0-\phi_{\omega_0}\|_{H^1}< \min(\omega_0,\omega_0^{-1})^{1/4}\omega_0^{1/(p-1)} \delta.
\end{align}
Then, setting $v(t,x)=\omega_0^{-1/(p-1)}u(\omega_0^{-1}t,\omega_0^{-1/2}x)$, we have
\begin{align*}
\|v(0,x)-\phi\|_{H^1}^2=&\|\omega_0^{-1/(p-1)} (u(0,\omega_0^{-1/2}x)-\phi_{\omega_0}(\omega_0^{-1/2}x))\|_{L^2}^2\\&+\|\omega_0^{-1/(p-1)-\frac{1}{2}} ((\partial_x u)(0,\omega_0^{-1/2}x)-\phi_{\omega_0}'(\omega_0^{-1/2}x))\|_{L^2}^2\\
=& \omega_0^{\frac{1}{2}-\frac{2}{p-1}}\| u(0)-\phi_{\omega_0}\|_{L^2}^2+\omega_0^{-\frac{1}{2}-\frac{2}{p-2}}\|\partial_x u(0)-\phi_{\omega_0}'\|_{L^2}^2\\
\leq &\max(\omega_0,\omega_0^{-1})^{1/2} \omega_0^{-\frac{2}{p-2}}\|u(0)-\phi_{\omega_0}\|_{H^1}^2\\
<&\delta^2.
\end{align*}
Since $v$ is also solution of \eqref{eq:nls1}, we can apply Theorem \ref{thm:asstab} to $v$.
Rescaling back, we get analogue result for solutions satisfying \eqref{eq:u0scale}.
\end{remark}

Equation \eqref{eq:nls1} is a  classical Hamiltonian systems in PDE's and the asymptotic stability of its ground states has been a longstanding open problem. For an extensive introduction   and for many other references  we refer to our recent paper \cite{CM24D1}  which deals with a different interval of the exponents $p$. Here we only reiterate that,  as  we wrote in   \cite{CM24D1}, our results  here are possible thanks to the idea initiated in  Kowalczyk et al. \cite{KMM20}, further developed and    refined in   \cite{KMM3,KMMvdB21AnnPDE,KM22}, to use virial estimates to prove dispersion and in particular to their astute and  simple way to treat the nonlinear term by an integration by parts,  see for example  \cite[p.2142]{KMM3}. In \cite{CM24D1} we discussed at length our choice to preserve the first virial inequality of   Kowalczyk et al. but to replace the second virial inequality with smoothing estimates.
In fact, while a result similar to \cite{CM24D1} was proved by Rialland \cite{Rialland2} following closely  the  theory in Martel \cite{Martel2},    Theorem \ref{thm:asstab} seems to be beyond  the scope of  the theory developed by Martel \cite{Martel1,Martel2} and Rialland \cite{rialland,Rialland2}, for two reasons. The first is that  Martel \cite{Martel2} and Rialland \cite{Rialland2} consider only 2nd order interactions, like \cite{CM24D1}. The second reason is that their theory, and in particular the 2nd virial inequality,  requires their systems to be small perturbations of the cubic NLS. Here we emphasize    that our framework is   unaffected by the  order of the FGR, except for  the construction of the Refined Profile, which involves a simple Taylor expansion argument. We recall that the Refined Profile is an elementary alternative to the much more complicated normal forms argument in papers such as \cite{Cu1}, since the modulation coordinates obtained using the Refined Profile in the ansatz \eqref{eq:ansatz} do not require further coordinate transformations.
The only reason we do not treat  the   FGR at order     5 or  higher for equation \eqref{eq:nls1}   is that $u\to f(u)$ is not regular enough for us to write the Taylor expansion of $f(u)$ needed for the Refined Profile.
Our approach has some technical constraints, since we need to exclude eigenvalues embedded in the continuous spectrum of the linearization. While, as we wrote above,  thanks to   Krieger and Schlag \cite{KrSch} and Perelman \cite{perelman}  here  we know that there are no embedded eigenvalues, in general this issue is delicate. Notice though that generically embedded eigenvalues are expected not to exist (we conjecture, but we have no particular evidence for this conjecture,   that for linearizations at ground states such embedded eigenvalues never exist).

 We emphasize that in  \cite{CM24D1}, like in  Martel \cite{Martel2} and in some of the cases  in Rialland \cite{Rialland2}    there are no assumptions and that everything is proved. In particular in  \cite{CM24D1} there is a proof, inspired by the analogous proof in Martel \cite{Martel2}, of the Fermi Golden Rule.  More precisely \cite{CM24D1} has a sketch of the proof of the FGR, but in  \cite{CM242} we write all the computations on the FGR omitted in \cite{CM24D1}.

Given Assumption \ref{ass:eigenvalues1},
the   proof of Theorem \ref{thm:asstab} is similar to the corresponding proof in \cite{CM24D1}. There are only two differences. The first is that the Refined Profile here is different from  the one in \cite{CM24D1}. The other   is that here we simplify  the proof of the  weighted estimate contained in Proposition \ref{lem:smooth111}, which here  we generalize considerably.  In \cite{CM24D1} the analogous result was contingent on a condition on the Wronskian between two Jost functions of the linearization. Here we give a much more general and simple  proof which drops the discussion on  the Wronskian and easily extends  to the linearized operators under the condition \ref{eq:nores111} and which do not have embedded eigenvalues.

\section{Linearization}\label{sec:lin1}

We return to a discussion of the linearization \eqref{eq:lineariz2} like in \cite{CM24D1}.
Weinstein \cite{W2}  showed that  for $ 1<p<5$ the   generalized kernel $ {N}_g(\mathcal{L}_\omega):=\cup_{j=1}^\infty \mathrm{ker} \mathcal{L}_{\omega}^j$ in $H^1 _\rad ( \R , \C^2 ) $ is
\begin{align}\label{eq:Ng}
 {N}_g(\mathcal{L}_{\omega })=\mathrm{span}\left \{ \begin{pmatrix}
 0 \\ \phi_{ \omega}
 \end{pmatrix} , \begin{pmatrix}
 \partial_\omega \phi_{\omega } \\ 0
 \end{pmatrix}          \right \} .
\end{align}
By symmetry reasons,   it is known that the spectrum $\sigma \(  \mathcal{L}_{\omega }\) \subseteq \C $ is symmetric by reflection with respect to the coordinate axes. Furthermore,
by Krieger and Schlag   \cite[p. 909]{KrSch}  we know that  $\sigma \(  \mathcal{L}_{\omega }\) \subseteq \im \R  $. By standard Analytic  Fredholm theory   the essential spectrum is $(- \infty \im ,-\omega \im ] \cup
[ \im \omega , +\infty \im ) $.

\noindent Let us consider the direct sum  decomposition
\begin{align}
  \label{eq:dirsum} L^2_\rad (\R , \C ^2) =  {N}_g(\mathcal{L}_{\omega })\bigoplus {N}_g^\perp (\mathcal{L}_{\omega }^*).
\end{align}  We have,  for $\lambda  = \omega \lambda (p)$,   a further decomposition
\begin{align}
  \label{eq:dirsum1}&{N}_g^\perp (\mathcal{L}_{\omega }^*) =   \ker (\mathcal{L}_{\omega }-\im \lambda )\bigoplus \ker (\mathcal{L}_{\omega }+\im \lambda ) \bigoplus X_c (\omega ) \text{ where } \\& X_c (\omega ) : = \(   {N}_g(\mathcal{L}_{\omega }^*) \bigoplus  \ker (\mathcal{L}_{\omega }^*-\im \lambda )\bigoplus \ker (\mathcal{L}_{\omega }^*+\im \lambda ) \) ^{\perp} . \label{eq:dirsum11}
\end{align}
We denote by $P_c$ the projection of $ L^2_\rad (\R , \C ^2)$  onto $X_c (\omega )$ associated with the above decompositions.

\noindent The space $L^2_\rad(\R , \C ^2)$ and the action of $\mathcal{L}_{\omega }$ on it are obtained by first  identifying $L^2_\rad(\R , \C  ) =L^2_\rad( \R ,  \R ^2  )  $ and then by extending   this action to  the completion of
$ L^2_\rad (\R ,  \R ^2  ) \bigotimes _\R \C  $ which is identified with $L^2_\rad (\R , \C ^2)$.  In $\C$ we consider the inner product
\begin{align*}
   \< z, w \> _{\C} =\Re \{z\overline{w}    \} =z_1w_1+z_2w_2  \text{ where } a_1=\Re a, \  a_2=\Im a \text{ for }a=z,w.
\end{align*}
This obviously coincides with the inner product in $\R ^2$ and expands as the standard inner product  $  \< X  , Y  \> _{\C ^2}  =  X ^ \intercal \overline{Y}$  (row column product, vectors here are columns)  form   in $\C ^2$.  The operator of multiplication by $\im $  in $\C=\R^2$ extends into the linear operator
$J ^{-1}=-J$ where   \begin{align}
J=\begin{pmatrix}
0 & 1 \\ -1 & 0
\end{pmatrix}.\nonumber
\end{align}
For  $u,v\in L^2_\rad (\R , \C ^2)$ we set $ \< u , v  \>    :=\int _\R \< u (x), v (x) \> _{\C ^2}  dx$. We have a natural symplectic form given by $\Omega :=\<  J ^{-1}\cdot  , \cdot   \>$ in both $ L^2(\R , \C ^2)$ and
$ L^2_\rad (\R , \R ^2)=L^2_\rad (\R , \C  )$, where equation \eqref{eq:nls1} is the Hamiltonian system  in  $  L^2_\rad (\R , \C  )$     with Hamiltonian the energy $E$ in \eqref{eq:energy}.  As we mentioned we consider
a generator $\xi[\omega]\in  \ker (\mathcal{L}_{\omega }-\im  \lambda )$.  Then for the complex conjugate $\overline{\xi}[\omega] \in  \ker (\mathcal{L}_{\omega }+\im \lambda )$.
Notice   the well known and elementary $J \mathcal{L}_{\omega }=-\mathcal{L}_{\omega } ^*J$  implies that  $ \ker (\mathcal{L}_{\omega }^*+\im \lambda )= \mathrm{span}\left \{  J \xi[\omega] \right \}$ and $ \ker (\mathcal{L}_{\omega }^*-\im \lambda )= \mathrm{span}\left \{  J \overline{{\xi}[\omega]} \right \}$. Notice that in   Lemma 2.7  \cite{CM2} it is shown that we can normalize $\xi[\omega]$  so that, as done in \cite{CM24D1} where it is explained why the sign in the right is a minus,
\begin{align}
  \label{eq:xinormlize} \Omega (   {\xi}[\omega],  {\xi}[\omega] ) =- \im .
\end{align}
Furthermore, notice that
\eqref{eq:xinormlize} is the same as
\begin{align}
  \label{eq:xinormlize2} \Omega (   \Re {\xi}[\omega] ,  \Im {\xi}[\omega] ) = \frac{1}{2}  \text{  and }   \Omega (   \Re {\xi}[\omega] ,   \Re {\xi}[\omega] )=\Omega (    \Im  {\xi}[\omega] ,   \Im  {\xi}[\omega] )=0,
\end{align}
where the latter is immediate by the skewadjointness of $J$. Like in \cite{CM24D1}   we can normalize so that
\begin{align}\label{eq:reimxi1} &  \xi [\omega] = \(   \xi _1 , \xi _2 \) ^\intercal  \text{  with }  \xi _1=\Re  \xi _1 \text{  and }  \xi _2=\im \Im  \xi _2.
\end{align}
Hence condition \eqref{eq:xinormlize} becomes
\begin{align}
  \label{eq:xinormliz1} \int _{\R} \xi _1 \Im \xi _2 dx = 2 ^{-1}.
\end{align}

\begin{notation}\label{not:notation} We will use the following miscellanea of  notations and definitions.
\begin{enumerate}



\item Like in the theory of     Kowalczyk et al.   \cite{KMM3},      we   consider constants  $A, 
 \epsilon , \delta  >0$ satisfying \begin{align}\label{eq:relABg}\log(\delta ^{-1})\gg\log(\epsilon ^{-1}) \gg     A
   \gg 1. \end{align}
\begin{remark}
In \cite{KMM3} there was another constant $B$ satisfying $A^{1/3}\gg B\gg 1$.
In this paper, we will use $B=A^{1/3}$.
\end{remark}

\item For  $\kappa \in (0,1)$    fixed in terms of $p$  and  small enough, we consider
\begin{align}\label{eq:l2w}&
\|  {\eta} \|_{  L ^{p,s}} :=\left \|\< x \> ^s \eta \right \|_{L^p(\R )}   \text{  where $\< x \>  := \sqrt{1+x^2}$,}\\&
\| \eta  \|_{  { \Sigma }_A} :=\left \| \sech \(\frac{2}{A} x\) \eta '\right \|_{L^2(\R )} +A^{-1}\left \|    \sech \(\frac{2}{A} x\)  \eta   \right\|_{L^2(\R )}  \text{ and} \label{eq:normA}\\& \|  {\eta} \|_{ \widetilde{\Sigma} } :=\left \| \sech \( \kappa  x\)   {\eta}\right \|_{L^2(\R )} . \label{eq:normk}
\end{align}
\item   For $\gamma\in \R$ and $n\in \N$ we set
\begin{align}
L^p_\gamma&:=\{u\in \mathcal D'(\R,\C)\ |\ \|u\|_{L^p_\gamma}:=\|e^{\gamma\<x\>}u\|_{L^p}<\infty\}  \text { for $p\in [1,+\infty ]$},\label{L2g}\\
H^n_\gamma&:=\{u\in \mathcal D'(\R,\C)\ |\ \|u\|_{H^n_\gamma}:=\|e^{\gamma \<x\>}u\|_{H^n}<\infty\}.\label{H1g}
\end{align}

\item We will consider the Pauli matrices \begin{equation*}
   \sigma_1=\begin{pmatrix} 0 &
   1 \\
   1 & 0
    \end{pmatrix} \,,
   \quad
   \sigma_2= \begin{pmatrix} 0 &
   -\im \\
    \im  & 0
    \end{pmatrix} \,,
   \quad
   \sigma_3=\begin{pmatrix} 1 &
   0 \\
   0 & -1
    \end{pmatrix}.
   \end{equation*}

\item We have denoted by $P_c(\omega)$ the projection on the space \eqref{eq:dirsum11} associated to the spectral decomposition
\eqref{eq:dirsum1}  of the operator $\mathcal{L} _{\omega}$. Later in \eqref{eq:opH1} we introduce an operator $H$ which is equivalent to $\mathcal{L}$. By an abuse of notation we will use the same symbol $P_c$ to denote the   continuous spectrum projections of both $H$ and $\mathcal{L}$.

\item We denote by $C^ \omega (\Omega , X)$ the space of analytic functions from an open subspace $\Omega $ of $\R$ or $\C$ with values in a Banach space $X$.

 \end{enumerate}

\end{notation} \qed

The following results, for $p\in \widehat{{\mathbf{F}}}\cap (1,5)$, are stated at this point also in \cite{CM24D1}.  By
 the absence of embedded eigenvalues $\mathcal{L}_{\omega }$ is admissible in the sense of Krieger and Schlag \cite{KrSch} and so the following is true.
\begin{proposition}\label{prop:KrSch} For any $\omega >0$ and $s>3/2$   there is a  constant $C>0$ 
such that
\begin{align} \label{eq:KrSch}
  \| P_c(\omega) e^{t \mathcal{L}_{\omega }} : L ^{2,s}  (\R , \C ^2) \to  L ^{2,-s} (\R , \C ^2)  \| \le C_\omega \< t \> ^{-\frac{3}{2}} \text{  for all $t\in \R$.}
\end{align}

\end{proposition}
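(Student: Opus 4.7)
The plan is to derive Proposition \ref{prop:KrSch} as a direct application of the main dispersive estimate of Krieger and Schlag \cite{KrSch}, once admissibility has been verified, and then transport the $\omega=1$ statement to arbitrary $\omega>0$ by the scaling identity \eqref{scaling}. This is essentially the same strategy used in \cite{CM24D1}, and the only real work is to check that the hypotheses of Krieger--Schlag are satisfied in the present setting.

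First I would reduce to the case $\omega=1$. By \eqref{scaling} we have $e^{t\mathcal{L}_\omega}=S_\omega e^{\omega t \mathcal{L}} S_\omega^{-1}$, and $S_\omega$ is an $L^2$-isometry intertwining the spectral decompositions, hence $P_c(\omega)=S_\omega P_c S_\omega^{-1}$. Plugging into \eqref{eq:KrSch} and using that the weight $\langle x\rangle^{\pm s}$ transforms in a controlled way under the rescaling $x\mapsto \sqrt{\omega}x$, the target bound reduces to the corresponding bound for $e^{t\mathcal{L}}$, with a multiplicative constant $C_\omega$ depending on $\omega$ but independent of $t$. The time factor $\omega t$ becomes $\langle \omega t\rangle^{-3/2}$ and is absorbed in $C_\omega$.

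Second, I would verify that $\mathcal{L}=\mathcal{L}_1$ is admissible in the sense of Krieger--Schlag. Their notion of admissibility requires: (a) the potential has sufficient exponential decay, which is immediate from the explicit formula \eqref{eq:sol} for $\phi$; (b) there are no eigenvalues embedded in the continuous spectrum, which is part (3) of Proposition \ref{prop:knownspec}; (c) the thresholds $\pm\im$ are neither eigenvalues (covered again by the absence of embedded eigenvalues at the edge) nor resonances, i.e.\ no bounded solution of $\mathcal{L}u=\pm\im u$ exists, which is exactly the hypothesis $p\in \widehat{{\mathbf{F}}}$ through statement \eqref{eq:nores111} (combined with its complex conjugate, obtained by applying complex conjugation or the symmetry $\sigma_3$). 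With these three inputs, the main local decay theorem in \cite{KrSch} yields
\begin{equation*}
\|P_c e^{t\mathcal{L}}:L^{2,s}(\R,\C^2)\to L^{2,-s}(\R,\C^2)\|\lesssim \langle t\rangle^{-3/2}
\end{equation*}
for any $s>3/2$, which is the $\omega=1$ version of \eqref{eq:KrSch}.

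The only point that requires some care is item (c), because the remaining threshold analysis in \cite{KrSch} is written for operators slightly different from the concrete $\mathcal{L}_\omega$ here; this is also precisely where the assumption $p\in \widehat{{\mathbf{F}}}$ enters, and it is the reason Proposition \ref{prop:nonres} is proved separately in Section \ref{sec:prop:nonres}. I would therefore cite Proposition \ref{prop:nonres} (or its proof sketch) at this step and invoke the Krieger--Schlag machinery as a black box for the remainder. No new dispersive analysis is required beyond checking these spectral hypotheses and rescaling; the combinatorial factor $\langle t\rangle^{-3/2}$ comes out of their resolvent expansion at the threshold, which applies uniformly once the resonance is excluded.
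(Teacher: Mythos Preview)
Your proposal is correct and matches the paper's approach: the paper does not give a proof at all beyond the single sentence preceding the proposition, namely that by the absence of embedded eigenvalues (and implicitly the no-resonance assumption $p\in\widehat{\mathbf{F}}$) the operator $\mathcal{L}_\omega$ is admissible in the sense of Krieger--Schlag, after which the estimate is quoted directly from \cite{KrSch}. Your write-up simply spells out the admissibility verification and the scaling reduction in more detail than the paper does.
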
\qed

The following is proved \cite{CM24D1} utilizing the fact that $0<|p-3|\ll 1$ and properties of the operator $\mathcal{L}_{\omega }$ for $p=3$. Here we will give a simpler proof that uses only the no resonance condition.
\begin{proposition} \label{lem:smoothest1} For any $\omega>0$,  $s>3/2$ and $\tau >1/2$ there exists a constant $C>0 $ such that
 \begin{align}&   \label{eq:smoothest11}   \left \|   \int   _{0} ^{t   }e^{  (t-t')  \mathcal{L}_{\omega }}P_c(\omega )g(t') dt' \right \| _{L^2( \R ,L^{2,-s} (\R ))  } \le C  \|  g \| _{L^2( \R , L^{2,\tau} (\R ) ) } \text{ for all $g\in  L^2( \R , L^{2,\tau} (\R ) )$}.
\end{align}
\end{proposition}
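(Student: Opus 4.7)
The plan is to reduce the inhomogeneous smoothing bound to a uniform weighted resolvent estimate via a Kato-type Plancherel argument. First, by the scaling \eqref{scaling} it suffices to treat $\omega=1$; set $\mathcal L:=\mathcal L_1$. After extending $g$ by zero outside its support, the retarded Duhamel operator is the convolution in $t$ of $g$ with the causal operator-valued kernel $K(t):=\chi_{t\ge 0}\,e^{t\mathcal L}P_c$, whose Fourier transform in $t$ is the boundary-value resolvent
\begin{align*}
\widehat K(\lambda)\;=\;(\im\lambda-\mathcal L)^{-1}P_c\;=:\;R(\lambda),\qquad \lambda\in\R .
\end{align*}
Plancherel then reduces \eqref{eq:smoothest11} to the uniform bound
\begin{align*}
M\;:=\;\sup_{\lambda\in\R}\,\big\|\<x\>^{-s}\,R(\lambda)\,\<x\>^{-\tau}\big\|_{L^2\to L^2}\;<\;\infty .
\end{align*}

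To establish $M<\infty$ I would argue by a partition of unity in $\lambda$. (i) In the spectral gap $|\lambda|<1$ the only obstructions to $(\im\lambda-\mathcal L)^{-1}$ being a bounded operator on $L^2$ are the eigenvalues $\pm\lambda(p)$, and these are annihilated by $P_c$, so $R(\lambda)$ is bounded on $L^2$ uniformly on compact subsets of $(-1,1)$. (ii) In the interior of the essential spectrum $1<|\lambda|<\infty$, combining admissibility (Proposition \ref{prop:KrSch}) with the absence of embedded eigenvalues (Proposition \ref{prop:knownspec}(3)) and the classical limiting absorption principle for exponentially decaying perturbations of $-\partial_x^2+1$ yields continuity of $\lambda\mapsto R(\lambda)$ in $\mathcal B(L^{2,\tau},L^{2,-s})$ and hence local boundedness. (iii) For $|\lambda|$ large, write $\mathcal L=\mathcal L_0+V$, with $\mathcal L_0$ the constant-coefficient matrix operator and $V$ the exponentially decaying matrix potential built from $\phi^{p-1}$; the classical $1$D Agmon bound gives $\|\<x\>^{-s}(\im\lambda-\mathcal L_0)^{-1}\<x\>^{-\tau}\|_{L^2\to L^2}=O(|\lambda|^{-1/2})$ for $s,\tau>1/2$, so the Neumann series $\sum_{k\ge 0}[V(\im\lambda-\mathcal L_0)^{-1}]^k$ converges in the weighted topology once $|\lambda|$ is large enough.

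The decisive remaining point is the behavior at the thresholds $\lambda=\pm 1$: generically the weighted resolvent blows up like $|\lambda\mp 1|^{-1/2}$ there, which would spoil uniformity. Here the no-threshold-resonance hypothesis \eqref{eq:nores111}, which enters the standing assumption on $p$ through the set $\widehat{\mathbf F}$, is exactly what is needed: a standard Jost-function argument shows that the absence of bounded solutions of $\mathcal L u=\im u$ cancels the singular leading term in the threshold expansion, so $R(\lambda)$ extends continuously across $\pm 1$ between the weighted spaces. Patching (i)--(iii) with this threshold continuity gives $M<\infty$ and hence \eqref{eq:smoothest11}.

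This threshold analysis is the main obstacle; it is precisely the step that in \cite{CM24D1} was treated through the delicate computation of the Wronskian of two Jost functions, and consequently required the perturbative restriction to $p$ near $3$. The direct appeal to \eqref{eq:nores111} here instead gives a much cleaner argument that applies to any linearization at a ground state having neither embedded eigenvalues nor a threshold resonance, which is the generalization announced in the Introduction.
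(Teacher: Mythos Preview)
Your reduction of \eqref{eq:smoothest11} to a uniform weighted resolvent bound via Plancherel and Kato smoothing is exactly the route the paper takes: after conjugating to the operator $H$ of \eqref{eq:opH}, the paper states this reduction as Lemma~\ref{lem:LAP}, citing \cite{CM24D1} for the passage from LAP to the inhomogeneous estimate. Where you and the paper diverge is in the proof of the LAP itself. You take the free constant-coefficient operator as reference for large $|\lambda|$ and propose to handle the thresholds $\lambda=\pm1$ by a direct Jost-function expansion, using the no-resonance hypothesis \eqref{eq:nores111} to cancel the $|\lambda\mp1|^{-1/2}$ singularity. The paper instead takes as reference $\sigma_3(h_p+1)$ with the \emph{scalar} operator $h_p=-\partial_x^2+\sech^2\left(\tfrac{p-1}{2}x\right)$; because $h_p$ has a \emph{repulsive} potential it already enjoys a uniform LAP including at threshold (quoted from \cite{CM2109.08108}), so the threshold difficulty is absorbed into the reference and no expansion is ever carried out. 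The remaining matrix perturbation $\mathbf v$ is then treated by the single resolvent identity \eqref{eq:factor3}, and by Fredholm the only possible obstruction to inverting $1+Q_0^+(E)$ on $|E|\ge a$ is an embedded eigenvalue or a threshold resonance of $H$, both excluded by hypothesis. Your route is correct, but making the threshold step rigorous would require the matrix-valued Jost/Wronskian machinery of \S5--6 of \cite{KrSch} (or \S9 of \cite{CM24D1}); the paper's choice of reference operator sidesteps that computation entirely, which is precisely the simplification over \cite{CM24D1} announced in the Introduction.
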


The proof of the analogous result in  \cite{CM24D1} to the following one,   based only on  the absence of threshold resonance, which for $0<|p-3|\ll 1$ follows by Coles and Gustafson \cite{coles}, applies also in the current paper and will be not further discussed.

\begin{proposition}[Kato smoothing]\label{lem:smooth111} For any $\omega >0$ and $s>1$  there exists $C>0$   such that
\begin{equation}\label{eq:smooth111}
   \|   e^{t \mathcal{L}_\omega}P_c(\omega) u_0 \| _{L^2(\R , L ^{2,-s}(\R   ))} \le C \|  u_0 \| _{L^2(\R  )}.
\end{equation}
\end{proposition}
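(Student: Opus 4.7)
The plan is to derive the Kato smoothing estimate from a uniform limiting absorption principle (LAP) for the resolvent $R(z) = (\mathcal{L}_\omega - z)^{-1}$ via Plancherel in the time variable, following the strategy used in \cite{CM24D1}. By the scaling identity \eqref{scaling} we may reduce to $\omega = 1$ and write $\mathcal{L} = \mathcal{L}_1$.

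\textbf{Step 1: reduction to a uniform resolvent bound.} Since the spectrum of $\mathcal{L}$ restricted to $X_c$ is purely continuous and equal to $(-\im\infty,-\im]\cup[\im,\im\infty)$, the Stone-type representation on $P_cL^2$ yields
\[
e^{t\mathcal{L}}P_c u_0 = \frac{1}{2\pi\im}\int_{|\lambda|\geq 1} e^{\im t\lambda}\bigl(R(\im\lambda+0)-R(\im\lambda-0)\bigr)u_0\,d\lambda.
\]
Multiplying by $\langle x\rangle^{-s}$, taking the $L^2_t$ norm, and applying Plancherel in $t$ reduce \eqref{eq:smooth111} to the uniform bound
\[
M := \sup_{|\lambda|\geq 1}\bigl\|\langle x\rangle^{-s}\bigl(R(\im\lambda+0)-R(\im\lambda-0)\bigr)P_c\,\langle x\rangle^{-s}\bigr\|_{L^2\to L^2} < \infty .
\]

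\textbf{Step 2: large $|\lambda|$.} Decompose $\mathcal{L} = \mathcal{L}_0 + V$ where $\mathcal{L}_0 = -J(-\partial_x^2+1)$ is the free part and $V$ is the matrix multiplication operator built from $\phi^{p-1}$; in particular $V$ is exponentially decaying. An explicit computation with the one-dimensional oscillatory Green's function of $\mathcal{L}_0$ gives $\|\langle x\rangle^{-s}R_0(\im\lambda\pm 0)\langle x\rangle^{-s}\|_{L^2\to L^2}\to 0$ as $|\lambda|\to\infty$ for $s>1/2$. The second resolvent identity $R = R_0 - R_0 V R$ then provides uniform control of $R(\im\lambda\pm 0)$ in weighted spaces on $\{|\lambda|\geq\Lambda\}$ for $\Lambda$ large enough.

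\textbf{Step 3: compact intervals away from the thresholds.} On any set $\{1+\delta\leq|\lambda|\leq\Lambda\}$ the free boundary resolvent $R_0(\im\lambda\pm 0):L^{2,s}\to L^{2,-s}$ depends continuously on $\lambda$ and $V:L^{2,-s}\to L^{2,s}$ is bounded with exponential decay, so $\mathrm{Id}+R_0(\im\lambda\pm 0)V$ is a continuous family of Fredholm operators of index zero on $L^{2,-s}$. Any element of its kernel would give an embedded eigenfunction of $\mathcal{L}$, which is excluded by Proposition~\ref{prop:knownspec}(3); hence each such operator is invertible and its inverse is uniformly bounded on the compact set.

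\textbf{Step 4: threshold analysis (main obstacle).} The delicate step, and the one where the hypothesis of Proposition \ref{prop:nonres} enters decisively, is continuity of $R(\im\lambda\pm 0)$ up to the thresholds $\lambda=\pm 1$. Combining the standard threshold expansion of $R_0(\im\lambda\pm 0)$ with the exponential decay of $V$, the no-threshold-resonance hypothesis \eqref{eq:nores111} is exactly the injectivity of $\mathrm{Id}+R_0(\pm\im)V$ acting on bounded solutions: failure of invertibility would produce a nonzero bounded solution of $\mathcal{L}u=\im u$, contradicting \eqref{eq:nores111}. A Fredholm alternative in an appropriate weighted space then promotes this injectivity to invertibility of $\mathrm{Id}+R_0(\im\lambda\pm 0)V$ with uniform operator bound on a full neighbourhood of each threshold. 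Combined with Steps 2 and 3 this gives $M<\infty$ and hence \eqref{eq:smooth111}. This is the content of the argument used in \cite{CM24D1}, which only requires \eqref{eq:nores111} together with Proposition~\ref{prop:knownspec}(3) and therefore transfers verbatim to the present setting.
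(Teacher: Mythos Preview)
Your sketch is correct and matches the paper's approach: the paper gives no independent proof of this proposition but simply observes that the argument in \cite{CM24D1}, which relies only on the absence of embedded eigenvalues (Proposition~\ref{prop:knownspec}(3)) and the no--threshold--resonance condition \eqref{eq:nores111}, carries over verbatim. Your Steps~1--4 are a faithful outline of precisely that LAP--plus--Plancherel argument.
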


\section{Refined profile }\label{sec:refprof}

It is well known,  see Weinstein \cite{W2},  that the following is    a symplectic  submanifold  of  $L ^2_\rad (\R, \C ) $,
\begin{align}
  \label{eq:mangs} \mathcal{S}=\left \{  e^{\im \vartheta  } {\phi}_{\omega} : \vartheta \in \R , \omega >0     \right \} .
\end{align}
  Now we will introduce the notion of Refined Profile, which here is more complicated than in \cite{CM24D1}. We will follow the framework  introduced in more general form in \cite{CM3}.  Here however things are much simpler.
For $\mathbf{m}\in \N_0^{2 }$ where $\N_0=\{  0,1,2,...     \}$ of the form $\mathbf{m}=(m_1,m_2)$ we set $\overline{\mathbf{m}}=(m_2,m_1)$,  \begin{align*}
 {z}^{\mathbf{m}}= {z}^{ {m}_1}\overline{ {z}}^{ {m}_2}  \text{   and }|\mathbf{m}|=m_1+ m_2.
\end{align*}
We will treat separately the construction of the Refined Profile   for the 3rd and the  4th order FGR.
Before doing this, we clarify the dependence on $\omega$.

\noindent
Set $\Lambda_p=\frac{1}{2}x\partial_x + \frac{1}{p-1}$.
Notice $\Lambda_p u =\left.\partial_{\omega}\right|_{\omega=1} \omega^{\frac{1}{p-1}}u(\sqrt{\omega}x)$.
Suppose $\phi=\phi[z]$ solves
\begin{align*}
-\partial_x^2\phi-|\phi|^{p-1}\phi + R = -\widetilde{\vartheta}\phi+\im \widetilde{\omega} \Lambda_p \phi + \im D_z\phi \widetilde{z},
\end{align*}
for some function $R=R[z]$ and constants $\widetilde{\vartheta}=\widetilde{\vartheta}[z]$, $\widetilde{\omega}=\widetilde{\omega}[z]$ and $\widetilde{z}=\widetilde{z}[z]$.
Then, setting
\begin{align}
\phi[\omega,z](x)&=\omega^{\frac{1}{p-1}}\phi[z](\sqrt{\omega}x),\label{def:phiomegaz}\\
R[\omega,z](x)&=\omega^{\frac{p}{p-1}}R[z](\sqrt{\omega}x),\nonumber
\end{align}
and
\begin{align}
\widetilde{\vartheta}[\omega,z]&=\omega \widetilde{\vartheta}[z],\  \widetilde{\omega}[\omega,z]=\omega \widetilde{\omega}[z],\
\widetilde{z}[\omega,z]=\omega \widetilde{z}[z],\nonumber
\end{align}
it is easy to verify
\begin{align*}
-\partial_x^2\phi[\omega,z]-|\phi[\omega,z]|^{p-1}\phi[\omega,z] + R[\omega,z] = -\widetilde{\vartheta}[\omega,z]\phi[\omega,z]+\im \widetilde{\omega}[\omega,z] \partial_\omega \phi[\omega,z] + \im D_z\phi[\omega,z] \widetilde{z}[\omega,z].
\end{align*}
Therefore, it suffices to consider  case $\omega=1$.

\noindent We also record the Taylor formula of $f$.
For $\psi>0$ and $w\in \C$, we have
\begin{align*}
f(\psi+w)=&f(\psi) + Df(\psi)w+\frac{1}{2}D^2f(\psi)(w,w) + \frac{1}{6}D^3f(\psi)(w,w,w) \\&+ \frac{1}{24}D^4 f(\psi)(w,w,w,w) + O(|w|^p).
\end{align*}
Furthermore, it is elementary to check
\begin{align}\label{eq:fdiff}
D^Nf(\psi)(w,\cdots,w)=\sum_{n=0}^N a_{N,n}|\psi|^{p-1-2n}\psi^{1-N+2n}w^{N-n}\overline{w}^n,
\end{align}
where the coefficients $a_{N,n}$ satisfies  the recurrence relation
\begin{align*}
a_{N,n}=\frac{p+1-2n}{2}a_{N-1,n-1} + \(\frac{p+3}{2}+n-N\)a_{N-1,n}.
\end{align*}
In particular, we have $a_{0,0}=1$, $a_{1,0}=\frac{p+1}{2}$ and $a_{1,1}=\frac{p-1}{2}$ and therefore
\begin{align}\label{eq:f'}
	Df(\psi)w=\frac{p+1}{2}\psi^{p-1}w+\frac{p-1}{2}\psi^{p-1}\overline{w},
\end{align}
for $\psi>0$.

\subsection{Refined profile in the 3rd   order FGR}\label{sec:refprof3rd}

This is the simplest case.
The refined profile  will be a polynomial of $z$ and $\overline{z}$
  of the form
\begin{align}&
    \phi[z]  = \phi+  \widetilde{\phi}[z] \text{ where }   \widetilde{ \phi}[z]=   \sum _{\mathbf{m}\in \mathbf{NR}_3}{z}^{\mathbf{m}} \xi _{\mathbf{m}}, \label{eq:refprof}
\end{align}
where
\begin{align*}
\mathbf{NR}_3=\{(1,0),(0,1),(2,0),(1,1),(0,2),(2,1),(1,2)\}.
\end{align*}

\begin{lemma}
There exist $\{\xi_\mathbf{m}\}_{\mathbf{m}\in \mathbf{NR}_3}\subset \sech (\kappa  x)L^2 _\rad (\R , \R)$, $G_{(3,0)}, G_{(0,3)}\in \sech (\kappa  x)L^2 _\rad (\R , \R)$ and $\lambda_{(2,1)}\in \R$ such that
\begin{align}
-\phi[z]+\im D_z\phi[z](-\im \lambda z -\im \lambda_{(2,1)} |z|^2z)=-\partial_x^2\phi[z] - f(\phi[z]) + z^3 G_{(3,0)} +\overline{z}^3 G_{(0,3)} + \widetilde{R}[z],\label{eq:rp30}
\end{align}
where $\phi[z]$ is defined by \eqref{eq:refprof} and $G_{(3,0)}$ is given by the formula \eqref{def:G30} and $G_{(0,3)}$ is given by replacing all $\xi_{ \mathbf{m}}$ in \eqref{def:G30} by $\xi_{\overline{\mathbf{m}}}$ and
\begin{align*}
\|\cosh(\kappa x) \widetilde{R}[z] \|_{L^2}\lesssim |z|^4.
\end{align*}
\end{lemma}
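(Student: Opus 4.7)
The strategy is the standard refined-profile matching of powers of $z$. Plug the ansatz $\phi[z]=\phi+\sum_{\mathbf{m}\in\mathbf{NR}_3}z^{\mathbf{m}}\xi_{\mathbf{m}}$ into \eqref{eq:rp30} and Taylor-expand the nonlinearity as in \eqref{eq:fdiff} through order three in $\widetilde\phi[z]$, grouping every Taylor remainder of order four or higher into $\widetilde R[z]$. Using $-\partial_x^2\phi+\phi-f(\phi)=0$ to cancel the $z^0$ term and observing that $\im D_z\phi[z](-\im\lambda z)$ contributes the scalar factor $\lambda(m_1-m_2)$ at the monomial $z^{\mathbf{m}}$, matching coefficients produces, for every $\mathbf{m}$ with $1\le|\mathbf{m}|\le 3$, a linear equation of schematic form
\begin{equation*}
  \bigl(\mathcal{L}-\im(m_1-m_2)\lambda\bigr)\xi_{\mathbf{m}} \;=\; S_{\mathbf{m}}\bigl(\phi,\{\xi_{\mathbf{m}'}\}_{|\mathbf{m}'|<|\mathbf{m}|}\bigr),
\end{equation*}
where $S_{\mathbf{m}}$ is a polynomial expression in the $\xi_{\mathbf{m}'}$ of strictly lower total degree, weighted by powers of $\phi^{p-1-2k}$ coming from \eqref{eq:fdiff}, and where the $\C$-antilinear half of $Df(\phi)$ (see \eqref{eq:f'}) couples the equation for $\xi_{\mathbf{m}}$ with that for $\xi_{\overline{\mathbf{m}}}$.

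For $|\mathbf{m}|=1$ the source vanishes and we simply take $\xi_{(1,0)}=\xi$, $\xi_{(0,1)}=\overline{\xi}$, already normalized via \eqref{eq:xinormlize}. For $|\mathbf{m}|=2$ we have $(m_1-m_2)\in\{-2,0,2\}$, and since $2\lambda<1$ (3rd-order FGR regime, see \eqref{eq:p1p2}) and $\pm 2\im\lambda$ is not an eigenvalue of $\mathcal{L}$ by Proposition~\ref{prop:knownspec}, the operator $\mathcal{L}-\im(m_1-m_2)\lambda$ is invertible on $X_c(1)$; the only mild point is $\mathbf{m}=(1,1)$, where the generalized kernel $N_g(\mathcal{L})$ is avoided because the source, being a quadratic in $\xi$ and $\overline\xi$, is symplectically orthogonal to \eqref{eq:Ng} by a direct inspection. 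Exponential decay of the solutions in $\sech(\kappa x)L^2$ for $\kappa$ small enough is inherited from the exponential decay of the sources (which contain the factor $\phi^{p-1}$ and/or $\xi_{\mathbf{m}'}$ of lower order) through standard weighted-resolvent bounds for the second-order ODE system.

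The two delicate indices are $\mathbf{m}=(2,1)$ and $\mathbf{m}=(1,2)$, where $(m_1-m_2)\lambda=\pm\lambda$ is precisely the eigenvalue of $-\im\mathcal{L}$ and the operator $\mathcal{L}\mp\im\lambda$ has a one-dimensional kernel spanned by $\xi$ (resp.\ $\overline{\xi}$). Here the extra term $-\im\lambda_{(2,1)}|z|^2 z$ in the LHS modulation law produces, at order $z^{2}\overline{z}$, an additional contribution $-\lambda_{(2,1)}\xi_{(1,0)}=-\lambda_{(2,1)}\xi$; the Fredholm alternative is the solvability condition that the net right-hand side be orthogonal to $\ker(\mathcal{L}^*-\im\lambda)=\mathrm{span}(J\overline{\xi})$, and yields
\begin{equation*}
  \lambda_{(2,1)} \;=\; \frac{\langle S_{(2,1)},\,J\overline{\xi}\rangle}{\langle\xi,\,J\overline{\xi}\rangle},
\end{equation*}
a real number because the normalization \eqref{eq:xinormliz1} and the parity structure \eqref{eq:reimxi1} make both pairings real. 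With this choice $\xi_{(2,1)}$ is obtained uniquely in $\ker(\mathcal{L}-\im\lambda)^{\perp}\cap\sech(\kappa x)L^2$; the equation for $\xi_{(1,2)}$ is its complex conjugate and is solved by $\xi_{(1,2)}=\overline{\xi_{(2,1)}}$. Finally, at $\mathbf{m}=(3,0),(0,3)$ we have $|3\lambda|\in(1,3/2)$ lying in the essential spectrum of $\mathcal{L}$, so the corresponding linear equation is unsolvable in any weighted $L^2$ space; we collect exactly these two terms into $z^3 G_{(3,0)}+\overline z^3 G_{(0,3)}$, producing \eqref{def:G30}. Everything else—namely the quartic and higher terms of the Taylor expansion of $f(\phi+\widetilde\phi)$, together with the cross monomials of degree $\ge 4$ in $z,\overline z$ generated by products of $\xi_{\mathbf{m}}$'s—goes into $\widetilde R[z]$; its estimate $\|\cosh(\kappa x)\widetilde R[z]\|_{L^2}\lesssim|z|^4$ follows from the exponential decay of each $\xi_{\mathbf{m}}$ and the pointwise bound $|D^N f(\phi)|\lesssim\phi^{p-N}$ up to $N=4$ with the $O(|w|^p)$ tail handled by $|w|\lesssim|z|$ on the relevant bounded region in $z$.

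\emph{Main obstacle.} The nontrivial step is the $\mathbf{m}=(2,1)$ analysis: one must verify that $\langle\xi,J\overline{\xi}\rangle\neq 0$ (which is precisely \eqref{eq:xinormlize}) so that $\lambda_{(2,1)}$ is well defined, and that the resulting $\xi_{(2,1)}$ lies in the exponentially weighted space. The second point follows from a Combes--Thomas/ODE-argument on the limiting symbol of $\mathcal{L}-\im\lambda$, which is invertible at spatial infinity because $\lambda<1$; the first is an algebraic identity already exploited in \cite{CM24D1}. The rest of the proof is Taylor expansion and matching.
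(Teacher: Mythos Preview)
Your overall plan—expand, match monomials, invert the linearized operator off its spectrum, and invoke Fredholm at $(2,1)$—is the right one and matches the paper's strategy.  However, there is a concrete error in your execution that propagates through the rest of the argument.

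\textbf{The identification $\xi_{(1,0)}=\xi$, $\xi_{(0,1)}=\overline{\xi}$ is wrong.}  The lemma requires each $\xi_{\mathbf{m}}$ to be a real scalar function in $L^2_{\rm rad}(\R,\R)$, while $\xi$ is the $\C^2$–valued eigenvector of $\mathcal{L}$.  The paper instead sets $\xi_{(1,0)}=\tfrac12(\xi_1-\im\xi_2)$ and $\xi_{(0,1)}=\tfrac12(\xi_1+\im\xi_2)$, which are real precisely because of the structure \eqref{eq:reimxi1}.  With real scalar $\xi_{\mathbf m}$, the $\C$–antilinear part of $Df(\phi)$ couples the coefficient of $z^{\mathbf m}$ with that of $z^{\overline{\mathbf m}}$, so the correct linear problem is not your schematic $\bigl(\mathcal{L}-\im(m_1-m_2)\lambda\bigr)\xi_{\mathbf m}=S_{\mathbf m}$ but rather the $2\times2$ system
\[
(H-(m_1-m_2)\lambda)\begin{pmatrix}\xi_{\mathbf m}\\ \xi_{\overline{\mathbf m}}\end{pmatrix}=\begin{pmatrix}G_{\mathbf m}\\ -G_{\overline{\mathbf m}}\end{pmatrix},
\]
with $H=-\im U^{-1}\mathcal{L}U$ as in \eqref{eq:opH}.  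This is what the paper inverts for $\mathbf m=(2,0)$ and what it uses for the Fredholm step at $\mathbf m=(2,1)$, obtaining $\lambda_{(2,1)}\in\R$ directly because all ingredients are real-valued.  Two of your downstream claims fail once this is corrected: first, at $\mathbf m=(1,1)$ the equation collapses to the scalar $L_+\xi_{(1,1)}=G_{(1,1)}$ with $L_+$ invertible on even functions, so no orthogonality to $N_g(\mathcal{L})$ is needed (and your ``direct inspection'' is never carried out); second, your relation $\xi_{(1,2)}=\overline{\xi_{(2,1)}}$ is false in this real-scalar setting—as the paper explicitly remarks, there is no simple formula relating $\xi_{\mathbf m}$ to $\xi_{\overline{\mathbf m}}$, because there is none relating $\xi_1$ to $\xi_2$.
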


\begin{remark}
The left hand side of \eqref{eq:rp30} appears when one compute $e^{-\im t}\im \partial_t (e^{\im t}\phi[z])$ and assuming $\im\dot{z}= \lambda z +\lambda_{(2,1)}|z|^2z$.
\end{remark}

\begin{proof}
We substitute the ansatz \eqref{eq:refprof} into \eqref{eq:rp30} and solve the equation for the coefficients of $z^{\mathbf{m}}$ for $\mathbf{m}\in \mathbf{NR}_3$.
We first note
%
\begin{align*}
&-\phi +\sum_{\mathbf{ m} \in \mathbf{NR}} z^{\mathbf{m}} \(- 1+ (m_1-m_2)\(\lambda + \lambda_{(2,1)}|z|^2 \)\)\xi_{ \mathbf{m}}\\&
=-\phi+z(-1+\lambda)\xi_{(1,0)}+\overline{z}(1-\lambda)\xi_{(0,1)} + z^2(-1+2\lambda)\xi_{(2,0)}-|z|^2\xi_{(1,1)}+\overline{z}^2(-1-2\lambda)\xi_{(0,2)} \\&\quad+ |z|^2z \((-1 + \lambda)\xi_{(2,1)} +\lambda_{(2,1)}\xi_{(1,0)}\)
+|z|^2\overline{z}\((-1-\lambda)\xi_{(1,2)} + \lambda_{(2,1)}\xi_{(0,1)}\),
\end{align*}
where $\lambda_{(2,1)}$ to be determined.
Expanding $f(\phi[z])$  we have
\begin{align*}
f(\phi[z])&=f(\phi+\widetilde{ \phi}[z])\\&=f(\phi) + Df(\phi)\widetilde{ \phi}[z] + \frac{1}{2}D^2f(\phi)(\widetilde{ \phi}[z],\widetilde{ \phi}[z])+\frac{1}{6}D^3f(\phi)\(\widetilde{ \phi}[z],\widetilde{ \phi}[z],\widetilde{ \phi}[z]\)+O(|z|^4)\\&
=f(\phi) + \sum_{\mathbf{m}\in \mathbf{NR}_3} z^{\mathbf{m}}\phi^{p-1}\(\frac{p+1}{2} \xi_{ \mathbf{m}}+\frac{p-1}{2}{\xi}_{\overline{\mathbf{m}}}\)+\sum_{2\le |\mathbf{m}|\le 3}z^{\mathbf{m}}G_{\mathbf{ m}}+O(|z|^4) ,
\end{align*}
where
\begin{align*}
  \sum_{2\le |\mathbf{m}|\le 3}z^{\mathbf{m}}G_{\mathbf{ m}}+O(|z|^4) =  \frac{1}{2}D^2f(\phi)(\widetilde{ \phi}[z],\widetilde{ \phi}[z])+\frac{1}{6}D^3f(\phi)\(\widetilde{ \phi}[z],\widetilde{ \phi}[z],\widetilde{ \phi}[z]\)+O(|z|^4) .
\end{align*}
 The $G_{\mathbf{ m}}$ can be computed by the expansion formula \eqref{eq:fdiff}.
In particular, using
\begin{align}\label{f''}
D^2f(\phi)(w,w)=\frac{p-1}{4}\phi^{p-2}\((p+1)w^2+2(p+1) |w|^2+(p-3) \overline{w}^2\),
\end{align}
we have
\begin{align}
G_{(2,0)}&=\frac{(p-1)}{8}\phi^{p-2}\((p+1)\xi_{(1,0)}^2+2(p+1)\xi_{(1,0)}\xi_{(0,1)}+(p-3)\xi_{(0,1)}^2\)\label{def:G20}\\
G_{(1,1)}&=\frac{p-1}{4}\phi^{p-2} \((p+1)\(\xi_{(1,0)}^2+\xi_{(0,1)}^2\) +2(p-1)\xi_{(1,0)}\xi_{(0,1)}\),\label{def:G11}
\end{align}
and $G_{(0,2)}$ is given by replacing all $\xi_{\mathbf{m}}$ in $G_{(2,0)}$ by $\xi_{\overline{\mathbf{m}}}$.
This symmetry comes from the fact
\begin{align*}
\overline{D^2f(\psi)(w,w)}=D^2f(\psi)(\overline{w},\overline{w}),
\end{align*}
which follows from the explicit formula \eqref{f''}.

Using
\begin{align}\label{f'''}
D^3f(\psi)(w,w,w)=&\frac{p-1}{8} \psi^{p-3}\((p-3)(p+1)w^3+3(p-1)(p+1)w^2\overline{w}\right.\nonumber\\&\qquad\left.+3(p-3)(p+1) w \overline{w}^2+(p-5)(p-3)\overline{w}^3\),
\end{align}
as well as \eqref{f''}, we have
\begin{align}
G_{(2,1)}&=\frac{p-1}{4}\phi^{p-2}\(\xi_{(1,0)}\( (p+1)\xi_{(2,0)}+2(p+1)\xi_{(1,1)}+(p-3)\xi_{(0,2)}\)\right.\label{def:G21}\\&\qquad\qquad\left.+\xi_{(0,1)}\((p+1)\xi_{(2,0)}+2(p-1)\xi_{(1,1)}+(p+1)\xi_{(0,2)}\)\)\nonumber\\
&\quad +\frac{p-1}{16}\phi^{p-3}\( (p-1)(p+1)\xi_{(1,0)}^3+3(p-3)(p+1)\xi_{(1,0)}^2\xi_{(0,1)}\right.\nonumber\\&\qquad\qquad\left. +(3p^2-8p+13)\xi_{(1,0)}\xi_{(0,1)}^2+(p-3)(p+1)\xi_{(0,1)}^3\),\nonumber \\
G_{(3,0)}&=\frac{p-1}{4}\phi^{p-2}\((p+1)\(\xi_{(1,0)}\xi_{(2,0)}+\xi_{(1,0)}\xi_{(0,2)}+\xi_{(0,1)}\xi_{(2,0)}\)+(p-3)\xi_{(0,1)}\xi_{(0,2)}\) \label{def:G30}\\
&\quad+\frac{p-1}{48}\phi^{p-3}\((p-3)(p+1)\xi_{(1,0)}^3+3(p-1)(p+1)\xi_{(1,0)}^2\xi_{(0,1)}\right.\nonumber\\&\qquad\qquad\left.+3(p-3)(p+1)\xi_{(1,0)}\xi_{(0,1)}^2+(p-5)(p-3)\xi_{(0,1)}^3\),\nonumber
\end{align}
and $G_{(1,2)}$ and $G_{(0,3)}$ are given by replacing all $\xi_{\mathbf{m}}$ in $G_{(2,1)}$ and $G_{(3,0)}$ by $\xi_{\overline{\mathbf{m}}}$ respectively.

We want $\phi[z]$ to solve our equation \eqref{eq:rp30} up to cubic resonant terms and    a $O(z^4)$ error.  We will see that this requirement yields
  $\lambda_{(2,1)}$ and $\xi_{ \mathbf{m}}$.

\noindent The condition that the coefficients of $z$ and $\overline{z}$ are null  becomes
\begin{align}\label{def:xi10}
(-1+\lambda)\xi_{(1,0)} &=-\partial_x^2 \xi_{(1,0)} -\frac{p+1}{2}\phi^{p-1} \xi_{(1,0)} -\frac{p-1}{2}\phi^{p-1}\xi_{(0,1)}\\
(-1-\lambda)\xi_{(0,1)} &=-\partial_x^2 \xi_{(0,1)} -\frac{p-1}{2}\phi^{p-1} \xi_{(1,0)} -\frac{p+1}{2}\phi^{p-1}\xi_{(0,1)}, \label{def:xi01}
\end{align}
which can be written as
\begin{align}\label{eq:evlambda}
(H-\lambda)\begin{pmatrix}
\xi_{(1,0)}\\
\xi_{(0,1)}
\end{pmatrix}=0,
\end{align}
where
 \begin{align}\label{eq:opH}
   &   H    =  \begin{pmatrix}
-\partial_x^2+1 -\frac{p+1}{2}\phi^{p-1} & -\frac{p-1}{2}\phi^{p-1}\\
\frac{p-1}{2}\phi^{p-1} & \partial_x^2-1 +\frac{p+1}{2}\phi^{p-1}
\end{pmatrix}   =     \sigma _3 \(-\partial _x^2 +1 \) +  V   , \\&
  V :=  V (p,x)   :=    M_0 \sech ^2   \(   \frac{p-1}2 x\)  \text{ with }    M_0:= -\( \frac{p+1}{2}
    \sigma _3      +\im  \frac{p-1}{2}  \sigma _2 \)   \frac {p+1}2   .  \nonumber
\end{align}
Now, like in \cite{CM24D1} let the matrix $U$ be defined
by
\begin{align}\label{def:U}
	U = \begin{pmatrix}
		1 &
		1  \\
		\im &
		-\im      \end{pmatrix} \, , \quad
	U^{-1}= \frac 12  \begin{pmatrix}  1 &
		-\im   \\
		1 &
		\im    \end{pmatrix}   .\end{align}
Then, we have
\begin{align}\label{eq:opH1}
   & U ^{-1}\mathcal{L} U = \im  H.
\end{align}
Thus, we can choose
\begin{align*}
U\begin{pmatrix}
\xi_{(1,0)}\\
\xi_{(0,1)}
\end{pmatrix}
=\xi,
\end{align*}
where $\xi=\xi[1]=(\xi_1,\xi_2)^\intercal$ was the normalized eigenfunction of $\mathcal{L}$ associated to $\im \lambda$.
Thus, we set
\begin{align}
\xi_{(1,0)}:=\frac{1}{2}\(\xi_1 -\im \xi_2\),\label{def:xi10}\\
\xi_{(0,1)}:=\frac{1}{2}\(\xi_1+\im \xi_2 \).\label{def:xi01}
\end{align}
Recall that $\xi_1=\Re \xi_1$ and $\xi_2=\im \Im \xi_2$, so $\xi_{(1,0)}$ and $\xi_{(0,1)}$ are $\R$-valued.

\noindent We next consider the $z ^{\mathbf{m}} $  case $|\mathbf{m}|=2$.
In this case, comparing the coefficients of $z^2$ and $\overline{z}^2$,  the condition that these coefficients  are null is equivalent to
\begin{align}\label{def:xi20}
(H-2\lambda)\begin{pmatrix}
\xi_{(2,0)}\\
\xi_{(0,2)}
\end{pmatrix}
=\begin{pmatrix}
G_{(2,0)}\\
-G_{(0,2)}
\end{pmatrix}.
\end{align}
Notice  that from the expression of $G_{(2,0)}$ and $G_{(0,2)}$ given in \eqref{def:G20} and below \eqref{def:G11}, the right hand side is a known term.
Further, by Assumption \ref{ass:eigenvalues1}, $H-2\lambda$ is invertible.
Thus, we can determine $\xi_{(2,0)}$ and $\xi_{(0,2)}$  that solve the above equations, by
\begin{align*}
\begin{pmatrix}
\xi_{(2,0)}\\
\xi_{(0,2)}
\end{pmatrix}=(H-2\lambda)^{-1}\begin{pmatrix}
G_{(2,0)}\\
-G_{(0,2)}
\end{pmatrix}.
\end{align*}

\begin{remark}
Even though $G_{(2,0)}$ and $G_{(0,2)}$ are related by the symmetry of replacing $\xi_{\mathbf{m}}$ with $\xi_{\overline{\mathbf{m}}}$, we have no explicit formula to relate $\xi_{\mathbf{m}}$ and $\xi_{\overline{\mathbf{m}}}$.
This is due to the lack of explicit formula relating $\xi_1$ and $\xi_2$.
\end{remark}

The  condition that the   coefficient of $|z|^2$ is null yields the     equation
\begin{align}\label{def:xi11}
L_+\xi_{(1,1)} = G_{(1,1)},
\end{align}
where $G_{(1,1)}$ is given in \eqref{def:G11}.
Since $L_+$ is invertible (for even functions), we determine
\begin{align*}
\xi_{(1,1)}=L_+^{-1} G_{(1,1)}.
\end{align*}
Finally,   the coefficients of $|z|^2z$ and $|z|^2\overline{z}$ are null exactly if we have
\begin{align}\label{eq:3rd(2,1)}
(H-\lambda)\begin{pmatrix}
\xi_{ (2,1)}\\
\xi_{ (1,2)}
\end{pmatrix}
=\lambda_{(2,1)} \begin{pmatrix}
\xi_{(1,0)}\\
\xi_{(0,1)}
\end{pmatrix}
+
\begin{pmatrix}
G_{(2,1)}\\
-G_{(1,2)}
\end{pmatrix},
\end{align}
where $G_{(2,1)}$ and $G_{(1,2)}$ is given by \eqref{def:G21} and below \eqref{def:G30} and since we have already determined $\xi_{\mathbf{m}}$ with $|\mathbf{m}|\leq 2$, they are given terms.
Now, since $\mathrm{Ran}\(H-\lambda\)=\mathrm{ker}(H^*-\lambda)^\perp$ and $H^*=\sigma_3H\sigma_3$, we have $$\mathrm{Ran}(H-\lambda)=\left\{\begin{pmatrix}
\xi_{(1,0)}\\
-\xi_{(0,1)}
\end{pmatrix}\right\}^\perp.$$
Therefore, by $\<(\xi_{(1,0)}\ \xi_{(0,1)})^\intercal,(\xi_{(1,0)}\ -\xi_{(0,1)})^\intercal\>=1/2$, which follows from \eqref{eq:xinormliz1}, \eqref{def:xi10} and \eqref{def:xi01}, setting
\begin{align*}
\lambda_{(2,1)}=-2\<\begin{pmatrix}
\xi_{(1,0)}\\
\xi_{(0,1)}
\end{pmatrix},
\begin{pmatrix}
G_{(2,1)}\\
G_{(1,2)}
\end{pmatrix}\>
\end{align*}
we have a unique $(\xi_{ (2,1)}\ \xi_{ (1,2)})^\intercal$ which satisfies \eqref{eq:3rd(2,1)}.
Notice that since $\xi_{\mathbf{m}}$ and $G_{\mathbf{m}}$ are all $\R$-valued $\lambda_{(2,1)}$ is also $\R$-valued.

\noindent Since the only $z^3$  and $\bar{z}^3$ terms appear from the expansion of $f$, the proof is done.
\end{proof}


We set $$R[z]=z^3 G_{(3,0)} +\overline{z}^3 G_{(0,3)} + \widetilde{R}[z].$$

Adjusting the modulation parameters, we can make adjust $R[z]$ to get a similar quantity  symplectically orthogonal to the manifold of refined profiles.

\begin{proposition}[Refined Profile 3rd order FGR]\label{prop:refpropf}
There exists $\delta>0$ s.t.\ there exist $\widetilde{\omega}_{\mathcal{R}}, \widetilde{\vartheta}_{\mathcal{R}}\in C^4(D_{\C}(0,\delta),\R)$ and $\widetilde{z}_{\mathcal{R}} \in C^4(D_{\C}(0,\delta),\C)$
with  \begin{align}\label{eq:estpar}
 |\widetilde{\vartheta}_\mathcal{R}| + |\widetilde{\omega}_\mathcal{R}| +|\widetilde{z}_\mathcal{R}|  \lesssim |z|^3,
\end{align}
such that  setting $\widetilde{\vartheta}=1+\widetilde{\vartheta}_{\mathcal{R}}$, $\widetilde{\omega}=\widetilde{\omega}_{\mathcal{R}}$ and $\widetilde{z}=-\im(\lambda   +\lambda_{(2,1)} |z|^2 ) z +\widetilde{z}_{\mathcal{R}}$ and
\begin{align}\label{eq:phi_pre_gali}
R^\perp[{z}]:= \partial ^2_x\phi  [z]+ f(\phi [z]) - \widetilde{\vartheta}\phi [z]+ \im \widetilde{\omega}\partial_{\omega}\phi [z] +\im  D_{z}\phi  [z]\widetilde{z },
\end{align}
for $z_1=\Re z$ and $z_2=\Im z$, we have
\begin{align}\label{R:orth} &
\< \im {R}^\perp[z], \phi [z]\>=\< \im {R}^\perp[z],\im  \Lambda_p\phi [z]\>  =\< \im {R}^\perp[z],\im \partial_{z_{ j}}\phi[z]\> = 0,\text{   for all $j=1,2 $.}
\end{align}
Moreover, $R^\perp[z]$ can be expanded as
\begin{align*}
R^\perp[z]=z^3G_{(3,0)}^\perp + \overline{z}^3 G_{(0,3)}^\perp +\widetilde{R}^\perp[z],
\end{align*}
with $G^\perp_{(3,0)}$ and $G^\perp_{(0,3)}$ explicitly given by \eqref{G30perp} and \eqref{G03perp} below and
\begin{align}&
 \| \cosh \( \kappa  x  \)  \widetilde{R}^\perp    [z]\| _{L^2(\R ) }\lesssim |z|^4 .\label{estR}
\end{align}

 \end{proposition}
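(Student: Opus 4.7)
The plan is to determine $\alpha:=(\widetilde\vartheta_{\mathcal R},\widetilde\omega_{\mathcal R},\Re\widetilde z_{\mathcal R},\Im\widetilde z_{\mathcal R})\in\R^4$ as a $C^4$ function of $z\in D_\C(0,\delta)$ by applying the implicit function theorem to
\begin{align*}
F(z,\alpha):=\begin{pmatrix}\<\im R^\perp[z,\alpha],\phi[z]\>\\\<\im R^\perp[z,\alpha],\im\partial_\omega\phi[z]\>\\\<\im R^\perp[z,\alpha],\im\partial_{z_1}\phi[z]\>\\\<\im R^\perp[z,\alpha],\im\partial_{z_2}\phi[z]\>\end{pmatrix},
\end{align*}
where $R^\perp[z,\alpha]$ denotes the right-hand side of \eqref{eq:phi_pre_gali} with $\widetilde\vartheta,\widetilde\omega,\widetilde z$ depending on $\alpha$ as in the statement. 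One checks $F(0,0)=0$ immediately: at $z=0,\alpha=0$ we have $\phi[0]=\phi$, $\widetilde\vartheta=1$, $\widetilde\omega=0$, $\widetilde z=0$, so $R^\perp[0,0]=\partial_x^2\phi+f(\phi)-\phi=0$ by \eqref{eq:static}.

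The hard part is invertibility of $\partial_\alpha F(0,0)$. From \eqref{eq:phi_pre_gali} the four partial derivatives of $R^\perp$ with respect to the components of $\alpha$ are $-\phi[z]$, $\im\partial_\omega\phi[z]$, $\im\partial_{z_1}\phi[z]$, $\im\partial_{z_2}\phi[z]$, so evaluating at $z=0$ (where $\partial_\omega\phi[0]=\Lambda_p\phi$ and $\partial_{z_j}\phi[0]=\xi_j$ by \eqref{def:xi10}--\eqref{def:xi01}) each Jacobian entry is a pairing among the four functions $\phi$, $\im\Lambda_p\phi$, $\im\xi_1$, $\im\xi_2$. By \eqref{eq:reimxi1}, $\xi_1$ is real-valued and $\xi_2$ is purely imaginary, so $\phi$ and $\im\xi_2$ are real while $\im\Lambda_p\phi$ and $\im\xi_1$ are purely imaginary. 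Since $\<\im u,v\>=-\Im(u\overline v)$ vanishes whenever $u,v$ share the same (real vs.\ purely imaginary) type, the potential off-diagonal-block entries reduce to $\int\phi\xi_1\,dx$ and $\int\Lambda_p\phi\,\Im\xi_2\,dx$, both of which are zero by symplectic orthogonality of $\xi$ to $N_g(\mathcal L)=\mathrm{span}\{(0,\phi)^\intercal,(\partial_\omega\phi,0)^\intercal\}$ (a standard consequence of $\xi$ being an eigenvector for the nonzero eigenvalue $\im\lambda$ of a symplectic operator). Hence the $4\times 4$ Jacobian decouples into two $2\times 2$ blocks, with determinants $-\mathbf q'(1)^2\neq 0$ (Vakhitov--Kolokolov: $\mathbf q(\omega)=\omega^{\frac{2}{p-1}-\frac12}\mathbf q(1)$ gives $\mathbf q'(1)\neq 0$ for $p\neq 5$) and $\tfrac14\neq 0$ (from the normalization \eqref{eq:xinormliz1}).

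The implicit function theorem then supplies $\alpha\in C^4(D_\C(0,\delta),\R^4)$ with $\alpha(0)=0$, the $C^4$ regularity being inherited from the polynomial structure of $\phi[z]$ together with the $O(|w|^p)$ Taylor remainder of $f$ (note $p>4$ in the 3rd-order FGR range $(p_2,p_3)$). For the bound $|\alpha(z)|\lesssim|z|^3$ of \eqref{eq:estpar}, observe that $R^\perp[z,0]=R[z]=z^3G_{(3,0)}+\overline z^3G_{(0,3)}+\widetilde R[z]$ by the preceding lemma, so $F(z,0)=O(|z|^3)$ and the IFT expansion $\alpha(z)=-[\partial_\alpha F(0,0)]^{-1}F(z,0)+O(|F(z,0)|^2)$ yields the claim. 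Finally, inserting $\alpha(z)$ into
\begin{align*}
R^\perp[z]=R[z]-\widetilde\vartheta_{\mathcal R}(z)\phi[z]+\im\widetilde\omega_{\mathcal R}(z)\partial_\omega\phi[z]+\im D_z\phi[z]\widetilde z_{\mathcal R}(z)
\end{align*}
and extracting the $z^3$ and $\overline z^3$ coefficients (using the cubic leading parts of $\widetilde\vartheta_{\mathcal R},\widetilde\omega_{\mathcal R},\widetilde z_{\mathcal R}$) defines $G_{(3,0)}^\perp$ and $G_{(0,3)}^\perp$ explicitly; the balance is $O(|z|^4)$ and exponentially localized because $\widetilde R[z]$, $\phi$, $\Lambda_p\phi$ and $\xi_j$ all decay like $\sech(\kappa x)$, which gives \eqref{estR}.
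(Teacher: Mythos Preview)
Your proof is correct. The paper takes a slightly different but essentially equivalent route: instead of invoking the implicit function theorem, it constructs directly the symplectic projection $P[\Theta]$ onto the tangent space $T_{\phi[\Theta]}\mathcal{S}_\delta$ of the refined-profile manifold and then \emph{defines} $R^\perp[z]:=-\im P^\perp[0,1,z]\,\im R[z]$, reading off $(\widetilde\vartheta_{\mathcal R},\widetilde\omega_{\mathcal R},\widetilde z_{\mathcal R})$ as the coordinates of $-\im P[0,1,z]\,\im R[z]$ in the tangent basis. Since the dependence of $R^\perp[z,\alpha]$ on $\alpha$ is affine-linear, your implicit-function step is in fact solving the same linear system that the projection solves, so the two arguments coincide at the level of computation; the key non-degeneracy input---that $\{\im\phi,\Lambda_p\phi,\xi_1,\xi_2\}$ span a symplectic subspace, equivalently your block-diagonal Jacobian with nonzero $2\times2$ determinants---is identical in both.

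What the paper's framing buys is the explicit formulas \eqref{G30perp}--\eqref{G03perp} for $G_{(3,0)}^\perp$ and $G_{(0,3)}^\perp$ as images under $P^\perp[0,1,0]$, obtained in one line by freezing $z=0$ in the projection. In your approach this amounts to computing the cubic leading part of $\alpha(z)=-[\partial_\alpha F(z,0)]^{-1}F(z,0)$ (note the system is exactly linear in $\alpha$, so no $O(|F|^2)$ term is needed) and substituting back, which gives the same objects but less transparently. Either way, the orthogonality \eqref{R:orth} and the estimate \eqref{estR} follow as you indicate.
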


\proof
Recall \eqref{def:phiomegaz}.
For $\Theta=(\vartheta,\omega,z)$, we set $\phi[\Theta]=e^{\im \vartheta}\phi[\omega,z]$ and
\begin{align}
  \label{eq:refmanifold} \mathcal{S}_\delta:=\{\phi[\Theta]\ |\ \vartheta\in \R, |\omega-1|<\delta, \ |z|<\delta\}
\end{align}
We   claim that  $\mathcal{S}_\delta$
is a symplectic submanifold of $L^2_{\mathrm{rad}}(\R,\C)$ with the symplectic form $\<\im \cdot,\cdot\>$.
Indeed, from \eqref{def:xi10} and \eqref{def:xi01}, we have $z\xi_{(1,0)}+\bar{z}\xi_{(0,1)}=z_1\xi_1+z_2\xi_2$ ($z=z_1+\im z_2$, $z_1,z_2\in \R$) and thus
\begin{align*}
T_{e^{\im \vartheta}\phi}S_{\delta} =\mathrm{span}\{e^{\im \vartheta} \im \phi,e^{\im \vartheta} \Lambda_p\phi , e^{\im \vartheta}\xi_1, e^{\im \vartheta}\xi_2\},
\end{align*}
where $T_\psi S_{\delta}$ is the tangent space of $S_{\delta}$ at $\psi\in S_{\delta}$.
By the relation
\begin{align*}
\<\im \Lambda_p\phi,\im\phi\>=\frac{1}{2}-\frac{2}{p-1}\neq 0,\ \<\im \xi_1,\xi_2\>=\frac{1}{2},\ \<\im \xi_j,\im\phi\>=\<\im \xi_j,\Lambda_p\phi\>=0\  (j=1,2),
\end{align*}
we see that $\<\im \cdot,\cdot\>$ restricted on $T_{e^{\im \vartheta}\phi}S_{\delta}$ is nondegenerate.
Thus, $T_{e^{\im \vartheta}\phi}S_{\delta}$ is a symplectic vector space.
Further, by continuity, $\<\im \cdot,\cdot\>$ is non-degenerate restricted on  $T_{\phi[\Theta]}S_{\delta}$ for $|\omega-1|<\delta$, $|z|<\delta$ and $\delta>0$ sufficiently small.
Therefore, we conclude $S_\delta$ is a symplectic manifold.

Now, we set $e_1[\Theta]= \im \phi[\Theta]$, $f_1[\Theta]= \partial_\omega \phi[\Theta]$.
Then, we have $\<\im e_1[\Theta],f_1[\Theta]\>\neq 0$, provided $\phi[\Theta]\in S_\delta$.
We choose $e_2[\Theta]$ and $f_2[\Theta]$ from $\{e_1[\Theta],f_1\}^\perp:=\{w\in T_{\phi[\Theta]}S_{\delta}\ |\ \<\im w,e_1[\Theta]\>=\<\im w, f_1[\Theta]\>=0 \}$
so that $\<e_2[\Theta],f_2[\Theta]\>\neq 0$.
In particular, we choose $e_2$ and $f_2$ smoothly so that $e_2[\vartheta,1,0]=e^{\im \vartheta}\xi_1$ and $f_2[\vartheta, 1,0]=e^{\im \vartheta}\xi_2$.

We now set the projection $P[\Theta]$ by
\begin{align*}
P[\Theta]\psi:=\sum_{j=1}^2\frac{1}{\<\im e_j[\Theta],f_j[\Theta]\>} \(\<\im \psi,f_j[\Theta]\>e_j[\Theta]+\<\im e_j[\Theta],\psi\>f_j[\Theta]\).
\end{align*}
Then, we see that $P[\Theta]$ is a projection on to $T_{\phi[\Theta]}S_\delta$ satisfying
\begin{align}
\<\im P[\Theta]v,w\>=\<\im v,P[\Theta]w\>.\label{symporth}
\end{align}
We remark that a projection on a symplectic subspace satisfying \eqref{symporth} is unique and therefore $P[\Theta]$ actually do not depend on the choice of $\{e_j[\Theta],f_j[\Theta]\}_{j=1,2}$.

We can now set $R^\perp[z]$ by $R^\perp[z]=-\im P^\perp[0,1,z]\im R[z]$, where $P^\perp[\Theta]=1-P[\Theta]$.
Notice that from the definition of $P[\Theta]$, we have the orthogonality condition \eqref{R:orth}.
Indeed, for example the first orthogonality condition follows from
\begin{align*}
\<\im R^\perp[z],\phi[z]\>&=\<P^\perp[0,1,z]\im R[z],\phi[z]\>=\<\im P^\perp[0,1,z]\im R[z],\im \phi[z]\>=-\<R[z],P^\perp[0,1,z]\im \phi[z]\>\\&=0,
\end{align*}
where in the 3rd equality we have used \eqref{symporth} and in the last equality we have used $P[0,1,z]\im \phi[z]=\im \phi[z]$.
The other orthogonality conditions follow in a similar manner.
Now, since the range of $P[0,1,z]$ is $T_{\phi[z]}S_\delta$, we can uniquely express $-\im P[0,1,z]\im R[z]$ as
\begin{align*}
-\im P[0,1,z]\im R[z]&=-\im \(\widetilde{\vartheta}_{\mathcal{R}}[z]\im \phi[z]+\widetilde{\omega}_{\mathcal{R}}[z]\Lambda_p\phi[z]+\sum_{j=1,2}\widetilde{z}_{j,\mathcal{R}}[z]\partial_{z_j}\phi[z]\)\\&
=\widetilde{\vartheta}_{\mathcal{R}}[z] \phi[z]-\im \widetilde{\omega}_{\mathcal{R}}[z]\Lambda_p\phi[z]-\im D_z\phi[z]\widetilde{z}_{\mathcal{R}}[z],
\end{align*}
where $\widetilde{z}_{\mathcal{R}}=\widetilde{z}_{1,\mathcal{R}}+\im \widetilde{z}_{2,\mathcal{R}}$.
The estimate \eqref{eq:estpar} follows form $|R[z]|\lesssim |z|^3$.
Substituting,
\begin{align*}
R[z]&=R^\perp[z]-\im P[0,1,z]\im R[z]\\&
=R^\perp[z]+\widetilde{\vartheta}_{\mathcal{R}}[z] \phi[z]-\im \widetilde{\omega}_{\mathcal{R}}[z]\Lambda_p\phi[z]-\im D_z\phi[z]\widetilde{z}_{\mathcal{R}}[z],
\end{align*}
into \eqref{eq:rp30}, we see that $R^\perp[z]$ defined in \eqref{eq:phi_pre_gali} corresponds to $R^\perp[z]$ given above.

Finally, since $P[\Theta]$ is only $\R$-linear, we have
\begin{align*}
R^\perp[z]&=-\im P^\perp[0,1,z] \im \(z^3G_{(3,0)}+\overline{z}^3G_{(0,3)}\)+O(|z|^4)\\&
=-\im P^\perp[0,1,0]\im \(z^3G_{(3,0)}+\overline{z}^3G_{(0,3)}\)+O(|z|^4)\\&
=-\im \Re(z^3)P^\perp [0,1,0]\im(G_{(3,0)}+G_{(0,3)})+\im \Im(z^3)P^\perp[0,1,0]\(G_{(3,0)}-G_{(0,3)}\)+O(|z|^4)\\&
=z^3G_{(3,0)}^\perp + \overline{z}^3 G_{(0,3)}^\perp +O(|z|^4),
\end{align*}
where
\begin{align}
G_{(3,0)}^\perp&=-\frac{\im}{2}P^\perp [0,1,0]\im(G_{(3,0)}+G_{(0,3)})+\frac{1}{2} P^\perp[0,1,0]\(G_{(3,0)}-G_{(0,3)}\),\label{G30perp}\\
G_{(0,3)}^\perp &=-\frac{\im}{2}P^\perp [0,1,0]\im(G_{(3,0)}+G_{(0,3)})-\frac{1}{2}P^\perp[0,1,0] \(G_{(3,0)}-G_{(0,3)}\),\label{G03perp}
\end{align}
with $P^\perp[0,1,0] $ explicitly given by
\begin{align*}
P^\perp[0,1,0]\psi= \psi+\frac{1}{\<\phi,\Lambda_p\phi\>}\(\<\im \psi,\Lambda_p\phi\>\im \phi -\<\phi,\psi\>\Lambda_p\phi\) - \frac{1}{\<\im \xi_1,\xi_2\>}\(\<\im \psi,\xi_2\>\xi_1+\<\im \xi_1,\psi\>\xi_2\).
\end{align*}
\qed

\subsection{Refined profile in the 4th   order FGR}\label{sec:refprof4th}

In this case, the refined profile has the form
\begin{align}\label{eq:rf4th}
\phi[z]=\phi+\sum_{\mathbf{m}\in \mathbf{NR}_4}z^{\mathbf{m}}\xi_{ \mathbf{m}},
\end{align}
with
\begin{align*}
\mathbf{NR}_4=\mathbf{NR}_3\cup \{(3,0),(0,3),(3,1),(1,3),(2,2)\}.
\end{align*}

\begin{lemma}
There exists $\{\xi_{ \mathbf{m}}\}_{\mathbf{m}\in \mathbf{NR}_4}\subset \cosh(\kappa x)L^2_{\mathrm{rad}}(\R,\R)$ and $\lambda_{(2,1)}\in \R$ such that
\begin{align*}
-\phi[z]+\im D_z\phi[z](-\im \lambda z -\im \lambda_{(2,1)} |z|^2z)=-\partial_x^2\phi[z] - f(\phi[z]) + z^4 G_{(4,0)} +\overline{z}^4 G_{(0,4)} + \widetilde{R}[z],
\end{align*}
where $\phi[z]$ is defined by \eqref{eq:rf4th}, $G_{(4,0)}$ is given by  \eqref{def:G40} and $G_{(0,4)}$ is given by replacing all $\xi_{ \mathbf{m}}$ in \eqref{def:G40} by $\xi_{\overline{\mathbf{m}}}$ and
\begin{align*}
\|\cosh(\kappa x) \widetilde{R}[z] \|_{L^2}\lesssim |z|^p.
\end{align*}

\end{lemma}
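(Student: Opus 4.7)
The plan is to follow the template of the 3rd order refined profile lemma verbatim, adding the five new modes $(3,0),(0,3),(3,1),(1,3),(2,2)$ whose indices are non-resonant in the 4th order FGR regime. I substitute the ansatz \eqref{eq:rf4th} together with the normal form $\widetilde z=-\im(\lambda+\lambda_{(2,1)}|z|^2)z$ into the target identity, expand $f(\phi[z])$ by Taylor to order four using \eqref{eq:fdiff}, and match coefficients of $z^{\mathbf m}$ for each $\mathbf m\in\mathbf{NR}_4$. For $|\mathbf m|\le 3$, the algebra is identical to the 3rd order case: $\xi_{(1,0)},\xi_{(0,1)}$ come from the eigenfunction $\xi$ via \eqref{def:xi10}--\eqref{def:xi01}; $(\xi_{(2,0)},\xi_{(0,2)})$ is obtained from $(H-2\lambda)^{-1}$ since in the 4th order regime $2\lambda\in(1/2,2/3)$ is inside the spectral gap of $H$ and distinct from $\pm\lambda$; $\xi_{(1,1)}=L_+^{-1}G_{(1,1)}$; and $(\xi_{(2,1)},\xi_{(1,2)})$ is fixed by the Fredholm alternative at $H-\lambda$, which simultaneously determines $\lambda_{(2,1)}\in\R$ through the orthogonality formula from the 3rd order proof.

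The new algebra at order four is analogous. For $\mathbf m\in\{(3,0),(0,3)\}$ the system is
\begin{align*}
(H-3\lambda)\begin{pmatrix}\xi_{(3,0)}\\ \xi_{(0,3)}\end{pmatrix}=\begin{pmatrix}G_{(3,0)}\\ -G_{(0,3)}\end{pmatrix},
\end{align*}
with $G_{(3,0)},G_{(0,3)}$ the already explicit expressions \eqref{def:G30} from the 3rd order section. The crucial input is that in the 4th order FGR regime $p\in(p_3,p_4)$ we have $\lambda<1/3$, hence $3\lambda<1$ lies strictly below the threshold of $\sigma_{\mathrm{ess}}(H)=(-\infty,-1]\cup[1,\infty)$, while $3\lambda\notin\{0,\pm\lambda\}$ ensures $3\lambda\notin\sigma_{\mathrm{pp}}(H)$ by Proposition \ref{prop:knownspec}(5) and \eqref{eq:Ng}; thus $H-3\lambda$ is invertible on the radial even sector and yields exponentially localized $\xi_{(3,0)},\xi_{(0,3)}$. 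For $\mathbf m\in\{(3,1),(1,3)\}$ one again inverts $H-2\lambda$ and for $\mathbf m=(2,2)$ one inverts $L_+$; the right-hand sides $G_{\mathbf m}$ are computed from \eqref{eq:fdiff} as polynomial combinations of the previously determined $\xi_{\mathbf m'}$ with $|\mathbf m'|\le 2$, together with a quartic $D^4f(\phi)$ contribution. The coefficients of $z^4$ and $\bar z^4$ cannot be cancelled because $4\lambda>1$ sits in the essential spectrum of $H$, and they remain as the resonant terms $G_{(4,0)},G_{(0,4)}$.

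The only genuinely new analytic estimate is the remainder bound $\|\cosh(\kappa x)\widetilde R[z]\|_{L^2}\lesssim|z|^p$. This is where I expect the main technical point: since $f(u)=|u|^{p-1}u$ is only $C^{\lfloor p\rfloor}$ and $p\in(p_3,p_4)\subset(4,5)$, the Taylor expansion to order four leaves a remainder pointwise bounded by $C|\widetilde\phi[z]|^p$ on the region where $|\widetilde\phi[z]|\lesssim\phi$, while on the complement the brute bound $|f(\phi+w)|+\sum_{k\le 4}|D^kf(\phi)(w^{\otimes k})|\lesssim|w|^p+\phi^{p-1}|w|+\cdots$ is absorbed by the exponential decay of every $\xi_{\mathbf m}$ (inherited from $\cosh(\kappa x)^{-1}L^2$ via weighted resolvent estimates for $H-k\lambda$ and $L_+$). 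Combining these two regimes with $|\widetilde\phi[z]|\lesssim|z|\sech(\kappa x)$ produces the claimed $|z|^p$ bound with the weight $\cosh(\kappa x)$ absorbed. The rest of the proof is a mechanical extension of the 3rd order case, so I would write only the new order-four matching equations in detail and invoke the weighted decay theory to close the remainder estimate.
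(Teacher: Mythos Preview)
Your approach is essentially identical to the paper's: reuse the 3rd-order construction for $\mathbf m\in\mathbf{NR}_3$, then invert $H-3\lambda$ for $(3,0),(0,3)$, $H-2\lambda$ for $(3,1),(1,3)$, and $L_+$ for $(2,2)$, leaving the $z^4,\bar z^4$ terms as the resonant remainder with $G_{(4,0)},G_{(0,4)}$ coming solely from the expansion of $f$. One small slip: the right-hand sides $G_{(3,1)},G_{(1,3)},G_{(2,2)}$ (and $G_{(4,0)},G_{(0,4)}$) depend also on the cubic modes $\xi_{(2,1)},\xi_{(1,2)},\xi_{(3,0)},\xi_{(0,3)}$, not only on $\xi_{\mathbf m'}$ with $|\mathbf m'|\le 2$; since you have already determined those in your steps 1--2, the hierarchy still closes and the argument goes through.
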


\begin{proof}
For $\mathbf{ m}\in \mathbf{NR}_3$, $G_\mathbf{ m}$ and $\xi_{ \mathbf{m}}$ are  exactly   the same of the 3rd order case.
Further, $G_{(3,0)}$ and $G_{(0,3)}$ are given by \eqref{def:G30} and below \eqref{def:G30}.
Using
\begin{align*}
&D^4f(\psi)(w,w,w,w)=\frac{p-1}{16}\phi^{p-4}\((p-5)(p-3)(p+1)w^4+4(p-3)(p-1)(p+1)w^3\overline{w}\right.\\&\qquad\left.+6(p-3)(p-1)(p+1)w^2\overline{w}^2+4(p-5)(p-3)(p+1)w\overline{w}^3+(p-7)(p-5)(p-3)\overline{w}^4\),
\end{align*}
we have
\small
\begin{align*}
G_{(3,1)}&=\frac{p-1}{4}\phi^{p-2}\((p+1)\(\xi_{(1,0)}\(\xi_{ (2,1)}+\xi_{ (1,2)}+\xi_{(3,0)}\)+\xi_{(0,1)}\(\xi_{ (2,1)}+\xi_{(3,0)}+\xi_{(0,3)}\)\right.\right.\\&\left.\left.\qquad\qquad+\xi_{(1,1)}\(2\xi_{(2,0)}+\xi_{(0,2)}\)\)+(p-3)\(\xi_{(1,0)}\xi_{(0,3)}+\xi_{(0,1)}\xi_{ (1,2)}+\xi_{(0,2)}\xi_{(1,1)}\)\)\\&\quad+
\frac{p-1}{16}\phi^{p-3}\( (p-3)(p+1)\(\xi_{(1,0)}^2\(\xi_{(1,1)}+2\xi_{(0,2)}\)+2\xi_{(1,0)}\xi_{(0,1)}\(\xi_{(2,0)}+\xi_{(1,1)}\)+2\xi_{(0,1)}^2\xi_{(0,2)}\)\right.\\&\left.\qquad\qquad+2(p-1)(p+1)\(\xi_{(1,0)}^2\xi_{(2,0)}+\xi_{(1,0)}\xi_{(0,1)}\xi_{(1,1)}+\xi_{(0,1)}^2\xi_{(2,0)}\)\right.\\&\left.\qquad\qquad+(p-5)(p-3)\(\xi_{(0,1)}^2\xi_{(1,1)}+2\xi_{(1,0)}\xi_{(0,1)}\xi_{(0,2)}\) \)\\&\quad
+\frac{p-1}{96}\phi^{p-4}\((p-5)(p-3)(p+1)\xi_{(1,0)}^3\xi_{(0,1)} + (p-3)(p-1)(p+1)\(\xi_{(1,0)}^4+3\xi_{(1,0)}\xi_{(0,1)}^3\)\right.\\&\left.\qquad\qquad+
3(p-3)(p-1)(p+1)\(\xi_{(1,0)}^3\xi_{(0,1)}+\xi_{(1,0)}\xi_{(0,1)}^3\)\right.\\&\left.\qquad\qquad+(p-5)(p-3)(p+1)\(\xi_{(1,0)}^2\xi_{(0,1)}^2+\xi_{(0,1)}^4\)+(p-7)(p-5)(p-3)\xi_{(1,0)}\xi_{(0,1)}^3\)
\end{align*}
\normalsize
and
\small
\begin{align*}
G_{(2,2)}&=\frac{p-1}{4}\phi^{p-2}\(2(p-1)\(\xi_{(1,0)}\xi_{ (1,2)}+\xi_{(0,1)}\xi_{ (2,1)}+\xi_{(2,0)}\xi_{(0,2)}\)+2p\xi_{(1,1)}^2\right.\\&\left.\qquad\qquad+(p+1)\(2\xi_{(1,0)}\xi_{ (2,1)}+2\xi_{(0,1)}\xi_{ (1,2)}+\xi_{(2,0)}^2+\xi_{(0,2)}^2\)\)\\&
\quad +\frac{p-1}{8}\phi^{p-3}\((p-3)(p-2)\(\xi_{(1,0)}^2\xi_{(0,2)}+2\xi_{(1,0)}\xi_{(0,1)}\xi_{(1,1)}+\xi_{(0,1)}^2\xi_{(2,0)}\)\right.\\&\left.\qquad\qquad+
2(p-2)(p+1)\(\xi_{(1,0)}\xi_{(0,1)}\(\xi_{(2,0)}+\xi_{(0,2)}\)+\(\xi_{(1,0)}^2+\xi_{(0,1)}^2\)\xi_{(1,1)}\)\)\\&\quad
+\frac{p-1}{64}\phi^{p-4}\(2(p-5)(p-3)^2\xi_{(1,0)}^2\xi_{(0,1)}^2+4(p-3)^2(p+1)\(\xi_{(1,0)}^3\xi_{(0,1)}+\xi_{(1,0)}\xi_{(0,1)}^3\)\right.\\&\left.\qquad\qquad+(p-3)(p-1)(p+1)\(\xi_{(1,0)}^4+4\xi_{(1,0)}^2\xi_{(0,1)}^2+\xi_{(0,1)}^4\)\)
\end{align*}
\normalsize
and $G_{(1,3)}$ is given by replacing all $\xi_{\mathbf{m}}$ in $G_{(3,1)}$ by $\xi_{\overline{\mathbf{m}}}$.
Further,
\small
\begin{align}
G_{(4,0)}&=\frac{p-1}{4}\phi^{p-2}\((p+1)\(2\xi_{(1,0)}\(\xi_{(3,0)}+\xi_{(0,3)}\)+2\xi_{(0,1)}\xi_{(3,0)}+\xi_{(2,0)}^2+2\xi_{(2,0)}\xi_{(0,2)}\)\right.\nonumber\\&\qquad\qquad \left.+(p-3)\(2\xi_{(0,1)}\xi_{(0,3)}+\xi_{(0,2)}^2\)\)\nonumber\\&\quad+
\frac{p-1}{16}\phi^{p-3}\((p-3)(p+1)\(\xi_{(1,0)}^2\xi_{(2,0)}+2\xi_{(1,0)}\xi_{(0,1)}\xi_{(0,2)}+\xi_{(0,1)}^2\xi_{(2,0)}\)\right.\nonumber\\&\qquad\qquad \left.+(p-1)(p+1)\(2\xi_{(1,0)}\xi_{(0,1)}\xi_{(2,0)}+\xi_{(1,0)}^2\xi_{(0,2)}\)+(p-5)(p-3)\xi_{(0,1)}^2\xi_{(0,2)}\)\nonumber\\&
\quad+\frac{p-1}{384}\phi^{p-4}\((p-5)(p-3)(p+1)\xi_{(1,0)}^4+4(p-3)(p-1)(p+1)\xi_{(1,0)}^3\xi_{(0,1)}\right.\nonumber\\&\left.\qquad\qquad+6(p-3)(p-1)(p+1)\xi_{(1,0)}^2\xi_{(0,1)}^2 + 4(p-5)(p-3)(p+1)\xi_{(1,0)}\xi_{(0,1)}^3\right.\nonumber\\&\left.\qquad\qquad+(p-7)(p-5)(p-3)\xi_{(0,1)}^4\)\label{def:G40}
\end{align}
\normalsize
and $G_{(0,4)}$ is given by replacing all $\xi_{\mathbf{m}}$ in $G_{(4,0)}$ by $\xi_{\overline{\mathbf{m}}}$.
Proceeding as the previous section, we determine $\xi_{(3,0)}$ and $\xi_{(0,3)}$ by
\begin{align*}
(H-3\lambda) \begin{pmatrix}
\xi_{(3,0)}\\
\xi_{(0,3)}
\end{pmatrix}
=
\begin{pmatrix}
G_{(3,0)}\\
-G_{(0,3)}
\end{pmatrix}.
\end{align*}
By Assumption \ref{ass:eigenvalues1}, $H-3\lambda$ is invertible.
Similarly, we determine $\xi_{(3,1)}$ and $\xi_{ (1,3)}$ by
\begin{align*}
(H-2\lambda) \begin{pmatrix}
\xi_{(3,1)}\\
\xi_{(1,3)}
\end{pmatrix}
=
\begin{pmatrix}
G_{(3,1)}\\
-G_{(1,3)}
\end{pmatrix},
\end{align*}
and $\xi_{ (2,2)}$ by
\begin{align*}
L_+\xi_{ (2,2)}=G_{(2,2)},
\end{align*}
because $H-2\lambda$ and $L_+$ are invertible.

Finally, $z^4$ and $\overline{z}^4$ term appear only from the expansion of $f$, we have the conclusion.
\end{proof}

Like for the 3rd order case, by adjusting the modulation parameters, we arrive at the following proposition.


\begin{proposition}[Refined Profile 4th order FGR]\label{prop:refpropf4th}
	There exists $\delta>0$ s.t.\ there exist $\widetilde{\omega}_{\mathcal{R}}, \widetilde{\vartheta}_{\mathcal{R}}\in C^4(D_{\C}(0,\delta),\R)$ and $\widetilde{z}_{\mathcal{R}} \in C^4(D_{\C}(0,\delta),\C)$
	with  \begin{align}\label{eq:estpar4}
		|\widetilde{\vartheta}_\mathcal{R}| + |\widetilde{\omega}_\mathcal{R}| +|\widetilde{z}_\mathcal{R}|  \lesssim |z|^4,
	\end{align}
	such that setting $\widetilde{\vartheta}=1+\widetilde{\vartheta}_{\mathcal{R}}$, $\widetilde{\omega}=\widetilde{\omega}_{\mathcal{R}}$ and $\widetilde{z}= -\im(\lambda   +\lambda_{(2,1)} |z|^2 ) z +\widetilde{z}_{\mathcal{R}}$ and
	\begin{align}\label{eq:phi_pre_gali4}
		R^\perp[{z}]:= \partial ^2_x\phi  [z]+ f(\phi [z]) - \widetilde{\vartheta}\phi [z]+ \im \widetilde{\omega}\partial_{\omega}\phi [z] +\im  D_{z}\phi  [z]\widetilde{z },
	\end{align}
	for $z_1=\Re z$ and $z_2=\Im z$, we have
	\begin{align*} &
		\< \im {R}^\perp[z], \phi [z]\>=\< \im {R}^\perp[z],\im  \Lambda_p\phi [z]\>  =\< \im {R}^\perp[z],\im \partial_{z_{ j}}\phi[z]\> = 0,\text{   for all $j=1,2 $,}
	\end{align*}
	and
	\begin{align*}
		R^\perp[z]=z^4G_{(4,0)}^\perp + \overline{z}^4 G_{(0,4)}^\perp +\widetilde{R}^\perp[z],
	\end{align*}
	with
\begin{align}&
 \|\cosh(\kappa x)\widetilde{R}^\perp[z]\|_{L^2}\lesssim |z|^p .\label{estR4th}
\end{align}

	

 \end{proposition}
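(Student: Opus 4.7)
The plan is to follow the argument of Proposition \ref{prop:refpropf} almost verbatim, with the refined profile $\phi[z]$ now furnished by the 4th order lemma preceding the statement. First, I will reduce to $\omega=1$ via the scaling \eqref{def:phiomegaz}, just as in the 3rd order case, so that it suffices to construct $\widetilde{\vartheta}_{\mathcal{R}}, \widetilde{\omega}_{\mathcal{R}}, \widetilde{z}_{\mathcal{R}}$ for $\omega=1$ and then read off the general $\omega$ case from the scaling relations.

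Next, I define the refined manifold $\mathcal{S}_\delta:=\{\phi[\Theta]=e^{\im\vartheta}\phi[\omega,z]\ :\ \vartheta\in\R,\ |\omega-1|<\delta,\ |z|<\delta\}$ with $\phi[\omega,z]$ now given by the 4th order refined profile \eqref{eq:rf4th}. As in the 3rd order case, from \eqref{def:xi10}, \eqref{def:xi01} the tangent space at $e^{\im\vartheta}\phi$ is spanned by $\{e^{\im\vartheta}\im\phi, e^{\im\vartheta}\Lambda_p\phi, e^{\im\vartheta}\xi_1, e^{\im\vartheta}\xi_2\}$ and the symplectic form $\<\im\cdot,\cdot\>$ is non-degenerate on it by the same computations $\<\im\Lambda_p\phi,\im\phi\>=\tfrac12-\tfrac{2}{p-1}\neq 0$ and $\<\im\xi_1,\xi_2\>=\tfrac12$. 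By continuity this persists on $T_{\phi[\Theta]}\mathcal{S}_\delta$ for $\delta$ small, so $\mathcal{S}_\delta$ is symplectic. I then introduce the unique symplectic projection $P[\Theta]$ onto $T_{\phi[\Theta]}\mathcal{S}_\delta$ via the same formula as in Proposition \ref{prop:refpropf}, and set $R^\perp[z]:=-\im P^\perp[0,1,z]\im R[z]$. Expanding $-\im P[0,1,z]\im R[z]$ in the basis $\{\im\phi[z], \Lambda_p\phi[z], \partial_{z_j}\phi[z]\}$ of $T_{\phi[z]}\mathcal{S}_\delta$ reads off the scalar functions $\widetilde{\vartheta}_{\mathcal{R}}, \widetilde{\omega}_{\mathcal{R}}, \widetilde{z}_{\mathcal{R}}$, and the orthogonality relations displayed in the statement follow exactly as in the 3rd order case from the symplectic self-adjointness $\<\im P[\Theta]v,w\>=\<\im v,P[\Theta]w\>$ combined with $P[0,1,z]\phi[z]=\phi[z]$, $P[0,1,z]\im\Lambda_p\phi[z]=\im\Lambda_p\phi[z]$, $P[0,1,z]\im\partial_{z_j}\phi[z]=\im\partial_{z_j}\phi[z]$.

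The estimate \eqref{eq:estpar4} follows from $|R[z]|\lesssim|z|^4$ together with the boundedness of $P[0,1,z]$ in the relevant weighted norms; note that the $O(|z|^{<4})$ contributions to $R[z]$ have been annihilated by the construction of $\{\xi_{\mathbf{m}}\}_{\mathbf{m}\in\mathbf{NR}_4}$ in the lemma preceding the statement. For the expansion of $R^\perp[z]$, I exploit, exactly as in the 3rd order case, the fact that $P^\perp[0,1,z]$ is only $\R$-linear: replacing $P^\perp[0,1,z]$ by $P^\perp[0,1,0]$ modulo $O(|z|)$, writing $z^4G_{(4,0)}+\overline{z}^4G_{(0,4)}=\Re(z^4)(G_{(4,0)}+G_{(0,4)})+\im\Im(z^4)(G_{(4,0)}-G_{(0,4)})$, and applying the explicit formula for $P^\perp[0,1,0]$ yields
\begin{align*}
G_{(4,0)}^\perp &=-\tfrac{\im}{2}P^\perp[0,1,0]\im(G_{(4,0)}+G_{(0,4)})+\tfrac12 P^\perp[0,1,0](G_{(4,0)}-G_{(0,4)}),\\
G_{(0,4)}^\perp &=-\tfrac{\im}{2}P^\perp[0,1,0]\im(G_{(4,0)}+G_{(0,4)})-\tfrac12 P^\perp[0,1,0](G_{(4,0)}-G_{(0,4)}),
\end{align*}
and a remainder $\widetilde{R}^\perp[z]$ collecting everything else.

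The only step requiring genuine care is the bound \eqref{estR4th} with the exponent $|z|^p$ rather than $|z|^5$. This is where the limited smoothness of $f(u)=|u|^{p-1}u$ enters: the Taylor remainder after the 4th order expansion in \eqref{eq:fdiff} is only $O(|w|^p)$ for $p<5$, and this is inherited by $\widetilde{R}[z]$ from the lemma, hence by $\widetilde{R}^\perp[z]$ after applying the bounded projection $P^\perp[0,1,z]$ in $\cosh(\kappa x)L^2(\R)$. This is the main obstacle to pushing the construction to higher order FGR and is precisely the reason, noted in the introduction, that we stop at $n=4$.
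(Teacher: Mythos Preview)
Your proposal is correct and follows essentially the same approach as the paper, which simply states that the proof is similar to that of Proposition \ref{prop:refpropf} with $(3,0)$ and $(0,3)$ replaced by $(4,0)$ and $(0,4)$. You have spelled out the details of this adaptation explicitly, including the crucial observation that the remainder exponent is $|z|^p$ rather than $|z|^5$ because the Taylor expansion of $f$ after fourth order only yields an $O(|w|^p)$ error for $p<5$.
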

\proof
The proof is similar to that  of Proposition \ref{prop:refpropf}.
We note that the expression of $G_{(4,0)}^\perp$ and $G_{(0,4)}^\perp$ is given by \eqref{G30perp} and \eqref{G03perp} with replacing $(3,0)$ and $(0,3)$ with $(4,0)$ and $(0,4)$ respectively.

\qed

\section{Modulation, continuation argument and proof of Theorem \ref{thm:asstab}}\label{sec:mod}

Recall that $\phi[\omega,z]$ is given in \eqref{def:phiomegaz} from $\phi[z]$, which we have constructed for $n=3,4$.
 By standard arguments, which we skip, we have the following \textit{modulation}.

\begin{lemma}[Modulation]\label{lem:mod1}
  There exist an $\delta _0 >0$ and functions
$\omega   \in C^1(   \mathcal{U} (1,\delta _0   ),  (0,\infty)) $ and
$\vartheta \in C^1( \mathcal{U} (1,\delta _0   ), \R /2\pi\Z ) $  and $z \in C^1( \mathcal{U} (1,\delta _0   ), \C ) $
 such that for  any $u \in \mathcal{U} (1,\delta _0   )$
\small \begin{align}\label{61}
  & \eta (u):=e^{-\im \vartheta( u) } u -  \phi [\omega( u) , z(u)]    \text{ satisfies }
    \\& \nonumber
\< \eta (u),\im\phi [\omega ( u), {z}( u)]\>=\< \eta (u), \partial_{\omega}\phi [\omega( u), {z}( u)]\>  =\< \eta (u),\partial_{z_{ j}}\phi[\omega ( u), {z}( u)]\> = 0,\text{   for all $j=1,2 $.}
\end{align}
 \normalsize
Furthermore we have the identities  $\omega  ({\phi}_{\omega })=\omega $,  $\vartheta  (  e^{\im \vartheta _0}  u)    = \vartheta  (    u)   + \vartheta _0 $   and $\omega  (  e^{\im \vartheta _0}  u)    = \omega  (    u)    $ and  $z  (  e^{\im \vartheta _0}  u)    = z  (    u)    $.
\end{lemma}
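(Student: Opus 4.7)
The plan is a standard Implicit Function Theorem (IFT) argument. First I would define $F \colon \R \times (0,\infty) \times \C \times D_{H^1_\rad (\R )}(\phi,\delta) \to \R^4$ by
\begin{equation*}
F(\vartheta,\omega,z,u) := \begin{pmatrix} \< e^{-\im\vartheta} u - \phi[\omega,z],\ \im\phi[\omega,z]\> \\ \< e^{-\im\vartheta} u - \phi[\omega,z],\ \partial_\omega\phi[\omega,z]\> \\ \< e^{-\im\vartheta} u - \phi[\omega,z],\ \partial_{z_1}\phi[\omega,z]\> \\ \< e^{-\im\vartheta} u - \phi[\omega,z],\ \partial_{z_2}\phi[\omega,z]\> \end{pmatrix},
\end{equation*}
so that $F=0$ is precisely the orthogonality condition \eqref{61} with $\eta(u) = e^{-\im\vartheta} u - \phi[\omega,z]$. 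Since $(\omega,z)\mapsto\phi[\omega,z]$ is $C^4$ by Propositions \ref{prop:refpropf} and \ref{prop:refpropf4th}, the map $F$ is $C^1$ in $(\vartheta,\omega,z,u)$. At the base point $(\vartheta,\omega,z,u)=(0,1,0,\phi)$ one has $\eta=0$, and hence $F$ vanishes there.

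Next I would verify that the Jacobian $D_{(\vartheta,\omega,z_1,z_2)}F$ at the base point is invertible. Since $\eta = 0$ there, this Jacobian equals $-G$, where $G_{jk} := \< T_j,T_k\>$ is the Gram matrix of the four tangent vectors
\begin{equation*}
T_1 = \im\phi,\qquad T_2 = \Lambda_p\phi,\qquad T_3 = \partial_{z_1}\phi[z]\big|_{z=0} = \xi_1,\qquad T_4 = \partial_{z_2}\phi[z]\big|_{z=0} = \xi_2,
\end{equation*}
where $\xi_1$ is real-valued and $\xi_2 = \im\,\Im\,\xi_2$ purely imaginary, cf.\ \eqref{eq:reimxi1}. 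The real/imaginary parities decouple $G$ into two $2\times 2$ blocks along the pairings $\{T_1,T_4\}$ and $\{T_2,T_3\}$. Each block is nondegenerate: saturation of Cauchy--Schwarz would force either $\phi$ proportional to $\Im\,\xi_2$ or $\Lambda_p\phi$ proportional to $\xi_1$, and inserting any such relation into the eigenvalue equation $\mathcal{L}\xi = \im\lambda\xi$ and using $L_-\phi = 0$, $L_+\Lambda_p\phi = -\phi$ and $\lambda\neq 0$ immediately forces $\xi \equiv 0$, a contradiction. Equivalently, this is just the linear independence in $L^2_\rad(\R,\C)$ of the tangent vectors to $\mathcal{S}_\delta$ at $\phi$, already observed when $\mathcal{S}_\delta$ was shown to be a $4$-dimensional symplectic submanifold in the proof of Proposition \ref{prop:refpropf}.

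Given the invertibility, the IFT produces a $\delta_0 > 0$ and $C^1$ functions $\vartheta,\omega,z$ on $D_{H^1_\rad (\R )}(\phi,\delta_0)$ solving $F = 0$. To extend to the tubular neighborhood $\mathcal{U}(1,\delta_0)$, for $u = e^{\im\vartheta_0} v$ with $v \in D_{H^1_\rad (\R )}(\phi,\delta_0)$ I would set $\vartheta(u) := \vartheta(v) + \vartheta_0 \pmod{2\pi}$, $\omega(u) := \omega(v)$, and $z(u) := z(v)$. The claimed equivariance identities $\omega(\phi_\omega) = \omega$, $\vartheta(e^{\im\vartheta_0} u) = \vartheta(u) + \vartheta_0$, $\omega(e^{\im\vartheta_0} u) = \omega(u)$ and $z(e^{\im\vartheta_0} u) = z(u)$ then follow from the uniqueness clause of the IFT, because in each case the natural candidate parameters satisfy $F = 0$ by inspection. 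The only genuinely delicate step is the invertibility of the Gram matrix, and that is essentially already encoded in the refined-profile construction, so no new obstacle arises.
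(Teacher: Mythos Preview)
Your implicit--function--theorem argument is correct and is precisely the ``standard argument'' the paper alludes to; the paper itself skips the proof entirely. Your verification that the Jacobian at the base point is $-G$ with $G$ the Gram matrix of $\{\im\phi,\Lambda_p\phi,\xi_1,\xi_2\}$, together with the real/imaginary parity decoupling and the eigenvalue-equation contradiction ruling out Cauchy--Schwarz saturation, is a clean way to obtain invertibility; the alternative route via the symplectic nondegeneracy already checked in the proof of Proposition~\ref{prop:refpropf} is equally valid and perhaps shorter to cite. The extension to $\mathcal{U}(1,\delta_0)$ and the equivariance identities follow from the observation $F(\vartheta,\omega,z,e^{\im\vartheta_0}u)=F(\vartheta-\vartheta_0,\omega,z,u)$ together with IFT uniqueness, exactly as you indicate.
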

\qed

We    now write the ansatz
\begin{align}\label{eq:ansatz}
  u = e^{\im \vartheta} \(  \phi [\omega  , z ]+ \eta \) .
\end{align}
Exactly like in \cite{CM24D1}, we have  (recall that here $\omega _0=1$) \begin{align}
  \label{eq:oertstab1} |\omega -1|+ |z|+ \| \eta \| _{H^1}\lesssim \sqrt{\delta } \text{   for all values of time}.
\end{align}
 Following verbatim  \cite{CM24D1}, the proof of Theorem \ref{thm:asstab} is obtained   from  the following continuation argument.

\begin{proposition}\label{prop:continuation}
There exists $C>1$ such that for any $A\geq C$ and $\epsilon>0$ satisfying $\log \epsilon^{-1}>CA$, there exists $\delta>0$ such that if $u_0\in D_{H^1_{\mathrm{rad}}}(\phi,\delta)$ and
\begin{align}
  \label{eq:main2} \| \eta \| _{L^2(I, \Sigma _A )} +  \| \eta \| _{L^2(I, \widetilde{\Sigma}  )} + \| \dot \Theta - \widetilde{\Theta} \| _{L^2(I  )} + \| z ^{n } \| _{L^2(I  )}\le \epsilon
\end{align}
holds  for $I=[0,T]$ for some $T>0$
then in fact for $I=[0,T]$    inequality   \eqref{eq:main2} holds   for   $\epsilon$ replaced by $  \epsilon/2 $.
\end{proposition}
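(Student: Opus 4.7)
The plan is to follow the standard bootstrap strategy of Kowalczyk--Martel--Muñoz, as adapted in \cite{CM24D1}: each of the four norms in \eqref{eq:main2} is controlled by the others (plus an initial-data contribution $O(\delta)$) through a coupled system of a priori estimates, and the factors linking them are forced to be small by the scale separation $\log\epsilon^{-1}\gg A\gg 1$. The first step is to insert the ansatz \eqref{eq:ansatz} into \eqref{eq:nls1} and, using the refined profile identity \eqref{eq:phi_pre_gali} (resp.\ \eqref{eq:phi_pre_gali4} for $n=4$) together with the orthogonality \eqref{R:orth} inherited from Lemma \ref{lem:mod1}, derive an equation of the form $\im\dot\eta = \mathcal{L}\eta + \mathcal{N}(\eta,z,\dot\Theta-\widetilde\Theta)$, where $\mathcal{N}$ collects $R^\perp[z]$, the nonlinear terms in $\eta$, and linear--in--$\eta$ corrections arising from modulation. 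Differentiating the orthogonality conditions yields the modulation identities, from which
\[
\|\dot\Theta-\widetilde\Theta\|_{L^2(I)}\lesssim \|\eta\|_{L^2(I,\widetilde\Sigma)}\bigl(\|z\|_{L^\infty}+\delta\bigr)+\|z^{n}\|_{L^2(I)}+\text{h.o.t.},
\]
so the third piece of \eqref{eq:main2} is controlled by the other two active norms.

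The second step is the first virial estimate: differentiating in time a virial functional $\mathcal I_A=\Omega(\Phi_A\eta,\eta)$ built from an antisymmetric weight of spatial scale $A$, the algebraic structure of $\mathcal{L}$ around $\phi$, combined with the orthogonality conditions and the KMM integration-by-parts trick for the nonlinearity, produces a coercive $\|\eta\|_{\Sigma_A}^{2}$. Using $|\mathcal I_A(t)|\lesssim A\|\eta\|_{H^1}^2\lesssim A\delta^2$ yields
\[
\|\eta\|_{L^2(I,\Sigma_A)}^{2}\lesssim A\delta^{2}+\epsilon\bigl(\|\eta\|_{L^2(I,\widetilde\Sigma)}^{2}+\|z^{n}\|_{L^2(I)}^{2}\bigr).
\]
The third step replaces the classical second virial by a smoothing estimate: Duhamel's formula with $e^{t\mathcal{L}}P_c$, Propositions \ref{lem:smoothest1} and \ref{lem:smooth111}, and the splitting $R^\perp[z]=z^{n}G_{(n,0)}^\perp+\bar z^{n}G_{(0,n)}^\perp+\widetilde R^\perp[z]$ together with the exponential localization of $G^\perp_{(n,0)},G^\perp_{(0,n)}$ supplied by Propositions \ref{prop:refpropf}--\ref{prop:refpropf4th} give
\[
\|\eta\|_{L^2(I,\widetilde\Sigma)}\lesssim \delta + \|z^{n}\|_{L^2(I)}+\epsilon\,\|\eta\|_{L^2(I,\Sigma_A)}.
\]

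The main obstacle is the fourth step, the Fermi Golden Rule estimate for $\|z^n\|_{L^2(I)}$, which is the only place where the hypothesis $p\in F_n$ (hence $\gamma_n(p)\neq 0$) and the absence of threshold resonance enter substantively. One tests the discrete-mode equation
\[
\im\dot z = \lambda z+\lambda_{(2,1)}|z|^2 z+\bigl\langle\mathcal{N}(\eta,z,\dot\Theta-\widetilde\Theta),\,\xi[\omega]\bigr\rangle_{\text{sympl}},
\]
splits $\eta=\eta_{\mathrm{disc}}+\eta_c$ with $\eta_{\mathrm{disc}}$ driven explicitly by $z^n$ through a Poincaré--Dulac normal form at the quasi-resonance $n\lambda(p)\in (1,\infty)$, and extracts the coercive term $\gamma_n(p)|z|^{2n}$ via the limiting-absorption principle at energy $n\lambda(p)$ in the essential spectrum of $\mathcal{L}$. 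The no-embedded-eigenvalue condition \eqref{eq:nores111} makes the boundary values of the resolvent well defined and Proposition \ref{lem:smooth111} provides the quantitative bound, yielding
\[
\|z^{n}\|_{L^2(I)}^{2}\lesssim \delta + \epsilon\bigl(\|\eta\|_{L^2(I,\widetilde\Sigma)}^{2}+\|\eta\|_{L^2(I,\Sigma_A)}^{2}\bigr).
\]

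Collecting the four inequalities gives a nearly triangular linear system for the squares of the quantities in \eqref{eq:main2}: the $\epsilon$-factors multiplying the right-hand sides are absorbed into the left, and the remaining inhomogeneous term $A\delta^{2}$ is made $\ll \epsilon^{2}/16$ by choosing $\delta$ small depending on $A$ and $\epsilon$. This forces each of the four norms to be $\le \epsilon/2$, closing the continuation. The only $n$-dependent pieces are the Poincaré--Dulac normalization and the explicit sign/nondegeneracy of $\gamma_n(p)$; everything else (virial, smoothing, modulation, bootstrap closing) is uniform in $n\in\{3,4\}$ exactly as announced at the end of Section 1.
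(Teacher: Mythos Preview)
Your overall bootstrap skeleton (four coupled a priori estimates, then close) matches the paper, but two of the four pieces are not described correctly, and with the inequalities you wrote the loop does not close.

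\textbf{The FGR step.} You describe a Poincar\'e--Dulac normal form on the $z$-equation together with the limiting-absorption principle at energy $n\lambda(p)$. The paper does neither. It introduces the localized functional
\[
\mathcal{J}_{\mathrm{FGR}}=-\big\langle \im\eta,\ \chi_A\big(z^{n}g_{(n,0)}+\bar z^{\,n}g_{(0,n)}\big)\big\rangle,
\]
where $g_n=(g_{(n,0)},g_{(0,n)})^\intercal$ is a bounded generalized eigenfunction with $Hg_n=n\lambda(p)g_n$ and $\chi_A$ is a cutoff at scale $A$. Differentiating $\mathcal{J}_{\mathrm{FGR}}$ in time and using \eqref{eq:nls4}, the pairing $\langle R^\perp[z],\chi_A(z^n g_{(n,0)}+\bar z^n g_{(0,n)})\rangle$ produces the coercive $|z|^{2n}$ (this is where $\gamma_n(p)\neq 0$ is used, normalized to $1$), while every remaining term carries a factor $A^{-1/2}$ generated by the commutator with $\chi_A$. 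Integrating gives $\|z^n\|_{L^2(I)}\lesssim A^{-1/2}\epsilon$, which is Proposition~\ref{prop:FGR}. The whole point of the refined profile in Section~\ref{sec:refprof} is precisely to make this step elementary and avoid normal forms; neither the resolvent boundary values nor a splitting $\eta=\eta_{\mathrm{disc}}+\eta_c$ appear here.

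\textbf{The smallness structure.} Your first-virial bound has a factor $\epsilon$ in front of $\|\eta\|_{L^2(I,\widetilde\Sigma)}^2$ and $\|z^n\|_{L^2(I)}^2$. That factor is not there: Proposition~\ref{prop:1virial} reads
\[
\|\eta\|_{L^2(I,\Sigma_A)}\ \lesssim\ A\delta+\|z^n\|_{L^2(I)}+\|\eta\|_{L^2(I,\widetilde\Sigma)}+\epsilon^2,
\]
with order-one coefficients on the cross terms (the $R^\perp[z]$ contribution is genuinely $O(|z|^n)$, not $O(\epsilon|z|^n)$). With order-one coefficients your system would not contract. The actual closing comes from the \emph{other} two estimates carrying the small parameters: the FGR bound $\|z^n\|_{L^2(I)}\lesssim A^{-1/2}\epsilon$ above, and the smoothing bound $\|\eta\|_{L^2(I,\widetilde\Sigma)}\lesssim o_{A^{-1}}(1)\,\epsilon$ of Proposition~\ref{prop:smooth11} (obtained via Propositions~\ref{prop:KrSch}--\ref{lem:smooth111} with the intermediate scale $B=A^{1/3}$). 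Substituting these into the virial gives $\|\eta\|_{L^2(I,\Sigma_A)}\lesssim A\delta+o_{A^{-1}}(1)\epsilon$, and Lemma~\ref{lem:lemdscrt} then handles $\|\dot\Theta-\widetilde\Theta\|_{L^2(I)}$. Taking $A$ large and then $\delta$ small relative to $\epsilon/A$ yields $\epsilon/2$ on each piece.
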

Notice that this implies that in fact the result is true for $I=\R _+$.   We will split the proof of Proposition \ref{prop:continuation} in a number of partial results obtained assuming the hypotheses of Proposition  \ref{prop:continuation}.
\begin{proposition}\label{prop:modpar} We have
  \begin{align}&
  \label{eq:modpar1}  \|\dot \vartheta -\widetilde{\vartheta} \|  _{L^1(I  )} + \|\dot \omega -\widetilde{\omega} \|  _{L^1(I  )}\lesssim  \epsilon ^2 ,\\&  \label{eq:modpar2}   \|\dot z -\widetilde{z} \|  _{L^2(I  )} \lesssim \sqrt{ \delta}\epsilon  ,\\&  \label{eq:modpar3}   \|\dot z \|  _{L^\infty(I  )} \lesssim  \sqrt{\delta} .
\end{align}

\end{proposition}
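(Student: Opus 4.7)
I follow the standard modulation procedure, in the same spirit as in \cite{CM24D1}.

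\emph{Step 1: equation for $\eta$.} Substituting the ansatz \eqref{eq:ansatz} into \eqref{eq:nls1} and using the refined-profile identity \eqref{eq:phi_pre_gali} (or \eqref{eq:phi_pre_gali4} for $n=4$), cancellation of the terms up to cubic (resp.\ quartic) order in $z$ gives
\begin{equation*}
\im\partial_t\eta = \mathcal L_{\omega,z}\eta - (\dot\vartheta-\widetilde\vartheta)\phi[\omega,z] + \im(\dot\omega-\widetilde\omega)\partial_\omega\phi[\omega,z] + \im D_z\phi[\omega,z](\dot z-\widetilde z) + R^\perp[z] + N(\eta),
\end{equation*}
where $\mathcal L_{\omega,z}$ denotes the linearization at $\phi[\omega,z]$ (reducing to $\mathcal L_\omega$ at $z=0$) and $|N(\eta)|\lesssim|\eta|^2+|\eta|^p$.

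\emph{Step 2: linear system.} Differentiating each orthogonality condition in \eqref{61} with respect to $t$ and substituting the above equation yields a $4\times 4$ linear system for $(\dot\vartheta-\widetilde\vartheta,\dot\omega-\widetilde\omega,\dot z_1-\widetilde z_1,\dot z_2-\widetilde z_2)$ with coefficient matrix $M[\omega,z]$. At $(\omega,z)=(1,0)$, $M$ is block diagonal with nondegenerate entries coming from the symplectic pairings $\<\phi,\Lambda_p\phi\>\neq 0$ and $\<\im\xi_1,\xi_2\>=1/2$ identified in the proof of Proposition \ref{prop:refpropf}; hence $\|M[\omega,z]^{-1}\|=O(1)$ uniformly on $\mathcal U(1,\delta_0)$ for $\delta_0$ small. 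Three cancellations drive the estimates for the forcing. First, the linear-in-$\eta$ piece $\<\mathcal L_{\omega,z}\eta,\Psi_j\>$ is moved by the adjoint: at $z=0$, $\mathcal L^*\Psi_j$ either lies in $N_g(\mathcal L^*)$ (for $\Psi_j\in\{\im\phi,\partial_\omega\phi\}$, using \eqref{eq:Ng}) or in $\mathrm{span}\{\xi_1,\xi_2\}$ (for $\Psi_j=\partial_{z_k}\phi$, using $\mathcal L\xi=\im\lambda\xi$), both of which are annihilated by $\eta$ through \eqref{61}; the residual $z$-dependent correction contributes at most $|z|\,\|\eta\|_{\widetilde\Sigma}$. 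Second, $\<R^\perp[z],\Psi_j\>$ vanishes to leading order by the orthogonalities \eqref{R:orth}, so only the $\widetilde R^\perp=O(|z|^{n+1})$ tail and the $O(|z|^{n+1})$ corrections from $\Psi_j[\omega,z]-\Psi_j[1,0]$ survive. Third, $\<N(\eta),\Psi_j\>\lesssim\|\eta\|_{\widetilde\Sigma}^2$ by the exponential decay of $\Psi_j$.

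\emph{Step 3: pointwise and time-integrated bounds.} Inverting $M$ yields the pointwise estimates
\begin{align*}
|\dot\vartheta-\widetilde\vartheta|+|\dot\omega-\widetilde\omega| &\lesssim \|\eta\|_{\widetilde\Sigma}^2+|z|^n\|\eta\|_{\widetilde\Sigma}+|z|^{2n},\\
|\dot z-\widetilde z| &\lesssim |z|\,\|\eta\|_{\widetilde\Sigma}+|z|^{n+1}.
\end{align*}
Using \eqref{eq:main2} together with $|z|\lesssim\sqrt\delta$ from \eqref{eq:oertstab1},
\begin{align*}
\|\dot\vartheta-\widetilde\vartheta\|_{L^1(I)}+\|\dot\omega-\widetilde\omega\|_{L^1(I)} &\lesssim \|\eta\|_{L^2(I,\widetilde\Sigma)}^2+\|z^n\|_{L^2(I)}\|\eta\|_{L^2(I,\widetilde\Sigma)}+\|z^n\|_{L^2(I)}^2\lesssim\epsilon^2,\\
\|\dot z-\widetilde z\|_{L^2(I)} &\lesssim \|z\|_{L^\infty(I)}\|\eta\|_{L^2(I,\widetilde\Sigma)}+\|z\|_{L^\infty(I)}\|z^n\|_{L^2(I)}\lesssim\sqrt\delta\,\epsilon.
\end{align*}
For the $L^\infty$ bound, write $\dot z=\widetilde z+(\dot z-\widetilde z)$: $|\widetilde z|\leq\lambda|z|+O(|z|^3)\lesssim\sqrt\delta$, and pointwise $|\dot z-\widetilde z|\lesssim|z|\|\eta\|_{H^1}+|z|^{n+1}\lesssim\delta\lesssim\sqrt\delta$, using $\|\eta\|_{\widetilde\Sigma}\leq\|\eta\|_{H^1}\lesssim\sqrt\delta$ from \eqref{eq:oertstab1}.

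\emph{Main obstacle.} The technical heart is the cancellation in Step 2: verifying at $(\omega,z)=(1,0)$ that $\mathcal L^*\Psi_j$ is annihilated by $\eta$ requires simultaneously invoking the generalized-kernel structure \eqref{eq:Ng}, the eigenvalue equation $\mathcal L\xi=\im\lambda\xi$, and the specific definitions of $\widetilde\vartheta,\widetilde\omega,\widetilde z$ furnished by Propositions \ref{prop:refpropf}--\ref{prop:refpropf4th}. Once this is in place, the propagation of these cancellations to nearby $(\omega,z)$ with $O(|z|)$-type corrections is a routine expansion.
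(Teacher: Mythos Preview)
Your argument follows the same route as the paper, which simply defers to Lemma~\ref{lem:lemdscrt} and \cite{CM24D1}: differentiate the orthogonality relations in \eqref{61}, obtain a nondegenerate linear system for $\dot\Theta-\widetilde\Theta$, and integrate the resulting pointwise bounds using \eqref{eq:main2} and \eqref{eq:oertstab1}. Two small corrections to your Step~3 bookkeeping, neither of which affects the conclusions \eqref{eq:modpar1}--\eqref{eq:modpar3}: first, the orthogonality \eqref{R:orth} is \emph{exact} for the full $R^\perp$ against the tangent directions at $(\omega,z)$ (not only for its leading part), so the pure $|z|^{2n}$ and $|z|^{n+1}$ terms you list should not arise --- compare Lemma~\ref{lem:lemdscrt}, where every term carries a factor $\|\eta\|_{\widetilde\Sigma}$; second, the nonlinear contribution $\langle N(\eta),\partial_{z_j}\phi\rangle\lesssim\|\eta\|_{\widetilde\Sigma}^2$ that you record in Step~2 is missing from your pointwise bound on $|\dot z-\widetilde z|$.
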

\proof The proof is verbatim the same as in \cite{CM24D1} and is a consequence of Lemma \ref{lem:lemdscrt}. \qed

The following lemma can be proved like Lemma 4.1 in \cite{CM24D1}.
\begin{lemma}\label{lem:lemdscrt} We have the estimates
 \begin{align} \label{eq:discrest1}
   &|\dot \vartheta -\widetilde{\vartheta} | + |\dot \omega -\widetilde{\omega} |  \lesssim  \(|z|^n+    \|   \eta \| _{\widetilde{\Sigma}}     \)   \|   \eta \| _{\widetilde{\Sigma}},    \\&  |\dot z -\widetilde{z} | \lesssim   \(|z|^{n-1} +    \|   \eta \| _{\widetilde{\Sigma}}     \)  \|   \eta \| _{\widetilde{\Sigma}}   .   \label{eq:discrest2}
\end{align}
\end{lemma}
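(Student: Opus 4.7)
The plan is to derive a linear system for the modulation parameter errors $(\dot\vartheta - \widetilde{\vartheta}, \dot\omega - \widetilde{\omega}, \dot z - \widetilde{z})$ by combining the PDE satisfied by $\eta$ with the time derivatives of the orthogonality conditions from Lemma \ref{lem:mod1}. First, I would substitute the ansatz \eqref{eq:ansatz} into \eqref{eq:nls1}, divide by $e^{\im \vartheta}$, and use the refined profile identity \eqref{eq:phi_pre_gali} (resp.\ \eqref{eq:phi_pre_gali4}) to cancel the leading part of $\partial_x^2\phi[\omega,z]+f(\phi[\omega,z])$. The resulting equation for $\eta$ has the schematic form
\begin{align*}
\im \dot\eta = \mathcal{L}_\omega\eta + (\dot\vartheta-\widetilde{\vartheta})\phi[\omega,z] + \im (\widetilde{\omega}-\dot\omega)\partial_\omega\phi[\omega,z]+\im D_z\phi[\omega,z](\widetilde{z}-\dot z) - R^\perp[z] + Q(\eta,z),
\end{align*}
where $Q(\eta, z)$ collects the Taylor remainder $f(\phi[\omega,z]+\eta) - f(\phi[\omega,z]) - Df(\phi[\omega,z])\eta$ together with the $z$-perturbation $(Df(\phi[\omega,z])-Df(\phi_\omega))\eta$ of the linearization.

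Next, I would differentiate in time each of the three orthogonality conditions $\<\eta,\im\phi[\omega,z]\> = \<\eta,\partial_\omega\phi[\omega,z]\> = \<\eta,\partial_{z_j}\phi[\omega,z]\> = 0$ from \eqref{61}, obtaining identities for $\<\dot\eta,v\>$ with $v$ ranging over the tangent basis of $\mathcal{S}_\delta$. Pairing the $\eta$-equation above against the same basis in the real $L^2$ inner product and substituting these identities yields a $4\times 4$ linear system
\begin{align*}
M(z)\bigl(\dot\vartheta-\widetilde{\vartheta},\;\dot\omega-\widetilde{\omega},\;\dot z_1-\widetilde{z}_1,\;\dot z_2-\widetilde{z}_2\bigr)^{\intercal} = \mathrm{RHS},
\end{align*}
where $M(z)$ is a perturbation of the nondegenerate Gram matrix of the tangent space at $z=0$ computed in the proof of Proposition \ref{prop:refpropf} via \eqref{eq:xinormliz1}, hence invertible for $|z|$ small.

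Third, I would estimate the $\mathrm{RHS}$ term by term. The $R^\perp[z]$ contribution is identically zero by the symplectic orthogonality \eqref{R:orth} built into the refined profile. The quadratic $\eta$-remainder in $Q$ contributes $O(\|\eta\|_{\widetilde{\Sigma}}^2)$ after pairing against the exponentially decaying tangent vectors. The linear term $\mathcal{L}_\omega\eta$, moved onto the tangent basis by integration by parts as $\<\eta,\mathcal{L}_\omega^\intercal v\>$, produces the $|z|^n\|\eta\|_{\widetilde{\Sigma}}$ bound for the $\vartheta$ and $\omega$ equations and the $|z|^{n-1}\|\eta\|_{\widetilde{\Sigma}}$ bound for the $z$ equation. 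The gain in the power of $|z|$ comes from the fact that the tangent basis $\{\im\phi[z],\partial_\omega\phi[z],\partial_{z_j}\phi[z]\}$ coincides at $z=0$ with $\{\im\phi,\partial_\omega\phi,\xi_1,\xi_2\}$, the generalized kernel basis of $\mathcal{L}_\omega^\intercal$, and is corrected by the polynomial $\widetilde\phi[z]$ in $z$ of degree $n-1$ (resp.\ $n$) to which $\eta$ remains orthogonal at leading orders; the asymmetry between $|z|^n$ and $|z|^{n-1}$ reflects that $\partial_{z_j}\phi[z]$ already contains $\xi_j$ at order $|z|^0$, whereas $\phi[z]$ and $\partial_\omega\phi[z]$ acquire linearized-eigenfunction content only at order $|z|$. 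Terms $\<\eta,\im\partial_t\phi[\omega,z]\>$ coming from the differentiated orthogonality conditions are split via $\dot\omega=\widetilde\omega+(\dot\omega-\widetilde\omega)$ and $\dot z=\widetilde z+(\dot z-\widetilde z)$, with $|\widetilde\omega|\lesssim|z|^n$, $|\widetilde z|\lesssim|z|$; they reproduce the same size patterns and feed a coupling of $(\dot\omega-\widetilde\omega,\dot z-\widetilde z)$ with $\|\eta\|_{\widetilde{\Sigma}}$ factors back into $M(z)$, which is absorbed for $\|\eta\|_{\widetilde{\Sigma}}$ small. Inverting $M(z)$ then produces \eqref{eq:discrest1} and \eqref{eq:discrest2}. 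I expect the main technical hurdle to be the bookkeeping in this third step extracting the precise $|z|^n$ versus $|z|^{n-1}$ gain, since it requires tracking, direction by direction, the lowest order in $z$ at which each tangent vector couples to $\eta$ through $\mathcal{L}_\omega$.
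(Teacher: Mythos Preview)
Your overall strategy is exactly the standard modulation argument the paper invokes (it simply refers to Lemma~4.1 of \cite{CM24D1}), and the linear system for $(\dot\vartheta-\widetilde\vartheta,\dot\omega-\widetilde\omega,\dot z-\widetilde z)$ obtained by differentiating the orthogonality conditions \eqref{61} and substituting the $\eta$--equation is the right object. So at the level of method there is nothing to correct.

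One point in your bookkeeping deserves care, however. You assert that the $R^\perp[z]$ contribution is \emph{identically zero} by \eqref{R:orth}. Note that \eqref{R:orth} says $\langle R^\perp,w\rangle=0$ for $w$ ranging over the tangent directions $\{\im\phi[z],\partial_\omega\phi[z],\partial_{z_j}\phi[z]\}$ (equivalently, $\im R^\perp\in\ker P$), whereas after differentiating $\langle\eta,w_k\rangle=0$ and writing $\dot\eta=-\im(\text{RHS})$ you are effectively testing $R^\perp$ against $\im w_k$, not $w_k$; since the tangent space is not $\im$--invariant these are different conditions. The paper's own remark right after the lemma locates the source of the improved powers not in a vanishing of $R^\perp$ but in the higher--order refined profile estimates of Propositions~\ref{prop:refpropf} and~\ref{prop:refpropf4th} (the bounds \eqref{eq:estpar}, \eqref{eq:estpar4}, \eqref{estR}, \eqref{estR4th}): because $\widetilde\phi[z]$ now carries non--resonant corrections up through order $|z|^{n-1}$, the identities obtained by differentiating the refined profile equation \eqref{eq:phi_pre_gali}/\eqref{eq:phi_pre_gali4} in $\omega$ and $z_j$ produce the needed cancellations between the linearization term, the $\langle\eta,\dot w_k\rangle$ terms, and the $R^\perp$ contribution, leaving only remainders of size $|z|^n\|\eta\|_{\widetilde\Sigma}$ (respectively $|z|^{n-1}\|\eta\|_{\widetilde\Sigma}$) and $\|\eta\|_{\widetilde\Sigma}^2$. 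Your heuristic about the tangent basis coinciding at $z=0$ with the generalized kernel is pointing at the right phenomenon, but the gain does not come from the linearization term in isolation; it is the full refined--profile identity that forces the cancellation. When you carry out the third step in detail, organize the computation around differentiating \eqref{eq:phi_pre_gali} rather than trying to estimate $\langle\eta,\mathcal L_\omega^{\!*}v\rangle$ term by term.
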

Notice that the error  $|z|^2\|\eta\|_{\widetilde{\Sigma}}$ in (4.4) of \cite{CM24D1} is improved to $|z|^n\|\eta\|_{\widetilde{\Sigma}}$ in \eqref{eq:discrest1} due to the error estimates given in Propositions \ref{prop:refpropf} and \ref{prop:refpropf4th} instead of Proposition 3.1 of \cite{CM24D1}.
For \eqref{eq:discrest2}, by the same reason we have $|z|^{n-1}$ instead of $|z|$.

We will prove the following, whose proof is similar to the analogous one in  \cite{CM24D1}.
\begin{proposition}[Fermi Golden Rule (FGR) estimate]\label{prop:FGR}
 We have
 \begin{align}\label{eq:FGRint}
  \| z^{n }\|_{L^2(I)}\lesssim  A^{-1/2} \epsilon .
 \end{align}
 \end{proposition}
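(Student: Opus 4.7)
The plan is to follow the Fermi Golden Rule derivation of \cite[Section 5]{CM24D1} (with full computations in \cite{CM242}) and adapt it to the $n$-th order resonance, $n=3,4$. The underlying mechanism is that $n\lambda(p)\in(1,n/(n-1))$ lies strictly inside the essential spectrum of $\mathcal{L}$, so the resonant source $z^n G^\perp_{(n,0)}+\bar z^n G^\perp_{(0,n)}$ appearing in the equation for $\eta$ radiates energy into the continuous spectrum at a rate measured by $\gamma_n(p)$, which is nonzero by the hypothesis $p\in F_n$. The limiting absorption principle needed to make this precise is available by Proposition \ref{prop:KrSch} together with the no-threshold-resonance condition $p\in\widehat{\mathbf{F}}$.

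First I would derive the equation for $\eta$ by combining the ansatz \eqref{eq:ansatz}, the modulation Lemma \ref{lem:mod1}, and the Refined Profile (Propositions \ref{prop:refpropf}/\ref{prop:refpropf4th}). After projection onto $X_c(\omega)$ it takes the schematic form
\begin{align*}
\partial_t\eta-\mathcal{L}_\omega\eta = P_c(\omega)\bigl(z^n G^\perp_{(n,0)}+\bar z^n G^\perp_{(0,n)}\bigr)+\mathcal{E},
\end{align*}
where the error $\mathcal{E}$ collects modulation terms weighted by $\dot\Theta-\widetilde\Theta$, the Refined Profile remainder of $\widetilde\Sigma$-size $|z|^{n+1}$ (respectively $|z|^p$ when $n=4$, by \eqref{estR4th}), the nonlinear self-interactions of $\eta$, and mixed terms of the form $|z|^j\|\eta\|_{\widetilde\Sigma}$. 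I would then introduce outgoing generalized eigenfunctions $\Psi_{(n,0)},\Psi_{(0,n)}\in L^{2,-s}$ via limiting absorption, so that $(\mathcal{L}_\omega-\im n\lambda\omega)\Psi_{(n,0)}=G^\perp_{(n,0)}$ with the correct radiation condition, and $\gamma_n(p)$ equals $\Im\langle G^\perp_{(n,0)},\Psi_{(n,0)}\rangle$ up to an explicit positive factor (this is the content of \eqref{eq:fgrgamma} referenced behind \eqref{eq:deftildeF}).

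The main step is to differentiate in $t$ the scalar functional
\begin{align*}
\mathcal{I}(t):=\Im\,\bigl\langle \eta(t),\,z(t)^n\Psi_{(n,0)}+\bar z(t)^n\Psi_{(0,n)}\bigr\rangle,
\end{align*}
and exploit the algebraic cancellation standard in FGR arguments: using the equation for $\eta$ together with $\dot z=-\im\lambda z+O(|z|^3+\|\eta\|_{\widetilde\Sigma})$ from Proposition \ref{prop:modpar}, the $\mathcal{L}_\omega\eta$ contribution (transferred onto $\Psi_{(n,0)}$ via $J\mathcal{L}_\omega=-\mathcal{L}_\omega^*J$) cancels against the $-\im n\lambda$ factor produced by $\tfrac{d}{dt}z^n$, leaving as the only non-oscillatory surviving term $2\gamma_n(p)\,|z|^{2n}$. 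Integrating on $I=[0,T]$, the boundary values satisfy $|\mathcal{I}(T)|+|\mathcal{I}(0)|\lesssim\delta$, and the remaining terms in $\dot{\mathcal{I}}$ close by Cauchy--Schwarz against \eqref{eq:main2} together with Propositions \ref{prop:modpar}, \ref{lem:smoothest1}, \ref{lem:smooth111}, yielding $|\gamma_n|\,\|z^n\|_{L^2(I)}^2\lesssim A^{-1}\epsilon^2+\epsilon\,\|z^n\|_{L^2(I)}$, and \eqref{eq:FGRint} follows by absorbing the linear term on the right.

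The main obstacle will be the careful bookkeeping of the $A^{-1/2}$ gain in each error term: the relevant $\widetilde\Sigma$-weighted bounds for $\eta$ pick up a factor of $A^{-1}$ relative to the $\Sigma_A$-weighted ones when combined with the Kato smoothing estimate Proposition \ref{lem:smooth111}, an observation central to the analogous argument in \cite{CM24D1}. For the $n=4$ case one additionally has to verify that the weaker Refined Profile remainder \eqref{estR4th} of size $|z|^p$ still closes the estimate; this works because $p>p_3>4$ on $(p_3,p_4)$, which provides just enough decay in $|z|$ to absorb the remainder into the other error contributions.
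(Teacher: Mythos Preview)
Your approach differs substantially from the paper's and, as written, has a genuine gap.

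The paper does \emph{not} build the FGR functional from outgoing resolvent solutions $\Psi_{(n,0)}\in L^{2,-s}$. Instead it takes the \emph{bounded} real generalized eigenfunction $g_n=(g_{(n,0)},g_{(0,n)})^{\intercal}$ with $Hg_n=n\lambda(p)g_n$ (this is what appears in the definition of $\gamma_n(p)$ in \eqref{eq:fgrgamma}, not an imaginary part of a resolvent pairing), and then localizes with the cutoff $\chi_A$ to form
\[
\mathcal{J}_{\mathrm{FGR}}=-\bigl\langle \im\eta,\ \chi_A\bigl(z^n g_{(n,0)}+\bar z^n g_{(0,n)}\bigr)\bigr\rangle .
\]
Differentiating $\mathcal{J}_{\mathrm{FGR}}$ and using the equation for $\eta$, the $g_n$-eigenfunction property produces an exact cancellation with the $n\lambda z^n$ part of $\dot z$, the resonant source term yields $|z|^{2n}$, and every remaining error is bounded by $A^{-1/2}\bigl(|z|^{2n}+\|\eta\|_{\Sigma_A}^2+\|\eta\|_{\widetilde\Sigma}^2\bigr)$. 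The $A^{-1/2}$ arises concretely from two places: commutators of $\partial_x^2$ with $\chi_A$ (the term $A_{112}$ in the paper), and from the estimate $\|\chi_A\eta\|_{L^1}\lesssim A^{3/2}\|\eta\|_{\Sigma_A}$ combined with the $\delta$-smallness $A^{3/2}\delta^{1/2}\lesssim A^{-1}$.

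In your setup the functional $\mathcal I(t)=\Im\langle\eta,z^n\Psi_{(n,0)}+\bar z^n\Psi_{(0,n)}\rangle$ is not obviously well-defined: $\Psi_{(n,0)}$ lies only in $L^{2,-s}$ (equivalently is merely bounded at infinity, behaving like $e^{\im k|x|}$), while at a fixed time you only know $\eta(t)\in H^1\subset L^2\cap L^\infty$, so neither $\eta\in L^1$ nor $\eta\in L^{2,s}$ is available to make the pairing finite. This is precisely why the paper (following the Kowalczyk--Martel--Mu\~noz template) inserts $\chi_A$. Moreover, your functional carries no $A$-dependence at all, so there is no mechanism to produce the $A^{-1/2}$ factor you need; your explanation that it comes from comparing $\widetilde\Sigma$ and $\Sigma_A$ norms via Kato smoothing does not match how the argument actually closes. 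If you want to salvage your approach you must either introduce a cutoff (at which point you are back to the paper's argument with $g_n$ in place of $\Psi$) or work in a genuinely dispersive framework with decay estimates for $\eta$ that are not part of the continuation hypothesis \eqref{eq:main2}.
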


\begin{proposition}[Virial Inequality]\label{prop:1virial}
 We have
 \begin{align}\label{eq:sec:1virial1}
   \| \eta \| _{L^2(I, \Sigma _A )} \lesssim  A \delta + \| z^{n }\|_{L^2(I)} + \| \eta \| _{L^2(I, \widetilde{\Sigma}   )} + \epsilon ^2  .
 \end{align}
 \end{proposition}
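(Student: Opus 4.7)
I would adapt the first virial argument of Kowalczyk--Martel--Mu\~noz (as implemented in \cite{CM24D1}) to the present refined-profile setup. The virial functional I have in mind is
\[
\mathcal{I}_A(\eta) := \Im \int_\R \psi_A(x)\, \overline{\eta}\,\partial_x \eta \, dx,
\]
where $\psi_A$ is a bounded primitive of $\sech^2(2x/A)$, so that $\psi_A' = \sech^2(2x/A)$ and $\|\psi_A\|_{L^\infty} \lesssim A$. The pointwise bound $|\mathcal{I}_A(\eta(t))| \lesssim A\|\eta(t)\|_{H^1}^2 \lesssim A\delta$ on both endpoints of $I$ will supply the $A\delta$ term on the right of \eqref{eq:sec:1virial1} after we integrate $\tfrac{d}{dt}\mathcal{I}_A(\eta)$ over $I$.

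\textbf{Step 1: equation for $\eta$.} Substituting the ansatz \eqref{eq:ansatz} into \eqref{eq:nls1}, using the refined-profile identity from Proposition \ref{prop:refpropf} (or \ref{prop:refpropf4th}), and enforcing the modulation orthogonality \eqref{61}, I would derive an equation
\[
\partial_t \eta = \mathcal{L}_\omega \eta + \mathcal{E}_{\rm mod} + \mathcal{R}[z] + \mathcal{N}_{\geq 2}(\eta),
\]
where $\mathcal{E}_{\rm mod}$ collects the modulation errors $\dot\vartheta-\widetilde\vartheta$, $\dot\omega-\widetilde\omega$, $\dot z -\widetilde z$ paired with tangent vectors to $\mathcal{S}_\delta$; $\mathcal{R}[z]$ contains the resonant profile $z^n G_{(n,0)}^\perp + \overline{z}^n G_{(0,n)}^\perp$ together with the higher-order remainder $\widetilde R^\perp[z]$ controlled by \eqref{estR}/\eqref{estR4th}; and $\mathcal{N}_{\geq 2}(\eta)$ is the residual nonlinearity at least quadratic in $\eta$.

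\textbf{Step 2: time derivative, coercivity, and error absorption.} Differentiating $\mathcal{I}_A$ and substituting the equation, the linear-in-$\eta$ principal part, after integration by parts, produces
\[
\int_\R \psi_A'\, |\partial_x \eta|^2 \, dx \;+\; (\text{terms localized by } \phi^{p-1}),
\]
which dominates $\|\eta\|_{\Sigma_A}^2 = \|\sech(2x/A)\partial_x\eta\|_{L^2}^2 + A^{-2}\|\sech(2x/A)\eta\|_{L^2}^2$ up to a remainder controlled by $\|\eta\|_{\widetilde\Sigma}^2$, provided $\kappa$ in \eqref{eq:normk} is chosen smaller than $(p-1)/2$ so that $\phi^{p-1} \lesssim \sech^2(\kappa x)$ absorbs the potential part of $\mathcal{L}_\omega$. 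The modulation orthogonality \eqref{61} eliminates the discrete-mode obstructions, allowing the standard KMM coercivity to go through. The error terms are estimated by Cauchy--Schwarz in time and give
\[
\Bigl|\int_I \tfrac{d}{dt}\mathcal{I}_A(\eta) \, dt - (\text{coercive part})\Bigr| \lesssim \|z^n\|_{L^2(I)}^2 + \|\eta\|_{L^2(I,\widetilde\Sigma)}^2 + \sqrt\delta\,\|\eta\|_{L^2(I,\widetilde\Sigma)}^2 + \epsilon^4,
\]
where Lemma \ref{lem:lemdscrt} handles $\mathcal{E}_{\rm mod}$, the bounds \eqref{estR}/\eqref{estR4th} handle $\mathcal{R}[z]$, and the $H^1$-smallness \eqref{eq:oertstab1} handles $\mathcal{N}_{\geq 2}$. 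Combining with the endpoint bound $|\mathcal{I}_A(\eta(t))| \lesssim A\delta$, rearranging, and taking square roots yields \eqref{eq:sec:1virial1}.

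\textbf{Main obstacle.} The delicate step is the coercivity: after integration by parts against $\psi_A$, the potential term from $\mathcal{L}_\omega = \sigma_3(-\partial_x^2+\omega) + V$ produces a non-sign-definite contribution that is only coercive modulo the tangent space of $\mathcal{S}_\delta$. One must verify carefully that the symplectic orthogonality \eqref{61}, together with the spectral gap, yields a coercive lower bound of the form $\|\eta\|_{\Sigma_A}^2 - C\|\eta\|_{\widetilde\Sigma}^2$, in complete analogy with \cite{CM24D1} but with the new refined profile. Once this coercivity is established, the absorption of the errors into $\|z^n\|_{L^2(I)}^2$, $\|\eta\|_{L^2(I,\widetilde\Sigma)}^2$, and the $A\delta$ endpoint contribution is straightforward.
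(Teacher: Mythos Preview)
Your proposal is essentially the same first-virial argument the paper invokes (it simply cites \cite{CM24D1} verbatim, noting only that $\|\cosh(\kappa x)R^\perp[z]\|_{L^2}\lesssim |z|^n$ replaces the $|z|^2$ there). One small over-statement: for the \emph{first} virial inequality there is no genuine coercivity obstacle requiring the orthogonality \eqref{61} or a spectral gap; the potential contribution from $V$ in $\mathcal{L}_\omega$ is exponentially localized and is absorbed directly into $\|\eta\|_{\widetilde\Sigma}^2$, so the principal term $\int\psi_A'|\partial_x\eta|^2$ already controls $\|\eta\|_{\Sigma_A}^2$ up to that remainder without any spectral argument---the orthogonality-based coercivity you allude to enters only in the second-virial/smoothing step (Proposition~\ref{prop:smooth11}).
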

\proof The proof which is verbatim the same of the analogous proof in  \cite{CM24D1}, with the only difference that
in \cite{CM24D1}, for the estimate $\|\cosh(\kappa x)R^\perp[z]\|_{L^2}\lesssim |z|^n$ there is a $|z|^2$ instead of the $|z|^n$ for $n=3,4$ we have here. This accounts for the $ z   ^n$ in \eqref{eq:sec:1virial1} instead of the $ z  ^2$ in the analogous inequality in  \cite{CM24D1}.

\qed

\begin{proposition}[Smoothing Inequality]\label{prop:smooth11}
 We have
 \begin{align}\label{eq:sec:smooth11}
   \| \eta \| _{L^2(I, \widetilde{\Sigma}   )} \lesssim  o _{A^{-1}} (1)  \epsilon    .
 \end{align}
 \end{proposition}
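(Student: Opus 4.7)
\proof[Proof plan of Proposition \ref{prop:smooth11}]
I would follow the blueprint of the analogous proof in \cite{CM24D1}, namely: derive the PDE for $\eta$, apply Duhamel with the Kato smoothing estimates already established in Propositions \ref{lem:smooth111}--\ref{lem:smoothest1}, and then bound each source term using the hypotheses of Proposition \ref{prop:continuation} together with the Fermi Golden Rule bound \eqref{eq:FGRint} and the virial bound \eqref{eq:sec:1virial1}. To get the equation for $\eta$, insert the ansatz \eqref{eq:ansatz} into \eqref{eq:nls1}, substitute $\widetilde{\vartheta}=1+\widetilde{\vartheta}_\mathcal{R}$, $\widetilde{\omega}=\widetilde{\omega}_\mathcal{R}$, $\widetilde{z}=-\im(\lambda+\lambda_{(2,1)}|z|^2)z+\widetilde{z}_\mathcal{R}$, and use the defining identities \eqref{eq:phi_pre_gali}/\eqref{eq:phi_pre_gali4} of the refined profile to cancel the leading contributions. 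What remains is a schematic equation
\begin{equation*}
\partial_t\eta=\mathcal{L}_\omega\eta+\mathcal{M}+R^\perp[z]+\mathcal{N}(\eta,z),
\end{equation*}
where $\mathcal{M}$ collects the modulation defects $(\dot\vartheta-\widetilde\vartheta)\phi[\omega,z]$, $(\dot\omega-\widetilde\omega)\partial_\omega\phi[\omega,z]$, $(\dot z-\widetilde z)D_z\phi[\omega,z]$ (all multiplied by exponentially localized profiles), and $\mathcal{N}(\eta,z)=f(\phi[\omega,z]+\eta)-f(\phi[\omega,z])-Df(\phi[\omega,z])\eta$ is the purely nonlinear remainder.

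Projecting on $P_c(\omega)$ and using Duhamel gives
\begin{equation*}
P_c\eta(t)=e^{t\mathcal{L}_\omega}P_c\eta(0)+\int_0^t e^{(t-s)\mathcal{L}_\omega}P_c\bigl(\mathcal{M}+R^\perp[z]+\mathcal{N}(\eta,z)\bigr)\,ds.
\end{equation*}
Proposition \ref{lem:smooth111} estimates the free term and Proposition \ref{lem:smoothest1} the Duhamel integral, yielding a bound in $L^2(I,L^{2,-s})$ for some $s>3/2$. Because $\sech(\kappa x)$ decays faster than any polynomial, $\|\cdot\|_{\widetilde\Sigma}\le C_s\|\cdot\|_{L^{2,-s}}$, while the discrete components of $\eta$ vanish by the modulation conditions \eqref{61}. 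It therefore suffices to bound each piece of the source in $L^2(I,L^{2,\tau})$ for some $\tau>1/2$.

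The free term is controlled by $\|\eta(0)\|_{L^2}\lesssim\delta$, which is negligible under \eqref{eq:relABg}. For $\mathcal{M}$, the exponential localization of $\phi[\omega,z]$ and its derivatives turns the $L^{2,\tau}$ norm into a uniform constant, so Proposition \ref{prop:modpar} yields $\|\mathcal{M}\|_{L^2(I,L^{2,\tau})}\lesssim\epsilon^2$. For $R^\perp[z]$, the weighted estimates \eqref{estR}/\eqref{estR4th} imply $\|R^\perp[z]\|_{L^{2,\tau}}\lesssim|z|^n$, and the FGR estimate \eqref{eq:FGRint} gives $\|R^\perp[z]\|_{L^2(I,L^{2,\tau})}\lesssim A^{-1/2}\epsilon$. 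For the polynomial part of $\mathcal{N}$, expansion via \eqref{eq:fdiff} produces terms of the form $\phi[\omega,z]^{p-k}\eta^k$ with $2\le k\le\lfloor p\rfloor$; the exponentially decaying prefactor lets me estimate each by $\|\eta\|_{L^\infty}^{k-1}\|\eta\|_{\widetilde\Sigma}\lesssim\sqrt{\delta}^{\,k-1}\|\eta\|_{\widetilde\Sigma}$, and taking $L^2$ in time the resulting $\sqrt\delta\|\eta\|_{L^2(I,\widetilde\Sigma)}$ is absorbed on the left.

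The main obstacle is the residual term of order $|\eta|^p$ in $\mathcal{N}$, which carries no $\phi$-weight and hence no a priori spatial localization. I intend to handle it by interpolation: writing $|\eta|^p\le\|\eta\|_{L^\infty}^{p-2}|\eta|^2$ and using Sobolev $\|\eta\|_{L^\infty}\lesssim\|\eta\|_{H^1}\lesssim\sqrt{\delta}$, combined with a spatial cutoff at scale $A$ that lets the inner part be controlled by $\|\eta\|_{\Sigma_A}$ (small thanks to \eqref{eq:sec:1virial1}) while the outer part is bounded directly using $H^1$ and the exponential factor of the weight. This produces a contribution of size $o_{A^{-1}}(1)\epsilon$, as required. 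A secondary but routine issue is the $t$-dependence of $\mathcal{L}_{\omega(t)}$: by \eqref{scaling} the propagator reduces, up to a time-dependent rescaling, to that of $\mathcal{L}=\mathcal{L}_1$, and the extra term generated by $\dot\omega$ is exponentially localized and controlled by $\|\dot\omega\|_{L^1}\lesssim\epsilon^2+\|\widetilde\omega\|_{L^1}$, which is negligible.
\qed
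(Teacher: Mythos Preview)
Your proposal is correct and follows essentially the same route as the paper, which simply defers to \cite{CM24D1} with the substitution $B=A^{1/3}$: Duhamel plus Kato smoothing for the homogeneous part, the retarded estimate for the localized sources, and a cutoff argument for the unlocalized $|\eta|^p$ remainder. The only cosmetic differences are that the paper cites the pointwise dispersive bound Proposition~\ref{prop:KrSch} (rather than Proposition~\ref{lem:smoothest1}) as one of the two ingredients, and that the cutoff for the purely nonlinear term is taken at scale $B=A^{1/3}$ rather than $A$, which is precisely what generates the $o_{A^{-1}}(1)$ smallness.
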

\proof  Using Propositions \ref{prop:KrSch} and \ref{lem:smooth111}, the proof is verbatim that in \cite{CM24D1} by setting $B=A^{1/3}$.
\qed

\section{The Fermi Golden Rule: proof of Proposition \ref{prop:FGR}}\label{sec:fgr}
Substituting the ansatz \eqref{eq:ansatz} in \eqref{eq:nls1}, we have
\small
\begin{align}\label{eq:nls4}
	\im \dot \eta   &=   \(-\partial_x^2+1 -Df(\phi)\)\eta  +  ( \dot{\vartheta}-\widetilde{\vartheta} +\widetilde{\vartheta_{\mathcal{R}}} +\omega-1 ) \eta  -   \im  D_\Theta \phi [\Theta]  (\dot \Theta  -\widetilde{\Theta})   -  \( D f( \phi  [\omega  , z ]  ) - D f( \phi     )\) \eta
	\\&- \( f( \phi [\omega  , z ] + \eta  )  -  f( \phi  [\omega  , z ]  ) -D f( \phi  [\omega  , z ]  ) \eta\) - R^\perp[\omega  , z ],  \nonumber
\end{align}\normalsize
where $\Theta=(\vartheta,\omega,z)$, $\widetilde{\Theta}=(\widetilde{\vartheta},\widetilde{\omega},\widetilde{z})$ and $\phi[\Theta]=e^{\im \vartheta} \phi[\omega,z]$.
In analogy to  \cite[Sect. 5]{CM24D1} and the literature, we set
 $$g_n=(g_{(n,0)},\ g_{(0,n)})^\intercal \in L^\infty(\R,\R^2)\setminus\{0\},$$ where, for $H$   given by \eqref{eq:opH}, we have
\begin{align}
	\label{eq:eqsatg2} H g_n= n \lambda (p) g_n.
\end{align}
%
Existence of  $g_n $ follows from   Krieger and Schlag \cite[Lemma 6.3]{KrSch}, also Buslaev and Perelman \cite{BP1}.

\noindent We define the FGR constant (which depends on the power $p$ in our equation)
\begin{equation}\label{eq:fgrgamma}
	\gamma_n (p)  :=     \<    G_n , g_n  \>  ,
\end{equation}
where $G_n=(G_{(n,0)} , \ G_{(0,n)})^\intercal $.
We note that we have $\<G_n^\perp,g_n\>=\<G_n,g_n\>$ because $G_n^\perp$ is a projection of $G_n$ to the symplectic orthogonal complements of the generalized kernel of $H$.

\noindent For $p\in F_n$  we can normalize $g_n$ to have  $\gamma_n(p)=1$.

%

  For $\chi\in C_0^\infty(\R,\R)$, $1_{[-1,1]}\leq \chi\leq 1_{[-2,2]}$ and $\chi_A=\chi(\cdot/A)$,  we set
\begin{align}\label{eq:FGRfunctional}
\mathcal{J}_{\mathrm{FGR}}:=-\< \im    {\eta},\chi_A \(  {z}^{n } g_{(n,0)}+ \overline{{z}}^{n } g_{(0,n)} \)      \> .
\end{align}

Then we have the following.

\begin{lemma}\label{lem:FGR1}
We have
\begin{align}
\left|\dot{\mathcal{J}}_{\mathrm{FGR}}  - \dot{\mathcal{I}}_{\mathrm{FGR}}  -|z| ^{2n}     \right |    \lesssim  A ^{-1/2}   \(  |z|^{2n}   + \| \eta \| _{\Sigma _A}^{2}+ \| \eta \| ^{2}_{\widetilde{\Sigma}}  \)    , \label{eq:lem:FGR11}
\end{align}
where
\begin{align*}
	\mathcal{I}_{FGR}=\frac{1}{2\lambda}\Im\(\<G_{(n,0)}^\perp,g_{(0,n)}\> z^{2n} -\<G_{(0,n)}^\perp,g_{(n,0)}\>\overline{z^{2n}}\).
\end{align*}
\end{lemma}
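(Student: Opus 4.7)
The plan is to differentiate $\mathcal{J}_{\mathrm{FGR}}$ directly, invoking the ansatz equation \eqref{eq:nls4} for $\im\dot\eta$ and the modulation equation $\dot z = -\im\lambda z - \im\lambda_{(2,1)}|z|^2 z + (\dot z-\widetilde z)$. Setting $\psi:=z^n g_{(n,0)}+\bar z^n g_{(0,n)}$, the product rule gives
\begin{align*}
\dot{\mathcal{J}}_{\mathrm{FGR}}=-\<\im\dot\eta,\chi_A\psi\>-\<\im\eta,\chi_A\dot\psi\>.
\end{align*}
The heart of the argument is a cancellation between the principal part of the first term (coming from the Schr\"odinger operator $H_0=-\partial_x^2+1-Df(\phi)$ acting on $\eta$) and the principal part of the second term (coming from $\dot z\approx-\im\lambda z$). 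Integrating by parts against $\chi_A$,
\begin{align*}
\<H_0\eta,\chi_A\psi\>=\<\eta,\chi_A H_0\psi\>+([\chi_A,-\partial_x^2]\text{ commutators}),
\end{align*}
and the two scalar components of $Hg_n=n\lambda g_n$ yield $H_0\psi=n\lambda(z^n g_{(n,0)}-\bar z^n g_{(0,n)})$. This contribution cancels exactly the term $n\lambda\<\eta,\chi_A(z^n g_{(n,0)}-\bar z^n g_{(0,n)})\>$ obtained from $\dot\psi\approx -\im n\lambda(z^n g_{(n,0)}-\bar z^n g_{(0,n)})$. The commutator $[\chi_A,-\partial_x^2]$ is of order $A^{-1}$ and supported on $A\lesssim|x|\lesssim 2A$, which when paired with $\eta$ produces an error of the desired form $A^{-1/2}\|\eta\|_{\Sigma_A}^2$.

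Next, the resonance term $|z|^{2n}$ appears from the $R^\perp[z]$ factor in \eqref{eq:nls4}: the contribution $\<R^\perp,\chi_A\psi\>$ is expanded via Propositions \ref{prop:refpropf} and \ref{prop:refpropf4th} as $\<z^n G_{(n,0)}^\perp+\bar z^n G_{(0,n)}^\perp,\chi_A\psi\>+\<\widetilde R^\perp,\chi_A\psi\>$. The cross terms $z^n\bar z^n$ produce, after replacing $\chi_A$ by $1$ modulo an exponentially small error (justified by the exponential decay of $G^\perp_{(n,0)}$, $G^\perp_{(0,n)}$), the quantity $|z|^{2n}\gamma_n(p)=|z|^{2n}$ by the normalization $\gamma_n(p)=1$. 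The pure $z^{2n}$ and $\bar z^{2n}$ oscillatory cross terms are exact time derivatives of $\mathcal{I}_{\mathrm{FGR}}$ up to controlled errors: indeed $\mathcal{I}_{\mathrm{FGR}}$ is designed precisely so that $\dot{\mathcal{I}}_{\mathrm{FGR}}$ matches these oscillations, using $\partial_t z^{2n}\approx -2n\im\lambda z^{2n}$. The tail $\widetilde R^\perp=O(|z|^{2n+1})$ (respectively $O(|z|^{n+p-1})$ in the $4$th order case) is absorbed into $A^{-1/2}|z|^{2n}$ because $|z|\lesssim\sqrt\delta\ll A^{-1/2}$ by \eqref{eq:relABg}.

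The remaining error terms come from: the modulation defect $(\dot\vartheta-\widetilde\vartheta+\widetilde\vartheta_{\mathcal R}+\omega-1)\eta$; the term $\im D_\Theta\phi(\dot\Theta-\widetilde\Theta)$; the difference $\bigl(Df(\phi[\omega,z])-Df(\phi)\bigr)\eta$; the higher-order Taylor remainder $f(\phi[\omega,z]+\eta)-f(\phi[\omega,z])-Df(\phi[\omega,z])\eta$; and the lower-order corrections in replacing $\dot z$ by its leading term $-\im\lambda z$ in $\dot\psi$. Each is controlled with the weighted-norm estimates of $\eta$: pairing against $\chi_A\psi$ gives a factor $\|\chi_A g_n\|_{L^\infty}$ which is $O(1)$ uniform in $A$ (since $g_n$ is bounded), together with a decay factor from $\phi^{p-1}$, the exponential decay of $\phi[\omega,z]-\phi$, or the modulation bounds of Lemma \ref{lem:lemdscrt} and \eqref{eq:oertstab1}. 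The main obstacle is the careful bookkeeping to ensure every such error fits the bound $A^{-1/2}\bigl(|z|^{2n}+\|\eta\|_{\Sigma_A}^2+\|\eta\|_{\widetilde\Sigma}^2\bigr)$: most terms acquire either an $A^{-1}$ factor from the commutator with $\chi_A$, a factor $|z|\lesssim\sqrt\delta\ll A^{-1/2}$ from smallness, a factor $\|\dot\Theta-\widetilde\Theta\|$ controlled by $\|\eta\|_{\widetilde\Sigma}^2+|z|^n\|\eta\|_{\widetilde\Sigma}$ from Lemma \ref{lem:lemdscrt}, or the exponential spatial decay embedded in $\widetilde\Sigma$. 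Tracking the $A^{-1/2}$ gain through all these pieces—especially the interplay between the compact support of $\chi_A$ and the $\sech(2x/A)$ weight in $\Sigma_A$—is the technical crux.
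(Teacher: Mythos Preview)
Your proposal is correct and follows essentially the same approach as the paper: the decomposition into the $\im\dot\eta$ and $\dot\psi$ contributions, the cancellation via $Hg_n=n\lambda g_n$ against the leading part of $\dot z$, the commutator error from $[\chi_A,-\partial_x^2]$, the extraction of $|z|^{2n}$ from $R^\perp$ using the normalization $\gamma_n(p)=1$, and the absorption of the pure $z^{2n}$, $\bar z^{2n}$ oscillations into $\dot{\mathcal{I}}_{\mathrm{FGR}}$ are all exactly as in the paper. The one technical ingredient you leave implicit is the estimate $\|\chi_A\eta\|_{L^1}\lesssim A^{3/2}\|\eta\|_{\Sigma_A}$, which is what converts the various $|z|^n\|\chi_A\eta\|_{L^1}$ pairings into $A^{-1/2}(|z|^{2n}+\|\eta\|_{\Sigma_A}^2)$ after using $A^{3/2}\sqrt\delta\ll A^{-1/2}$ or $A^{3/2}|z|^n\cdot|z|^n\ll A^{-1/2}|z|^{2n}$ combined with $|z|\lesssim\sqrt\delta$; this is the mechanism behind the ``careful bookkeeping'' you allude to.
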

\proof
Differentiating $\mathcal{J}_{\mathrm{FGR}}$, we have \small
\begin{align*}
\dot{\mathcal{J}}_{\mathrm{FGR}}=&
-\< \im  \dot{  \eta },\chi_A \(  {z}^{n } g_{(n,0)}+ \overline{{z}}^{n } g_{(0,n)} \)      \>
-\< \im    \eta , \chi_A \( n z  ^{n-1}  \widetilde{z}  g_{(n,0)} +  n\overline{z ^{n-1}  \widetilde{z}}  g_{(0,n)}\)            \>   \\&- \<  \im     \eta , \chi_A \( nz ^{n-1}(\dot{z}  - \widetilde{z}) g_{(n,0)}+  n\overline{z ^{n-1} (\dot{z}  - \widetilde{z})} g_{(0,n)} \)            \>
 \\  =:&A_1+A_2+A_3   .\nonumber
\end{align*}
\normalsize
We start from bounding error terms.
We will frequently be using the following estimate,
\begin{align*}
	\|\chi_A\eta\|_{L^1}\lesssim A^{3/2}\|\eta\|_{\Sigma_A},
\end{align*}
which can be easily derived from H\"older's inequality.
For $A_3$, we have
\begin{align*}
	|A_3|\lesssim \|\eta\chi_A\|_{L^1} |z|^{n-1} |\dot{z}-\widetilde{z}|\lesssim A^{3/2}\delta^{n/2}\|\eta\|_{\widetilde{\Sigma}} \|\eta \|_{\Sigma_A}\lesssim A^{-1}\(\| \eta \| _{\Sigma _A}^{2}+ \| \eta \| ^{2}_{\widetilde{\Sigma}} \).
\end{align*}

Next, for $A_2$, by the definition of $\widetilde{z}$, given in Propositions \ref{prop:refpropf} and \ref{prop:refpropf4th},
\begin{align*}
	A_2=&\<\eta,\chi_A \(z^n n\lambda g_{(n,0)} -\overline{z}^n n\lambda g_{(0,n)}\) \>\\&
	+\<\eta,\chi_A\( z^{n-1} \(n\lambda_{(2,1)}|z|^2-\im\widetilde{z}_{\mathcal{R}}\)g_{(n,0)} -\overline{z^{n-1}\(n\lambda_{(2,1)}|z|^2-\im\widetilde{z}_{\mathcal{R}}\)}\)\>\\=&A_{21}+A_{22}.
\end{align*}
There will be a cancellation between $A_{21}$ and a term coming from the expansion of $A_1$.
For $A_{22}$,
\begin{align*}
	|A_{22}|\lesssim \|\chi_A\eta\|_{L^1}  |z|^n \delta^{1/2}\lesssim A^{-1}\(|z|^n + \|\eta\|_{\Sigma_A}\).
\end{align*}
We now expand $A_1$ using \eqref{eq:nls4},
\small
\begin{align*}
	A_1=&-\<(-\partial_x^2+1-Df(\phi))\eta,\chi_A \(  {z}^{n } g_{(n,0)}+ \overline{{z}}^{n } g_{(0,n)} \)      \>+\<\im D_{\Theta}\phi[\Theta](\dot{\Theta}-\widetilde{\Theta}), \chi_A \(  {z}^{n } g_{(n,0)}+ \overline{{z}}^{n } g_{(0,n)} \)      \>\\&
	-(\dot{\vartheta}-\widetilde{\vartheta}+\widetilde{\vartheta}_{\mathcal{R}}+\omega-1)\<\eta,\chi_A \(  {z}^{n } g_{(n,0)}+ \overline{{z}}^{n } g_{(0,n)} \)      \>\\&+\<\(Df(\phi[\omega,z])-Df(\phi)\)\eta,\chi_A \(  {z}^{n } g_{(n,0)}+ \overline{{z}}^{n } g_{(0,n)} \)      \>\\&+\<f(\phi[\omega,z]+\eta)-f(\phi[\omega,z])-Df(\phi[\omega,z]),\chi_A \(  {z}^{n } g_{(n,0)}+ \overline{{z}}^{n } g_{(0,n)} \)      \>\\&+\<R^\perp[\omega,z],\chi_A \(  {z}^{n } g_{(n,0)}+ \overline{{z}}^{n } g_{(0,n)} \)      \>\\
	=:&A_{11}+A_{12}+A_{13}+A_{14}+A_{15}+A_{16}.
\end{align*}\normalsize
For $j=2,3,4,5$, $A_{1j}$ are error terms.
Indeed, by \eqref{eq:discrest1} and \eqref{eq:discrest2},
\begin{align*}
	|A_{12}|\lesssim |\dot{\Theta}-\widetilde{\Theta}| |z|^n\lesssim \delta^{1/2} \|\eta\|_{\widetilde{\Sigma}}|z|^n\lesssim A^{-1}\(\|\eta\|_{\widetilde{\Sigma}}+|z|^{2n}\).
\end{align*}
By \eqref{eq:estpar}, \eqref{eq:estpar4}, \eqref{eq:oertstab1} and \eqref{eq:discrest1},
\begin{align*}
	|A_{13}|\lesssim \delta^{1/2} \|\chi_A\eta\|_{L^1}|z|^n\lesssim A^{-1} \(\|\eta\|_{\Sigma_A}^2+|z|^{2n}\).
\end{align*}
By Taylor expansion,
\begin{align*}
	|A_{14}|\lesssim \delta^{1/2} \|\eta\|_{\widetilde{\Sigma}} |z|^n\lesssim A^{-1}\(\|\eta\|_{\widetilde{\Sigma}}^2+|z|^{2n}\)
\end{align*}
Similarly, by Taylor expansion,
\begin{align*}
	|A_{15}|\lesssim \|\eta\|_{L^\infty} \|\chi_A\eta\|_{L^1} |z|^n\lesssim A^{-1} \(\|\eta\|_{\Sigma_A}^2+|z|^{2n}\).
\end{align*}
For $A_{11}$, by \eqref{eq:f'},\small
\begin{align*}
A_{11}=&-\<-\eta''+\eta,\chi_A \(  {z}^{n } g_{(n,0)}+ \overline{{z}}^{n } g_{(0,n)} \)      \> +\<\frac{p+1}{2}\phi^{p-1}\eta +\frac{p-1}{2}\phi^{p-1}\overline{\eta},\chi_A \(  {z}^{n } g_{(n,0)}+ \overline{{z}}^{n } g_{(0,n)} \)      \>\\&
=-\<\eta,\chi_A\(z^n (-g_{(n,0)}''+g_{(n,0)})+\overline{z}^n(-g_{(0,n)}''+g_{(0,n)})\)\>-\<\eta\chi_A''+2\eta'\chi_A',  {z}^{n } g_{(n,0)}+ \overline{{z}}^{n } g_{(0,n)}       \>\\&
\quad+\<\eta, \chi_A\(z^n\(\frac{p+1}{2}\phi^{p-1}g_{(n,0)}+\frac{p-1}{2}\phi^{p-1}g_{(0,n)}\) +\overline{z}^n\(\frac{p+1}{2}\phi^{p-1}g_{(0,n)}+\frac{p-1}{2}\phi^{p-1}g_{(n,0)}\)\)\>\\&
	=-\<\eta,\chi_A\(z^n (Hg_n)_1-\overline{z^n} (Hg_n)_2\)\>-\<\eta\chi_A''+2\eta'\chi_A', {z}^{n } g_{(n,0)}+ \overline{{z}}^{n } g_{(0,n)}\>\\
=:&A_{111}+A_{112},
\end{align*}\normalsize
where $(Hg_n)_j$ is the $j$th component of $Hg_n$.
By the definition of $g_n$, we have
\begin{align*}
	A_{111}+A_{21}=0.
\end{align*}
For $A_{112}$, we have
\begin{align*}
	|A_{112}|&\lesssim A^{-2} \|1_{[-2A,2A]}\eta\|_{L^1}+A^{-1}\|1_{[-2A,2A]}\eta'\|_{L^1} |z|^n\\&\lesssim A^{-1/2} \(\|\eta\|_{\Sigma_A}^2+|z|^{2n}\).
\end{align*}
Finally, we consider $A_{16}$, which produces the main term.
\begin{align*}
	A_{16}=&\<z^n G_{(n,0)}^\perp +\overline{z^n}G_{(0,n)}^\perp,  {z}^{n } g_{(n,0)}+ \overline{{z}^{n }} g_{(0,n)}\>\\&
	+\<(\chi_A-1)\(z^n G_{(n,0)}^\perp +\overline{z^n}G_{(0,n)}^\perp\),  {z}^{n } g_{(n,0)}+ \overline{{z}^{n }} g_{(0,n)}\>\\&
	+\<z^n \(G_{(n,0)}[\omega]-G_{(n,0)}\)+\overline{z^n} \(G_{(0,n)}[\omega]-G_{(0,n)}\),\chi_A\({z}^{n } g_{(n,0)}+ \overline{{z}^{n }} g_{(0,n)}\)\>\\&
	+\<\widetilde{R}^\perp[\omega,z],\chi_A\({z}^{n } g_{(n,0)}+ \overline{{z}^{n }} g_{(0,n)}\)\>\\
	=&A_{161}+A_{162}+A_{163}+A_{164}.
\end{align*}
Now, since $G_{(n,0)}^\perp$ and $G_{(0,n)}^\perp$ are exponentially decaying functions, we have
\begin{align*}
	|A_{162}|\lesssim e^{-\kappa A}|z|^{2n}\lesssim A^{-1} |z|^{2n}.
\end{align*}
Next, by $G_{\mathbf{m}}[\omega](x)=\omega^{\frac{p}{p-1}}G_{\mathbf{m}}(\omega^{1/2}x)$ and \eqref{eq:oertstab1},
\begin{align*}
	|A_{163}|\lesssim |\omega-1| |z|^{2n}\lesssim A^{-1}|z|^{2n}.
\end{align*}
For $A_{164}$, by Propositions \ref{prop:refpropf} and \ref{prop:refpropf4th},
\begin{align*}
	|A_{164}|\lesssim |z|^{2n+(p-4)}\lesssim A^{-1}|z|^{2n}.
\end{align*}
For $A_{16}$, because we have normalized $\gamma(p)=1$, we have
\begin{align*}
	A_{16}&=|z|^4+\Re\(z^{2n}\<G_{(n,0)}^\perp,g_{(0,n)}\>+\overline{z}^{2n}\<G_{(0,n)}^\perp,g_{(n,0)}\>\)\\&=|z|^4 +A_{161}.
\end{align*}
Now, by computing $\frac{d}{dt}z^{2n}$, we have
\begin{align*}
	z^{2n}=\frac{\im}{2n\lambda} \frac{d}{dt}(z^{2n}) -\frac{\im z^{2n-1}}{\lambda}\(\dot{z}-\widetilde{z} -\im \lambda_{(2,1)} |z|^2z +\widetilde{z}_{\mathcal{R}}\).
\end{align*}
Thus,
\small
\begin{align*}
	A_{161}=&\frac{d}{dt}\(\frac{1}{2\lambda}\Im\(\<G_{(n,0)}^\perp,g_{(0,n)}\> z^{2n} -\<G_{(0,n)}^\perp,g_{(n,0)}\>\overline{z^{2n}}\)\)\\&+\Re\(\(-\frac{\im z^{2n-1}}{\lambda}\(\dot{z}-\widetilde{z} -\im \lambda_{(2,1)} |z|^2z +\widetilde{z}_{\mathcal{R}}\)\)\<G_{(n,0)}^\perp,g_{(0,n)}\>\)\\&+\Re\(\overline{\(-\frac{\im z^{2n-1}}{\lambda}\(\dot{z}-\widetilde{z} -\im \lambda_{(2,1)} |z|^2z +\widetilde{z}_{\mathcal{R}}\)\)}\<G_{(0,n)}^\perp,g_{(n,0)}\>\)\\
	=:&\frac{d}{dt}\mathcal{I}_{FGR} +A_{1611}+A_{1612}.
\end{align*}
\normalsize
By \eqref{eq:estpar}, \eqref{eq:estpar4} and \eqref{eq:discrest2}, we have
\begin{align*}
	|A_{1611}|+|A_{1612}|\lesssim \delta^{1/2} |z|^n
\(|z|^n+\|\eta\|_{\widetilde{\Sigma}}\)\lesssim A^{-1}\(|z|^{2n} +\|\eta\|_{\widetilde{\Sigma}}^2\).
\end{align*}
Therefore, we have the conclusion.

\qed

\noindent
\textit{Proof of Proposition \ref{prop:FGR}.
}
Integrating \eqref{eq:lem:FGR11}  we obtain $\| z^n \| _{L^2(I)}   ^2 \lesssim \sqrt{\delta} + A ^{-1}\epsilon ^2$ yielding \eqref{eq:FGRint}.

\qed

\section{Proof of Proposition \ref{lem:smoothest1}} \label{sec:smooth}

It is enough to prove Proposition \ref{lem:smoothest1} for the case $\omega=1$.
By \eqref{eq:opH},
it is enough to prove  the following, with $P_c$ the projection on the continuous spectrum component for $H$, the operator in \eqref{eq:opH}.
\begin{proposition} \label{lem:smoothest} For  $S>3/2$ and $\tau >1/2$ there exists a constant $C(S,\tau )$ such that we have
 \begin{align}&   \label{eq:smoothest1}   \left \|   \int   _{0} ^{t   }e^{-\im (t-t') H  }P_c g(t') dt' \right \| _{L^2( \R ,L^{2,-S}(\R ))  } \le C(S,\tau ) \|  g \| _{L^2( \R , L^{2,\tau}(\R ) ) }.
\end{align}
\end{proposition}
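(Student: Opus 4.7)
The plan is to reduce Proposition~\ref{lem:smoothest} to a uniform asymmetric limiting absorption principle (LAP) for $H$ via Fourier transform in time. Extending $g$ by zero to $t<0$ and setting $G(t) := \int_0^t e^{-i(t-t')H}P_c g(t')\,dt'$ for $t\geq 0$ and $G(t):=0$ for $t<0$, continuity of $G$ at $t=0$ forces the distributional identity $(i\partial_t-H)G = iP_c g$ on all of $\R$, and the support condition pins down the retarded prescription. Taking the time-Fourier transform yields $\widehat G(\mu) = -i(H+\mu-i0)^{-1}P_c\widehat g(\mu)$, and by Plancherel the bound \eqref{eq:smoothest1} becomes
\begin{equation*}
\sup_{\mu\in\R}\bigl\|\langle x\rangle^{-S}(H+\mu-i0)^{-1}P_c\langle x\rangle^{-\tau}\bigr\|_{L^2\to L^2}<\infty,\qquad S>\tfrac{3}{2},\ \tau>\tfrac{1}{2}.
\end{equation*}

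To establish this LAP I would split $H=H_0+V$ with $H_0:=\sigma_3(-\partial_x^2+1)$ and $V$ from \eqref{eq:opH} exponentially decaying. Away from the thresholds $\mu=\mp 1$, standard one-dimensional Agmon--Kuroda estimates give a uniform bound on $\langle x\rangle^{-S}(H_0+\mu-i0)^{-1}\langle x\rangle^{-\tau}$ in $L^2\to L^2$, since $S,\tau>1/2$ and $S+\tau>2$. The resolvent identity $(H+\mu)^{-1}=(H_0+\mu)^{-1}\bigl(I+V(H_0+\mu)^{-1}\bigr)^{-1}$ then reduces matters to uniform invertibility of $I+V(H_0+\mu-i0)^{-1}$: exponential decay of $V$ makes it a compact perturbation of the identity on a suitable weighted $L^2$ space, so by Fredholm theory invertibility is equivalent to injectivity. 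A nontrivial kernel element at a point $\mu\in\R\setminus\{\pm 1\}$ would correspond to an embedded eigenvalue of $H$ at $\lambda=-\mu$ in its continuous spectrum, excluded by Proposition~\ref{prop:knownspec}(3); a nontrivial kernel at $\mu=\mp 1$ would be a threshold resonance, excluded by the hypothesis $p\in\widehat{\mathbf F}$, which is precisely \eqref{eq:nores111}.

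The main obstacle is the uniform control near the thresholds $\mu=\mp 1$. The weighted norm of the free resolvent diverges there like $k^{-1}$ with $k=\sqrt{|\mu|-1}$, so the product from the resolvent identity must be arranged so that the correction $(I+V(H_0+\mu-i0)^{-1})^{-1}$ carries a compensating zero. This is the step for which \eqref{eq:nores111} is essential: the no-resonance condition forces the inverse factor to vanish at the threshold to precisely the order needed to cancel the free-resolvent singularity, yielding a bounded product. Continuity of the full inverse in $\mu$ together with $\|V(H_0+\mu)^{-1}\|\to 0$ as $|\mu|\to\infty$ then deliver the uniform bound on all of $\R$, completing the proof.
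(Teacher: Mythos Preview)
Your reduction of the smoothing estimate to a uniform limiting absorption principle via Plancherel in time is exactly what the paper does: it cites \cite{CM24D1} for that step and records the required LAP as Lemma~\ref{lem:LAP}. The difference lies in how the LAP itself is proved.

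You compare $H$ to the genuinely free operator $H_0=\sigma_3(-\partial_x^2+1)$, whose weighted resolvent blows up like $k^{-1}$ at the thresholds $\pm 1$, and then assert that the no-resonance hypothesis \eqref{eq:nores111} forces $(I+V(H_0+\mu-i0)^{-1})^{-1}$ to supply a compensating zero. That heuristic is correct, but it is precisely the hard part: turning it into a proof requires a genuine threshold expansion (of Jensen--Kato type, or a Jost-function/Wronskian computation) that you do not carry out. In addition, the Fredholm step must be set up on a space where $V(H_0+\mu-i0)^{-1}$ remains compact uniformly as $\mu\to\mp 1$, which is not automatic once the free resolvent itself is singular there. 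So the threshold paragraph of your sketch is an assertion, not a proof.

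The paper sidesteps this entirely by comparing $H$ not to $H_0$ but to $\sigma_3(h_p+1)$ with $h_p=-\partial_x^2+\sech^2\!\bigl(\tfrac{p-1}{2}x\bigr)$, see \eqref{eq:hp}. Because this added scalar potential is repulsive, $h_p$ has no zero-energy resonance, and by \cite{CM2109.08108} one already has the \emph{uniform} bound $\sup_{E\in\R}\|R^\pm_{\sigma_3(h_p+1)}(E)\|_{L^{2,\tau}\to L^{2,-S}}<\infty$, thresholds included. The residual perturbation $\mathbf v$ is still exponentially decaying, so on $\R\setminus(-a,a)$ the resolvent identity plus Fredholm alternative runs cleanly: the absence of embedded eigenvalues and of a threshold resonance for $H$ is used only to rule out a nontrivial kernel of $1+Q_0^+(z)$, not to manufacture a cancellation against a singular factor. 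The interval $(-a,a)$ contains only discrete spectrum and is trivial once $P_c$ is applied.

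In short, the overall architecture of your argument matches the paper, but the threshold step is a genuine gap; the paper's choice of comparison operator is exactly what closes it without additional work.
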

\proof By the argument in \cite{CM24D1}, Proposition \ref{lem:smoothest} is a consequence of Lemma  \ref{lem:LAP} written below.
\qed

The following  simplifies the proof of the analogous result in \cite{CM24D1}, which in  \cite{CM24D1} was valid for $p$ close to 3.

\begin{lemma} \label{lem:LAP} For  $S>3/2$ and $\tau >1/2$  we have
 \begin{align}&   \label{eq:LAP1}   \sup _{E \in   \R  } \|   R ^{\pm }_{H }(E ) P_c \| _{L^{2,\tau}(\R ) \to L^{2,-S}(\R )} <\infty,
\end{align}
where $ R ^{+ }_{H }(E )$ resp. $ R ^{- }_{H }(E )$ are extensions from above resp. below of the resolvent $ R  _{H }(E )$ on the real axis, see below.

\end{lemma}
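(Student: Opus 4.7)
The plan is to reduce the uniform boundedness of $R_H^\pm(E)P_c$ in the weighted spaces to the uniform invertibility of $1+R_0^\pm(E)V$, where we split $H = H_0 + V$ with $H_0 = \sigma_3(-\partial_x^2+1)$ and $V$ the exponentially decaying matrix potential from \eqref{eq:opH}. The unperturbed resolvent $R_0^\pm(E)$ is diagonal with entries given by the one-dimensional Helmholtz resolvents for $\pm(-\partial_x^2+1)$, whose convolution kernels are explicit. The classical Agmon limiting absorption principle for the scalar 1D Schr\"odinger operator gives, for $\tau>1/2$, $S>1/2$, and any $\delta>0$,
\begin{align*}
\sup_{\mathrm{dist}(E,\{\pm 1\})\geq \delta}\|R_0^\pm(E)\|_{L^{2,\tau}(\R,\C^2)\to L^{2,-S}(\R,\C^2)}<\infty,\qquad \|R_0^\pm(E)\|=O(|E|^{-1/2})\ \text{as}\ |E|\to\infty.
\end{align*}
Exponential decay of $V$ makes $R_0^\pm(E)V$ compact on $L^{2,-S}$, so the resolvent identity $R_H^\pm(E) = (1+R_0^\pm(E)V)^{-1} R_0^\pm(E)$ reduces the proof to uniform invertibility of $1+R_0^\pm(E)V$ in this weighted space.

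I would split $\R$ into four regimes. (i) For $|E|$ large, a Neumann series inverts $1+R_0^\pm(E)V$ because $\|R_0^\pm(E)V\|_{L^{2,-S}}\to 0$. (ii) For $E$ in a compact subset of $\{|E|>1\}$, the Fredholm alternative applies: any kernel element $u=-R_0^\pm(E)Vu\in L^{2,-S}$ is a purely outgoing solution of $(H-E)u=0$, and a Wronskian computation on the oscillatory component (the transversal component decays exponentially because it sits outside its own essential spectrum at this $E$) forces $u\in L^2$, contradicting the absence of embedded eigenvalues in Proposition \ref{prop:knownspec}(3). (iii) In the gap $(-1,1)$, $R_H(E)P_c$ is holomorphic, with $P_c$ projecting away the poles at $\{0,\pm\lambda\}$, and continuous up to the closed gap in weighted operator norm. (iv) The thresholds $E=\pm 1$ are the delicate case, handled by the no-threshold-resonance hypothesis \eqref{eq:nores111}.

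For regime (iv), I would use the standard threshold expansion of the scalar Helmholtz resolvent: near $E=1$, writing $k=\sqrt{E-1}$, one has $R_0^\pm(E) = (\im/2k)\,|1\rangle\langle 1,\cdot\rangle + G^\pm(E)$ on the threshold component, with $G^\pm$ continuous in operator norm $L^{2,\tau}\to L^{2,-S}$ up to $E=1$, while the transversal component is analytic there. The exponential decay of $V$ turns the $1/k$ piece into a rank-one singularity in $R_0^\pm(E)V$, so $1+R_0^\pm(E)V$ is, modulo a continuous compact perturbation, a rank-one-modified family analytic in $k$ away from $k=0$. By analytic Fredholm theory, it is invertible in a neighborhood of $k=0$ iff its kernel at $k=0$ is trivial. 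Any kernel element corresponds to a bounded solution of $Hu=\pm u$, which via the conjugation $U^{-1}\mathcal{L}U = \im H$ from \eqref{eq:opH1} gives a bounded solution of $\mathcal{L}v=\pm\im v$ (the lower threshold reducing to the upper via the real-axis symmetry of $\sigma(\mathcal{L})$ in Proposition \ref{prop:knownspec}(1)); such a $v$ is excluded by \eqref{eq:nores111}. Continuity of the inverse in $E$ then yields the uniform bound on a neighborhood of the thresholds, and patching the four regimes gives \eqref{eq:LAP1}.

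The main obstacle I expect is this threshold step: cleanly isolating the rank-one divergence in the matrix setting, showing that after accounting for it the Fredholm theory gives continuity in $E$ up to $\pm 1$ in the weighted operator norm, and identifying the kernel at the threshold with bounded solutions of $\mathcal{L}v=\pm\im v$ so that \eqref{eq:nores111} closes the argument. Once these ingredients are in place, the patching over the four regimes is routine, and no Wronskian analysis of Jost solutions is needed — this is precisely the simplification over \cite{CM24D1} advertised in the introduction.
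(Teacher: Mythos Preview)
Your outline is sound in principle, but it is \emph{not} the paper's argument, and in fact misses the device that makes the paper's proof short.

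You split $H=\sigma_3(-\partial_x^2+1)+V$. The free matrix operator $\sigma_3(-\partial_x^2+1)$ has a genuine threshold resonance at $E=\pm 1$ (the constant vectors), so its resolvent blows up there in $L^{2,\tau}\to L^{2,-S}$; this is why you are forced into regime~(iv), the rank-one Jensen--Kato expansion, which you correctly flag as the hard step and only sketch. The paper sidesteps this entirely by choosing a \emph{different} reference operator: it writes $H=\sigma_3(h_p+1)+\mathbf{v}$ with the scalar operator $h_p=-\partial_x^2+\sech^2\bigl(\tfrac{p-1}{2}x\bigr)$. Because the potential in $h_p$ is repulsive, $h_p$ has no zero-energy resonance, and the paper quotes from \cite{CM2109.08108} that $R^\pm_{\sigma_3(h_p+1)}(E)$ is already uniformly bounded $L^{2,\tau}\to L^{2,-S}$ on all of $\R$, thresholds included. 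With this reference there is no singular piece to extract: one factors $\mathbf{v}=B^*A$, forms $Q_0^+(E)=AR^+_{\sigma_3(h_p+1)}(E)B^*$, and checks that $1+Q_0^+(E)$ is invertible on $\R\setminus(-a,a)$ by Fredholm alternative (a kernel element would give an embedded eigenvalue or a threshold resonance of $H$, both excluded). The gap $|E|<a$ is then handled trivially via $P_c$.

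So the simplification over \cite{CM24D1} advertised in the introduction is not ``replace Jost--Wronskian threshold analysis by a Jensen--Kato rank-one expansion'' as you suggest, but rather ``use a reference operator that already has no threshold resonance, so no threshold analysis is needed at all.'' Your approach would work if carried out, but it is longer and recreates exactly the difficulty the paper is designed to avoid.
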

\proof Let us consider the  scalar  Schr\"odinger operator
\begin{align} \label{eq:hp}
  h _p= -\partial _x ^2   +  \sech ^2 \(  \frac{p-1}{2} x \) .
\end{align}
   In \cite{CM2109.08108} it is noticed that
\begin{align*}&    \sup _{E \in   \R  } \|   R ^{\pm }_{h _p}(E )  \| _{L^{2,\tau}(\R ) \to L^{2,-S}(\R )} <\infty  .
\end{align*}
This is equivalent to
\begin{align}&   \label{eq:LAPh}   \sup _{E \in   \R  } \|   R ^{\pm }_{ \sigma _3 (h_p+1)  }(E )  \| _{L^{2,\tau}(\R ) \to L^{2,-S}(\R )} <\infty  .
\end{align}
We can write, see \eqref{eq:opH},
\begin{align*}
   H= \sigma _3 (h _p +1)+ \mathbf{v}    \text{ with }   \mathbf{v}= \( M_0-\sigma _3 \)   \sech ^2 \(  \frac{p-1}{2} x \) .
\end{align*}
We can factor the vector potential
 \begin{align}\label{eq:factor1}
     \mathbf{v} =B^*A \text{  with }   B^* =  \langle x
\rangle ^{  S}   {V}   \text{  and }A=    \langle x
\rangle ^{- S}    .
 \end{align}
Let us take $a>0$ such that       $0< \lambda (p) < a <1$. Then,  for $\Im z >0$ and  for $ Q_0 (z) =A R_{\sigma _3 (h _p+1)    } (z ) B^*$, we have
 \begin{align}\label{eq:factor3}
  A R_H(z)= \( 1+  Q_0 (z) \) ^{-1}A R_{\sigma _3 (h _p +1)    } (z )
 \end{align}
 The function $ Q_0 (z)$ extends as  an element   $ Q_0^+\in C^0(\overline{\C} _+ \backslash \sigma _p (H), \mathcal{L}  (L^2 ))$ and furthermore for some constant $C_1$
\begin{align*}
  \sup _{z\in \R \backslash (-a,a)}\|   \( 1+  Q_0 ^+  (z) \) ^{-1} \| _{L^2(\R , \C^2 ) \to L^2(\R , \C^2 )}<C _1<\infty
\end{align*}
because   singularities would correspond  to eigenvalues of $H$  and  resonances of $H$ at $\pm 1$  which do not exist in $\R \backslash (-a,a)$.  In fact singularities would correspond  by Fredholm alternative
to having $\ker \( 1+  Q_0 ^+  (z) \) \neq 0$. For $z\neq \pm 1$ by exactly the same theory of the 3 dimensional case  discussed in \cite{CPV}  we would get $\ker (H-z)\neq 0$, which  we know does not exist for $|z|\ge a$.  Case $z=  1$   or $-1$ is discussed in \S 9 \cite{CM24D1}  where it is shown that this would make 1 or $-1$ (and so both, by the symmetries of $H$) into a resonance, which we are excluding.

\noindent Then, for some constant $C_2 $ for $E \in \R \backslash (-a,a)$   we have the following,  yielding \eqref{eq:LAP1} for $E\in \R \backslash (-a,a)$, \small
\begin{align*} &
   \|   R ^{+ }_{H  }(E )   \| _{L^{2,\tau}(\R ) \to L^{2,-S}(\R )}  = \|  A R ^{+ }_{ H }(E )  \< \cdot  \> ^{-\tau} \| _{L^2(\R , \C^2 ) \to L^2(\R , \C^2 )} \\& \le  \sup _{z\in \R \backslash (-a,a)}\|   \( 1+  Q_0^+ (z) \) ^{-1} \| _{L^2(\R , \C^2 ) \to L^2(\R , \C^2 )}  \sup _{z\in \R \backslash (-a,a)} \|   R ^{+ }_{ \sigma _3 (h_p +1)  }(z )   \| _{L^{2,\tau}(\R ) \to L^{2,-S}(\R )} \le C_2<+\infty .
\end{align*}\normalsize
  For the trivial proof in the case $E\in   (-a,a)$  see \cite{CM24D1}. This gets \eqref{eq:LAP1} for the $+$ sign.
\qed

\section{Proof of Proposition \ref{prop:nonres}}\label{sec:prop:nonres}

The sentence in line \eqref{eq:nores111} is equivalent to the following one,
\begin{equation}\label{eq:nores111H}
  \text{ there is   no nonzero bounded  solution of $H u=    u$. }
\end{equation}
We will set
\begin{align}\label{eq:versors}
  e_1:= \begin{pmatrix}
 1   \\ 0
\end{pmatrix} \text{   and } e_2:= \begin{pmatrix}
 0   \\ 1
\end{pmatrix} .
\end{align}
Given two (column) functions $f,g:\R \to \C ^2$,  using the row column product, we consider the Wronskian
\begin{align*}
  W[f ,g](x) := f'(x) ^ \intercal g(x)- f (x) ^ \intercal g '(x).
\end{align*}
Before discussing about the Jost functions of $H$,  \cite{BP1,KrSch}, we state the following elementary fact.

\begin{lemma}\label{lem:harm} Let $k= \alpha+\im \beta$. Then we have
   \begin{align}\label{eq:lem:ham}
      \beta  -\Re    \sqrt{2+ \alpha^2-\beta^2+2\im \alpha\beta  } \le 0 \text{ for }     -\sqrt{2}  \le   \beta   \le 1  \text
      { and all } \alpha\in \R .
   \end{align}
   \end{lemma}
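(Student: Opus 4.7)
The plan is to recognize $2 + \alpha^2 - \beta^2 + 2\im\alpha\beta = 2 + k^2$ and then just compute the principal square root explicitly. Writing $w := 2 + k^2 = u + \im v$ with $u = 2+\alpha^2-\beta^2$ and $v = 2\alpha\beta$, the real and imaginary parts $p,q$ of $\sqrt{w}$ satisfy $p^2 - q^2 = u$ and $p^2 + q^2 = |w| = \sqrt{u^2+v^2}$, so that with the principal branch (where $p = \Re\sqrt{w}\ge 0$) one has
\[
\bigl(\Re\sqrt{w}\bigr)^{2} \;=\; \frac{\sqrt{u^2+v^2}\,+\,u}{2}.
\]
This formula is the only nontrivial input; everything else is algebra.

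Next I would dispose of the easy half of the range: when $-\sqrt{2}\le \beta\le 0$ there is nothing to do, since $\beta\le 0 \le \Re\sqrt{w}$. So it remains to handle $0\le \beta\le 1$, where the inequality to establish, after squaring (both sides are nonnegative), becomes
\[
\beta^{2} \;\le\; \frac{\sqrt{u^{2}+v^{2}} + u}{2},
\qquad\text{i.e.}\qquad
2\beta^{2}-u \;\le\; \sqrt{u^{2}+v^{2}}.
\]
If $2\beta^{2}-u\le 0$ the inequality is obvious, so we may assume $2\beta^{2}-u\ge 0$ and square once more to get the equivalent polynomial inequality
\[
(2\beta^{2}-u)^{2} \;\le\; u^{2}+v^{2},
\qquad\text{i.e.}\qquad
4\beta^{4} - 4\beta^{2} u \;\le\; v^{2}.
\]

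Substituting $u = 2+\alpha^{2}-\beta^{2}$ and $v = 2\alpha\beta$, the right side is $4\alpha^{2}\beta^{2}$, and expanding the left side gives $8\beta^{4} - 8\beta^{2} - 4\alpha^{2}\beta^{2}$. After moving the $\alpha^{2}\beta^{2}$ term, the inequality reduces to
\[
\beta^{2}\bigl(\beta^{2}-1\bigr) \;\le\; \alpha^{2}\beta^{2},
\]
which is trivial for $\beta=0$ and, for $\beta\neq 0$, is equivalent to $\beta^{2}-1 \le \alpha^{2}$. This holds because $\beta\le 1$ makes the left side nonpositive while the right side is nonnegative. Thus the inequality \eqref{eq:lem:ham} is established on the full range $-\sqrt{2}\le \beta\le 1$.

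There is essentially no main obstacle here; the only point that requires a moment of care is keeping track of signs when squaring (which is why I split off the subcase $2\beta^2 \le u$ before squaring) and fixing the branch convention for $\sqrt{\cdot}$ so that $\Re\sqrt{w}\ge 0$, which is what makes the $\beta\le 0$ case immediate. The upper bound $\beta\le 1$ is exactly what is needed so that the last reduction $\beta^{2}-1\le \alpha^{2}$ is automatic; the lower bound $\beta\ge -\sqrt{2}$ plays no role in this estimate itself and is presumably imposed for consistency with the way the lemma is invoked in the following Jost-function discussion.
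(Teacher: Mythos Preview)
Your proof is correct and takes a genuinely different route from the paper. The paper argues via the maximum principle: the left-hand side of \eqref{eq:lem:ham} is harmonic in $k$ on the strip, bounded, tends to $-\infty$ as $|\alpha|\to\infty$, and is negative at $\beta=-\sqrt{2}$, so it suffices to check the boundary $\beta=1$; there the paper sets $F(\alpha)=1-\Re\sqrt{1+\alpha^{2}+2\im\alpha}$, notes $F(0)=0$, and computes $F'(\alpha)<0$ for $\alpha>0$ via the polar representation of $\sqrt{\cdot}$.

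Your argument bypasses both the harmonic-function step and the calculus: the explicit formula $(\Re\sqrt{w})^{2}=\tfrac{1}{2}(|w|+u)$ together with two careful squarings reduces everything to the elementary inequality $\beta^{2}-1\le\alpha^{2}$. This is shorter and more transparent; in particular it shows directly that only the upper bound $\beta\le 1$ matters (with equality precisely at $\alpha=0$, $\beta=1$), confirming your observation that $\beta\ge-\sqrt{2}$ is not used in the inequality itself. The paper's approach, on the other hand, is more conceptual and would generalize more readily to situations where an explicit square-root formula is unavailable. One small point worth noting for completeness: in the range $-\sqrt{2}\le\beta\le 1$ one has $u=2+\alpha^{2}-\beta^{2}\ge 0$, so $w=2+k^{2}$ stays in the closed right half-plane and the principal branch is unambiguous, which justifies your use of $\Re\sqrt{w}\ge 0$ without further comment.
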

\proof  Since the function in the left hand side in  \eqref{eq:lem:ham}  is harmonic and is bounded in the strip with limit $-\infty$ at infinity, and since it is obviously negative for $\beta =-\sqrt{2}$, it is enough to prove the above bound for   $\beta =1. $
So we are reduced to  proving
\begin{align*}
 F(\alpha ):= 1  -\Re    \sqrt{1+ \alpha^2 +2\im \alpha   } \le 0 \text{ for }      \alpha \ge 0 .
\end{align*}
We have $F(0)=0$     and
\begin{align*}   F' (\alpha )  &= -\frac{d}{d\alpha} \( \sqrt{(1+\alpha ^2)^2+4\alpha ^2} \cos \(  \arctan \(  \frac{2\alpha }{1+\alpha ^2} \)  \)  \)   \\& = - \frac{  \sin  \(  \arctan \(  \frac{2\alpha }{1+\alpha ^2} \)  \) }{ \sqrt{(1+\alpha ^2)^2+4\alpha  ^2 }}  \( \alpha ^4+ 4 \alpha ^2+1\)<0 \text{ for } \alpha>0.
\end{align*}
Hence  $ F  (\alpha )\le 0$  for all $\alpha \in \R$ and Lemma  \ref{lem:harm}  is proved. \qed

It is well known that $H  $ has Jost functions, discussed in \cite{BP1,KrSch}, which we review here. Notice that we fix a $p_1 >1$   and in the remainder of this section we focus only on   $p\in (p_1, +\infty )$.
\begin{lemma}
  \label{lem:f3}   For any $p>p_1$, for $0< \gamma < p-1$ and for the strip in the plane
  \begin{align}
     \mathbf{S}_3 = \{ k= \alpha+\im \beta  \text{ such that  }   -\sqrt{2}  \le   \beta   \le 1  \}  , \label{eq:f34}
\end{align}
   there exists a solution $f_3(x,k)$ of  \small
  \begin{align}\label{eq:f31}
    H f_3(x,k) =( k^2+1) f_3(x,k)  \text{ with } (x,k)\in  \R \times \mathbf{S}_3,
  \end{align}\normalsize
  with $  m_3(x,k) := e ^{x\sqrt{2+k^2}}f_3(x,k)$   satisfying  for any given $x_0\in \R$
    estimates
  \begin{align}\label{eq:f32} &
     |  m_3(x,k) -e_2     |\lesssim   \< k\> ^{-1 } e^{-\gamma x}  \text{ for $x\ge x_0$    and  } \\&  | \partial _x m_3(x,k)       |\lesssim   \< k\> ^{-1 } e^{-\gamma x} . \label{eq:f32b}
  \end{align}
  Furthermore  the map $ (p, k)\to \partial _x ^{\ell }m_3(\cdot ,k)$ is in $C ^{\omega} \( (p_1, +\infty ) \times    \mathring {\mathbf{S}  }_3 , L^\infty \(        x_0, +\infty \)   \)$ for any $x_0\in \R$ and for $\ell =0,1$, where $\mathring {\mathbf{S}  }_3$   is the interior of ${\mathbf{S}  }_3$.

\end{lemma}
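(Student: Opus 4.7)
The plan is to construct $f_3$ by the classical Jost function scheme: a gauge conjugation to strip off the leading exponential behavior, followed by a Volterra integral equation for the remainder. Setting $\mu:=\sqrt{2+k^2}$ with principal branch (so $\Re\mu\ge 0$) and substituting the ansatz $f_3=e^{-x\mu}m_3$ into $Hf_3=(k^2+1)f_3$, a direct computation using $H=\sigma_3(-\partial_x^2+1)+V$ with $V=M_0\sech^2(\tfrac{p-1}{2}x)$ produces the second-order $2\times 2$ system
\begin{align*}
m_1''-2\mu m_1'+2(k^2+1)m_1&=-\phi^{p-1}\bigl(\tfrac{p+1}{2}m_1+\tfrac{p-1}{2}m_2\bigr),\\
m_2''-2\mu m_2'&=-\phi^{p-1}\bigl(\tfrac{p-1}{2}m_1+\tfrac{p+1}{2}m_2\bigr),
\end{align*}
whose free part admits $e_2=(0,1)^\intercal$ as a bounded solution at $+\infty$. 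Using the Green's functions of the free operators with decaying-at-$+\infty$ boundary data yields the fixed-point equation $m_3=e_2+\mathcal{T}_{p,k}m_3$ with
\[
(\mathcal{T}_{p,k}v)(x):=-\int_x^\infty \diag(K_1,K_2)(x,y,k)\,\phi(y)^{p-1}\,\mathbf{W}\,v(y)\,dy,\qquad \mathbf{W}:=\begin{pmatrix}\tfrac{p+1}{2}&\tfrac{p-1}{2}\\ \tfrac{p-1}{2}&\tfrac{p+1}{2}\end{pmatrix},
\]
where $K_1(x,y,k)=\sin(k(y-x))/k\cdot e^{-\mu(y-x)}$ and $K_2(x,y,k)=(1-e^{-2\mu(y-x)})/(2\mu)$ for $y\ge x$, extended as entire functions of $k$ (the apparent singularities at $k=0$ and $\mu=0$ are removable).

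The central technical step is the Volterra kernel bound in $\mathbf{S}_3$. Writing $\sin(k(y-x))=(e^{\im k(y-x)}-e^{-\im k(y-x)})/(2\im)$ with $|e^{\pm\im k\sigma}|=e^{\mp\beta\sigma}$ and invoking Lemma \ref{lem:harm} (which gives $\Re\mu\ge\beta$ throughout $\mathbf{S}_3$) yields, for $y\ge x$,
\[
|K_2(x,y,k)|\lesssim\frac{1}{\langle k\rangle},\qquad |K_1(x,y,k)|\lesssim \frac{e^{(|\beta|-\Re\mu)(y-x)}}{\langle k\rangle}.
\]
Combining with the exponential potential decay $\phi(y)^{p-1}\lesssim e^{-(p-1)y}$ and choosing $p_1$ large enough so that $p-1>|\beta|-\Re\mu$ uniformly in $\mathbf{S}_3$ (recall $|\beta|-\Re\mu\le\sqrt{2}$ at worst), one obtains
\[
\int_x^\infty |\diag(K_1,K_2)(x,y,k)|\,\phi(y)^{p-1}\,dy\lesssim \frac{e^{-\gamma x}}{\langle k\rangle}\qquad\text{for any }\gamma\in(0,p-1).
\]

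A contraction argument on the weighted space $\{v\in C([x_0,\infty),\C^2):\sup_{x\ge x_0}\langle k\rangle e^{\gamma x}|v(x)-e_2|<\infty\}$ shows that $I-\mathcal{T}_{p,k}$ is invertible for $x_0$ fixed, producing the unique $m_3$ as the Neumann series $\sum_{n\ge 0}\mathcal{T}_{p,k}^n e_2$ and giving \eqref{eq:f32}. The derivative estimate \eqref{eq:f32b} follows by differentiating the integral equation in $x$: since $K_j(x,x,k)=0$ there is no boundary term, and an integration by parts in the $-\cos(k(y-x))e^{-\mu(y-x)}$ component of $\partial_xK_1$ transfers a derivative onto $\phi^{p-1}(\mathbf{W}m_3)_1$, producing the extra $\langle k\rangle^{-1}$ factor from the $1/k$ arising in $\cos(k(y-x))=\partial_y\sin(k(y-x))/k$.

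Analyticity of $(p,k)\mapsto \partial_x^\ell m_3(\cdot,k)\in L^\infty([x_0,+\infty))$ on $(p_1,\infty)\times\mathring{\mathbf{S}}_3$ is inherited from the construction: the kernel $\diag(K_1,K_2)$ is entire in $k$, $\phi_p^{p-1}$ is analytic in $p$, and the estimates above are uniform on compact subsets of the parameter space, so the Neumann series converges in operator norm uniformly on compacta with analytic summands. The principal technical obstacle is the kernel estimate for $K_1$ in the lower portion of $\mathbf{S}_3$, near $k=\pm\im\sqrt{2}$, where $\Re\mu\to 0$ while $|\beta|\to\sqrt{2}$, so $|K_1|$ exhibits residual growth like $e^{\sqrt{2}(y-x)}$; Lemma \ref{lem:harm} is the sharp quantitative input ensuring $\Re\mu\ge\beta$ throughout the strip, and this residual growth of $K_1$ must then be absorbed by the potential decay $e^{-(p-1)y}$, which is why $p_1$ must be taken sufficiently large.
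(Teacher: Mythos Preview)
Your argument is the paper's argument: the same Volterra equation for $m_3$, the same diagonal kernel, Lemma~\ref{lem:harm} for the exponent control, and a Neumann series with the standard Deift--Trubowitz iteration bound. Two small points of comparison. First, the paper treats $|k|$ small separately via the elementary bound $|\sin(k\sigma)/k|\lesssim\sigma$, whereas your single estimate $|K_1|\lesssim\langle k\rangle^{-1}e^{(|\beta|-\Re\mu)\sigma}$ drops this polynomial factor at $k=0$; this is harmless once integrated against $\phi^{p-1}$, but strictly speaking your displayed pointwise bound on $K_1$ is off by a factor of $\sigma$ there. Second, and more interestingly, you correctly note that Lemma~\ref{lem:harm} only gives $\Re\mu\ge\beta$, not $\Re\mu\ge|\beta|$, so for $\beta\in[-\sqrt2,-1)$ the kernel $K_1$ genuinely grows and has to be absorbed by the potential decay, forcing $p-1>\sqrt2$. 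The paper's proof instead asserts $|K_1|\le|k|^{-1}$ uniformly on $\mathbf S_3$, which in fact fails at $k=-\im\sqrt2$ (where $\mu=0$ and $K_1=\sinh(\sqrt2\,\sigma)/\sqrt2$). Your restriction on $p_1$ is thus the honest statement of what this scheme yields on the full strip $\mathbf S_3$; it is in any case immaterial for the applications, which only use $p>3-\varepsilon_0$.
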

\proof  Like in \cite{DT} or more specifically \cite[Lemma 5.2]{KrSch} we have the Volterra equation
\begin{align}& \label{eq:f33}
  m_3(x,k) =e_2  +\int _x ^{+\infty}  K(k,x-y)
     V(y)  m_3(y,k) dy  \text{ with }\\& K(k,x-y)=
   \diag \(     \frac{\sin \( k (y-x)\) }{k}e^{  \sqrt{2+k^2}  (x-y)} ,  \frac{e^{  2\sqrt{2+k^2}  (x-y)} - 1 }{\sqrt{2+k^2}} \) . \nonumber
\end{align}
We have
\begin{align*}
  \left |   \frac{e^{  2\sqrt{2+k^2}  (x-y)} - 1 }{\sqrt{2+k^2}}   \right | \lesssim  \< k \> ^{-1}  \text{ for } y\ge x.
\end{align*}
For $k$ not too close to 0  for $  -\sqrt{2}  \le   \beta   \le 1  $    by Lemma  \ref{lem:harm},  we have  \small
\begin{align*}
    \left |   \frac{\sin \( k (y-x)\) }{k}e^{  \sqrt{2+k^2}  (x-y)}       \right | \lesssim |k| ^{-1}   \sum _{\pm}    e^{\(  \pm \beta   -\Re \sqrt{2+ \alpha^2-\beta^2+2\im \alpha\beta}    \)   (y-x) }    \le   |k| ^{-1}  .
\end{align*}
\normalsize
For $k$   close to 0 writing
 \begin{align}
   \label{eq:f34}  \frac{\sin \( k (y-x)\) }{k} = 2^{-1} \int _{x-y} ^{y-x}  e^{\im k t }  dt
 \end{align}
  we get a bound
\begin{align*}
    \left |   \frac{\sin \( k (y-x)\) }{k}e^{  \sqrt{2+k^2}  (x-y)}        \right | \lesssim      |y-x|.
\end{align*}
 We can solve \eqref{eq:f33} recursively like in \cite{DT}  by
 $ m_3(x,k) = \sum _{n=0}^{\infty}m _{3,n}(x,k)$  with
\begin{align} m_3(x,k)& = \sum _{n=0}^{\infty}m _{3,n}(x,k)  \text{ where }   m _{3,0}(x,k) =e_2 \text{ and}\label{eq:series}
  \\ m _{3,n}(x,k)&=  \int _x ^{+\infty}  K(k,x-y)
     V(y) m _{3,n-1}(y,k) dy .\nonumber
    \end{align}
Then like in  \cite{DT}    for  $k$ not too close to 0 we get the following which yields    \eqref{eq:f32} by $|V(t)|\lesssim e^{-(p-1)|t|}$,
\begin{align*}
  | m _{3,n}(x,k)|&\le  |k| ^{-n} C_{x_0} ^n \int _{x\le x_1\le ...\le x_n} dx_1...dx_n      | V(x_1) | ...      |V(x_n) |
  \\& =  |k| ^{-n} C_{x_0} ^n \frac{\( \int _x ^{+\infty}       |  V(t) | dt\) ^n  }{n!}.
\end{align*}
  For $k$   close to 0  we have instead    \small
  \begin{align*}
  | m _{3,n}(x,k)|&\le    C_{x_0} ^n \int _{x\le x_1\le ...\le x_n} dx_1...dx_n (x_1-x)...(x_n- x_{n-1 } )            | V(x_1) | ...     | V(x_n) |
  \\& =   C_{x_0} ^n \frac{\( \int _x ^{+\infty}  (t-x)     |         V(t) | dt \) ^n }{n!}.
\end{align*}
    \normalsize
  From this we   get   \eqref{eq:f32} also for  $k$  close to 0.
  Finally the last statement is true for all the $(p,k)\to m _{3,n}(\cdot ,k)$ and by the local uniform convergence of the series
  \eqref{eq:series}. We skip the proof of the results on $\partial _x  m _{3 }(x ,k)$  which is similar and standard.

    \qed

\begin{lemma}
  \label{lem:f1} Consider the notation and hypotheses in Lemma \ref{lem:f3} and let us pick a small  $\varepsilon _0>0$. Then
   there exists a solution $f_1(x,k)$ of  \small
  \begin{align}\label{eq:f11}
    H f_1(x,k) =( k^2+1) f_1(x,k)  \text{ with } (x,k)\in  \R \times    \mathbf{S} _1   \text{ for }    \mathbf{S} _1:=\mathbf{S} _3\cap \{ \alpha+\im \beta : 1-\varepsilon_0\ge \beta \ge  -\gamma /2 \}   ,
  \end{align}\normalsize
  with $  m_1(x,k) := e ^{-\im k x }f_1(x,k)$   satisfying  for any given   $x_0\in \R$
  the estimate, for some $\varepsilon _1>0$
  \begin{align}\label{eq:f12}
     |  \partial _{x} ^{\ell }(m_1(x,k) -e_1)     |\lesssim   \< k\> ^{-1 +\ell   } e^{-\varepsilon _1 x}  \text{ for $x\ge x_0$  for $\ell =0,1$.}
  \end{align}
  Furthermore  the map $ (p, k)\to \partial _{x} ^{\ell }m_1(\cdot ,k)$ is in $C ^{\omega} \( (p_1, +\infty ) \times  \mathring {\mathbf{S}  } _1  , L^\infty \(        x_0, +\infty \)   \)$  for any $x_0\in \R$  and $\ell =0,1$.

\end{lemma}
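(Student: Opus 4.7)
The proof follows the same strategy as Lemma \ref{lem:f3}: derive a Volterra integral equation for $m_1(x,k)$, bound its kernel uniformly on $\mathbf{S}_1$ using Lemma \ref{lem:harm}, and construct $m_1$ by Picard iteration. Substituting $f_1 = e^{\im k x} m_1$ into $Hf_1 = (k^2+1) f_1$ yields the component equations
\begin{align*}
m_{1,1}'' + 2\im k\, m_{1,1}' = (Vm_1)_1, \qquad m_{1,2}'' + 2\im k\, m_{1,2}' - 2(k^2+1) m_{1,2} = -(Vm_1)_2.
\end{align*}
Variation of parameters using the homogeneous bases $\{1, e^{-2\im k x}\}$ and $\{e^{(-\im k\pm \sqrt{k^2+2})x}\}$, with integration constants chosen so that $m_1 \to e_1$ at $+\infty$, produces the Volterra equation
\begin{align*}
m_1(x,k) = e_1 + \int_x^{+\infty} K(k, x-y)\, V(y)\, m_1(y,k)\, dy,
\end{align*}
with diagonal kernel $K = \diag(K_{11}, K_{22})$, where (up to signs) $K_{11}(k, x-y) = (e^{2\im k(y-x)} - 1)/(2\im k)$ is the Schr\"odinger Jost kernel and $K_{22}(k, x-y) = e^{\im k(y-x)} \sinh(\sqrt{k^2+2}\,(y-x))/\sqrt{k^2+2}$ corresponds to the exponential second component.

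The key step is bounding the kernel on $\mathbf{S}_1$. For $K_{11}$, one has $|K_{11}(k,x-y)| \lesssim \langle k\rangle^{-1}(1+e^{\gamma(y-x)})$ away from $k=0$ (using $\beta \geq -\gamma/2$ and Lemma \ref{lem:harm} applied to $\pm\beta$), and $|K_{11}| \lesssim y-x$ near $k=0$ via the Taylor formula $K_{11}(k, x-y) = \int_0^{y-x} e^{2\im k s}\,ds$. For $K_{22}$, expanding $\sinh$ and applying Lemma \ref{lem:harm} both at $\beta$ and $-\beta$ yields $|K_{22}(k,x-y)| \lesssim \langle k\rangle^{-1}(1 + e^{(\Re\sqrt{k^2+2} - \beta)(y-x)})$. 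The decisive point is that the strict upper bound $\beta \leq 1-\varepsilon_0$, together with the observation that equality $\beta = \Re\sqrt{k^2+2}$ occurs in Lemma \ref{lem:harm} only at $\beta = 1$, yields by compactness a uniform gap $\varepsilon_1 > 0$ with $\Re\sqrt{k^2+2} - \beta \geq \varepsilon_1$ on $\mathbf{S}_1$. Combined with $|V(y)| \lesssim e^{-(p-1)|y|}$ and the choice $\gamma < p-1$, the integrand $|K_{22}(k, x-y) V(y)|$ is dominated by an integrable quantity producing an exponential decay in $x$ of rate $\varepsilon_1$.

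Given the kernel bounds, the Picard iteration $m_1 = \sum_{n \geq 0} m_{1,n}$ with $m_{1,0} = e_1$ and $m_{1,n+1}(x,k) = \int_x^{+\infty} K(k, x-y) V(y) m_{1,n}(y, k) dy$ converges by the same factorial control as in Lemma \ref{lem:f3}, giving $\|m_{1,n}(\cdot,k)\|_{L^\infty([x_0,\infty))} \lesssim \langle k\rangle^{-n} C_{x_0}^n e^{-\varepsilon_1 n x_0}/n!$ for $|k|$ bounded away from $0$ and an analogous bound with $(y-x)$ factors near $k=0$. Summation yields \eqref{eq:f12}. The estimate on $\partial_x m_1$ is obtained by differentiating the Volterra equation (the boundary term at $y=x$ vanishes since $K_{11}(k,0) = K_{22}(k,0) = 0$); differentiation of the kernel in $x$ brings down a factor comparable to $k$, improving the weight $\langle k \rangle^{-1}$ to $\langle k \rangle^{0}$, consistent with the $\ell=1$ case of \eqref{eq:f12}. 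Analyticity of $(p,k) \mapsto \partial_x^\ell m_1(\cdot, k)$ on $(p_1,+\infty) \times \mathring{\mathbf{S}}_1$ is inherited from each iterate (the kernel is analytic in $k \in \mathring{\mathbf{S}}_1$ and $V$ is analytic in $p$) together with the local uniform convergence of the Neumann series.

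The main obstacle is the kernel $K_{22}$: unlike in Lemma \ref{lem:f3} where both diagonal entries are directly bounded by $\langle k\rangle^{-1}$, here the $\sinh$ factor produces potential exponential growth in $y-x$ at rate $\Re\sqrt{k^2+2}$, and only the strict form of Lemma \ref{lem:harm} combined with the strict inequality $\beta \leq 1-\varepsilon_0$ provides the positive uniform gap $\varepsilon_1 > 0$ needed to close the Volterra estimate and produce the $e^{-\varepsilon_1 x}$ factor in \eqref{eq:f12}. This is precisely why the strip $\mathbf{S}_1$ is required to be strictly smaller than $\mathbf{S}_3$.
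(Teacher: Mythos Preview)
Your direct Volterra approach breaks down at the second component. Writing your kernel as
\[
K_{22}(k,x-y)=\frac{1}{2\sqrt{k^2+2}}\Bigl(e^{(ik+\sqrt{k^2+2})(y-x)}-e^{(ik-\sqrt{k^2+2})(y-x)}\Bigr),
\]
the dominant term for $y>x$ has magnitude $e^{(\Re\sqrt{k^2+2}-\beta)(y-x)}$. The quantity $\Re\sqrt{k^2+2}-\beta$ is indeed bounded \emph{below} by a positive $\varepsilon_1$ on $\mathbf{S}_1$, as you say, but it is not bounded \emph{above}: for $k=\alpha+\im\beta$ with $|\alpha|$ large one has $\Re\sqrt{k^2+2}\sim|\alpha|\to\infty$. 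Hence for $|\alpha|>p-1+\beta$ the integrand $|K_{22}(k,x-y)V(y)|\sim\langle k\rangle^{-1}e^{(\Re\sqrt{k^2+2}-\beta)(y-x)}e^{-(p-1)y}$ is not integrable on $(x,+\infty)$, the Volterra integral diverges, and the Picard scheme does not even start. Your compactness argument cannot rescue this because $\mathbf{S}_1$ is unbounded in $\alpha$, and in any case a lower bound on the growth rate is the wrong direction for controlling the kernel.

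This is precisely why the paper, following Krieger--Schlag \cite[Lemma~5.3]{KrSch}, does not attempt a diagonal Volterra equation for $m_1$. Instead it writes the ansatz $f_1(x,k)=v(x,k)e_1+u(x,k)f_3(x,k)$, so that the second component of $f_1$ is carried entirely by $u\cdot f_3^{(2)}$, with $f_3^{(2)}$ already known (from Lemma~\ref{lem:f3}) to decay like $e^{-\sqrt{k^2+2}x}$. This removes the growing homogeneous mode from the system and reduces the construction to a \emph{scalar} Volterra equation for $m=e^{-\im kx}v$ whose kernel, involving only the Schr\"odinger factor $(e^{2\im k(y-x)}-1)/(2\im k)$ and exponentially decaying coefficients built from $f_3$, is controlled by $\langle k\rangle^{-1}e^{-(p-1-\gamma)y}$ uniformly on $\mathbf{S}_1$. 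The function $u$ is then recovered by a further quadrature. The $\varepsilon_1$ in \eqref{eq:f12} comes not from a gap in the kernel growth but from the factor $e^{-(\Re\sqrt{k^2+2}+\beta)x}$ multiplying $u\,f_3$ in the expression for $m_1$, where Lemma~\ref{lem:harm} plus $\beta\le 1-\varepsilon_0$ gives $\Re\sqrt{k^2+2}+\beta\ge\varepsilon_1>0$.
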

\proof Following Krieger and Schlag \cite[Lemma 5.3]{KrSch}    we can frame the above as
\begin{align}\label{eq:f14}
  f_1(x,k) =  v(x,k) e_1 +   u(x,k) f_3(x,k)
\end{align}
where, for $ f_3  (x,k)  =\( f_3 ^{(1)}(x,k), f_3 ^{(2)}(x,k)\) ^\intercal $,
\begin{align}\label{eq:f13}
   u' (x,k) = \left [  f_3 ^{(2)}(x,k)   \right ] ^{-1} \int _{x}^{+\infty}  f_3 ^{(2)}(y,k)V _{21}(y) v(y,k) dy
\end{align}
and where  $v(x,k)= e^{\im kx}m(x,k) $ with
\small
\begin{align}\label{eq:votf1}
    m (x,k) &= 1 + \int _{x}^{+\infty} K(x,y,k) m(y,k) dy \text{  with  } K(x,y,k) = K_1(x,y,k) + K_2(x,y,k) \\ K_1(x,y,k) &= \frac{ e ^{2\im k (y-x)} -1 }{2\im k} \( \left [  f_3 ^{(2)}(y,k)   \right ] ^{-1}    f_3 ^{(1)}(y,k)  V _{21}(y)    + V _{11}(y) \) \text{ and}\nonumber \\  K_2(x,y,k) &= -2 \int_{x}^{y} \frac{ e ^{2\im k (z-x)} -1 }{2\im k}   \(   f_3 ^{(1)}(z,k) '-  \frac{f_3 ^{(2)}(z,k) '}{f_3 ^{(2)}(z,k)}    f_3 ^{(1)}(z,k)     \)  \left [  f_3 ^{(2)}(z,k)   \right ] ^{-2} dz  f_3 ^{(2)}(y,k)V _{21}(y) .\nonumber
\end{align}
\normalsize
Notice here  that   $V _{ij}(x) = M _{0ij}\sech ^2   \(   \frac{p-1}2 x\)  $ with $M _{0ij}$ components of the  matrix $M_0$, see   \eqref{eq:opH}.  Then, using Lemma \ref{lem:f3}  like in Krieger and Schlag \cite{KrSch}  we get for $y\ge x \ge x_0\ge 0$ and $k\in \mathbf{S}_1$
\begin{align*}
   |K  (x,y,k) |&  \lesssim  \frac{y-x}{\< |k| ( y-x) \>} e^{-\( p-1  -\gamma   \) y}   .
\end{align*}
This yields  a solution $m(x,k)$ of the Volterra equation \eqref{eq:votf1} with $| m(x,k) -1 |\lesssim \< k \> ^{-1}e^{-\( p-1    - \gamma \) x}$.  Inserting this in \eqref{eq:f13} for $k\in \mathbf{S}$  we get
\begin{align*}
 | u' (x,k) |\lesssim   \< k \> ^{-1}   \int _{x}^{+\infty}  e^{-\Re \sqrt{2+ \alpha^2-\beta^2+2\im \alpha\beta}  (y-x)-\( (p-1) + \beta  \) y  }       dy \lesssim  \< k \> ^{-1} e^{-\( (p-1) -  \frac{\gamma}{2} \) x} .
\end{align*}
Then, choosing $  u  (x_0,k)=0$ we have
\begin{align*}
   |u  (x ,k)|\lesssim  \< k \> ^{-1} .
\end{align*}
Finally, entering the above information in \eqref{eq:f14}  we get that for $m_1(x,k) = e^{-\im kx }f_1(x,k)$ we have
\begin{align*}
  |m_1(x,k) -e_1|&\lesssim  |m (x,k) - 1| +   \< k \> ^{-1} |m_3(x,k)| e^{ -\(  \Re \sqrt{2+ \alpha^2-\beta^2+2\im \alpha\beta  }  +\beta \)   x}\\& \lesssim  \< k \> ^{-1}e^{-\( p-1    - \gamma \) x}  + \< k \> ^{-1} |m_3(x,k)| e^{ -\(  \Re \sqrt{2+ \alpha^2-\beta^2+2\im \alpha\beta  }  +\beta \)   x} .
\end{align*}
Notice from the proof of Lemma \ref{lem:harm}  for  the last exponential for $k \in  \mathbf{S}_1$  we have for some $\varepsilon _1>0$
\begin{align*}
   e^{ -\(  \Re \sqrt{2+ \alpha^2-\beta^2+2\im \alpha\beta  }  +\beta \)   x} \le  e^{ - \varepsilon _1     x}  .
\end{align*}
Finally, the last statement follows because   $m(\cdot ,k)$ (like $m_3$) is in  $C ^{\omega} \( (p_1, +\infty ) \times \mathring {\mathbf{S}  } _1 , L^\infty \(        x_0, +\infty \)   \)$. As a consequence also       $u'(\cdot ,k)$,   $u (\cdot ,k)$ and  $m_1(\cdot ,k)$ satisfy the last statement. We skip the proof of the results on $\partial _x  m _{1 }(x ,k)$  which is similar and standard.

  \qed

By taking $ f_2(x,k):=  \overline{f_1(x,\overline{k})} $     we define another Jost function corresponding to the analogous function in \cite{KrSch}.

\begin{lemma}
  \label{lem:f4} Consider the notation and hypotheses in Lemma \ref{lem:f3}.
 \begin{description}
   \item[a] There exists a solution $\widetilde{f}_4(x,k)$   \small
  \begin{align}\label{eq:f41}
    H \widetilde{f}_4(x,k) =( k^2+1) \widetilde{f}_4(x,k)  \text{ with } (x,k)\in  \R \times   \widetilde{\mathbf{S}} _4  \text{ for }   \widetilde{ \mathbf{S}} _4=  \{ \alpha+\im \beta : |\beta |\le 1  \}   ,
  \end{align}\normalsize
  with $  \widetilde{m}_4(x,k) := e ^{-   \sqrt{2+k^2} x }\widetilde{f}_4(x,k)$   satisfying  for any  given  $x_0\in \R$
    estimate
  \begin{align}\label{eq:f42}
     | \partial _x ^{\ell }   ( \widetilde{m}_4(x,k) -e_2)     |\lesssim   \< k\> ^{-1 +\ell } e^{-\gamma  x}  \text{ for $x\ge x_0$  and $\ell =0,1$.}
  \end{align}
   \item[b]  The map $ (p, k)\to \partial _x ^{\ell } \widetilde{m}_4(\cdot ,k)$ is in $C ^{\omega} \( (p_1, +\infty ) \times  \mathring {\widetilde{\mathbf{S}}  } _4 , L^\infty \(        x_0, +\infty \)   \)$  for $\ell =0,1$.
   \item[c]  There is a unique choice of $c_1,c_2\in \C$ such that
  \begin{align}\label{eq:jost14} {f}_4 (x,k) :=-c_1  f_1 (x,k)  -c_2 f_2 (x,k)+ \widetilde{f}_4 \Longrightarrow     W[f_j,f_4]=0 \text{ for $j=1,2$.} \end{align}

  \item[d] For  $   {m}_4(x,k) := e ^{-   \sqrt{2+k^2} x } {f}_4(x,k)$ we have for $\mathbf{S}_4=  \{ \alpha+\im \beta : |\beta |\le a_2  \}$,  $a_2:=\min \( 1-\varepsilon _0, \frac{\gamma}{2}\)$
  \begin{align}
   \text{$(p, k)\to  \partial _x ^{\ell } {m}_4(\cdot ,k)$ is in $C ^{\omega} \( (p_1, +\infty ) \times  \mathbf{S} _4 , L^\infty \(        x_0, +\infty \)   \) $ for $\ell =0,1$.} \label{eq:f42b}
  \end{align}

 \end{description}
  \end{lemma}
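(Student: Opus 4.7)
My plan is to mirror the Volterra-iteration scheme of Lemmas \ref{lem:f3} and \ref{lem:f1}, now adapted to the growing asymptotic $\widetilde f_4\sim e^{\sqrt{2+k^2}x}e_2$ at $+\infty$. Writing $c=\sqrt{2+k^2}$ and $\widetilde m_4=e^{-cx}\widetilde f_4$, a componentwise computation (multiply $Hu=(k^2+1)u$ by $\sigma_3$ and substitute) shows that the first component of $\widetilde m_4$ solves an ODE whose free solutions are $e^{(-c\pm\im k)x}$, while the second solves $\mu_2''+2c\mu_2'=\mathrm{(coupling)}$, with free solutions $1$ and $e^{-2cx}$. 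All these are bounded on $[x_0,+\infty)$ thanks to $\mathrm{Re}\,c\geq\sqrt{1+\alpha^2}\geq 1$ whenever $|\beta|\leq 1$. Choosing Green's kernels compatible with $\widetilde m_4\to e_2$ at $+\infty$ produces the Volterra equation
\[
\widetilde m_4(x,k)=e_2+\int_x^{+\infty}\widetilde K(k,x,y)\,V(y)\,\widetilde m_4(y,k)\,dy,
\]
with $\widetilde K(k,x,y)=\mathrm{diag}\!\left(\frac{\sin(k(y-x))}{k}e^{-c(y-x)},\ \frac{1-e^{-2c(y-x)}}{2c}\right)$ for $y\geq x$. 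The kernel is bounded by $\lesssim\langle k\rangle^{-1}$ on $\{y\geq x\geq x_0\}$ (using $\sin(k(y-x))/k=\frac{1}{2}\int_{x-y}^{y-x}e^{\im k t}\,dt$ in the small-$k$ regime, exactly as in Lemma \ref{lem:f3}), so the exponential decay of $V$ yields convergence of the Neumann series together with the estimate \eqref{eq:f42}. This gives (a); part (b) is then immediate from the term-wise analyticity of the Neumann iterates in $(p,k)$ and the local uniform convergence of the series on $(p_1,\infty)\times\mathring{\widetilde{\mathbf{S}}}_4$, with the $\ell=1$ statement following by differentiating the integral equation.

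For part (c), I would first verify the symmetry $V^\top=\sigma_3 V\sigma_3$ directly from the explicit form of $V$ in \eqref{eq:opH}, which makes the Wronskian $W[f,g]$ of any two solutions of $Hu=(k^2+1)u$ conserved in $x$. Bilinearity of $W$ and $W[f_j,f_j]=0$ then reduce the conditions $W[f_j,f_4]=0$ to the $2\times 2$ linear system
\[
c_2\,W[f_1,f_2]=W[f_1,\widetilde f_4],\qquad c_1\,W[f_2,f_1]=W[f_2,\widetilde f_4],
\]
which has a unique solution provided $W[f_1,f_2]\neq 0$. Evaluating the Wronskian at $x\to+\infty$ using $f_1\sim e^{\im kx}e_1$ and $f_2\sim e^{-\im kx}e_1$ (from Lemma \ref{lem:f1}) gives $W[f_1,f_2]=2\im k$, nonzero away from $k=0$.

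For part (d) I would write $m_4(x,k)=\widetilde m_4(x,k)-c_1 e^{(\im k-c)x}m_1(x,k)-c_2 e^{(-\im k-c)x}m_2(x,k)$. Since $\mathrm{Re}(\pm\im k-c)\leq|\beta|-\mathrm{Re}\,c\leq a_2-1<0$ on $\mathbf{S}_4$, both exponential prefactors decay on $[x_0,+\infty)$, so $m_4$ and $\partial_x m_4$ are bounded there, and inherit analyticity in $(p,k)$ from (a)--(b) and Lemmas \ref{lem:f3}--\ref{lem:f1} combined with the analyticity of $c_1,c_2$ as ratios of Wronskians. The main obstacle is the zero of $W[f_1,f_2]=2\im k$ at $k=0$, which a priori makes $c_1,c_2$ singular; however, $k=0$ corresponds to the spectral threshold $E=1$ of $H$, and the no-resonance hypothesis of Proposition \ref{prop:nonres} is precisely the statement that allows one to show the combination $\widetilde f_4-c_1 f_1-c_2 f_2$ extends analytically across $k=0$, with the singularities of the three summands cancelling. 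Establishing this cancellation, likely via a careful small-$k$ Taylor expansion of $f_1,f_2$ and $\widetilde f_4$ together with the exclusion of a nontrivial bounded solution at $E=1$, is the delicate technical point.
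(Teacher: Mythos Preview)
Your first-component kernel in part (a) is wrong, and this is a genuine obstruction rather than a typo. After the substitution $\widetilde f_4=e^{cx}\widetilde m_4$ the equation for $\mu_1:=(\widetilde m_4)_1$ is
\[
\mu_1''+2c\mu_1'+(c^2+k^2)\mu_1=(V\widetilde m_4)_1,
\]
whose free solutions are indeed $e^{(-c\pm \im k)x}$. The variation-of-parameters kernel built from these solutions is $\frac{\sin(k(y-x))}{k}\,e^{+c(y-x)}$, not $e^{-c(y-x)}$; the function you wrote is the kernel for the $m_3$ equation (which has $-2c\partial_x$, the opposite sign), and one checks directly that your $K$ does \emph{not} satisfy $(\partial_x^2+2c\partial_x+c^2+k^2)K=0$ for $x<y$. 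So the fixed point of your Volterra equation would not produce a solution of $H\widetilde f_4=(k^2+1)\widetilde f_4$. Worse, the correct kernel $\frac{\sin(k(y-x))}{k}e^{c(y-x)}$ grows like $e^{(\Re c)(y-x)}$, and since $\Re c\sim|k|$ is unbounded, the integral $\int_x^{+\infty}$ against $V(y)$ cannot be controlled uniformly in $k$: a pure Volterra-from-$+\infty$ scheme simply does not work for this component. The paper avoids this by using a \emph{mixed} integral equation $\widetilde m_4=e_2+T\widetilde m_4$ where the first-component piece is $\int_{x_0}^{x}\frac{\sin(k(y-x))}{k}e^{-c(x-y)}(V\widetilde m_4)_1(y)\,dy$; now $y<x$, the exponent $-c(x-y)$ has nonpositive real part, and combining with Lemma~\ref{lem:harm} the kernel is uniformly $\lesssim\langle k\rangle^{-1}$. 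For $x_0\gg1$ this makes $T$ a contraction in $L^\infty(x_0,+\infty)$. The price is that $\widetilde f_4$ depends on $x_0$, but this is harmless since only $f_4$ (after the subtraction in (c)) is intrinsic.

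Your treatment of (c) is fine. In (d), however, the apparent singularity of $c_1,c_2$ at $k=0$ has nothing to do with the no-resonance hypothesis. Since $f_2(x,k)=\overline{f_1(x,\overline k)}$, at $k=0$ one has $f_1=f_2$ and $W[f_1,\widetilde f_4]=W[f_2,\widetilde f_4]$, so the combination $c_1f_1+c_2f_2=(2\im k)^{-1}\bigl(W[f_1,\widetilde f_4]f_2-W[f_2,\widetilde f_4]f_1\bigr)$ is of the form $0/0$ and extends analytically across $k=0$ by a one-line Taylor expansion in $k$; no spectral input is required. The paper simply records $W[f_1,f_2]=2\im k$ and $W[f_3,\widetilde f_4]=-2\sqrt{2+k^2}$ and deduces (c)--(d) from analyticity of all ingredients on $\mathbf S_4$.
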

\proof  We consider the equation $  \widetilde{m}_4(\cdot ,k) = e_2 + T \widetilde{m}_4(\cdot ,k)$    like in \cite{KrSch},  where
\begin{align}  \label{eq:f43}
Tf(x) &:= -\int _x ^{+\infty}
    \diag \(    0 ,  \frac {  1 }{\sqrt{2+k^2}} \)    V(y)   f(y)  dy \\& + \int  _{x_0} ^{x}
   \diag \(     \frac{\sin \( k (y-x)\) }{k}e^{  -\sqrt{2+k^2}  (x-y)} ,  \frac{ e^{  -2\sqrt{2+k^2}  (x-y)}   }{2\sqrt{2+k^2}} \)     V(y)   f(y)  dy   . \nonumber
\end{align}
By Lemma \ref{lem:harm} for $z\ge 0$    and  $ |  \beta |\le 1$ we have
\begin{align}
 \left | \sin \( k z\)   e^{  -\sqrt{2+k^2}  z}         \right |  \le e^{\(  \beta  -\Re    \sqrt{2+ \alpha^2-\beta^2+2\im \alpha\beta  } \) z     }  + e^{\(  -\beta  -\Re    \sqrt{2+ \alpha^2-\beta^2+2\im \alpha\beta  } \)z     }\le 2   .\label{eq:f44}
\end{align}
For $k$ not close to 0   from  \eqref{eq:f43} for$ |  \beta |\le 1$ we obtain
  \small
\begin{align*}
   |T(f-g)(x)| & \lesssim  \< k\> ^{-1}\int _{x}^{+\infty }e^{-(p-1) y} |f(y)-g(y)| dy  \\&\le  (p-1)^{-1} \< k\> ^{-1}  e^{-(p-1) x_0} \| f-g\| _{L^\infty (x_0,+\infty )} \text{ for }x\ge x_0.
\end{align*}
\normalsize
For  $k$  close to 0  using \eqref{eq:f34}  we get
\begin{align*}
   |T(f-g)(x)| & \lesssim \int _{x}^{+\infty }e^{-(p-1) y} |f(y)-g(y)| dy   + \int  _{x_0} ^{x} y  e^{-(p-1) y}    |f(y)-g(y)| dy      \\&\le    C_{\gamma}    e^{-\gamma x_0} \| f-g\| _{L^\infty (x_0,+\infty )} \text{ for }x\ge x_0.
\end{align*}
Notice that while in the statement of our lemma we have generically $x_0\in \R$, in the proof we can take   $x_0\gg 1$   and  conclude that $T$ is a contraction. This yields the solution $\widetilde{m}_4(\cdot ,k) \in L^\infty (x_0,+\infty )$ and provides the proof of \eqref{eq:f42} which extends also to the case  of any $x_0\in \R$.
Obviously we have by Neumann expansion
\begin{align*}
   \widetilde{m}_4(\cdot ,k) = \sum _{n=0}^{+\infty}T^n e_2
\end{align*}
where the series converges in $L^\infty (x_0,+\infty )$   for $x_0\gg 1$. Since  it is elementary to see  that the series   converges  uniformly  locally in terms of the parameters $ (p, k)\in (p_1,+\infty ) \times \widetilde{{\mathbf{S}}}_4  $
we get claim  \textbf{b} in the    statement and the result extends to any $x_0\in \R$.

Notice that $W[ f_1(\cdot ,k),f_2(\cdot ,k)] =2\im k $   and
$W[ f_3(\cdot ,k),\widetilde{f}_4(\cdot ,k)] = -2 \sqrt{2+k^2} $, see    \cite{KrSch} and analytic continuation. This yields claims  \textbf{c}   and    \textbf{d} of the statement, where $\R \times \mathbf{S}_4$ is contained in the domain of all the above Jost functions. We skip the proofs for  $\ell =1$.

  \qed

 Since   $V(p, -x) =V(p, x)$, writing
  \begin{align}\label{eq:jost21} &
    g_j (x,k)  :=
    f_j (-x,k)
  \end{align}
yields analogous Jost functions with prescribed behavior as $x\to -\infty$  with analogous results to those valid for the $f_j ( x,k)$, except for the fact that the half lines $(x_0,+\infty)$ need to be replaced by the half lines $( -\infty ,x_0)$.

\noindent For $p=3$ the Jost functions have been known explicitly since Kaup \cite{kaup}. We computed them explicitly in \cite{CM24D1} by exploiting Martel's Darboux transformation, see \cite{Martel2}. Using the result in \S 10   \cite{CM24D1}  it is elementary to    obtain  formulas
\begin{align}\label{eq:plwave1}
   f_1(x,k)&= \frac{e^{\im kx }}{1-k^2 -2\im k} \begin{pmatrix} 1-k^2 -2\im k\tanh (x) - \sech ^2 (x)   \\   - \sech ^2 (x) \end{pmatrix} \text{  ,  }\\ \label{eq:plwave2}
   f_2(x,k)&= \frac{e^{-\im kx }}{1-k^2 +2\im k} \begin{pmatrix} 1-k^2 +2\im k\tanh (x) - \sech ^2 (x)   \\   - \sech ^2 (x) \end{pmatrix} \text{  ,  } \\   {f}_3(x,k)&=  \frac{e^{ -\sqrt{2+k^2} x}}{3+k^2   + 2\sqrt{2+k^2}} \begin{pmatrix}  - \sech ^2 (x)     \\  3+k^2   + 2\sqrt{2+k^2} \tanh (x) - \sech ^2 (x) \end{pmatrix}  \text{  ,  } \label{eq:plwave3}   \\   {f}_4(x,k)&=  \frac{e^{ \sqrt{2+k^2} x}}{3+k^2   - 2\sqrt{2+k^2}} \begin{pmatrix}  - \sech ^2 (x)     \\  3+k^2   - 2\sqrt{2+k^2} \tanh (x) - \sech ^2 (x) \end{pmatrix}  \text{  .  } \label{eq:plwave4}
\end{align}
We consider the matrices
\begin{align}\label{eq:matjost}&  F_1 (x,k) = (f_1 (x,k), f_3 (x,k) ) \quad , \quad  F_2 (x,k) = (f_2 (x,k), f_4 (x,k) )  \, , \\&
  G_1 (x,k) = (g_2 (x,k), g_4 (x,k) ) \quad , \quad  G_2 (x,k) = (g_1 (x,k), g_3 (x,k) )  .\nonumber
\end{align}
For   matrix valued functions     $F= (\phi _1 (x), \phi _2(x)) $   and   $G= (\psi _1(x), \psi _2(x)) $  (where $\phi _1(x), \phi _2(x), \psi _1 (x)$ and $\psi _2(x)$ are column vector functions),  we set
\begin{align*}
 W[F,G]:= F'(x)^ \intercal G(x) - F  (x)^ \intercal G '(x).
\end{align*}
By direct computation, see   \cite{KrSch},
\begin{align*}
 W[F,G] =    \begin{pmatrix} W[\phi_1,\psi _1]  &   W[\phi_1,\psi _2] \\ W[\phi_2,\psi _1]  &   W[\phi_2,\psi _2] \end{pmatrix}
 .
\end{align*}
Quoting from \cite{KrSch}  and using the above results we have the following.
\begin{lemma}\label{lem:trrefl} For any $k\in \mathbf{S} \backslash \{  0 \} $ there exist matrices $A(k) $ and $B(k)$, smooth in $k$ and  such that
   \begin{align}
     \label{eq:trrefl1}  F_1(x,k) =  G_1(x,k)  A(k) + G_2(x,k) B(k),
   \end{align}
   with $ A(-k)= \overline{A}(k)$, $ B(-k)= \overline{B}(k)$ and
   \begin{align} &  \label{eq:trrefl2}   G_2(x,k) =  F_2(x,k)  A(k) + F_1(x,k) B(k) ,    \\&    W[F_1(x,k),G_2(x,k)]  = A^ \intercal  (k)  \diag (2\im k, -2\sqrt{2+k^2}) \label{eq:trrefl3} \\&  W[F_1(x,k),G_1(x,k)]  = - B^ \intercal (k)    \diag (2\im k, -2\sqrt{2+k^2}) .\label{eq:trrefl4}
   \end{align}
   We have
   \begin{equation}\label{eq:complcon}
     \begin{aligned}
        & G_1(x,k)= F_2(-x,k)  \quad , \quad G_2(x,k)= F_1(-x,k) \\&  \overline{F_1( x,k)}= F_1( x,-k) \quad , \quad  \overline{F_2( x,k)}= F_2( x,-k).
     \end{aligned}
   \end{equation}
and furthermore  these are functions analytic in $p\ge 1 $ and $k\in \mathbf{S} _4$.

\end{lemma}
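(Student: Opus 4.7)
The plan is to assemble \eqref{eq:trrefl1}--\eqref{eq:complcon} from the Jost function framework of Lemmas \ref{lem:f3}--\ref{lem:f4}, using the spatial symmetry $V(p,-x)=V(p,x)$, complex conjugation, and Wronskian algebra. The identities \eqref{eq:complcon} are essentially definitional: $G_1(x,k)=F_2(-x,k)$ and $G_2(x,k)=F_1(-x,k)$ follow directly from $g_j(x,k):=f_j(-x,k)$ together with the block definitions \eqref{eq:matjost}, while the conjugation identities follow by taking complex conjugates of the Volterra equations for $m_1$ and $m_3$: since $V$ and the Volterra kernels are real on $\R$, one obtains $\overline{f_j(x,k)}=f_j(x,-k)$ on the real $k$-axis for $j=1,3$, and hence $\overline{F_1(x,k)}=F_1(x,-k)$; the case of $F_2$ combines the defining relation $f_2(x,k)=\overline{f_1(x,\overline{k})}$ with the uniqueness of $f_4$ in Lemma \ref{lem:f4}(c), which transfers the conjugation property from $\widetilde f_4$ to $f_4$ and hence to $F_2$.

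For \eqref{eq:trrefl1}, observe that at each nonzero $k$ in the common strip the four functions $g_1,g_2,g_3,g_4$ form a basis of the $4$-dimensional solution space of $Hu=(k^2+1)u$, thanks to the linear independence of the prescribed asymptotics $e^{\mp\im kx}e_1$ and $e^{\pm\sqrt{2+k^2}x}e_2$ as $x\to-\infty$. Expanding the two columns of $F_1$ uniquely in this basis and grouping the coefficients into the $2\times 2$ blocks aligned with $G_1=(g_2,g_4)$ and $G_2=(g_1,g_3)$ produces the matrices $A(k),B(k)$, whose smoothness and analyticity in $(p,k)$ on $(p_1,+\infty)\times \mathbf{S}_4$ are inherited from the corresponding regularity of the Jost functions. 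Then \eqref{eq:trrefl2} is obtained by substituting $x\mapsto -x$ in \eqref{eq:trrefl1} and reapplying \eqref{eq:complcon}, while conjugating \eqref{eq:trrefl1} on the real axis, using the already-established conjugation identities, and invoking the uniqueness of the expansion, yields $A(-k)=\overline{A(k)}$ and $B(-k)=\overline{B(k)}$.

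For the Wronskian identities \eqref{eq:trrefl3}--\eqref{eq:trrefl4}, I substitute $F_1=G_1A+G_2B$ and exploit the constant-matrix rule $W[FM,GN]=M^\intercal W[F,G]N$ together with bilinearity:
\begin{equation*}
W[F_1,G_2]=A^\intercal W[G_1,G_2]+B^\intercal W[G_2,G_2],\qquad W[F_1,G_1]=A^\intercal W[G_1,G_1]+B^\intercal W[G_2,G_1].
\end{equation*}
The crucial input is the elementary identity $W[g_j,g_k]=-W[f_j,f_k]$, which follows from $g_j(x)=f_j(-x)$ and reduces every scalar Wronskian among $g_\ell$'s to one among $f_\ell$'s: in this way $W[g_j,g_4]=0$ for $j=1,2$ by Lemma \ref{lem:f4}(c), while $W[f_1,f_3]$ and $W[f_2,f_3]$ are shown to vanish by asymptotic analysis at $x\to+\infty$ where the leading exponentials pair through the inner product $e_1^\intercal e_2=0$ and the sub-leading corrections decay thanks to $\Re\sqrt{2+k^2}>0$ and the Jost-function remainder estimates $|m_j-e_i|=O(e^{-\varepsilon x})$. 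These facts give $W[G_i,G_i]=0$ (the diagonal entries of such matrices vanish by the scalar antisymmetry of the Wronskian), and $W[G_1,G_2]$ is then diagonal with entries $W[g_2,g_1]=2\im k$ and $W[g_4,g_3]=-W[f_4,f_3]=-2\sqrt{2+k^2}$; combined with $W[G_2,G_1]=-W[G_1,G_2]^\intercal=-W[G_1,G_2]$, this produces exactly \eqref{eq:trrefl3}--\eqref{eq:trrefl4}. The main technical obstacle is the justification of the vanishing of the off-diagonal scalar Wronskians involving the growing Jost function $f_4$ (equivalently $g_4$); this is circumvented by transferring, via $W[g_j,g_k]=-W[f_j,f_k]$, to scalar Wronskians $W[f_j,f_4]$ which are zero by the very construction of $f_4$ in Lemma \ref{lem:f4}(c), avoiding any direct asymptotic argument against an exponentially growing factor.
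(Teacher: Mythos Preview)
Your argument is correct. The paper does not actually give a proof of this lemma: it simply writes ``Quoting from \cite{KrSch} and using the above results we have the following'' and appends a \qed. What you have done is supply a self-contained derivation of the identities that Krieger--Schlag establish, using the Jost functions of Lemmas~\ref{lem:f3}--\ref{lem:f4} and the parity $V(p,-x)=V(p,x)$. Your handling of the Wronskian identities is exactly the right strategy: the delicate off-diagonal entries $W[g_j,g_4]$ involving the exponentially growing solution are reduced, via $W[g_j,g_k]=-W[f_j,f_k]$, to $W[f_j,f_4]$, which vanishes by the construction in Lemma~\ref{lem:f4}(c); the remaining off-diagonal entries $W[f_1,f_3]$ and $W[f_2,f_3]$ vanish by the asymptotic argument (the prefactor $e^{(\pm ik-\sqrt{2+k^2})x}\to 0$ and the orthogonality $e_1^\intercal e_2=0$ kill any residual term). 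So your proof is more detailed than what the paper provides, but it follows the same route as the cited reference.
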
  \qed

We set
\begin{align}\label{eq:defd}
  D(p, k):= W[ F_1(x,k) ,  G_2(x,k)] =  \begin{pmatrix} W[ f_1(x,k),g_1(x,k)]  &   W[ f_1(x,k),g_3(x,k)] \\ W[ f_3(x,k),g_1(x,k)]  &   W[f_3(x,k),g_3(x,k)] \end{pmatrix} .
\end{align}
Notice that $D\in C ^{\omega} \( (p_1, +\infty ) \times  \mathring { {\mathbf{S}}  }_1 , M(2,\C )   \)$ with  $ M(2,\C )$   the spare of  square 2 matrices   with coefficients in $\C$.

The following holds, see   \cite{KrSch}.

\begin{lemma} \label{lem:eig0}For $k\in \R \setminus \{  0 \}$ the following are equivalent: \begin{itemize}
                                                                            \item  $\det A(k)=0$;
                                                                            \item $E=1+k^2$ is an eigenvalue of $H$;
                                                                            \item  $\det D(p,k)=0$.
                                                                       \end{itemize}    Furthermore   $E=1$ is neither a resonance  nor an eigenvalue   of $H$ if and only if   $\det D(p,0)\neq 0$.

\end{lemma}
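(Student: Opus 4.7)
The plan has three parts: an algebraic equivalence $\det A(k)=0 \iff \det D(p,k)=0$; the spectral equivalence of either condition with $E=1+k^2$ being an eigenvalue of $H$; and the separate threshold analysis at $E=1$. I would carry out the first two parts uniformly in $k\in\R\setminus\{0\}$ and then treat the degenerate case $k=0$ at the end.

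For the algebraic equivalence I combine the definition \eqref{eq:defd}, $D(p,k)=W[F_1(\cdot,k),G_2(\cdot,k)]$, with the Wronskian identity \eqref{eq:trrefl3} of Lemma~\ref{lem:trrefl}: this yields immediately
\[
D(p,k)=A^\intercal(k)\,\diag(2\im k,-2\sqrt{2+k^2}),
\]
hence $\det D(p,k)=-4\im k\sqrt{2+k^2}\,\det A(k)$. For $k\in\R\setminus\{0\}$ the prefactor is nonzero, so the two determinants vanish together, with no spectral input needed.

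For the spectral equivalence I would argue in both directions. If $Hu=(1+k^2)u$ for some $u\in L^2$, then $u$ decays at $\pm\infty$ and hence is proportional to the unique decaying Jost solution at each end: $f_3(\cdot,k)$ at $+\infty$ and $g_3(\cdot,k)$ at $-\infty$. Expanding $f_3$ in the basis $\{g_1,g_2,g_3,g_4\}$ via the second column of \eqref{eq:trrefl1}, the demand that the oscillating modes $g_1,g_2$ and the growing mode $g_4$ be absent at $-\infty$ forces $a_{12}=a_{22}=0$, so that the second column of $A(k)$ vanishes and $\det A(k)=0$. For the converse, I would start from a null vector $v$ of $A(k)$ and combine \eqref{eq:trrefl1}, the companion relation \eqref{eq:trrefl2}, the symmetries \eqref{eq:complcon}, and the Wronskian identities \eqref{eq:trrefl3}--\eqref{eq:trrefl4} to produce a nontrivial element of $\mathrm{span}\{f_3(\cdot,k)\}\cap\mathrm{span}\{g_3(\cdot,k)\}$, i.e.\ an $L^2$ eigenfunction. (By Proposition~\ref{prop:knownspec}(3) there are no embedded eigenvalues, so for real $k\ne0$ both sides of the equivalence are in fact always false; nonetheless the implication is the key input that gets used by analytic continuation near the real axis.)

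The threshold statement at $E=1$ requires a separate treatment, because at $k=0$ the Jost basis degenerates: $f_1(\cdot,0)$ and $f_2(\cdot,0)$ both approach the constant $e_1$ at $+\infty$, and similarly for $g_1,g_2$, so the algebraic route via $A(k)$ collapses. However, $D(p,0)$ remains a well-defined $2\times 2$ Wronskian matrix of the four bounded limiting Jost solutions $f_1(\cdot,0),f_3(\cdot,0),g_1(\cdot,0),g_3(\cdot,0)$, and its non-vanishing expresses the transversality of the $+\infty$-bounded and $-\infty$-bounded solution subspaces; this rules out both an $L^2$ eigenfunction at $E=1$ and a threshold resonance (a bounded but non-$L^2$ solution). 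The main obstacle is exactly this $k=0$ analysis: the clean matrix identity between $A(k)$ and $D(p,k)$ breaks down at the threshold, so one must instead work directly with $D(p,0)$ and carefully identify its kernel with the space of bounded solutions at $E=1$, separating out the genuinely $L^2$ eigenfunction part from the merely bounded resonance part.
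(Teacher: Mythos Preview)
The paper does not actually prove this lemma: it writes ``The following holds, see \cite{KrSch}'' and stops. Your proposal therefore goes well beyond what the paper does. The algebraic equivalence $\det A(k)=0\Leftrightarrow\det D(p,k)=0$ via \eqref{eq:trrefl3} is exactly right and is the one-line observation $\det D(p,k)=-4\im k\sqrt{2+k^2}\,\det A(k)$. Your forward direction (eigenvalue $\Rightarrow$ second column of $A(k)$ vanishes $\Rightarrow\det A(k)=0$) is also correct as written.

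The gap is in the converse. From $A(k)v=0$ you only get $F_1(\cdot,k)v=G_2(\cdot,k)B(k)v$, i.e.\ $v_1f_1+v_2f_3=(Bv)_1g_1+(Bv)_2g_3$. For this to be an $L^2$ eigenfunction you still need $v_1=0$ \emph{and} $(Bv)_1=0$, and neither follows from $\det A(k)=0$ alone together with \eqref{eq:trrefl1}--\eqref{eq:trrefl4} and \eqref{eq:complcon}. In Krieger--Schlag the missing input is the symmetry $\sigma_1 H\sigma_1=H^*$ (equivalently $\sigma_1 H^*\sigma_1=H$), which yields a second scattering relation of the form $\sigma_1\overline{A}\sigma_1 A-\sigma_1\overline{B}\sigma_1 B=I$ (their Lemma~5.14); combining this with $Av=0$ forces the null vector to have the required structure. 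Your sketch lists the right building blocks but not this conjugation identity, which is the actual mechanism. Your description of the threshold case $k=0$ is accurate in spirit and again matches what is carried out in \cite{KrSch} rather than in the present paper.
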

\qed

Following the arguments in Krieger and Schlag \cite{KrSch}  we obtain the following.
\begin{lemma}
  \label{lem:eig000} If $E=1+k^2$  for $ 0<\Im k < 1 $ is an eigenvalue   with $p>p_1$  then $\det D(p,k)=0$.
\end{lemma}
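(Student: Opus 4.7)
The plan is to exploit the fact that for any two solutions $u,v$ of $(H-E)u=0$ the scalar Wronskian $W[u,v]=u'^{\intercal}v-u^{\intercal}v'$ is constant in $x$. Indeed, rewriting $Hu=Eu$ as $\sigma_3u''=(\sigma_3-E+V)u$ and multiplying by $\sigma_3$ yields $u''=A(x)u$ with $A=I-E\sigma_3+\sigma_3 V$; a direct computation from \eqref{eq:opH} shows that $\sigma_3 V$ is symmetric, so $A^{\intercal}=A$ and $\frac{d}{dx}W[u,v]=u^{\intercal}(A^{\intercal}-A)v=0$.

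Next I observe that on the strip $0<\Im k<1-\varepsilon_0$ both $f_1(\cdot,k)$ and $f_3(\cdot,k)$ lie in $L^2$ near $+\infty$: by Lemmas \ref{lem:f1} and \ref{lem:f3} they are asymptotic to $e^{\im k x}e_1$ and $e^{-\sqrt{2+k^2}\,x}e_2$, and Lemma \ref{lem:harm} together with $\Im k>0$ guarantees that both exponents have strictly positive real part for the decay. A leading-order comparison (the two profiles have distinct decay rates, and the limit vectors $e_1,e_2$ are themselves linearly independent) shows that $f_1$ and $f_3$ are linearly independent solutions, and therefore span the two-dimensional $L^2$-at-$+\infty$ subspace of the four-dimensional solution space of $(H-E)u=0$. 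The mirror argument via \eqref{eq:jost21} shows that $g_1,g_3$ span the $L^2$-at-$-\infty$ subspace.

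Now suppose $u\neq 0$ is an eigenfunction of $H$ at $E=1+k^2$. By \eqref{eq:opH1} and Proposition \ref{prop:knownspec} the spectrum of $H$ is real, so $E$ must lie in the gap $(-1,1)$ of the essential spectrum, forcing $k$ onto the imaginary axis and ensuring that $u$ and $u'$ decay exponentially at both ends (standard Combes–Thomas, or direct ODE comparison against Jost exponentials). Since $u\in L^2(\R)$, the representations $u=\alpha_1 f_1+\alpha_3 f_3=\beta_1 g_1+\beta_3 g_3$ hold with $(\alpha_1,\alpha_3)\neq(0,0)$. Using the $-\infty$-side representation together with the exponential decay of $u,u',g_j,g_j'$ as $x\to-\infty$ (the Jost decay following from Lemmas \ref{lem:f1}--\ref{lem:f3} via \eqref{eq:jost21}), the \emph{constant} quantity $W[u,g_j]$ must equal its $x\to-\infty$ limit, namely $0$, for $j=1,3$. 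Inserting instead the $+\infty$-side representation yields $\alpha_1 W[f_1,g_j]+\alpha_3 W[f_3,g_j]=0$ for $j=1,3$, which by \eqref{eq:defd} says that the nonzero row vector $(\alpha_1,\alpha_3)$ is a left null vector of $D(p,k)$. Hence $\det D(p,k)=0$.

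The main technical point will be justifying the exponential decay of $u$ and $u'$ at $\pm\infty$ precisely enough so that the product $W[u,g_j]$ genuinely tends to $0$, rather than merely remaining bounded; this is where one must combine the eigenfunction decay coming from $E\not\in\sigma_{\mathrm{ess}}(H)$ with the explicit rates in Lemmas \ref{lem:f1} and \ref{lem:f3}. Once this decay is in hand, the remainder of the argument is a clean dimension count plus evaluation of a conserved Wronskian at $-\infty$.
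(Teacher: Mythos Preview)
Your argument is correct and in fact somewhat cleaner than the paper's. The paper writes the eigenfunction as $u=F_1(x,k)\alpha+F_2(x,k)\beta$ using \emph{all four} Jost functions, argues $\beta=0$ from the growth of $F_2$, then invokes the scattering relation \eqref{eq:trrefl1} to obtain $A(k)\alpha=0$, whence $\det A(k)=0$ and so $\det D(p,k)=0$ by \eqref{eq:trrefl3}. Because this route needs $f_4$, whose construction in Lemma~\ref{lem:f4} only reaches the strip $|\Im k|\le \min(1-\varepsilon_0,\gamma/2)$, the paper then performs an approximation by compactly supported potentials to push up to $\Im k<1$. Your approach bypasses $f_2,f_4$ and the matrices $A,B$ entirely: you represent $u$ only in the decaying bases $\{f_1,f_3\}$ at $+\infty$ and $\{g_1,g_3\}$ at $-\infty$, and read off $\det D=0$ directly from the conserved scalar Wronskian. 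This buys you a larger strip from the outset, since only Lemmas~\ref{lem:f3} and \ref{lem:f1} are needed.

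One small point you should make explicit: Lemma~\ref{lem:f1} furnishes $f_1(\cdot,k)$ only for $\Im k\le 1-\varepsilon_0$, so your argument as written covers $0<\Im k<1-\varepsilon_0$. Since $\varepsilon_0>0$ in Lemma~\ref{lem:f1} may be taken arbitrarily small, for any given $k$ with $0<\Im k<1$ you can simply fix $\varepsilon_0<1-\Im k$ and run the argument; this replaces the paper's compactly supported approximation step. Also, your reduction to $k\in\im\R_+$ via Proposition~\ref{prop:knownspec} and \eqref{eq:opH1} is a helpful simplification the paper does not state, and it makes the exponential decay of $u,u'$ (hence the vanishing of $W[u,g_j]$ at $-\infty$) entirely straightforward.
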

\proof Notice first of all that $D(p, k)$  is well defined in the set of the statement. If we   have an eigenvalue  with   $ 0<\Im k \le  \min \( 1-\varepsilon _0, \frac{\gamma}{2}\)$ then  the corresponding eigenfunction is of the form
\begin{align}\label{eq:eig0001}
  u(x)=  F_1(x,k) \alpha +F_2(x,k)\beta \text{     with column vectors $\alpha ,\beta  \in \C ^2$.}
\end{align}
  But since
$|F_1(x,k)  |  \xrightarrow{x \to  +\infty} 0$ while  $|F_2(x,k) \beta  |  \xrightarrow{x \to  +\infty} +\infty$  for any $\beta \neq 0$, we must have  $\beta =0$ and $\alpha \neq 0$.
  Applying the two sides of  \eqref{eq:trrefl1} to $\alpha  $ we have
\begin{align}\label{eq:eig0002}
  u(x)=  F_1(x,k) \alpha =  G_1(x,k)  A(k)\alpha  + G_2(x,k) B(k)\alpha .
\end{align}
If now $ A(k)\alpha \neq 0$  by $| G_1(x,k)  A(k)\alpha  | \xrightarrow{x \to -\infty} +\infty$ and  $|F_1(x,k) \alpha |+ | G_2(x,k)| \xrightarrow{x \to -\infty} 0$
we get a contradiction. So $ A(k)\alpha =0$ and hence
$\det A(k)=0$ which by \eqref{eq:trrefl3} and the definition   of $ D(p, k)$  in \eqref{eq:defd}   is equivalent to   $\det D(p,k)=0$.  So we have proved the statement for    $ 0<\Im k \le \min \( 1-\varepsilon _0, \frac{\gamma}{2}\)$, a range which guarantees    the existence of all the Jost functions.  The argument extends however to $ 0<\Im k < 1$. The reason is the following. First of all $\varepsilon _0>0$ can be taken as small as desired which was fixed   in order to get the inequality in \eqref{eq:f12} for some $\varepsilon _1>0$.  Furthermore, if $V$ were compactly supported we could pick
$a_0=  1-\varepsilon _0$ and by the arbitrariness of $\varepsilon _0>0$ we would reach the whole range $ 0<\Im k < 1 $. Let us consider now a sequence $V_n(\cdot ) \xrightarrow{n\to +\infty} V (\cdot ) $ of compactly supported potentials   with the convergence occurring in
 $L^\infty _{p-1-\varepsilon} $  for any $\varepsilon >0$. Then it is easy to see that $D_n(\cdot , \cdot ) \xrightarrow{n\to +\infty} D (\cdot , \cdot ) $ uniformly on compact sets in the domain defined by $ 0<\Im k < 1 $ is an eigenvalue   with $p>p_1$ and this yields the statement of the lemma.

\qed

In view of Lemma \ref{lem:eig0} we have
\begin{equation}\label{eq:deftildeF1}
  \widehat{ \mathbf{F}} \cap (p_1, +\infty )\subseteq \{   p>p_1  : \  \det D(p,0)=0  \}   .
\end{equation}
Notice that  $D(p,k)$ depends analytically in $p>p_1$ and   for $-a_0<\Im k<1$.  By elementary computations we get
\begin{align*}
   \det D(3,k)= -4\im k \frac{1-k^2+2\im k}{1-k^2-2\im k}  \sqrt{2+k^2} \ \frac{3+k^2-2\sqrt{2+k^2}}{3+k^2+2\sqrt{2+k^2}}.
\end{align*}
This shows that $k=0$ is a root of multiplicity 1  for $ \det D(3,k)$.  Then  $ \det D(p,\cdot )=0$
   has exactly a single root close to 0   and it has  multiplicity 1 for  $0<|p-3|\ll 1 $.
Since for $0<|p-3|\ll 1 $   there is an eigenvalue $ \lambda(p) \in (0,1)$ very close to 1,  indeed  by Coles and Gustafson \cite{coles}      it satisfies
\begin{equation}\label{eq:lambdanear3}
         \lambda(p) = 1 - \alpha _0 (p-3) ^4 +o\( (p-3) ^4  \) \text{ for a constant $\alpha _0>0$,} \end{equation}
           by   Lemma  \ref{lem:eig000}    there exists $k(p) \in  \im \R _+$ very close to 0 such that  $\lambda(p) =1+ k^2(p)$  with $ \det D(p, k (p))= 0$. So this is the single root close to 0
and hence  $ \det D(p, 0)\neq  0$  for
$0<|p-3|\ll 1 $.  Hence, by the analytic dependence of $ D(p,0)$ in $p$,  the set $ \{   p>p_1 : \ \det D(p,0)=0  \}$ is a discrete subset of $(p_1,+\infty) $. But here $p_1>1$ can be taken arbitrarily close to  1, so we conclude that  $\{   p> 1 : \  \det D(p,0)=0  \}$ is a discrete subset of $( 1,+\infty) $,
 proving Proposition \ref{prop:nonres}.

\qed

\section{The analyticity  of $\gamma _3 (p) $ and proof of Proposition \ref{prop:n3FGR}} \label{sec:andep}

In this section, we prove Proposition \ref{prop:n3FGR}.
The idea is to use analyticity and show that $\gamma_3(\cdot )$ is not identically zero. Notice that in this section we will always consider $p>3- \varepsilon _0$ for some small $\varepsilon _0$ and we will  not take $p$ close to 1 like in Section \ref{sec:prop:nonres}.
We decompose  $\gamma_3(\cdot )$ as
\begin{align*}
    \gamma_3(p)=\<G_{3,1}(p),g_3(p)\> + \<G_{3,2}(p),g_3(p)\>=: \gamma_{3,1}(p) +\gamma_{3,2}(p),
\end{align*}
where $G_{3,j}(p)=(G_{(3,0),j},G_{(0,3),j})^\intercal $ for $j=1,2$, $G_{(3,0),1}$ is given by the 1st line of the r.h.s.\ of \eqref{def:G30} and $G_{(3,0),2}$ is given by the 2nd and 3rd line of the r.h.s.\ of \eqref{def:G30}.
Further, $G_{(0,3),j}$ are given by replacing all $\xi_{\mathbf{m}}$ in \eqref{def:G30} by $\xi_{\overline{\mathbf{m}}}$.

The functions $\gamma_{3,1}$ and $\gamma_{3,2}$ are initially defined on $(p_2,p_3)$. Proposition  \ref{prop:n3FGR} will be a consequence of Lemmas \ref{lem:gamma31}-- \ref{lem:Gamma23}  below.
\begin{lemma}\label{lem:gamma31}
There exists a connected open set $\mathfrak{A}_1\subset \C$ and a function $\Gamma_1$ holomorphic in $\mathfrak{A}_1$ such that \ $(p_2,p_3)\subset \mathfrak{A}_1$, $3\in \overline{\mathfrak{A}_1}$ and $\Gamma_1|_{(p_2,p_3)}=\gamma_{3,1}$.
Further, $\Gamma_1$ can be continuously extended to $p=3$.
\end{lemma}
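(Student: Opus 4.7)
My plan is to write $\gamma_{3,1}(p)$ as a pairing integral whose integrand depends holomorphically on $p$ in a complex neighbourhood $\mathfrak{A}_1$ of the interval $(p_2,p_3)$, and then to take the limit $p\to 3$ by dominated convergence, exploiting the exponentially decaying prefactor $\phi^{p-2}$ which appears in every term of $G_{(3,0),1}$.

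First I would establish holomorphy of the three ingredients of $\gamma_{3,1}$ in $p$. The family $H(p)$ is analytic of type (A), so by Kato perturbation theory the simple eigenvalue $\lambda(p)$ and (suitably normalised) eigenpair $(\xi_{(1,0)}(p),\xi_{(0,1)}(p))^{\intercal}$ extend holomorphically to an open complex neighbourhood of $(p_2,p_3)$, taking values in an exponentially weighted $L^2$-space. For any such $p$, either $\mathrm{Re}(2\lambda(p))\in (2/3,1)$ lies in the spectral gap or $\mathrm{Im}\,p\neq 0$ keeps $2\lambda(p)$ off the real spectrum of $H(p)$; in both cases $(H(p)-2\lambda(p))$ is invertible and equation \eqref{def:xi20} yields $(\xi_{(2,0)}(p),\xi_{(0,2)}(p))^{\intercal}$ holomorphic in $p$, with exponential spatial decay inherited from the source $(G_{(2,0)},-G_{(0,2)})^{\intercal}$. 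For $g_3(p)$, writing $3\lambda(p)=1+k(p)^2$ with the holomorphic branch $k(p)=\sqrt{3\lambda(p)-1}$, I would construct $g_3(p)$ as an explicit $\mathbb{C}$-linear combination of the Jost functions $f_1,f_2,g_1,g_2$ built in Lemmas \ref{lem:f3}--\ref{lem:f4}; joint holomorphy of these Jost functions in $(p,k)$, together with holomorphy of $k(p)$, yields holomorphy of $p\mapsto g_3(p)$ into $L^\infty_{\mathrm{loc}}$.

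Defining $\Gamma_1(p):=\langle G_{3,1}(p),g_3(p)\rangle$, the integrand is pointwise-in-$x$ holomorphic in $p$ on the open set $\mathfrak{A}_1$ where both of the above constructions are valid. The prefactor $\phi^{p-2}(x)\asymp e^{-(\mathrm{Re}\,p-2)|x|}$ as $|x|\to\infty$ supplies uniform exponential decay that dominates the at-worst-polynomial growth of the other $\xi_\mathbf{m}$-factors and the boundedness of $g_3$, uniformly on compact subsets of $\mathfrak{A}_1$. Morera's theorem then yields holomorphy of $\Gamma_1$ on $\mathfrak{A}_1$, and $\Gamma_1|_{(p_2,p_3)}=\gamma_{3,1}$ by construction. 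For the continuous extension at $p=3$, I would let $p\to 3$ along a path in $\mathfrak{A}_1$ and apply dominated convergence. As $p\to 3$ several degenerations occur simultaneously: $\lambda(p)\to 1$, so $\xi_{(1,0)},\xi_{(0,1)}$ lose their exponential decay and tend to bounded threshold states; $2\lambda$ enters the continuous spectrum, so $\xi_{(2,0)},\xi_{(0,2)}$ must be taken as limits via the limiting absorption principle (Lemma \ref{lem:LAP}); and $k(p)$ approaches the imaginary axis. However, each $\xi_\mathbf{m}$ remains bounded in a polynomially weighted norm uniformly near $p=3$, while the prefactor $\phi^{p-2}$ retains exponential decay rate close to $1$. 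The dominated-convergence argument then closes and defines $\Gamma_1(3)$ as a limit.

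The main obstacle is this last step: I must control the limiting behaviour of $\xi_{(2,0)},\xi_{(0,2)}$ as the argument $2\lambda(p)$ of the resolvent crosses from the spectral gap into the continuous spectrum, all while $\lambda(p)$ itself approaches the threshold $1$, and verify that the single exponential weight $\phi^{p-2}$ with rate close to $1$ uniformly dominates all of these degenerations. This is precisely the role of the split $\gamma_3=\gamma_{3,1}+\gamma_{3,2}$: the remainder $\gamma_{3,2}$ carries the weaker prefactor $\phi^{p-3}$ which is only bounded (not decaying) at $p=3$, so the same dominated-convergence strategy cannot be used for it and more delicate cancellation must be extracted in the subsequent lemmas.
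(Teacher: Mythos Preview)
Your overall strategy matches the paper's: establish holomorphy of each factor in $\gamma_{3,1}(p)$ on a complex domain reaching $p=3$, then extend continuously. However, two of your steps contain genuine gaps that the paper resolves with substantial additional machinery.

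\textbf{Invertibility of $H(p)-2\lambda(p)$.} Your dichotomy ``either $\Re(2\lambda(p))\in(2/3,1)$ or $\Im p\neq 0$'' does not yield invertibility on a connected domain reaching $p=3$. Along the reals $\lambda(p)\to 1$ as $p\to 3$, so $2\lambda(p)\to 2$ lies in the continuous spectrum; for complex $p$ the operator $H(p)$ is no longer self-adjoint and you have not shown $2\lambda(p)\notin\sigma(H(p))$. The paper handles this in two steps. First (Lemma~\ref{lem:Domdelta}) it chooses a specific wedge $\mathfrak C_{\theta_0\delta}$ in the lower half-plane with opening angle $\theta_0<\pi/6$, and uses the expansion $\lambda(p)=1-\alpha_0(p-3)^4+o((p-3)^4)$ to prove $\Im\lambda(p)>0$ throughout its interior; the angle restriction is forced by this \emph{fourth}-order degeneracy, since one needs $\sin 4\theta<0$. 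Second (Lemma~\ref{lem:xi2002extend}), even with $\Im\lambda(p)>0$ one must still remove a possibly nonempty discrete set $\mathfrak D_\delta$ where $2\lambda(p)$ happens to be a (complex) eigenvalue of the non-self-adjoint $H(p)$; discreteness is proved via a Puiseux-expansion argument. Your sketch misses both points.

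\textbf{Continuous extension at $p=3$.} Invoking Lemma~\ref{lem:LAP} alone is not enough: that lemma bounds $R_H^\pm(E)P_c$ between weighted spaces for \emph{fixed} real $p$, but here both the operator $H(p)$ and the spectral parameter $2\lambda(p)$ move with $p$, and the eigenvalue $\lambda(p)$ is simultaneously hitting the threshold $1$. The paper (Lemmas~\ref{lem:fp}--\ref{lem:extendtildezeta}) instead conjugates by the Darboux-type intertwiner $(S_p^*)^2$ via the identity $L_{-,p}L_{+,p}(S_p^*)^2=(S_p^*)^2 L_{3,p}L_{2,p}$, reducing \eqref{def:xi20} to a scalar equation $(L_{3,p}L_{2,p}-4\lambda(p)^2)\widetilde\zeta_p=f_p$; at $p=3$ this becomes $(-\partial_x^2-1)(-\partial_x^2+3)\widetilde\zeta_3=f_3$, for which the boundary value $(-\partial_x^2-1-\im 0)^{-1}$ is an explicit convolution and the limit is taken by hand. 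This factorisation is what makes the extension tractable; a generic limiting-absorption argument for the matrix operator $H(p)$ would have to confront the threshold resonance appearing exactly at $p=3$.
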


\begin{lemma}\label{lem:gamma32}
$\gamma_{3,2}$ can be analytically extended on $(3,p_3)$.
\end{lemma}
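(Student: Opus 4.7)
The plan is to represent $g_3(\cdot, p)$ explicitly in terms of the Jost functions of $H$ constructed in Section \ref{sec:prop:nonres}, thereby transferring the analyticity in $(p,k)$ of the Jost functions to analyticity of $\gamma_{3,2}(p)$ in $p$ on $(3, p_3)$.

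Set $k(p) := \sqrt{3\lambda(p)-1}$. On $(p_2, p_3)$ one has $k(p) \in (0, 1/\sqrt{2})$, and the formula extends real-analytically to $p \in (3, p_3)$, where $k(p)$ runs through $(0, \sqrt{2})$. This range lies in the interior of both $\mathbf{S}_1$ and $\mathbf{S}_4$, so by Lemmas \ref{lem:f3}, \ref{lem:f1} and \ref{lem:f4} the Jost functions $f_j(\cdot, k(p))$ and their left-end counterparts $g_j(\cdot, k(p))$ are real-analytic in $p$ with values in $L^\infty_{\mathrm{loc}}(\R,\C^2)$. Analyticity of $k(\cdot)$ itself on $(3,p_3)$ follows from Assumption \ref{ass:eigenvalues1} and the remark that $\mathcal L$ is of type $(A)$.

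Since $V$ is even in $x$, bounded even solutions of $Hu = (1+k(p)^2)u$ form a one-dimensional subspace, and $g_3(\cdot,p)$ generates it. On $[0,+\infty)$ any bounded solution is a linear combination of $f_1(\cdot,k(p))$, $f_2(\cdot,k(p))$, $f_3(\cdot,k(p))$ (the exponentially growing $f_4$ has zero coefficient), and matching to the analogous combination of $g_1,g_2,g_3$ on $(-\infty,0]$ via the transmission formulas of Lemma \ref{lem:trrefl}, together with the evenness constraint $\partial_x u(0)=0$, yields a homogeneous linear system for the coefficients whose matrix is built from the entries of $A(k), B(k)$ and of the Wronskian matrix $D(p,k)$ from \eqref{eq:defd}. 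The determinant of the associated reduced system does not vanish on $(3,p_3)$: by Lemma \ref{lem:eig0} a zero would force $1 + k(p)^2 = 3\lambda(p)$ to be an eigenvalue of $H$, but Proposition \ref{prop:knownspec}(3) rules out embedded eigenvalues above $1$, and (4) states that the only eigenvalue of $H$ in $(-1,1)$ is $\lambda(p)\neq 3\lambda(p)$. With a $p$-analytic normalization of $g_3$ (for instance, prescribing the coefficient of $f_1(\cdot,k(p))$ to be identically $1$), the coefficients, and hence $g_3(\cdot,p)$, are analytic in $p\in(3,p_3)$ with values in $L^\infty(\R,\R^2)$.

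Finally, $G_{3,2}(p)$ is a polynomial in the bound-state profiles $\xi_{(1,0)}(p), \xi_{(0,1)}(p)$, which are analytic in $p$ and exponentially decaying by the type $(A)$ analyticity of $\mathcal L$, multiplied by $\phi_p^{p-3}$, which for $p>3$ is positive, smooth, exponentially decaying in $x$ and analytic in $p$. Hence the integrand of
\begin{equation*}
  \gamma_{3,2}(p) = \langle G_{3,2}(p), g_3(\cdot, p)\rangle
\end{equation*}
depends analytically on $p\in(3,p_3)$ and is dominated locally uniformly by a fixed integrable function, so differentiation under the integral sign yields analyticity of $\gamma_{3,2}$. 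The main obstacle in this plan is verifying nonvanishing of $\det D(p,k(p))$ and the consequent existence of the explicit Jost representation of $g_3$; this reduces to the absence of discrete and embedded eigenvalues of $H$ at the value $3\lambda(p)$, handled above. A secondary technical point is that $p=3$ must be excluded from the extension, since there $k(p)\to\sqrt{2}$ touches the boundary of the analyticity strip of $f_3$, consistent with the lemma's open interval.
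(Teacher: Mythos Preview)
Your proposal is essentially correct and follows the same strategy as the paper: establish analyticity in $p$ of each building block—$g_3$ (via a Jost-function representation, cf.\ Lemma \ref{lem:analyg}), $\xi_{(1,0)}, \xi_{(0,1)}$ (cf.\ Lemma \ref{lem:analyxi01}), and $\phi_p^{p-3}$ (cf.\ Lemma \ref{lem:phip-3})—and then combine. The paper's proof is simply a two-line invocation of these three lemmas (stated and proved subsequently in the section), whereas you sketch their content inline; in particular your invertibility argument for the system defining $g_3$ via $\det D(p,k(p))\neq 0$ and the absence of embedded eigenvalues is exactly the mechanism behind Lemma \ref{lem:analyg}.

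One inaccuracy in your closing remark: the obstruction at $p=3$ is \emph{not} that $k(p)\to\sqrt{2}$ approaches the boundary of $\mathbf{S}_3$—it does not, since $\mathbf{S}_3$ constrains only $\Im k$ and here $k(p)$ is real. The genuine obstruction is that as $p\to 3^+$ the factor $\phi_p^{p-3}$ loses all decay and the decay rate of $\xi_{(1,0)}(p)$ degenerates (since $\lambda(p)\to 1$), so the integrand in $\gamma_{3,2}$ ceases to be integrable; this is why the paper restricts to $(3+\varepsilon,p_3)$ for arbitrary $\varepsilon>0$ and then lets $\varepsilon\downarrow 0$. Also, a small correction of attribution: the exponential decay of $\xi_{(1,0)},\xi_{(0,1)}$ stems from $\lambda(p)<1$ being strictly below the essential spectrum, not from type-(A) analyticity itself.
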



At $p=3$, we can compute the imaginary part of $\Gamma_1$ and $\Gamma_2$.
\begin{lemma}\label{lem:Gamma13}
    $ \displaystyle  \Im \Gamma_1(3):=\lim_{p\in \mathfrak{A}_1, p\to 3}\Im \Gamma_1(p)\neq 0$.
\end{lemma}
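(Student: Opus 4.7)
By Lemma \ref{lem:gamma31}, $\Gamma_1$ extends continuously to $p=3$, so the task is to compute the limit of $\Im\Gamma_1(p)$ as $p\to 3$ inside $\mathfrak{A}_1$ and show it is nonzero. I would do this by exploiting the Kaup-type closed forms \eqref{eq:plwave1}--\eqref{eq:plwave4} of the Jost functions available at $p=3$, reducing the computation to an explicit finite sum of oscillatory integrals in $\sech x, \tanh x$.

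The key point is to identify the source of the imaginary part. For real $p\in (p_2,p_3)$, every ingredient of $\gamma_{3,1}(p)$---namely $\xi_{(1,0)}, \xi_{(0,1)}, \xi_{(2,0)}, \xi_{(0,2)}$ and $g_3$---is real-valued, so $\gamma_{3,1}$ itself is real there. However, the continuation path from $(p_2,p_3)$ to $p=3$ in $\mathfrak{A}_1$ cannot remain on the real axis: at $p=p_2$ one has $2\lambda(p_2)=1$, which is precisely the threshold of $\sigma_{\rm ess}(H)$, and the operator $(H-2\lambda(p))^{-1}$ used in \eqref{def:xi20} to construct $\xi_{(2,0)}, \xi_{(0,2)}$ develops a threshold singularity. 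Hence $\mathfrak{A}_1$ is forced to bypass $p_2$ in a one-sided complex neighborhood, and the boundary value at $p=3$ picks up a Sokhotski--Plemelj contribution coming from $(H-2\lambda\mp\im 0)^{-1}$.

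The explicit evaluation of $\Im\Gamma_1(3)$ then proceeds in three steps. (i) Identify $\xi_{(1,0)}(3), \xi_{(0,1)}(3)$ as threshold states of $H$ at $E=1$, using \eqref{eq:plwave1}--\eqref{eq:plwave2} at $k=0$ together with Coles--Gustafson's asymptotic \eqref{eq:lambdanear3} to fix the symplectic normalization \eqref{eq:xinormliz1} in the limit $p\to 3^+$. (ii) Compute $\xi_{(2,0)}(3), \xi_{(0,2)}(3)$ as boundary values $(H-2\mp\im 0)^{-1}(G_{(2,0)}(3),-G_{(0,2)}(3))^\intercal$ via the Green's function built from \eqref{eq:plwave1}--\eqref{eq:plwave4} at $k=1$; only the imaginary parts, expressible as explicit pairings against the Jost functions at $k=\pm 1$, contribute to $\Im\Gamma_1$. (iii) Form $G_{(3,0),1}(3)=\phi\cdot(\xi_{(1,0)}\xi_{(2,0)}+\xi_{(1,0)}\xi_{(0,2)}+\xi_{(0,1)}\xi_{(2,0)})$ (noting that the $(p-3)\xi_{(0,1)}\xi_{(0,2)}$ term drops out automatically at $p=3$), and pair it with $g_3(3)$, which is expressible via the Jost functions at $k=\sqrt{2}$. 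The resulting $\Im\Gamma_1(3)$ is a finite linear combination of oscillatory integrals of the form $\int_{\mathbb R}P(\sech x,\tanh x)\, e^{\im\alpha x}\,dx$ with $\alpha\in\{\pm 1,\pm\sqrt{2},\pm(1\pm\sqrt{2})\}$, each computable by contour deformation through the poles of $\sech$ at $x=\im\pi/2$.

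The main obstacle is the final non-cancellation check: each integral evaluates to a rational multiple of $\pi\, e^{-\alpha\pi/2}$, and one must verify that the sum of these (linearly independent, since the $\alpha$'s are distinct positive reals) exponentials is not identically zero. This reduces to confirming that at least one of the residue coefficients surviving Steps (i)--(iii) is nonzero, a finite rational arithmetic check once the explicit formulas from Steps (i)--(iii) are substituted.
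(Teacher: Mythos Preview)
Your overall strategy is correct and parallels the paper's: the imaginary part of $\Gamma_1(3)$ arises from the limiting-absorption boundary value of $(H-2\lambda(p))^{-1}$ as $p\to 3$ through $\mathfrak{A}_1$, and all ingredients become explicit at $p=3$ via the Kaup formulas. Two points, however, separate your plan from the paper's execution.

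First, the paper does \emph{not} invert $(H-2-\im 0)^{-1}$ directly via the matrix Green's function built from the Jost functions at $k=1$ as you propose in Step~(ii). Instead it exploits the Darboux-type intertwining identity $L_{-,p}L_{+,p}(S_p^*)^2=(S_p^*)^2L_{3,p}L_{2,p}$ of \eqref{eq:intertwine}, which at $p=3$ collapses $L_{3,3}L_{2,3}$ to the \emph{constant-coefficient} scalar operator $(-\partial_x^2+1)^2$. The imaginary part of the relevant resolvent then comes from the elementary kernel $\tfrac{\im}{2}e^{\im|x|}$, giving $\Im\widetilde\zeta_3=b\cos x$ with a single constant $b>0$ (Lemma~\ref{lem:tildezeta3}), from which $\Im\zeta_1(3),\Im\zeta_2(3)$ follow by two differentiations. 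Your direct route through the $k=1$ matrix Green's function is viable but considerably heavier; the intertwining trick is what makes the computation tractable.

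Second, your final non-cancellation argument is not rigorous as stated. The integrals $\int P(\sech,\tanh)\cos(ax)\,dx$ evaluate to multiples of $\pi\,\sech(\tfrac{\pi a}{2})$ (not $\pi e^{-\pi a/2}$), and the surviving coefficients lie in $\mathbb{Q}(\sqrt 2)$, not in $\mathbb{Q}$. Linear independence of $\sech(\tfrac{\pi}{2}(\sqrt2-1))$ and $\sech(\tfrac{\pi}{2}(\sqrt2+1))$ over $\mathbb{Q}(\sqrt2)$ is a transcendence statement you have not supplied, so the claim that ``one nonzero coefficient suffices'' is unjustified. The paper sidesteps this entirely: after reducing via the recurrence \eqref{eq:relAan} to a two-term expression in those two $\sech$ values, it simply evaluates numerically to $\approx -0.203\neq 0$. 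If you want to avoid numerics you must supply the transcendence input, which is not elementary.
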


\begin{lemma}\label{lem:Gamma23}
	$\displaystyle \lim_{p\to 3+}\Im \gamma_{3,2}(p)=0$.
\end{lemma}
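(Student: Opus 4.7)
The proof will rest on the observation that both $G_{3,2}(p)$ and $g_3(p)$ are manifestly real-valued, so $\gamma_{3,2}(p)$ is real on its natural domain of definition, and its analytic extension to $(3,p_3)$ inherits this reality on the real axis.

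First I would verify the reality of $G_{3,2}$. Inspecting the 2nd and 3rd lines of \eqref{def:G30}, $G_{(3,0),2}$ equals $\tfrac{p-1}{48}\phi^{p-3}$ times a polynomial with real coefficients in the real-valued functions $\xi_{(1,0)}$ and $\xi_{(0,1)}$; the reality of the latter follows from the normalization \eqref{eq:reimxi1} combined with \eqref{def:xi10}--\eqref{def:xi01}. Since $\phi$ is real and positive, $G_{(3,0),2}$ is real-valued, and $G_{(0,3),2}$ is real-valued by the symmetry relating $(3,0)$ and $(0,3)$. Hence $G_{3,2}(p)$ takes values in $\R^2$.

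Second, by the convention stated immediately before \eqref{eq:eqsatg2}, the bounded solution $g_3\in L^\infty(\R,\R^2)\setminus\{0\}$ is real-valued. Therefore the pairing
$$
\gamma_{3,2}(p)=\< G_{3,2}(p),g_3(p)\>\in\R
$$
for every $p\in(p_2,p_3)$.

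Third, Lemma \ref{lem:gamma32} furnishes an analytic extension of $\gamma_{3,2}$ to $(3,p_3)$. Applying the identity principle to the analytic function $p\mapsto\Im \gamma_{3,2}(p)$, which vanishes on the non-discrete subset $(p_2,p_3)$ of the connected set $(3,p_3)$, one concludes that $\Im\gamma_{3,2}(p)=0$ throughout $(3,p_3)$. In particular, $\lim_{p\to 3+}\Im\gamma_{3,2}(p)=0$.

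In contrast with Lemma \ref{lem:Gamma13}, whose proof requires computing an explicit imaginary residue arising from limiting absorption applied to $\xi_{(2,0)},\xi_{(0,2)}$ when $2\lambda(p)$ crosses the continuous spectrum threshold $1$ (which happens as $p\to 3+$ since $2\lambda(3)=2>1$), the present lemma is a soft consequence of reality: $G_{3,2}$ depends only on the genuine $L^2$-eigenfunctions $\xi_{(1,0)},\xi_{(0,1)}$ and therefore never involves such a threshold crossing. The only minor point to check is that the analytic extension of Lemma \ref{lem:gamma32} is indeed available, which in turn rests on analytic perturbation theory for the simple discrete eigenvalue $\lambda(p)$ of $H$ (cf.\ the remark after Assumption \ref{ass:eigenvalues1}) and on the analytic dependence of the Jost functions on $p$ established in Lemmas \ref{lem:f3}--\ref{lem:f4}, from which one builds $g_3(p)$ analytically on $(3,p_3)$.
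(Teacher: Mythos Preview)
Your proof is correct and follows the same approach as the paper: establish that $\gamma_{3,2}$ is real-valued on $(p_2,p_3)$, invoke the analytic extension to $(3,p_3)$ from Lemma \ref{lem:gamma32}, and conclude by the identity theorem that the extension remains real on the whole interval. The paper's own proof is just these two sentences, so your version simply supplies the details (reality of $G_{3,2}$ via the explicit formula and of $g_3$ via its definition) that the paper leaves implicit.
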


\begin{proof}
The function $\gamma_{3,2}$ is real valued at $(p_2,p_3)$.
Thus, its analytic extension on $(3,p_3)$, given in Lemma \ref{lem:gamma31}, is also real valued.
\end{proof}

Before proving lemmas \ref{lem:gamma31}, \ref{lem:gamma32} and \ref{lem:Gamma13}, we prove Proposition \ref{prop:n3FGR}.
\begin{proof}[Proof of Proposition \ref{prop:n3FGR}]
Since $\gamma_{3,2}$ is analytic on $(3,p_3)$, we can analytically extend it on an open set $\mathfrak{A}_2\subset \C$.
Then, from Lemmas \ref{lem:gamma31} and \ref{lem:gamma32}, we define $\Gamma=\Gamma_1+\gamma_{3,2}$ in $\mathfrak{A}=\mathfrak{A}_1\cap \mathfrak{A}_2$.
Then, $\Gamma$ is holomorphic in $\mathfrak{A}$, $\Gamma|_{(p_2,p_3)}=\gamma_3$ (notice that $(p_2,p_3)\subset \mathfrak{A}$) and $\Im \Gamma$ can be continuously extended to $p=3\in \overline{\mathfrak{A}}$ with $\lim_{p\in \mathfrak{A},\ p\to 3}\Im \Gamma(p)\neq 0$.
Thus, $\Gamma$ is a nonzero holomorphic function in $\mathfrak{A}$ and therefore has at most a discrete  zero sets in $(p_2,p_3)$.
From $\Gamma|_{(p_2,p_3)}=\gamma_3$, we have the conclusion.
 \end{proof}

The proof of Lemmas \ref{lem:gamma31}, \ref{lem:gamma32} consists of carefully studying the analytic extension of $\phi_p$, $g_3(p)$, $\xi_{\mathbf{ m}}(p)$, which are the building blocks of $\gamma_{3,1}$ and $\gamma_{3,2}$.
For the proof of Lemma \ref{lem:Gamma13} we compute explicitly the above building blocks, which is possible due to the good factorization property of the linearized operator $\mathcal{L}$ at $p=3$.

We start from the analyticity of the eigenvalue $\lambda(p)$.

 \begin{lemma} \label{lem:analambda} There exists a small $\varepsilon _0>0$ such that the map $ p\to \lambda (p)$ is   in $ C^\omega  \( (3-\varepsilon _0, 5), \C \) $.
    \end{lemma}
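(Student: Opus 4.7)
The plan is to combine Kato's analytic perturbation theory on $(3,5)$ with an analytic implicit function argument at the threshold $p=3$, using the Jost-function determinant $D(p,k)$ constructed in Section~\ref{sec:prop:nonres}.

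First I would treat $p\in(3,5)$ by Kato's theory. The remark following Assumption \ref{ass:eigenvalues1} already notes that $\mathcal{L}$ is analytic of type (A) in $p\in(1,5)$. For each $p_0\in(3,5)$, the eigenvalue $\im\lambda(p_0)$ is simple by Proposition \ref{prop:knownspec}(5) and isolated, lying at distance $\min(\lambda(p_0),1-\lambda(p_0))>0$ from both $0$ and the essential spectrum. Kato's analytic perturbation theorem (see \cite[Ch. 12]{reedsimon}) then supplies a holomorphic extension of $p\mapsto\lambda(p)$ to a complex neighbourhood of $p_0$, so $\lambda\in C^\omega((3,5),\C)$.

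The delicate point is extending analyticity across $p=3$, where $\lambda(3)=1$ sits at the edge of the essential spectrum and Kato's theorem no longer applies directly. Here I would use $D(p,k)$ from \eqref{eq:defd}. By Lemmas \ref{lem:f3}--\ref{lem:f4}, the Jost functions $f_1,f_3,g_1,g_3$ are jointly analytic in $(p,k)$ in a complex neighbourhood of $(3,0)$, hence so is $D$. From the explicit formulas \eqref{eq:plwave1}--\eqref{eq:plwave4} I would compute
\begin{equation*}
\det D(3,k)=-4\im k\,\frac{1-k^2+2\im k}{1-k^2-2\im k}\,\sqrt{2+k^2}\,\frac{3+k^2-2\sqrt{2+k^2}}{3+k^2+2\sqrt{2+k^2}},
\end{equation*}
which vanishes at $k=0$ to first order, with $\partial_k\det D(3,0)=-4\im\,\sqrt{2}\,(3-2\sqrt{2})^2\neq 0$. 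The analytic implicit function theorem then yields a unique holomorphic $k=k(p)$ on a complex disk about $p=3$ with $k(3)=0$ and $\det D(p,k(p))=0$, so that $\lambda(p):=1+k(p)^2$ is holomorphic near $p=3$.

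It remains to glue the two branches. By Lemma \ref{lem:eig000} any eigenvalue of $\mathcal{L}$ of the form $\im(1+k^2)$ with $\Im k>0$ small corresponds to a zero of $\det D(p,\cdot)$; the implicit function theorem guarantees uniqueness of such a small zero, so for $p\in(3,3+\delta)$ the $\lambda(p)$ arising from the threshold construction coincides with the unique eigenvalue of Proposition \ref{prop:knownspec}(4). By the identity principle this threshold continuation then agrees with the Kato branch on all of $(3,5)$, producing $\lambda\in C^\omega((3-\varepsilon_0,5),\C)$ for some $\varepsilon_0>0$. The main obstacle is precisely this threshold step: verifying the joint holomorphy of $D(p,k)$ in an honest complex neighbourhood of $(3,0)$, which is exactly the content of the analytic statements in Lemmas \ref{lem:f3}--\ref{lem:f4}, together with the simple-zero hypothesis $\partial_k\det D(3,0)\neq 0$ furnished by the explicit computation above.
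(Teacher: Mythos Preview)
Your proof is correct and follows the same two-part strategy as the paper: Kato's analytic perturbation theory away from the threshold, and an analytic implicit function argument at $p=3$. The difference lies in which implicit equation is used at the threshold. The paper invokes the Birman--Schwinger-type equation from \cite[Lemma 11.3]{CM24D1}, namely $\alpha=-\tfrac{p-3}{2}s(p,\alpha)$ with $\lambda(p)=1-\alpha(p)^2$, where $s$ is built from the operators $M_{\alpha p}$, $\mathbf{P}_p^{1/2}$ displayed in \eqref{eq:implalpha1}. You instead use the Jost determinant $\det D(p,k)=0$ from Section~\ref{sec:prop:nonres} and set $\lambda(p)=1+k(p)^2$; this is in fact the same computation the paper carries out later in that section to prove Proposition~\ref{prop:nonres}, so your route is arguably more self-contained within this paper. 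Both approaches hinge on the same nondegeneracy at $(3,0)$, which you verify explicitly via $\partial_k\det D(3,0)\neq 0$.

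One small point to tighten: you only match the implicit-function branch with the Kato eigenvalue on $(3,3+\delta)$, but you should also note that for $p\in(3-\varepsilon_0,3)$ the function $1+k(p)^2$ coincides with the eigenvalue of Proposition~\ref{prop:knownspec}(4). This follows by the identical uniqueness argument via Lemma~\ref{lem:eig000}, since Coles--Gustafson \cite{coles} guarantees the eigenvalue is close to $1$ on that side as well. The paper covers $(3-\varepsilon_0,3)$ directly by Kato (the eigenvalue being simple and isolated there too), which is another option.
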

 \proof
  From \eqref{eq:opH} we see that $H$ is analytic of type $(A)$  in $p>1$  and this implies that the simple eigenvalue $\lambda (p)$ depends analytically on $p \in (3, 5)\cup (3-\varepsilon _0,3)$, see \cite[Ch. 12]{reedsimon}. Notice here that for $p\in (3, 5)$ away from $3$ we are relying on Assumption \ref{ass:eigenvalues1} while for    $p\in (3-\varepsilon _0,3 )$  and  $p\in (3, 3+\varepsilon   )$ on the facts proved by Coles and Gustafson \cite{coles}, see also \cite[Lemma 11.3]{CM24D1}.
 In fact, inspection of the proof  in \cite[Lemma 11.3]{CM24D1} shows that the above extends to the statement  that  $ p\to \lambda (p)$ is analytic in $p \in (3-\varepsilon _0, 5)$. Indeed $\lambda (p)=1-\alpha ^2(p)$
 where, for $p$ near 3, $\alpha (p)$ is the implicit function of the equation
 \begin{align}\label{eq:implalpha}
	\alpha  =-\frac{p-3}{2}
	s(p,\alpha )   \text{  with }   s(p,\alpha ):= \<   |\mathbf{P}_p| ^{\frac{1}{2}}(1+(p-3) M _{\alpha p} ) ^{-1}  \mathbf{P}_p^{\frac{1}{2}}   e_2   , e_2 \> _{\C ^2} .
\end{align}
 where here for    the $\phi_p (x)$  in  \eqref{eq:sol}   we have
 \begin{equation}\label{eq:implalpha1}
    \begin{aligned}
      &|\mathbf{P}_p(x) | ^{\frac{1}{2}}  :=   \begin{pmatrix}
		1+\sqrt{p-2} & 1-\sqrt{p-2}\\
		1-\sqrt{p-2} & 1+\sqrt{p-2}
	\end{pmatrix}      \frac{1}{2 \sqrt{p+1} }\phi_p  ^{\frac{p-1}{2}} (x)  \ ,  \\ &\mathbf{P}_p ^{\frac{1}{2}}(x)  :=    \sigma _1  |\mathbf{P}_p(x) | ^{\frac{1}{2}} =   \begin{pmatrix}
		1-\sqrt{p-2} & 1+\sqrt{p-2}\\
		1+\sqrt{p-2} & 1-\sqrt{p-2}
	\end{pmatrix}      \frac{1}{2 \sqrt{p+1} }\phi_p  ^{\frac{p-1}{2}} (x)\  , \\&  M _{\alpha p}  := \mathbf{P}_p^{\frac{1}{2}} N _{\alpha  }|\mathbf{P}_p| ^{\frac{1}{2}} \text{ and }\\&  N _{\alpha  }(x,y)= \frac{1}{2\alpha}    \begin{pmatrix} \frac{\alpha}{\sqrt{2- \alpha^2}} e^{-\sqrt{2- \alpha^2} |x-y|} &   0  \\  0     &   e^{-\alpha |x-y|} -1 \end{pmatrix}
    \end{aligned}
 \end{equation}
 with in the last line   the integral kernel of the operator $ N _{\alpha  }$. Since the function in \eqref{eq:implalpha}  is analytic in $(p,\alpha )$  for $p\in (1,5)$, and $0<\Re \alpha < \sqrt{2}$  it follows that the implicit function  $\alpha (p)$ is analytic for    $p$ in a neighborhood of  3, see     Dieudonn\'e  \cite[p. 272]{dieu}, and therefore $\alpha (\cdot )\in C^\omega ((3-\varepsilon _0, 5), \R ) $.
 \qed


 We next discuss the $g_3$ given in \eqref{eq:eqsatg2}. We will use the spaces \eqref{L2g}.

   \begin{lemma}\label{lem:analyg}
  There exists a   $\varepsilon _0 >0$ and a $\R^2$-valued function
  \begin{align}\label{eq:analyg3}
    g_3 \in C^\omega \( (3- \varepsilon _0, p_3),  L^\infty  _{-s}(\R ) \)
  \end{align}
    for any $s>0$  
    such that $Hg_3(p) =3\lambda (p) g_3(p)$    for any $p$  and
\begin{align}\label{eq:analyg1}
    g_3(3)=\begin{pmatrix}
    \cos(\sqrt{2}x)  + \mathrm{sech}^2(x) \cos(\sqrt{2}x) - 2\sqrt{2} \tanh(x) \sin(\sqrt{2}x) \\
    \sech^2(x) \cos(\sqrt{2}x)
    \end{pmatrix}.
\end{align}
 \end{lemma}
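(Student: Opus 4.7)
}

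Since $\lambda(p_3) = 1/3$ and $\lambda$ is decreasing on $(3-\varepsilon_0, 5)$ (Assumption~\ref{ass:eigenvalues1} together with \eqref{eq:lambdanear3}), the energy $E(p) := 3\lambda(p)$ satisfies $E(p) > 1$ on $(3-\varepsilon_0, p_3)$, lying in the essential spectrum of $H$ and away from the threshold $E = 1$. Set $k(p) := \sqrt{E(p) - 1}$; by Lemma~\ref{lem:analambda}, $p \to k(p)$ is real-analytic and positive, with $k(3) = \sqrt{2}$. The objective is to build $g_3(p)$ as an analytic family of nonzero bounded even real solutions of $Hg = E(p)g$, normalized so that $g_3(3)$ matches \eqref{eq:analyg1}.

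Because $H$ has real coefficients and the potential is even, complex conjugation and reflection $x \mapsto -x$ both preserve the solution space of $Hg = E(p) g$; since $E(p)$ is neither an embedded eigenvalue (Proposition~\ref{prop:knownspec}(3)) nor a threshold, the bounded solutions form a two-complex-dimensional space whose even, real subspace is one-dimensional. To expose an element analytically in $p$, start with the Jost function $f_1(\cdot, k(p))$ of Lemma~\ref{lem:f1}, bounded at $+\infty$ but possibly containing the growing mode $g_4$ at $-\infty$. Subtracting a suitable multiple $\mu(p) f_3(\cdot, k(p))$ of the Jost function of Lemma~\ref{lem:f3} (which decays at $+\infty$) cancels that mode. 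The coefficient $\mu(p)$ is an explicit Wronskian ratio, analytic in $p$ wherever its denominator is nonzero, giving a bounded complex solution $\phi(x, p) := f_1(x, k(p)) - \mu(p) f_3(x, k(p))$. Then
\begin{align*}
g_3(p)(x) := c(p)\, \Re\bigl[\phi(x, p) + \phi(-x, p)\bigr]
\end{align*}
is real, even, bounded, and satisfies $Hg_3(p) = E(p) g_3(p)$.

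The identification at $p = 3$ proceeds by direct computation using the closed-form expressions \eqref{eq:plwave1}, \eqref{eq:plwave3}. At $k = \sqrt{2}$ one finds that $f_1(x, \sqrt{2})$ is already bounded on all of $\R$ (its limit as $x \to -\infty$ is $\tfrac{-7-4\sqrt{2}\,\im}{9} e^{\im \sqrt{2}x}e_1$, so the $g_4$-component vanishes and $\mu(3) = 0$), and that $\Re[f_1(x, \sqrt{2}) + f_1(-x, \sqrt{2})]$ equals $\tfrac{2}{9}$ times the vector in \eqref{eq:analyg1}. Setting $c(3) = 9/2$ fixes the normalization; an analytic extension of $c(\cdot)$ (for instance taking $c$ constant on an appropriate subinterval) yields the desired family. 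Analyticity of $p \to g_3(p) \in L^\infty_{-s}(\R)$ for any $s > 0$ follows from the joint analyticity of the Jost functions in $(p, k)$ (Lemmas~\ref{lem:f1}, \ref{lem:f3}) and the trivial inclusion $L^\infty(\R) \subset L^\infty_{-s}(\R)$ with the weight absorbing the uniform bound.

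The main technical hurdle is ensuring the construction extends to the full interval $(3-\varepsilon_0, p_3)$: namely, that the denominator defining $\mu(p)$ does not vanish and that $g_3(p) \not\equiv 0$ throughout. Both hold at $p = 3$ by the explicit formulas and hence on an open neighborhood by analyticity. To cover the full interval one uses the one-dimensionality of the even, real, bounded solution space together with a connectedness/continuation argument, shrinking $\varepsilon_0$ or patching constructions across problematic values of $p$ if necessary.
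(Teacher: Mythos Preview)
Your overall strategy---build the bounded generalized eigenfunction as an analytic-in-$p$ linear combination of Jost functions, then identify the value at $p=3$ from the closed forms \eqref{eq:plwave1}--\eqref{eq:plwave4}---is exactly the paper's. The difficulty is your treatment of the full interval $(3-\varepsilon_0,p_3)$.

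Your combination $\phi = f_1-\mu f_3$, with $\mu$ chosen to kill the growing $g_4$-mode at $-\infty$, has denominator equal (up to a nonzero factor) to the single matrix entry $A_{22}(k(p))$; nothing rules out its vanishing at isolated $p\in(3,p_3)$, and your ``patching/continuation'' remedy is not spelled out (patching local analytic sections of a line bundle does not automatically yield a single global analytic section). The paper avoids this by taking instead the Krieger--Schlag combination
\[
\widetilde g_3(p)\;=\;2\im\kappa\,F_1(\cdot,\kappa)\,D(p,\kappa)^{-1}e_1,\qquad \kappa=\kappa(p),
\]
whose only obstruction is $\det D(p,\kappa(p))=0$. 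By Lemma~\ref{lem:eig0} this would force $3\lambda(p)$ to be an embedded eigenvalue of $H$, which is excluded on the entire interval by Proposition~\ref{prop:knownspec}(3). One spectral fact thus yields the construction on all of $(3-\varepsilon_0,p_3)$ simultaneously, with no patching.

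A second, smaller gap: your symmetrization $\Re[\phi(x,p)+\phi(-x,p)]$ does not by itself give analyticity in $L^\infty_{-s}(\R)$, because Lemmas~\ref{lem:f3} and~\ref{lem:f1} only control $f_1,f_3$ as analytic families in $L^\infty_{-s}(x_0,+\infty)$. The paper handles this by exhibiting a second representation of $\widetilde g_3$ on the left half-line, namely $g_2 + G_2\,B A^{-1}e_1$ (again from \cite[Lemma~6.3]{KrSch}), which is manifestly analytic in $L^\infty_{-s}(-\infty,x_0)$; gluing the two gives the claim on all of~$\R$.
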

 \begin{proof}
Notice that using the function  $f_1(x,k)$ in    \eqref{eq:plwave1} we have
\begin{align*}
   g_3(3)=- \Re \left [ \left . \(  1-k^2 -2\im k \) f_1(x,k) \right | _{k=\sqrt{2}} \right ] .
\end{align*}
By Lemma \ref{lem:f1} the function    $f_1(x,k)$    is analytic in $(k,p)$  for $p>3-\varepsilon _0$  and $ k= \alpha+\im \beta $ with $-\( 1 -\frac{\varepsilon_0}{2}\)\le \beta \le 1 - {\varepsilon_0}$ where we can take as $\varepsilon _0>0$ the same number of Lemma \ref{lem:f1}. By  Lemma  \ref{lem:f3}   the function    $f_1(x,k)$    is analytic in $(k,p)$ in the same set. So the same holds for
    $ F_1 (x,k) = (f_1 (x,k), f_3 (x,k) )$.      Now recall
that by  Krieger and Schlag \cite[Lemma 6.3]{KrSch}, for $e_1$ like in \eqref
 {eq:versors}  and in the notation in \S \ref{sec:prop:nonres} , we have     $\widetilde{g}_3(p)(\cdot )\in L^\infty (\R ) $ for
\begin{align}\label{eq:analyg2}
 \widetilde{ g}_3(p)(\cdot ):= 2\im \kappa  F_1(\cdot , \kappa ) D^{-1}(p,\kappa ) e_1  \text{ with }   \kappa= \kappa(p) =\sqrt{9\lambda ^2(p)-1} .
\end{align}
It is easy to show that $  \widetilde{g}_3 (\cdot )\in  C^0\(  (3- \varepsilon _0, p_3),   L^\infty (\R ) \)$.
Notice that by Lemmas \ref{lem:f3} and \ref{lem:f1} for any $s>0$ we have $F_1(\cdot , \kappa \( \cdot )  ) \in C ^{\omega} (3- \varepsilon _0, p_3),   L^\infty _{-s} (x_0,+\infty ) \)$ for any  $x_0\in \R$.
From  \cite[Lemma 6.3]{KrSch}  we see also that
\begin{align*}
   \widetilde{ g}_3(p)(x )&=    f_2(-x, \kappa(p) ) +  G_2 (x, \kappa(p) ) B(\kappa(p))A^{-1}(\kappa(p)) e_1
\end{align*}
where the right hand side is in  $  C ^{\omega} \( (3- \varepsilon _0, p_3),   L^\infty _{-s} (-\infty , x_0 ) \)$ for any  $x_0\in \R$, see \eqref{eq:jost21} and below. Then we can glue the statements and conclude that  $\widetilde{g}_3 (\cdot )\in C^\omega \( (3- \varepsilon _0, p_3),  L^\infty  _{-s}(\R ) \)$.

\noindent   Since  a direct simple inspection shows that $ D (3, k)$  is diagonal,    formula  \eqref{eq:analyg2}
yields
for $p=3$ a multiple of the  Jost function in  \eqref{eq:plwave1}. Multiplying by an appropriate constant and taking the real part  we get the desired function    $ { g}_3  (p)$.
\end{proof}

We omit the simple proofs of the following two lemmas, which can be proved  using formula  \eqref{eq:sol}.
 \begin{lemma}\label{lem:phip-2}
There exists $\varepsilon_0>0$ such that  $\phi_p^{p-2}\in C^\omega((3-\varepsilon_0,5),L^2_{1/2})$
 \end{lemma}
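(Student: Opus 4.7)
My plan is to leverage the explicit formula
\begin{equation*}
\phi_p^{p-2}(x)=\left(\frac{p+1}{2}\right)^{(p-2)/(p-1)}\sech^{2(p-2)/(p-1)}\!\left(\tfrac{p-1}{2}x\right),
\end{equation*}
derived immediately from \eqref{eq:sol}, to extend $p\mapsto\phi_p^{p-2}$ holomorphically to a complex strip around the real interval $(3-\varepsilon_0,5)$, establish a locally uniform $L^2_{1/2}$-bound, and then upgrade pointwise analyticity in $p$ to strong analyticity into $L^2_{1/2}$.

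First I would fix $\varepsilon_0\in(0,1/2)$ and a small $\delta>0$, and set $\Omega:=\{p\in\C:\Re p\in(3-\varepsilon_0,5),\ |\Im p|<\delta\}$. For each real $x$, every factor on the right-hand side is holomorphic in $p\in\Omega$: since $\Re((p+1)/2)>7/4$ on $\Omega$, the principal branch of $((p+1)/2)^{(p-2)/(p-1)}$ is well-defined, and $\cosh((p-1)x/2)$ is nonvanishing on $\R$ (its zeros would force the argument to be purely imaginary, whereas $\Re((p-1)/2)>3/4$ makes $\Re((p-1)x/2)=0$ only at $x=0$, where $\cosh=1$). One then defines $\sech^{2(p-2)/(p-1)}((p-1)x/2)$ by analytic continuation from the real axis, consistent on the simply connected domain $\Omega$. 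This gives pointwise-in-$x$ holomorphy $\Omega\ni p\mapsto\phi_p^{p-2}(x)\in\C$.

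The key step will be a uniform $L^2_{1/2}$-bound on compact subsets $K\subset\Omega$. The driving observation is that the spatial decay of $\phi_p$ at infinity is governed by the linear part of $-\phi''+\phi=\phi^p$, giving $\phi_p(x)\sim c_p e^{-|x|}$ as $|x|\to\infty$, so $\phi_p^{p-2}(x)\sim c_p^{p-2}e^{-(p-2)|x|}$. More quantitatively, combining the explicit formula with the algebraic identity $\frac{2(p-2)}{p-1}\cdot\frac{p-1}{2}=p-2$ yields
\begin{equation*}
|\phi_p^{p-2}(x)|\lesssim_K e^{-\Re(p-2)|x|}\quad\text{uniformly in }x\in\R,\ p\in K,
\end{equation*}
so $\|\phi_p^{p-2}\|_{L^2_{1/2}}^2\lesssim_K\int_\R e^{\langle x\rangle-2\Re(p-2)|x|}dx<\infty$. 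This integral is finite precisely because $\Re p>5/2$ on $\Omega$, which is the reason I impose $\varepsilon_0<1/2$.

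Finally, pointwise holomorphy plus local norm-boundedness implies strong holomorphy $\Omega\to L^2_{1/2}$ by a standard Banach-space criterion. Concretely, for any $v\in L^2_{-1/2}=(L^2_{1/2})^*$, Morera's theorem applied to $p\mapsto\int v(x)\overline{\phi_p^{p-2}(x)}\,dx$ (interchanging $\int_\R dx$ with a small contour integral in $p$, justified by the dominating bound above) establishes weak analyticity; combined with local boundedness this upgrades to strong analyticity, and restriction to the real axis yields the lemma. The main obstacle is the algebraic cancellation $\alpha(p)\beta(p)=p-2$ for complex $p$: without it one would have to track complex powers of $\sech$ through a region where the phase of the argument grows linearly in $x$, and that identity is precisely what renders the decay rate oscillation-free and equal to $\Re(p-2)$.
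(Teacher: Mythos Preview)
Your argument is correct and is exactly what the paper has in mind: the paper omits the proof entirely, saying only that it can be obtained from the explicit formula \eqref{eq:sol}, and you have carried out precisely that computation with the appropriate $L^2_{1/2}$ bookkeeping. One cosmetic slip: in the weak-holomorphy step the pairing should read $p\mapsto\int v(x)\,\phi_p^{p-2}(x)\,dx$ (no conjugate on $\phi_p^{p-2}$), since conjugating a holomorphic function of $p$ gives an anti-holomorphic one; with that fix the Morera/Fubini argument goes through as you describe.
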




\begin{lemma}\label{lem:phip-3}
For any $\varepsilon>0$, there exists $\delta>0$ such that  $\phi_p^{p-3}\in C^\omega((3+\varepsilon,p_3),L^2_\delta)$.
\end{lemma}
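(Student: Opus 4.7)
\textit{Proof plan for Lemma \ref{lem:phip-3}.}

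The plan is to exploit the explicit formula \eqref{eq:sol}, which gives
\[
\phi_p^{p-3}(x) = \left(\frac{p+1}{2}\right)^{\frac{p-3}{p-1}} \sech^{\frac{2(p-3)}{p-1}}\!\left(\frac{p-1}{2}\,x\right),
\]
and to show that this expression extends holomorphically in $p$ to a complex neighborhood of the interval $[3+\varepsilon,\,p_3)$, with values in the complexification of $L^2_\delta$ for a suitable $\delta = \delta(\varepsilon) > 0$. I would define the complex powers via principal branches, $c^\beta := \exp(\beta \log c)$ for $c$ near the positive real axis and $\sech^\alpha(y) := \exp(\alpha \log \sech(y))$. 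For real $x$ and complex $p$ in a small neighborhood of a compact subset $K \subset (3+\varepsilon, p_3)$, the argument $(p-1)x/2$ lies in a thin horizontal strip that avoids the poles of $\sech$, so the definition is unambiguous and $p \mapsto \phi_p^{p-3}(x)$ is pointwise holomorphic.

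The key step is a uniform exponential decay estimate. From $\sech(z) = 2/(e^z + e^{-z})$, for $|x|$ large one has $\log|\sech((p-1)x/2)| = -\Re(p-1)|x|/2 + O(1)$, hence
\[
|\phi_p^{p-3}(x)| \lesssim e^{-\Re(p-3)\,|x|}
\]
uniformly in $x$ and in $p$ ranging over the complex neighborhood. Shrinking that neighborhood so that $\Re(p-3) \ge 3\varepsilon/4$ throughout, and then setting $\delta := \varepsilon/4$, one obtains a uniform bound $\|\phi_p^{p-3}\|_{L^2_\delta} \le C_K$ on the complex neighborhood of $K$.

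To pass from pointwise analyticity to Banach-space valued analyticity, I would invoke the standard criterion: pointwise holomorphy in $p$ for each fixed $x$, together with a uniform $L^2_\delta$ bound on a complex neighborhood, yields via Cauchy's integral formula in $p$ and Fubini a norm-convergent power series representation of $p \mapsto \phi_p^{p-3} \in L^2_\delta$ on a disk around each $p_0 \in (3+\varepsilon, p_3)$, which is exactly the meaning of $C^\omega((3+\varepsilon, p_3), L^2_\delta)$.

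The only genuine subtlety, and the reason for the $\varepsilon$-dependence of $\delta$, is the trade-off between the exponential decay rate and the weight: $\phi_p^{p-3}$ decays at exactly the rate $e^{-(p-3)|x|}$, so $L^2_\delta$ membership forces $\delta < p-3$; consequently $\delta$ cannot exceed $\varepsilon$, and the conclusion cannot be pushed to $p=3$ where the function degenerates to the constant $1 \notin L^2_\delta$ for any $\delta>0$. Past this book-keeping no real obstacle arises; everything reduces to the explicit asymptotics of $\sech$.
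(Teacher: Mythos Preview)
Your approach is exactly what the paper intends: it omits the proof as ``simple'' and says it follows from the explicit formula \eqref{eq:sol}, and you carry out precisely that computation together with the standard passage from pointwise to Banach-space-valued analyticity. One small slip: for complex $p$ and real $x$, the argument $(p-1)x/2$ does \emph{not} stay in a thin horizontal strip (its imaginary part is $\Im(p)\,x/2$, unbounded in $x$); the reason the poles of $\sech$ are avoided is rather that those poles lie on the imaginary axis while the line $\{(p-1)x/2:x\in\R\}$ meets $i\R$ only at the origin, since $\Re(p-1)>0$. This does not affect your decay estimate or the rest of the argument.
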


For $(\xi_{(1,0)},\xi_{(0,1)})^\intercal$, which is an eigenvector of $H$ associated to the eigenvalue $\lambda(p)$, we have the following.

\begin{lemma}\label{lem:analyxi01} There exists $\varepsilon _0>0$ such that
for any
  $s>0$   and for $m=(1,0) , (0,1)$  there exist real valued functions $\xi_{\mathbf{m}} \in C^\omega ((3-\varepsilon_0,p_3),L^\infty_{-s}(\R))$
   satisfying \eqref{eq:evlambda} and
\begin{align}\label{eq:xiat3}
\begin{pmatrix}
\xi_{(1,0)}(3)\\
\xi_{(0,1)}(3)
\end{pmatrix}
=\begin{pmatrix}
1-\sech^2(x)\\
-\sech^2(x)
\end{pmatrix}.
\end{align}
\end{lemma}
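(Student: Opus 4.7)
The plan is to mimic the strategy of Lemma \ref{lem:analyg}, constructing the eigenvector from an analytic family of Jost functions. The essential new difficulty is that $\lambda(p)$ is an eigenvalue of $H$, so instead of inverting a matrix as in Lemma \ref{lem:analyg}, we must extract an analytic section from a nontrivial kernel. Set $k(p):=\im\alpha(p)$, where $\alpha(p)$ is the analytic function of Lemma \ref{lem:analambda} satisfying $\alpha(p)^2=1-\lambda(p)$ and $\alpha(3)=0$; then $\lambda(p)=1+k(p)^2$ and $k(p)$ is analytic on $(3-\varepsilon_0,p_3)$ with $k(3)=0$. By Lemmas \ref{lem:f1} and \ref{lem:f3}, $f_1(\cdot,k(p))$ and $f_3(\cdot,k(p))$ are jointly analytic in $(p,k)$ and together span the solutions of $(H-\lambda(p))\psi=0$ bounded at $+\infty$, while $g_1,g_3$ span solutions bounded at $-\infty$.

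An eigenvector of $H$ at $\lambda(p)$ therefore takes the form $\psi=c_1(p)f_1(\cdot,k(p))+c_3(p)f_3(\cdot,k(p))$ subject to the constraint that it lies in $\mathrm{span}(g_1,g_3)$. Using the connection formula \eqref{eq:trrefl1}, this constraint reads $A(k(p))(c_1,c_3)^\intercal=0$; by Lemma \ref{lem:eig0}, $\det A(k(p))=0$, so $\mathrm{adj}(A(k(p)))$ is an analytic family of vectors in $\ker A(k(p))$. Accordingly, the plan is to set
\begin{align*}
\begin{pmatrix}\xi_{(1,0)}(p)\\ \xi_{(0,1)}(p)\end{pmatrix}:=\frac{1}{A_{22}(k(p))}\, F_1(\cdot,k(p))\, \mathrm{adj}(A(k(p)))\, e_1,
\end{align*}
with $F_1=(f_1,f_3)$. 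To check the value at $p=3$: from \eqref{eq:plwave1}, $f_1(x,0)|_{p=3}=(1-\sech^2 x,-\sech^2 x)^\intercal$ is even in $x$, so $g_1(x,0)|_{p=3}=f_1(-x,0)|_{p=3}=f_1(x,0)|_{p=3}$; reading off the first column of \eqref{eq:trrefl1}, $f_1=g_2 A_{11}+g_4 A_{21}+g_1 B_{11}+g_3 B_{21}$, this identity forces $A_{11}(0)|_{p=3}=A_{21}(0)|_{p=3}=0$, hence $\mathrm{adj}(A(0))|_{p=3}\,e_1=A_{22}(0)|_{p=3}\,(1,0)^\intercal$. Inspecting the $x\to-\infty$ asymptotics of $f_3(x,0)|_{p=3}$ via \eqref{eq:plwave3} gives $A_{22}(0)|_{p=3}=(3-2\sqrt{2})/(3+2\sqrt{2})\neq 0$, so the denominator is analytic and nonzero in a neighborhood of $p=3$, and the formula reduces to $f_1(\cdot,0)|_{p=3}$ at $p=3$, matching \eqref{eq:xiat3}.

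For real-valuedness, $k(p)\in\im\R$ for real $p$, making the Volterra kernels in \eqref{eq:f33} and \eqref{eq:votf1} real (the factors $\sin(kz)/k=\sinh(\alpha z)/\alpha$ and $\sqrt{2+k^2}=\sqrt{2-\alpha^2}$ are real when $k=\im\alpha$), so the Jost functions, $A(k(p))$, and hence $(\xi_{(1,0)},\xi_{(0,1)})$ are real. For the $L^\infty_{-s}$ bound, the eigenfunction decays exponentially for $p\neq 3$ and is the bounded threshold resonance at $p=3$, so $\|(\xi_{(1,0)},\xi_{(0,1)})\|_{L^\infty_{-s}}\lesssim\|e^{-s\<x\>}\|_{L^\infty}<\infty$ uniformly in $p$; analyticity in $p$ with values in $L^\infty_{-s}(\R)$ follows by gluing the analyticity of $f_1,f_3$ on $(x_0,+\infty)$ and of $g_1,g_3$ on $(-\infty,x_0)$ exactly as in the proof of Lemma \ref{lem:analyg}. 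The main obstacle is the degeneration at $p=3$: there $\lambda(3)=1$ merges with the threshold of the essential spectrum of $H$, so the standard Kato analytic perturbation theory for isolated eigenvalues fails. The adjugate construction circumvents this because it depends only on the joint analyticity of the Jost functions in $(p,k)$ at $k=0$, supplied by Lemmas \ref{lem:f1}--\ref{lem:f3}, together with the explicit nondegeneracy $A_{22}(0)|_{p=3}\neq 0$, which says precisely that the threshold resonance is a pure $f_1$ with no $f_3$ component.
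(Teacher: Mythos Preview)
Your approach is essentially the same as the paper's: both construct the eigenvector as $F_1(\cdot,k(p))\alpha(p)$ for an analytic null vector $\alpha(p)$ of the connection matrix, then handle the $x\to-\infty$ side via the $G_j$'s. The paper obtains $\alpha(p)$ by a Riesz projection onto $\ker D(p,k(p))$ (and then invokes $\ker A=\ker D$ from \cite{KrSch}), whereas you take $\alpha(p)=\mathrm{adj}(A(k(p)))e_1$ directly in $\ker A$. Your route is more explicit and sidesteps the $\ker A=\ker D$ identification; the paper's is a bit more abstract but does not require checking matrix entries individually.

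Two small points to tighten. First, Lemma~\ref{lem:trrefl} only furnishes $A(k)$ for $k\neq 0$, and indeed from \eqref{eq:trrefl3} one has $A_{11}=D_{11}/(2\im k)$, $A_{12}=D_{21}/(2\im k)$, which are a priori singular at $k=0$. What saves your construction is that $\mathrm{adj}(A)e_1=(A_{22},-A_{21})^\intercal$, and by the same relation $A_{21}=-D_{12}/(2\sqrt{2+k^2})$, $A_{22}=-D_{22}/(2\sqrt{2+k^2})$, which \emph{are} analytic at $k=0$; you should state this explicitly rather than treat $A$ as globally defined. Second, your claim ``forces $A_{11}(0)|_{p=3}=A_{21}(0)|_{p=3}=0$'' is shaky at $k=0$ because $g_1=g_2$ there (so the expansion basis degenerates); more importantly, only $A_{21}(0)|_{p=3}=0$ is needed since $\mathrm{adj}(A)e_1=(A_{22},-A_{21})^\intercal$, and that follows cleanly from $D_{12}(3,0)=W[f_1(\cdot,0),g_3(\cdot,0)]=0$ by the evenness of $f_1(\cdot,0)$ together with the growth of $g_4$. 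Your real-valuedness argument via the Volterra kernels is a nice addition the paper leaves implicit.
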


\begin{proof}
We know     that for   $p\in (3- \varepsilon _0, p_3)$ there is exactly one solution $k=k(p)$ in the segment $ \left [0, \im  \sqrt{\frac{2}{3}}  \) \subset \C$   to $\det D(p,k) =0$.
Furthermore we know that $D(p,k)$  is analytic in $p$ and $k$ and that $k(p)$    has multiplicity 1, that is $  \frac{\partial}{\partial k} \left . \det D(p,k)\right | _{k=k(p)}\neq 0$, so that
  the   map $p\to k(p)$ is in $C ^\omega ( (3- \varepsilon _0, p_3), \C )$. The projection  map
   \begin{align*}
     Q_p = -\frac{1}{2\pi \im } \oint _{\partial D_p}  \frac{dw}{D(p,k(p)) -w}  \in \mathcal{L}(\C ^2, \ker \( D(p,k(p)) - k(p)  \)   )
   \end{align*}
   over appropriate boundaries of disks $D_p$ containing   $k(p)$  in their interior  and the other eigenvalue of $D(p,k(p))$ in the interior of $\C \backslash D_p$. Then $p\to Q_p\in C ^{\omega}\(  (3- \varepsilon _0, 5) ,  \mathcal{ L}( \C ^2) \)$.   Let now $\alpha (3)$ be an eigenvector of $D(3,0)$ associated to  $k(3)=0$ and set $\alpha (p)=Q_p \alpha (3)$. We have
   $p\to \alpha (p) \in C ^{\omega}\(  (3- \varepsilon _0, p_3) ,   \C ^2  \)$   with $\alpha (p)=0$ at most in a discrete subset of $(3- \varepsilon _0, p_3)$ and with $\alpha (3)\neq 0$. Following   the discussion in Lemma \ref{lem:eig000} we write
     \begin{align*}
       u[p] (x) :=   F_1(x,k(p)) \alpha (p) .
     \end{align*}
     Then we have $(H-\lambda (p) )    u[p] =0$   with $ u[3] $ a multiple of the vector valued function in \eqref{eq:xiat3}, and in fact the same for the right choice of $\alpha (3)$.
     Proceeding line in Lemma \ref{lem:analyg} we know that   for any $s>0$ we have $F_1(\cdot , k ( \cdot )  ) \in C ^{\omega} \((3- \varepsilon _0, p_3),   L^\infty _{-s} (x_0,+\infty ) \)$ for any  $x_0\in \R$
     and so in particular we have $u[\cdot ] \in C ^{\omega} \( (3- \varepsilon _0, p_3),   L^\infty _{-s} (x_0,+\infty ) \)$.  To check the behavior in $(-\infty , x_0)$ we use formula \eqref{eq:eig0002}
     and since   here $\ker A(k) =\ker D(k) $  by \cite[formula (5.34)]{KrSch}, we have  by \cite[formula (5.27)]{KrSch}
     \begin{align*}
       u[p] (x)  =   G_2(x,k(p)) B(k(p))   \alpha (p)  \text{ where  } B(k) ^{\intercal}= W[F_1 (\cdot , k) ,G_1 (\cdot , k) ] \diag \(   \frac{\im }{2  k} ,  \frac{1}{2 \sqrt{2+k^2}}  \) .
     \end{align*}
     By choosing $\varepsilon _0>0$ sufficiently small, and taking the same $\varepsilon _0 $ also in the statement of Lemma \ref{lem:f1},  we obtain that $B (\cdot )\in C ^{\omega}
     \(  \{ k:  |\Im k |<   \sqrt{\frac{2}{3}} \}, M(2 , \C )  \)$. Then    $u[\cdot ] \in C ^{\omega} \( (3- \varepsilon _0, p_3),   L^\infty _{-s} (-\infty , x_0) \)$ for any  $x_0\in \R$
     and hence  $
      u[\cdot ] \in C ^{\omega} \( (3- \varepsilon _0, p_3),   L^\infty _{-s} \( \R \) \)
     $.
       Finally   choose
     $ \( \xi_{(1,0)}(p),
\xi_{(0,1)}(p)  \)^\intercal =   u[p]$.
\end{proof}

\begin{remark}\label{rem:kato} Notice that  we could have used also the \textit{transformation function }  in pp. 99--102 in Kato \cite{katobook} to define $\alpha (p) =F(p)   \alpha (3)   $   where
$F\in C ^{\omega}\(  (3- \varepsilon _0, 5) , GL( 2,\C  ) \)$  with $F(p) Q_{3}= Q_pF(p)$.
 \end{remark}
We can now prove Lemma \ref{lem:gamma32}.

\begin{proof}[Proof of Lemma \ref{lem:gamma32}]
Take $\varepsilon>0$ arbitrary and let $\delta>0$ the constant given in Lemma \ref{lem:phip-3}.
Then, since $\phi^{p-3}_p$ is analytic in the topology $L^2_\delta$ and $\xi_{(1,0)},\xi_{(0,1)}$ and $g_3$ are analytic in $(3+\varepsilon,p_3)$ in the topology $L^2_{-\delta/10}$, we have $\gamma_{3,2}$ is analytic in $(3+\epsilon,p_3)$.
Since $\varepsilon>0$ is arbitrary, we have the conclusion.
\end{proof}

From Lemma \ref{lem:analyxi01}, the analyticity of $G_{(2,0)}$ and $G_{(0,2)}$ immediately follows.
\begin{lemma}\label{lem:G20andG02analytic}
There exists $\varepsilon_0>0$ such that  for $\mathbf{m}=(2,0)$ and $(0,2)$, $G_{\mathbf{m}}\in C^\omega((3-\varepsilon_0,p_3),L^2_{1/4})$, where $G_{\mathbf{m}}$ are given in \eqref{def:G20} and below  \eqref{def:G11}.
\end{lemma}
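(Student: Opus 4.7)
The plan is to read off the conclusion directly from the explicit formula \eqref{def:G20} (and its $(0,2)$ analogue obtained by swapping $\xi_{\mathbf{m}}\leftrightarrow \xi_{\overline{\mathbf{m}}}$), combined with the analyticity inputs already established. The guiding principle is that analytic dependence on a parameter is preserved under continuous bilinear maps between Banach spaces, so the proof reduces to chaining a short sequence of such multiplications and verifying that the final target is $L^2_{1/4}$.

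First I would fix a small weight, say $s=1/16$, and apply Lemma \ref{lem:analyxi01} to obtain $\xi_{(1,0)},\xi_{(0,1)}\in C^\omega((3-\varepsilon_0,p_3),L^\infty_{-s}(\R))$. Since pointwise multiplication is a continuous bilinear map $L^\infty_{-s}\times L^\infty_{-s}\to L^\infty_{-2s}=L^\infty_{-1/8}$, every quadratic monomial $\xi_{\mathbf{m}'}\xi_{\mathbf{m}''}$ with $\mathbf{m}',\mathbf{m}''\in\{(1,0),(0,1)\}$ inherits analyticity as a curve in $L^\infty_{-1/8}(\R)$.

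Next I would invoke Lemma \ref{lem:phip-2} to get $\phi_p^{p-2}\in C^\omega((3-\varepsilon_0,5),L^2_{1/2})$, and combine it with the previous step via the continuous bilinear pairing $L^2_{1/2}\times L^\infty_{-1/8}\to L^2_{3/8}$, which follows from the elementary estimate
\[
\|e^{(3/8)\<x\>}fg\|_{L^2}\le\|e^{(1/2)\<x\>}f\|_{L^2}\|e^{-(1/8)\<x\>}g\|_{L^\infty}.
\]
Consequently every term of the form $\phi_p^{p-2}\,\xi_{\mathbf{m}'}\xi_{\mathbf{m}''}$ appearing in \eqref{def:G20} belongs to $C^\omega((3-\varepsilon_0,p_3),L^2_{3/8})$, which in turn embeds continuously into $C^\omega((3-\varepsilon_0,p_3),L^2_{1/4})$ since $3/8>1/4$.

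Finally, multiplication by the scalar coefficients in \eqref{def:G20}, which are polynomials in $p$, is trivially analytic, so summing the finitely many resulting terms yields $G_{(2,0)}\in C^\omega((3-\varepsilon_0,p_3),L^2_{1/4})$; the case of $G_{(0,2)}$ is verbatim identical via the symmetry $\xi_{\mathbf{m}}\leftrightarrow\xi_{\overline{\mathbf{m}}}$. There is no genuine obstacle here: the argument is bookkeeping on how analyticity propagates through continuous bilinear maps, and the only mild point is to pick the weight $s$ small enough that $1/2-2s\ge 1/4$, which is amply guaranteed by the choice $s=1/16$.
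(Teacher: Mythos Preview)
Your proof is correct and follows essentially the same approach as the paper's: invoke Lemma~\ref{lem:phip-2} and Lemma~\ref{lem:analyxi01}, then combine them via continuous bilinear multiplications on the weighted spaces. The paper picks $s=1/10$ while you pick $s=1/16$, but both satisfy $1/2-2s\ge 1/4$; your write-up is in fact a bit more explicit about why analyticity propagates through the products.
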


\begin{proof}
Take $s=1/10$ in Lemma \ref{lem:analyxi01} and take $\varepsilon_0>0$ to be the smallest one appearing in Lemmas \ref{lem:phip-2} and \ref{lem:analyxi01}.
Then in $(3-\varepsilon_0,p_3)$,
$\phi^{p-2}_p$ is $L^\infty_{1/2}$-valued analytic function
and $\xi_{(1,0)}$ and $\xi_{(0,1)}$ are $L^2_{-1/10}$-valued analytic functions.
By the formula \eqref{def:G20}, we see that $G_{(2,0)}$ and $G_{(0,2)}$ are $L^2_{1/4}$-valued analytic functions.
\end{proof}

In the next lemma $(p_2, p_3)$  is an interval in the real axis.
\begin{lemma}\label{lem:Domdelta} For any $\theta _0\in \( 0, \frac{\pi}{6} \) $   and   $\delta   >0$ consider
\begin{align*}
    \mathfrak{C} _{\theta _0\delta  } :=\{p\in \C\ |\ 3\leq \Re p\leq p_3,\ -\delta<\Im p<0,\  0>\arg (p-3)>- \theta _0\}\cup (p_2,p_3) .
\end{align*}
 Then for $\delta    $   sufficiently small  we have $\Re \lambda ' (p)<0$   in $ \mathfrak{C} _{\theta _0\delta  }$  and $\Im \lambda   (p) >0$   in  $  \mathring {\mathfrak{C} }_{\theta _0\delta}$, the interior of $ \mathfrak{C}_{\theta _0\delta}$.
    \end{lemma}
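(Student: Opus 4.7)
The plan is to combine two regimes. Near $p=3$ we use the Coles--Gustafson expansion \eqref{eq:lambdanear3}, analytically continued to complex $p$; away from $p=3$ but still close to the real axis we use Assumption \ref{ass:eigenvalues1} together with continuity of the holomorphic extension of $\lambda$. By Lemma \ref{lem:analambda}, $\lambda$ is real-analytic on $(3-\varepsilon_0,5)$ and hence extends to a holomorphic function on a complex neighborhood $U$ of this interval, with $\lambda(\bar p)=\overline{\lambda(p)}$ by Schwarz reflection. For $\delta$ small enough, $\mathfrak{C}_{\theta_0\delta}\subset U$. On the real piece $(p_2,p_3)$, $\Re\lambda'=\lambda'<0$ directly from Assumption \ref{ass:eigenvalues1} and $\lambda$ is real, so it remains to treat the wedge.

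Near $p=3$, differentiating the analytic Taylor expansion at $3$ and identifying its low-order coefficients from \eqref{eq:lambdanear3} yields $\lambda'(p)=-4\alpha_0(p-3)^3+O(|p-3|^4)$ with $\alpha_0>0$. For $p\in\mathfrak{C}_{\theta_0\delta}$ the constraint $\arg(p-3)\in[-\theta_0,0]$ with $\theta_0<\pi/6$ gives $3\arg(p-3)\in[-\pi/2,0]$ and $\cos(3\arg(p-3))\geq\cos(3\theta_0)>0$, hence $\Re(p-3)^3\geq|p-3|^3\cos(3\theta_0)$ and $\Re\lambda'(p)<0$ for $|p-3|\leq r_0$ with $r_0$ small enough to subordinate the error. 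For the imaginary part, on the interior of the wedge $\arg(p-3)\in(-\theta_0,0)$ strictly, and since $4\theta_0<2\pi/3<\pi$ one has $4\arg(p-3)\in(-\pi,0)$ and $\sin(4\arg(p-3))<0$ strictly. Then the leading-order term in
\begin{equation*}
    \Im\lambda(p)=-\alpha_0|p-3|^4\sin(4\arg(p-3))+O(|p-3|^5)
\end{equation*}
is strictly positive, giving $\Im\lambda(p)>0$ for $|p-3|\leq r_0$ in the open wedge.

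Away from $p=3$, consider $p\in\mathfrak{C}_{\theta_0\delta}$ with $|p-3|\geq r_0$; the angular constraint forces $\Re p\in[3+r_0\cos\theta_0,p_3]$, a compact subset of $(3,p_3]$. By Assumption \ref{ass:eigenvalues1} and continuity, $\lambda'$ is real and bounded above by some $-c_0<0$ on this compact interval, and holomorphy of $\lambda'$ in $U$ propagates the bound to $\Re\lambda'(p)\leq -c_0/2$ throughout a sufficiently thin complex strip $|\Im p|<\delta_1$. For the imaginary part, the first-order Taylor expansion at $p_0=\Re p$, combined with Schwarz reflection $\lambda(p_0)\in\R$, reads $\Im\lambda(p)=(\Im p)\lambda'(p_0)+O((\Im p)^2)$, which is strictly positive since $\Im p<0$ and $\lambda'(p_0)<0$, provided $\delta$ is small enough. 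Choosing $\delta$ small enough to make both regimes work finishes the proof.

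The main obstacle is the degeneracy of $\lambda'$ at $p=3$: since $\lambda'$ vanishes to third order there, the sign of $\Re\lambda'(p)$ near $p=3$ must be extracted entirely from the asymptotic expansion, and the angular restriction $\theta_0<\pi/6$ is exactly what is needed to keep $\cos(3\arg(p-3))$ bounded away from $0$ on the wedge; the same hypothesis comfortably yields $\sin(4\arg(p-3))<0$ for the $\Im\lambda$ claim. Matching the near-$p=3$ and away-from-$p=3$ arguments forces the choice of $\delta$ to depend on both $r_0$ and $\theta_0$, but no new difficulty arises beyond this bookkeeping.
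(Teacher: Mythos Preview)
Your overall strategy (near-$p=3$ via the expansion \eqref{eq:lambdanear3}, away-from-$p=3$ via Assumption~\ref{ass:eigenvalues1} plus continuity, then glue by choosing $\delta$ small) coincides with the paper's, and your treatment of $\Re\lambda'<0$ is correct and essentially identical.

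There is, however, a gap in your $\Im\lambda>0$ argument in the near regime. You write
\[
\Im\lambda(p)=-\alpha_0 r^4\sin(4\vartheta)+O(r^5),\qquad r=|p-3|,\ \vartheta=\arg(p-3),
\]
and conclude positivity because the leading term is strictly positive on the open wedge. But that leading term is not bounded below \emph{uniformly} in $\vartheta$: as $\vartheta\to 0^-$ it behaves like $4\alpha_0 r^4|\vartheta|$, whereas your remainder bound $Cr^5$ is independent of $\vartheta$, so for $|\vartheta|\lesssim r$ the error can dominate. Two easy fixes are available. First, since $\lambda$ is real on $(3-\varepsilon_0,5)$ its Taylor coefficients at $3$ are real, hence the remainder satisfies $\Im R(p)=\sum_{n\ge5}c_n r^n\sin(n\vartheta)$; using $|\sin(n\vartheta)|\le n|\sin\vartheta|$ gives $|\Im R(p)|\lesssim r^5|\sin\vartheta|$, which \emph{is} beaten by $|\alpha_0 r^4\sin(4\vartheta)|\gtrsim r^4|\sin\vartheta|$ for small $r$. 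Second, and cleanest: once $\Re\lambda'<0$ is established on $\mathfrak{C}_{\theta_0\delta}$, the Cauchy--Riemann relation $\partial_{\Im p}(\Im\lambda)=\Re\lambda'<0$ together with $\Im\lambda=0$ on the real segment and the fact that every interior point is joined to the real axis by a vertical segment inside the wedge immediately yields $\Im\lambda>0$ by integration. The paper handles this step differently: it checks positivity only along the fixed-angle ray $\vartheta=-\theta_0$ (where $\sin(4\theta_0)$ is a fixed positive constant, so no uniformity issue arises) and just below the real axis, and then appeals to the maximum principle for the harmonic function $\Im\lambda$.
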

\begin{proof} For  $p\in (3,5)$, we have $\lambda'(p)<0$ by Assumption \ref{ass:eigenvalues1}. So if we pick
$\widehat{p} \in ( 3, p_2)$ then if $\delta >0$ is sufficiently small,  by continuity   we have
$ \Re \lambda ' (p)<0 $  in the rectangle $[ \widehat{p} ,    p_2] \times (0, -\delta ) $.
Near $p=3 $ by \eqref{eq:lambdanear3}, for polar representation \eqref{eq:lambdanear3}    we have
\begin{align*}
    \Re \lambda'(p) =  -4 r^3 \cos{3\vartheta} +O\( r^4\) <-4r^3  \cos{3\theta _0}  \( 1+ O\( r \)  \)
\end{align*}
So if  $\widehat{r}=\widehat{p}-3$  is small enough such that
$\( 1+ O\( \widehat{r} \)  \) > \frac{1}{2} $
then for $|p-p_3|\le  \widehat{r}$    and in case taking $\delta$ smaller we get  $\Re \lambda ' (p)<0$
in $  \mathring {\mathfrak{C} }_{\delta}$.

\noindent For $p\in (3,5)$   for sufficiently small $\epsilon _p$, we have $\lambda(p-\im \epsilon_p)=\lambda(p) - \lambda'(p) \im \epsilon _p+O(\epsilon^2 _p)$, which implies $\Im \lambda(p-\im \epsilon_p)>0$.   By \eqref{eq:lambdanear3} again using \eqref{eq:lambdanear3} for $p =e ^{-\im \theta _0}   $ near 3
\begin{align*}
    \Im \lambda (p) =  \alpha _0 r^4  \sin\( 4 \theta  _0 \)  \( 1+ O(r) \) .
\end{align*}
So again for $r>0$ small enough  and by $0<4\theta_0<\pi $ we have $ \Im \lambda (p) >0$.  Taking $\delta >0$ small enough we conclude by the maximum principle that we have also $\Im \lambda   (p) >0$   in  $  \mathring {\mathfrak{C} }_{\theta _0\delta}$.

\end{proof}


We fix $ \theta _0 =\frac\pi  7$  and write $\mathfrak{C} _{ \delta  }  =\mathfrak{C} _{\theta _0\delta  }$.

\begin{lemma}\label{lem:xi2002extend}
There exist $\delta>0$ and a discrete set $\mathfrak{D}_\delta\subset \mathfrak{C}_{\delta}$ such that  there exists an open set $\mathfrak{A}_{9}$ satisfying $\mathfrak{C}_\delta\setminus \mathfrak{D}_\delta\subset \mathfrak{A}_{9}$ such that  $(\xi_{(2,0)} , \xi_{(0,2)})^ \intercal$ initially given by \eqref{def:xi20} on $(p_2,p_3)$ extends into an element in $C ^\omega \( \mathfrak{A}_9 , L^2(  \R  , \C ^2) \)$.
\end{lemma}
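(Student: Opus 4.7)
I would apply the analytic Fredholm theorem to the operator-valued function $p\mapsto H(p)-2\lambda(p)$, invert it on an appropriate complex neighborhood of $\mathfrak{C}_\delta$, and then apply it to the analytic right-hand side of \eqref{def:xi20}. The natural choice is to enlarge $\mathfrak{C}_\delta$ to an open connected complex set $\Omega$ containing both the interior $\mathring{\mathfrak{C}}_\delta$ and a small complex neighborhood of the real segment $(p_2,p_3)$; this is possible after shrinking $\delta$.

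First I would extend every ingredient of \eqref{def:xi20} analytically to $\Omega$. The eigenvalue $\lambda(p)$ extends to $\Omega$ via the implicit function argument of Lemma \ref{lem:analambda} applied over complex $p$. The Jost functions $F_1(x,k), G_2(x,k)$ constructed in Lemmas \ref{lem:f3}--\ref{lem:f1} are obtained from Volterra equations whose kernel bounds depend only on $\Re p>1$ and on the strip estimates of Lemma \ref{lem:harm}; both are stable under complex perturbation of $p$, so these iterations extend analytically in $(p,k)$ to $\Omega\times\mathring{\mathbf{S}}$. The Riesz-projection construction in the proof of Lemma \ref{lem:analyxi01} then yields $(\xi_{(1,0)}, \xi_{(0,1)})^\intercal \in C^\omega(\Omega, L^\infty_{-s}(\R,\C^2))$ for any $s>0$. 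Since $\phi_p^{p-2}=\exp((p-2)\log\phi_p)$ extends to an $L^\infty_{1/2}$-valued holomorphic function on $\Omega$ (this is just the complex-$p$ version of Lemma \ref{lem:phip-2}), the formula \eqref{def:G20} produces $G_{(2,0)}, G_{(0,2)}\in C^\omega(\Omega, L^2_{1/4})$.

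Next, $H(p)=\sigma_3(-\partial_x^2+1)+V(p,x)$ is an analytic family of type (A) on $\Omega$, with $V(p,\cdot)$ relatively compact to $\sigma_3(-\partial_x^2+1)$, so the essential spectrum of $H(p)$ remains $(-\infty,-1]\cup[1,+\infty)$. By Lemma \ref{lem:Domdelta}, $\Im(2\lambda(p))>0$ on $\mathring{\mathfrak{C}}_\delta$, and by shrinking $\Omega$ near $(p_2,p_3)$ we ensure $2\lambda(p)$ stays off the essential spectrum throughout $\Omega$. Writing
\begin{align*}
H(p)-2\lambda(p)=\bigl(\sigma_3(-\partial_x^2+1)-2\lambda(p)\bigr)\bigl(I+K(p)\bigr),\qquad K(p):=\bigl(\sigma_3(-\partial_x^2+1)-2\lambda(p)\bigr)^{-1}V(p,\cdot),
\end{align*}
the operator $K(p)$ is compact and operator-norm analytic on $\Omega$. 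At any $p_0\in(p_2,p_3)$ the operator $H(p_0)-2\lambda(p_0)$ is invertible on the even sector of $L^2(\R,\C^2)$: the essential spectrum is avoided because $2\lambda(p_0)\in(2/3,1)$, and the only eigenvalues of $H(p_0)$ on the even sector in $(-1,1)$ are $0$ and $\pm\lambda(p_0)$ by Proposition \ref{prop:knownspec} together with the spectral decomposition of Section \ref{sec:lin1}. Hence $I+K(p_0)$ is invertible, and the analytic Fredholm theorem yields a discrete set $\mathfrak{D}_\delta\subset\Omega$ outside of which $(I+K(p))^{-1}$, and therefore $(H(p)-2\lambda(p))^{-1}$, is analytic in operator norm.

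Composing this inverse with the analytic right-hand side gives
\begin{align*}
\begin{pmatrix}\xi_{(2,0)}(p)\\\xi_{(0,2)}(p)\end{pmatrix}=(H(p)-2\lambda(p))^{-1}\begin{pmatrix}G_{(2,0)}(p)\\-G_{(0,2)}(p)\end{pmatrix} \in C^\omega(\mathfrak{A}_9, L^2(\R,\C^2)),
\end{align*}
with $\mathfrak{A}_9:=\Omega\setminus\mathfrak{D}_\delta$, which contains $\mathfrak{C}_\delta\setminus\mathfrak{D}_\delta$ (after possibly re-labelling $\mathfrak{D}_\delta$ as its intersection with $\mathfrak{C}_\delta$). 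The main obstacle is the first step: verifying that the Jost-function Volterra constructions of Section \ref{sec:prop:nonres} actually produce \emph{jointly} holomorphic extensions for complex $p$ rather than merely real-analytic ones in $p\in\R$. This is routine because all the iteration bounds used in Lemmas \ref{lem:f3}--\ref{lem:f4} depend only on $\Re p$ being bounded away from $1$ and on the harmonic-function bound of Lemma \ref{lem:harm}, both of which persist under complex perturbation of $p$; once this is in place the Fredholm argument runs with essentially no change.
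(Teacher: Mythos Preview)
Your argument is correct and reaches the same conclusion as the paper, but the route to discreteness of $\mathfrak{D}_\delta$ is genuinely different. The paper defines $\mathfrak{D}_\delta=\{p\in\mathfrak{C}_\delta:\ 2\lambda(p)\in\sigma_{\mathrm d}(H_p)\}$ and argues discreteness by hand: it changes variable to $z=2\lambda(p)$ (using $\lambda'(p)\neq 0$ on $\mathfrak{C}_\delta$ from Lemma~\ref{lem:Domdelta}), then invokes Kato--Rellich perturbation theory (the Puiseux expansions of Theorem~XII.2 in Reed--Simon) to describe the eigenvalue branches $\mu_j(z)$ of $H_{p(z)}$ near any putative accumulation point $z_0$, and shows that $\mu_j(z)\equiv z$ would propagate to force $2\lambda(p)\in\sigma(H_p)$ on $(p_2,p_3)$, contradicting Assumption~\ref{ass:eigenvalues1}. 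You instead factor $H(p)-2\lambda(p)=(\sigma_3(-\partial_x^2+1)-2\lambda(p))(I+K(p))$ with $K(p)$ compact and holomorphic, check invertibility at one real $p_0\in(p_2,p_3)$, and apply the analytic Fredholm alternative on a connected open $\Omega\supset\mathfrak{C}_\delta$. This is more economical: discreteness drops out in one line without the change of variables or the Puiseux machinery. What the paper's approach buys is a concrete spectral description of $\mathfrak{D}_\delta$ (points where $2\lambda(p)$ meets an eigenvalue branch of $H_p$), which is conceptually informative even if not needed downstream. Your care in noting that the Volterra constructions of Section~\ref{sec:prop:nonres} extend holomorphically in $p$ is also a point the paper glosses over by citing the real-$p$ Lemmas~\ref{lem:analambda} and~\ref{lem:G20andG02analytic} and implicitly using their complex extension.
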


\begin{proof}
By Lemmas \ref{lem:analambda} and \ref{lem:G20andG02analytic},   $\lambda(p)$ and $(G_{(2,0)}(p)\ -G_{(0,2)}(p))^\intercal$ are analytic in $p$. So is also
the potential $V$ of $H_p$, see \eqref{eq:opH}.
Therefore, it follows that
\begin{align*}
  (\xi_{(2,0)},\xi_{(0,2)})^\intercal=(H_p-2\lambda(p))^{-1}(G_{(2,0)}(p) ,  -G_{(0,2)})^\intercal
\end{align*}
   is analytic in $\mathfrak{C}_\delta\setminus \mathfrak{D}_\delta$ where
$\mathfrak{D}_{\delta}:=\{p\in \mathfrak{C}_{\delta}\ |\ 2\lambda(p)\in \sigma_{\mathrm{d}}(H_p)\}$.
Thus, it suffices to show that $\mathfrak{D}_\delta$ is discrete.

\noindent Set $z=2\lambda(p)$.
By  $\lambda'(p)\neq 0$ in $p\in \mathfrak{C}_\delta$ and the fact that $\mathfrak{C}_\delta$  is simply connected,        we can take the inverse and write $p=p(z)$ with $p(\cdot)$   analytic.
Now suppose $z_0=2\lambda(p(z_0))$ is an eigenvalue of $H_{p(z_0)}$. Then, since we know that
$2\lambda (p)   $ is not an eigenvalue of $ H_{p }$ for $p_2<p<p_3$ we must have $p(z_0)\in \mathring {\mathfrak{C} }_{\delta}$, the interior of $ \mathfrak{C}_{\delta}$. By Lemma  \ref{lem:Domdelta}
$\Im \lambda(p(z_0))>0$ and hence $z_0\in \sigma_{\mathrm{d}}(H_{p(z_0 )}) $.

\noindent We show that if $z\neq z_0$ in a small neighborhood $z_0$, then $z\notin
\sigma_{\mathrm{d}}(H_{p(z )}) $.
Suppose $z_0$ is an eigenvalue of $H_{p(z_0)}$ with multiplicity $m\geq 1$.
For an appropriate closed path around $z_0$,
\begin{align*}
    P (z) :=  -\frac{1}{2\pi \im}  \int _\gamma R _{H_{p(z )}} (w) dw
\end{align*}
the eigenvalues of $H_{p(z )}$ near $z_0$ are also eigenvalues also of $H_{p(z )}P(z)$, which has finite constant rank for $z$ close to $z_0$. It is easy to conclude that   the study of the eigenvalues of $H_{p(z)}$ close to $z_0$   with $z$ are near $z_0$  reduced to the study of the eigenvalues of a matrix dependent analytically in $z$.
By   Corollary of Theorem XII.2 in \cite{reedsimon}   we  know
that there are  $k(\leq m)$ multi-valued analytic functions $\mu_1(z), \cdots, \mu_k(z)$ such that $\mu_j(z_0)=z_0$ and that the eigenvalues of $H_{p(z)}$ near $z_0$ are given by $\mu_1(z), \cdots, \mu_k(z)$ for $z$ near $z_0$ with an expansion
\begin{align*}
    \mu_j(z)=z_0+\sum_{l=1}^\infty \mu_l^k (z-z_0)^{l/p_j}  \text{ where $\displaystyle\sum_{j=1}^k p_j=m$.}
\end{align*}
Assume that $z_0$ is an accumulation point  of $\mathfrak{D}_{\delta}$. Then $z_0$ is an accumulation point of the set of solutions
  $\mu_j(z)=z$  for some $j$.
This implies   $\mu_j(z) = z$ identically for all $z$ near $z_0$, which implies that $2\lambda(p)$ is an eigenvalue of $H_p$ for $p$ in an open subset of $ \mathfrak{C}_{\delta}$.   This would imply that this is so for   all $p\in \mathring {\mathfrak{C} }_{\delta}$. But this would extend also to a larger set and in particular
  to $p\in (p_2,p_3)$ contradicting   Assumption \ref{ass:eigenvalues1}.
\end{proof}

The continuous extension, in a weaker topology, of $(\xi_{(2,0)}\ \xi_{(0,2)})$ to $p=3$ will be discussed in Lemma \ref{lem:zeta12}

\begin{lemma}\label{lem:fp}
Take $\varepsilon_0>0$ small so that we can analytically extend $G_{(2,0)}$ and $G_{(0,2)}$ for  $p\in D_\C(3,\varepsilon_0) $.
For $p\in D_\C(3,\varepsilon_0)$, set $\psi(p):=L_{+,p}^{-1}\phi_p$ (the inverse is taken in the space of even functions) and
\begin{align*}
    \begin{pmatrix}
    \im F_1(p)\\
    F_2(p)
    \end{pmatrix}
    =
    \im U
    \begin{pmatrix}
   G_{(2,0)}(p)\\ -G_{(0,2)}(p)
    \end{pmatrix},
\end{align*}
where $U$ is the matrix given in \eqref{def:U}.
Then, there exists $p\to f_p \in C^0(D_\C(3,\varepsilon_0),L^\infty_{1/8})$ satisfying
\begin{align}\label{eq:Sstarf}
    (S^*_p)^2 f_p=2\lambda(p)F_1(p)-L_{-,p}F_2(p)+4\lambda(p)^2a(p)\psi_p,
\end{align}
where $S^*_p =-\phi_p^{-1} \partial_x(\phi_p \cdot)$ and
\begin{align}\label{eq:a(p)}
    a(p)=-\frac{\int_{\R}\phi_p F_1(p)\,dx}{2\lambda(p)\int_{\R} \phi_p \psi_p}.
\end{align}
Moreover, we have
\begin{align}
    f_3(x)=&-\frac{22\sqrt{2}}{15}(x\tanh(x) -\log(e^x+e^{-x}))\cosh(x)\label{eq:f3explicit} \\&+ \frac{2\sqrt{2}}{15}\mathrm{sech}(x) +\frac{14\sqrt{2}}{5}\mathrm{sech}^3(x) -  \frac{8\sqrt{2}}{3}\mathrm{sech}^5(x).\nonumber
\end{align}
\end{lemma}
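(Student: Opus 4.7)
The starting identity is $(S_p^*)^2 f = \phi_p^{-1}\partial_x^2(\phi_p f)$, a direct computation from $S_p^* = -\phi_p^{-1}\partial_x(\phi_p\cdot)$. The purpose of introducing $f_p$ is that, while one would like to define $\xi_{(2,0)},\xi_{(0,2)}$ via $(H - 2\lambda(p))^{-1}$, this operator becomes singular as $p\to 3$ (since $2\lambda(3)=2$ lies in the essential spectrum of $H$); the second-order ODE for $(S_p^*)^2$ can instead be inverted by explicit integration uniformly in $p$. Writing $g_p$ for the right-hand side of \eqref{eq:Sstarf}, the equation $(S_p^*)^2 f_p = g_p$ becomes $(\phi_p f_p)'' = \phi_p g_p$. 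Setting $h_p := \phi_p f_p$, I look for an even solution with $h_p(\pm\infty)=0$; uniqueness of $f_p$ inside $L^\infty_{1/8}$ is immediate because the candidate kernel functions $1/\phi_p$ and $x/\phi_p$ are incompatible with this weighted space (and the latter is also odd). Existence relies on the solvability conditions $\int_\R \phi_p g_p\,dx = 0$ and $\int_\R x\phi_p g_p\,dx = 0$, after which I set $h_p(x) = \tfrac{1}{2}\int_\R |y-x|\phi_p(y) g_p(y)\,dy$.

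The first condition is where $a(p)$ enters. Using $L_{-,p}\phi_p = 0$ and self-adjointness, the $L_{-,p}F_2$ term contributes zero, leaving $2\lambda \int \phi_p F_1\,dx + 4\lambda^2 a\int\phi_p\psi_p\,dx$, which vanishes exactly when $a$ equals the value $a(p)$ defined in \eqref{eq:a(p)}. The second condition follows by parity: $F_1$, $F_2$, $\phi_p$, $\psi_p$ are all even and $\phi_p'$ is odd, so $\int x\phi_p F_1$ and $\int x\phi_p\psi_p$ vanish trivially, while the identity $L_{-,p}(x\phi_p) = -2\phi_p'$ (a one-line consequence of the static equation) together with self-adjointness gives $\int x\phi_p L_{-,p}F_2\,dx = -2\int\phi_p' F_2\,dx = 0$.

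For the decay estimate, note that for $p$ in a small neighborhood of $3$ one has $\phi_p(x)\sim e^{-|x|}$ and $\psi_p = -\Lambda_p \phi_p \sim |x|e^{-|x|}$, while $F_1, F_2$ decay exponentially thanks to Lemma \ref{lem:G20andG02analytic}. Thus $\phi_p g_p$ decays like $(1+|x|)e^{-2|x|}$, and the integral formula combined with its two vanishing moments gives $|h_p(x)|\lesssim (1+|x|)e^{-2|x|}$. Dividing by $\phi_p$ then yields $|f_p(x)|\lesssim (1+|x|)e^{-|x|}$, which is bounded after multiplication by $e^{\<x\>/8}$, so $f_p \in L^\infty_{1/8}$. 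Continuity of $p \mapsto f_p$ in $L^\infty_{1/8}$ reduces to continuity of the ingredients $\phi_p$, $\lambda(p)$, $\psi_p$, $F_1(p)$, $F_2(p)$, $a(p)$ in suitable weighted spaces (supplied by Lemmas \ref{lem:analambda}, \ref{lem:phip-2}, \ref{lem:analyxi01}, \ref{lem:G20andG02analytic}), together with uniform-in-$p$ bounds on the integral kernel $|y-x|\phi_p(y)$.

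Finally, \eqref{eq:f3explicit} is a direct calculation: plugging in $\phi_3 = \sqrt 2\,\sech(x)$, $\lambda(3) = 1$, $\xi_{(1,0)}(3) = \tanh^2(x)$, $\xi_{(0,1)}(3) = -\sech^2(x)$, $\psi_3 = \tfrac{\sqrt 2}{2}\sech(x)(x\tanh(x)-1)$ (from $L_{+,p}(-\Lambda_p\phi_p) = \phi_p$), along with the values $\int \phi_3 F_1(3)\,dx = -\tfrac{4}{3}$, $\int \phi_3\psi_3\,dx = -1$ and $a(3) = -\tfrac{2}{3}$, one writes down $g_3(x)$ explicitly as a linear combination of $\sech^k(x)$ and $x\sech(x)\tanh(x)$, integrates twice, and divides by $\sqrt 2\,\sech(x)$. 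The $(x\tanh(x)-\log(e^x+e^{-x}))\cosh(x)$ structure traces back to integrating the $x\sech^2(x)\tanh(x)$ contribution coming from $\psi_3$. The main obstacle is the combined bookkeeping in the decay step: keeping the exponential weights sharp enough that $f_p$ lands in $L^\infty_{1/8}$ uniformly for $p$ in a complex neighborhood of $3$, and the tedious but entirely mechanical integrations yielding \eqref{eq:f3explicit}.
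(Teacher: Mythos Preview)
Your approach is essentially the same as the paper's: both invert $(S_p^*)^2$ by solving $(\phi_p f_p)'' = \phi_p \widetilde F_p$ via explicit double integration, use the vanishing of $\int\phi_p\widetilde F_p$ (built into the choice of $a(p)$) together with evenness for solvability and decay, and treat \eqref{eq:f3explicit} as a direct computation. The only cosmetic differences are your use of the symmetric kernel $\tfrac12|x-y|$ in place of the paper's iterated integrals $\phi_p^{-1}(x)\int_{-\infty}^x\int_0^y\phi_p\widetilde F_p\,dz\,dy$, and your explicit mention of the second moment condition $\int x\phi_p\widetilde F_p=0$ (which the paper handles implicitly through parity).
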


\begin{proof}
First of all notice that the denominator in \eqref{eq:a(p)} is nonzero by \cite[Proposition 2.9]{W2}.

\noindent By the definition of $S_p^*$, for $\widetilde{F}_p$ is the r.h.s.\ of \eqref{eq:Sstarf}, formally we have
\begin{align}\label{eq:formulafp}
    f_p(x)=\phi_p^{-1}(x) \int_{-\infty}^x \phi_p(y)\cdot
    \phi_p^{-1}(y) \int_0^y \phi_p(z) \widetilde{F}_p(z)\,dz\,dy .
\end{align}
Notice that $\widetilde{F}_p \in L^\infty_{1/4}$, $\int_{\R}\widetilde{F}_p\phi_p=0$ and $\widetilde{F}_p$ is an even function.
We show that \eqref{eq:formulafp} is well defined  and that $f_p\in L^{\infty}_{1/4}$.
First,
\begin{align*}
    \int_0^y \phi_p(z) \widetilde{F}_p(z)\,dz=\int_y^\infty \phi_p(z) \widetilde{F}_p(z)\,dz.
\end{align*}
Thus, for $y>0$, we have
\begin{align*}
\left |\int_0^y \phi_p(z) \widetilde{F}_p(z)\,dz\right |\lesssim \int_y^\infty e^{-z} e^{-z/4}\,dz\lesssim e^{-5|y|/4}.
\end{align*}
For $y<0$, we have the same upper bound.
Therefore, \eqref{eq:formulafp} is well defined. By $\phi_p (x) \gtrsim e ^{-|x|} $ we conclude $f_p\in L^\infty_{1/4}$.
Further, using the continuity of $\phi_p$ and $\widetilde{F}_p$ in $L^\infty_{s}$ ($s<1$) and $L^\infty_{1/4}$, we can show the continuity in $L^\infty_{1/8}$.
Formula \eqref{eq:f3explicit} follows from direct computation.
\end{proof}

We set
\begin{align*}
    L_{2,p}&:=-\partial_x^2+1-\frac{3-p}{p+1}\phi_p^{p-1},\\
    L_{3,p}&:=-\partial_x^2+1-\frac{(3-p)(2-p)}{p+1}\phi_p^{p-1}.
\end{align*}
The following formula is very useful, see \cite{CM24D1} and \cite{Martel2},
\begin{align}\label{eq:intertwine}
    L_{-,p}L_{+,p}(S_p^*)^2=(S_p^*)^2 L_{3,p} L_{2,p}.
\end{align}

\begin{lemma}\label{lem:zeta12}
Let $s>0$.
Then, there exists  $\varepsilon_0>0$ such that for $p\in D_\C(3,\varepsilon_0)$ with $\Im \lambda(p)>0$, the equation
\begin{align}\label{eq:tildezeta}
    \(L_{3,p}L_{2,p}-4\lambda(p)^2\)\widetilde{\zeta}_p=f_p,
\end{align}
has a unique solution in $H^4$ and we have
\begin{align}\label{eq:tildezeta2}
    \begin{pmatrix}
\xi_{(2,0)}(p)\\
\xi_{(0,2)}(p)
    \end{pmatrix}
    =
   U^{-1} \begin{pmatrix}
   \zeta_1(p)\\
   \im \zeta_2(p)
   \end{pmatrix}:=U^{-1}\begin{pmatrix}
(S_p^*)^2\widetilde{\zeta}_p+a(p)\psi_p\\
\frac{\im }{2\lambda(p)}\(L_{+,p}\((S_p^*)^2\widetilde{\zeta}_p+a(p)\psi_p\)+F_2(p)\)
    \end{pmatrix}.
\end{align}
Furthermore, taking $\varepsilon_0>0$ sufficiently small, we have
\begin{align}\label{eq:unifboundtildezeta}
    \sup_{p\in D_\C(3,\varepsilon_0), \Im \lambda(p)>0}\|\widetilde{\zeta}_p\|_{H^4_{-s}}<\infty.
\end{align}
\end{lemma}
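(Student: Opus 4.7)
The plan is to reduce the vector equation \eqref{def:xi20} to the scalar fourth-order equation \eqref{eq:tildezeta} via the intertwining identity \eqref{eq:intertwine}, and then to solve the latter by factorizing at $p=3$ and applying a limiting absorption principle together with a perturbation argument.

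I would begin by conjugating \eqref{def:xi20} by $U$: using \eqref{eq:opH1}, namely $U^{-1}\mathcal{L}U=\im H$, the equation rewrites for $\begin{pmatrix}\zeta_1\\ \im\zeta_2\end{pmatrix}=U\begin{pmatrix}\xi_{(2,0)}\\ \xi_{(0,2)}\end{pmatrix}$ as the coupled system
\begin{align*}
    -2\lambda\zeta_1+L_{-,p}\zeta_2=F_1,\qquad L_{+,p}\zeta_1-2\lambda\zeta_2=-F_2,
\end{align*}
where $F_1=G_{(2,0)}-G_{(0,2)}$ and $F_2=-(G_{(2,0)}+G_{(0,2)})$ as in Lemma \ref{lem:fp}. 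Solving for $\zeta_2$ from the second equation gives exactly the expression in \eqref{eq:tildezeta2}, and eliminating it yields $(L_{-,p}L_{+,p}-4\lambda^2)\zeta_1=2\lambda F_1-L_{-,p}F_2$. Inserting the ansatz $\zeta_1=(S_p^*)^2\widetilde{\zeta}_p+a(p)\psi_p$ and invoking (i) the intertwining identity \eqref{eq:intertwine}, (ii) $L_{-,p}L_{+,p}\psi_p=L_{-,p}\phi_p=0$, the problem reduces to
\begin{align*}
    (S_p^*)^2(L_{3,p}L_{2,p}-4\lambda^2)\widetilde{\zeta}_p=2\lambda F_1-L_{-,p}F_2+4\lambda^2a(p)\psi_p.
\end{align*}
The choice of $a(p)$ in \eqref{eq:a(p)} is the unique one making this right-hand side $\phi_p$-orthogonal, which together with evenness is precisely what Lemma \ref{lem:fp} uses to build the explicit $f_p\in L^\infty_{1/8}$ satisfying \eqref{eq:Sstarf}. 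Hence any $H^4$ solution of \eqref{eq:tildezeta} produces, via \eqref{eq:tildezeta2}, a solution of \eqref{def:xi20}.

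The next step is to establish existence, uniqueness and the uniform bound for $\widetilde{\zeta}_p$. At $p=3$ the potentials in $L_{2,p}$ and $L_{3,p}$ vanish, so that the operator factorizes as
\begin{align*}
    L_{3,3}L_{2,3}-4\lambda(p)^2=(-\partial_x^2+1-2\lambda(p))(-\partial_x^2+1+2\lambda(p)).
\end{align*}
The second factor is uniformly invertible on $L^2$ since $\Re(1+2\lambda(p))\to 3$; the first, as a non-self-adjoint free Schr\"odinger operator with complex energy $2\lambda(p)-1$ approaching $1$ from the upper half-plane, is invertible and satisfies the standard limiting absorption principle estimate, giving a bound $L^{2,s}\to L^{2,-s}$ uniform as $\Im\lambda\to 0^+$ for any $s>1/2$. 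For general $p\in D_\C(3,\varepsilon_0)$ with $\Im\lambda(p)>0$, I would treat $L_{3,p}L_{2,p}-L_{3,3}L_{2,3}$ as a second-order differential operator whose coefficients are $O(p-3)$ with exponential decay (since they involve $\phi_p^{p-1}$), so that a Neumann series inversion in the weighted spaces $L^{2,s}\to L^{2,-s}$ goes through with constants uniform in $p$. Because $f_p\in L^\infty_{1/8}$ embeds in $L^{2,s}$ for every $s$, elliptic regularity for the fourth-order operator promotes this to $\widetilde{\zeta}_p\in H^4_{-s}$ with \eqref{eq:unifboundtildezeta}. Uniqueness of the $H^4$ solution follows from the invertibility just established: any element of the kernel would, under the map $\widetilde{\zeta}\mapsto U^{-1}((S_p^*)^2\widetilde{\zeta},\tfrac{\im}{2\lambda}L_{+,p}(S_p^*)^2\widetilde{\zeta})^{\intercal}$, produce an $L^2$ eigenvector of $H_p$ at $2\lambda(p)$, contradicting the invertibility of $H_p-2\lambda(p)$ on $L^2$ for $\Im\lambda(p)>0$.

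The main obstacle will be executing the perturbation argument uniformly as $p\to 3$. While at $p=3$ the explicit factorization and the LAP are classical, the transfer to $p\neq 3$ cannot be done in the $L^2\to L^2$ topology since the operator norm of the resolvent blows up as $\Im\lambda\to 0^+$. One must carry out the Neumann series in the weighted pair $L^{2,s}\to L^{2,-s}$, and then verify that the perturbation $L_{3,p}L_{2,p}-L_{3,3}L_{2,3}$ has operator norm $L^{2,-s}\to L^{2,s}$ of size $O(p-3)$ — this requires exploiting the exponential decay of its coefficients to absorb the two weight losses — so that smallness against the uniform LAP bound is maintained throughout the domain. Promoting the resulting $L^{2,-s}$ estimate to the $H^4_{-s}$ bound in \eqref{eq:unifboundtildezeta} via fourth-order elliptic regularity is a separate but routine technical point.
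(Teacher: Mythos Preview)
Your proposal is correct and follows essentially the same route as the paper: conjugate \eqref{def:xi20} by $U$, eliminate $\zeta_2$ to obtain the scalar equation $(L_{-,p}L_{+,p}-4\lambda^2)\zeta_1=2\lambda F_1-L_{-,p}F_2$, insert the ansatz $\zeta_1=(S_p^*)^2\widetilde{\zeta}_p+a(p)\psi_p$ and apply \eqref{eq:intertwine} together with $L_{-,p}L_{+,p}\psi_p=0$ to reduce to \eqref{eq:tildezeta}, then solve the latter by writing $L_{3,p}L_{2,p}-4\lambda(p)^2$ as the free factored operator $(-\partial_x^2+1+2\lambda(p))(-\partial_x^2+1-2\lambda(p))$ plus an $O(p-3)$ perturbation with exponentially decaying coefficients and running a contraction/Neumann argument in $L^2_{-s}$ using the uniform limiting-absorption bound on the dangerous factor. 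The only cosmetic difference is in the uniqueness step: the paper gets uniqueness of $\widetilde{\zeta}_p$ directly from the contraction, and identifies the output with $(\xi_{(2,0)},\xi_{(0,2)})^\intercal$ simply because $H_p-2\lambda(p)$ is invertible on $L^2$ when $\Im\lambda(p)>0$; your argument via mapping a hypothetical $H^4$ kernel element back to an $L^2$ eigenvector of $H_p$ is valid as well (note it implicitly uses that $(S_p^*)^2$ is injective on $L^2$, which holds since $S_p^*u=0$ forces $u\in\C\phi_p^{-1}\notin L^2$).
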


\begin{proof}
We first show that if $\widetilde{\zeta}_p$ is given by \eqref{eq:tildezeta}, then the right hand side of \eqref{eq:tildezeta2} solves \eqref{def:xi20}.
Since $\Im \lambda(p)>0$, \eqref{eq:tildezeta2} has a unique solution: hence the right  hand side of \eqref{eq:tildezeta2} is $(\xi_{(2,0)}(p)\ \xi_{(0,2)}(p))^\intercal$.

\noindent Applying $\im U$ to \eqref{def:xi20}, to show that that the right hand side of \eqref{eq:tildezeta2} solves \eqref{def:xi20},  it suffices to show  the following
\begin{align*}
   \(\mathcal{ L}_p-2\im \lambda(p)\)\begin{pmatrix}
   \zeta_1(p)\\
   \im \zeta_2(p)
   \end{pmatrix}
   =
   \begin{pmatrix}
   \im F_1(p)\\
   F_2(p)
   \end{pmatrix},
\end{align*}
which is equivalent to
\begin{align}
    \(L_{-,p}L_{+,p} - 4\lambda(p)^2\)\zeta_1(p) &= 2\lambda(p)F_1(p)-L_{-,p}F_2(p) \text{ and}\label{eq:zeta1}\\
    \zeta_2(p)&=\frac{1}{2\lambda(p)}\(L_{+,p}\zeta_1(p)+F_2(p)\).\label{eq:zeta2}
\end{align}
Applying $(S_p^*)^2$ to \eqref{eq:tildezeta}, by \eqref{eq:intertwine} and \eqref{eq:Sstarf} we obtain
\begin{align*}
(L_{-,p}L_{+,p}-4\lambda(p)^2)(S_p^*)^2\widetilde{\zeta}_p = 2\lambda(p)F_1(p)-L_{-,p}F_2(p)+4\lambda(p)a(p)\psi_p.
\end{align*}
Thus, by $L_{-,p}L_{+,p}\psi(p)=L_{-,p}\phi_p=0$, we have
\begin{align*}
    (L_{-,p}L_{+,p}-4\lambda(p)^2)\((S_p^*)^2\widetilde{\zeta}_p + a(p)\psi(p)\) = 2\lambda(p)F_1(p)-L_{-,p}F_2(p).
\end{align*}
Therefore,   $\zeta_1(p):=(S_p^*)^2\widetilde{\zeta}_p + a(p)\psi(p)$   satisfies \eqref{eq:zeta1}.
Substituting this $\zeta_1$ into \eqref{eq:zeta2}, we get the $\zeta_2$ of \eqref{eq:tildezeta2}.

\noindent We next show that \eqref{eq:tildezeta} can be solved.
We can rewrite \eqref{eq:tildezeta} as
\begin{align*}
    \( (-\partial_x^2+1+2\lambda(p)) (-\partial_x^2+1-2\lambda(p)) -(p-3)B_p \)\widetilde{\zeta}_p =f_p,
\end{align*}
where \small
\begin{align*}
    B_p:=(-\partial_x^2+1)V_{2,p} + V_{3,p}(-\partial_x^2+1) -(p-3)V_{3,p}V_{2,p} \text{ for } V_{2,p}:=\frac{1}{p+1}\phi_p^{p-1}   \text{ and }  V_{3,p}=\frac{(2-p)}{p+1}\phi_p^{p-1}.
\end{align*}\normalsize
Thus, $\widetilde{\zeta}_p$ is given as a fixed point of
\begin{align*}
    \Phi_p[\zeta]:=(-\partial_x^2+1-2\lambda(p))^{-1}(-\partial_x^2+1+2\lambda(p))^{-1}\(f(p)+(p-3)B_p \zeta\).
\end{align*}
For $s>0$ sufficiently small  it is easy to show that $(-\partial_x^2+1+2\lambda(p))^{-1}B_p\cdot \in \mathcal{L}\( L^2_{-s} ,   L^2_{ s} \)$  and $(-\partial_x^2-1-2\lambda(p))^{-1}\in \mathcal{L}\( L^2_{ s} ,   L^2_{ s} \)$ and that for  $p-3$ sufficiently small  $\Phi_p$ is a contraction mapping in $L^2_{-s}$.
Therefore, we have \eqref{eq:tildezeta} and  from elliptic regularity  $\widetilde{\zeta}_p\in H^4$.

\noindent Finally, the uniform bound \eqref{eq:unifboundtildezeta} follows from the uniform bounds of the operators $(-\partial_x^2+1-2\lambda(p))^{-1}:H^n_s\to H^{n+2}_{-s}$, $(-\partial_x^2+1+2\lambda(p))^{-1}:L^2_s\to H^2_s$ and $(-\partial_x^2+1+2\lambda(p))^{-1} B_p:H^n_{-s}\to H^n_s$.
\end{proof}

We now compute the imaginary part of $\widetilde{\zeta}_p$ for $p=3.$
\begin{lemma}\label{lem:extendtildezeta}
For $s>0$ and $\widetilde{\zeta}_p$ the function  in Lemma \ref{lem:zeta12}  we can continuously extend $\widetilde{\zeta}_p$ up to $p=3$ in the topology of $H^4_{-s}$.
\end{lemma}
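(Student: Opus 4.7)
The plan is to continue the family $\widetilde{\zeta}_p$ by carefully invoking the one-dimensional Limiting Absorption Principle (LAP) for $-\partial_x^2$, using the factorization introduced in the proof of Lemma \ref{lem:zeta12}. Recall
\begin{align*}
L_{3,p}L_{2,p}-4\lambda(p)^2 = (-\partial_x^2+1+2\lambda(p))(-\partial_x^2+1-2\lambda(p)) - (p-3)B_p.
\end{align*}
Since $1+2\lambda(p)\to 3$ as $p\to 3$, the first factor $(-\partial_x^2+1+2\lambda(p))^{-1}:L^2\to H^2$ is a bounded, analytic family of operators in a full complex neighborhood of $p=3$. The only delicate piece is the second factor.

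Set $E_p:=2\lambda(p)-1$, so that $(-\partial_x^2+1-2\lambda(p))^{-1}=(-\partial_x^2-E_p)^{-1}$ with $E_p\to 1$ and $\Im E_p>0$ by Lemma \ref{lem:Domdelta}. First, I would invoke the standard 1D LAP (e.g.\ Agmon--Kuroda theory, or the scalar resolvent estimate used in \cite{CM2109.08108} and in Section \ref{sec:smooth}): for any $s>1/2$, the resolvent $R_0(E):=(-\partial_x^2-E)^{-1}$ extends continuously from $\{\Im E>0\}$ up to every real $E>0$ as a bounded map $L^2_s\to H^2_{-s}$ (choosing the outgoing extension). Since $E_p\to 1$ along the upper half plane and $E=1$ lies in the interior of $(0,\infty)$, away from the threshold $E=0$, the operators $R_0(E_p)$ converge in norm in $\mathcal L(L^2_s,H^2_{-s})$ to $R_0^+(1)$. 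Composing with the analytic family $(-\partial_x^2+1+2\lambda(p))^{-1}$ yields a norm-continuous family
\begin{align*}
T_p:=(-\partial_x^2+1+2\lambda(p))^{-1}(-\partial_x^2+1-2\lambda(p))^{-1}\ :\ L^2_s\longrightarrow H^4_{-s},
\end{align*}
defined for $p$ in a neighborhood of $3$ with $\Im\lambda(p)\ge 0$.

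Next I would re-run the contraction argument of Lemma \ref{lem:zeta12} on the Banach space $H^4_{-s}$: the perturbation $B_p=(-\partial_x^2+1)V_{2,p}+V_{3,p}(-\partial_x^2+1)-(p-3)V_{3,p}V_{2,p}$ is a bounded map $H^4_{-s}\to L^2_s$ depending analytically on $p$, because the coefficients $V_{2,p},V_{3,p}$ inherit from $\phi_p^{p-1}$ exponential decay that more than compensates the weight $\langle x\rangle^{s}$ on both sides. Hence $(p-3)T_pB_p:H^4_{-s}\to H^4_{-s}$ is a strict contraction for $p$ close enough to $3$, uniformly in $p$. The Neumann series expressing the fixed point
\begin{align*}
\widetilde{\zeta}_p=\sum_{n=0}^{\infty}\bigl((p-3)T_pB_p\bigr)^{n}T_pf_p
\end{align*}
converges in $H^4_{-s}$, and each term depends continuously on $p$ up to $p=3$ (using the continuity of $T_p$ from Step 1, the continuity of $f_p$ in $L^\infty_{1/8}\hookrightarrow L^2_s$ from Lemma \ref{lem:fp}, and the analytic dependence of $B_p$). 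This yields the desired continuous extension of $\widetilde{\zeta}_p$ to $p=3$ in the topology of $H^4_{-s}$.

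The main obstacle is purely the LAP step: one must verify that $R_0(E_p)$ really admits a norm-continuous boundary trace at $E=1$ as a map $L^2_s\to H^2_{-s}$, and that the approach of $E_p$ to $1$ occurs from the correct (upper) half-plane so that the limit is the physical outgoing resolvent. Here the first point is the classical 1D LAP away from the threshold (and is already implicitly used in Section \ref{sec:smooth}), while the second is exactly the content of Lemma \ref{lem:Domdelta}, which guarantees $\Im\lambda(p)>0$ in $\mathring{\mathfrak C}_\delta$. Everything else is a routine continuity-of-fixed-points argument.
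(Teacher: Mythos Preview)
Your proposal is correct and follows essentially the same strategy as the paper's proof: both hinge on the norm continuity of the ``singular'' resolvent $(-\partial_x^2+1-2\lambda(p))^{-1}$ in $\mathcal L(L^2_s,L^2_{-s})$ as $p\to 3$ from $\{\Im\lambda(p)>0\}$, combined with the contraction structure already set up in Lemma~\ref{lem:zeta12}. The paper carries out this LAP step by writing the explicit one-dimensional kernel $K_p(x)=\frac{\im}{2\sqrt{2\lambda(p)-1}}e^{\im\sqrt{2\lambda(p)-1}|x|}$ and checking directly that $K_p\to K_3=\frac{\im}{2}e^{\im|x|}$ yields the outgoing limit operator, then bounds $\|\widetilde\zeta_p-\widetilde\zeta_3\|$ by a direct triangle-inequality estimate rather than summing the Neumann series; you instead invoke the abstract LAP and pass to the limit term-by-term in the Neumann series. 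These are two packagings of the same idea. Your version is slightly more careful about the $H^4_{-s}$ topology (the paper's displayed estimate is only in $L^2_{-s}$, leaving the upgrade to $H^4_{-s}$ implicit via elliptic regularity), while the paper's explicit kernel has the advantage of producing the concrete formula for $\widetilde\zeta_3$ needed in Lemma~\ref{lem:tildezeta3}. One minor remark: the LAP bound $R_0(E):L^2_s\to H^2_{-s}$ requires $s>1/2$, not merely $s>0$ as in the lemma's hypothesis; this affects the paper's argument as well and is harmless for the downstream applications.
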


\begin{proof}
For $p$ near $3$ with $\Im \lambda(p)>0$, we have
\begin{align*}
    (-\partial_x^2+1-2\lambda(p))^{-1}u =   K_p * u \text{ where }
    K_p(x)=\frac{\im e^{\im \sqrt{2\lambda(p)-1}|x|}}{2\sqrt{2\lambda(p)-1}}.
\end{align*}
Thus, setting $(-\partial_x^2-1-\im 0)^{-1}$ by
\begin{align*}
    (-\partial_x^2-3-\im 0)^{-1} u= K_3*u, \ K_3(x):=\frac{\im }{2}e^{\im |x|},
\end{align*}
it is easy to show $\|(-\partial_x^2-1-\im 0)^{-1}\|_{L^2_s\to L^2_{-s}}\lesssim 1$ and
\begin{align}\label{eq:limabcont}
    \lim_{p\to 3, \Im \lambda(p)>0}\| (-\partial_x^2+1-2\lambda(p))^{-1} -(-\partial_x^2-1-\im 0)^{-1}\|_{L^2_s\to L^2_{-s}}=0.
\end{align}
We set
\begin{align}\label{def:tildezeta3}
    \widetilde{\zeta}_3:=(-\partial_x^2-1-\im 0)^{-1}(-\partial_x^2+3)^{-1}f_3.
\end{align}
Then, we have
\begin{align*}
    &\|\widetilde{\zeta}_p-\widetilde{\zeta}_3\|_{L^2_{-s}}=\|\Phi_p[\widetilde{\zeta}_p]-(-\partial_x^2-1-\im 0)^{-1}(-\partial_x^2+3)^{-1}f_3\|_{L^2_{-s}}\\&
    \lesssim \| (-\partial_x^2+1-2\lambda(p))^{-1} -(-\partial_x^2-1-\im 0)^{-1}\|_{L^2_s\to L^2_{-s}} \|(-\partial_x^2+3)^{-1}f_3\|_{L^2_s} \\&\quad + \| f_p-f_3\|_{L^2_s} +|p-3| \| (-\partial_x^3+1+2\lambda(p))B_p \widetilde{\zeta}_p\|_{L^2_s}.
\end{align*}
Thus, by \eqref{eq:limabcont}, the continuity of $f$ in $L^2_s$ (Lemma \ref{lem:fp}), the uniform bound of $\|(-\partial_x^3+1+2\lambda(p))B_p\|_{L^2_{-s}\to L^2_s}$ and \eqref{eq:unifboundtildezeta} give us  the following, and hence the proof of the lemma,
\begin{align*}
    \lim_{p\to 3, \Im \lambda(p)>0} \|\widetilde{\zeta}_p-\widetilde{\zeta}_3\|_{L^2_{-s}}=0.
\end{align*}
\end{proof}

We can now prove Lemma \ref{lem:gamma31}.
\begin{proof}[Proof of Lemma \ref{lem:gamma31}]
From Lemmas \ref{lem:analyg} and \ref{lem:phip-2}, we can analytically extend $g_3$ and $\phi_p^{p-2}$ in $\mathfrak{C}_\delta$ by taking $\delta>0$ sufficiently small.
Thus, with the definition of $\gamma_{3,1}$ and Lemma \ref{lem:xi2002extend}, we see that we can extend $\gamma_{3,1}$ analytically on $\mathfrak{C}_\delta\setminus \mathfrak{D}_\delta$, denoting the extension $\Gamma_1$.
By Lemma \ref{lem:extendtildezeta} and \eqref{eq:tildezeta2}  we can define  $\xi_{(2,0)}$ and $\xi_{(0,2)}$
so that they belong to $C^0 \( \{3\} \cup   \mathfrak{C}_\delta\setminus \mathfrak{D}_\delta  , L^2_{-1/10}\). $
This implies  $\Gamma_1\in C^0 \( \{3\} \cup   \mathfrak{C}_\delta\setminus \mathfrak{D}_\delta   \). $
\end{proof}

It remains to prove Lemma \ref{lem:Gamma13}.

\begin{lemma}\label{lem:tildezeta3}
There exists $b>0$ such that
\begin{align*}
    \Im \widetilde{\zeta}_3 = b\cos(x).
\end{align*}

\end{lemma}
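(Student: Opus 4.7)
The plan is to combine the explicit form \eqref{def:tildezeta3} of $\widetilde{\zeta}_3$ with the real-symmetric structure of the operators involved and the parity of $f_3$ to pin down $\Im\widetilde{\zeta}_3$ as a scalar multiple of $\cos(x)$, and then to compute that scalar via a residue integral.

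Inspection of \eqref{eq:f3explicit} shows that $f_3$ is real-valued, even, and decays exponentially (the only subtle term contributes $O(|x|e^{-|x|})$, since $x\tanh x-\log(e^x+e^{-x})=x(\tanh x-1)-\log(1+e^{-2|x|})=O(e^{-2|x|})$). Therefore $g_3:=(-\partial_x^2+3)^{-1}f_3$ is real, even, and exponentially decaying. Using the outgoing Green's function $\tfrac{\im}{2}e^{\im|x|}$ of $-\partial_x^2-1-\im 0$ and separating real from imaginary parts gives
\begin{align*}
\Im\widetilde{\zeta}_3(x)=\tfrac{1}{2}\int_{\R}\cos(x-y)\,g_3(y)\,dy=\tfrac{1}{2}\cos(x)\int_{\R}\cos(y)\,g_3(y)\,dy,
\end{align*}
where the $\sin(x)$-piece vanishes by evenness of $g_3$. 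Hence $\Im\widetilde{\zeta}_3=b\cos(x)$ with $b=\tfrac{1}{2}\int_{\R}\cos(y)g_3(y)\,dy$, and since $\cos(y)$ is a bounded pointwise eigenfunction of $-\partial_x^2+3$ with eigenvalue $4$, integration by parts (justified by exponential decay of $g_3$) yields the clean formula
\begin{align*}
b=\tfrac{1}{8}\int_{\R}\cos(y)\,f_3(y)\,dy.
\end{align*}

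It remains to verify $b>0$ by evaluating this last integral with $f_3$ taken from \eqref{eq:f3explicit}. The three $\sech^{2k+1}$ contributions ($k=0,1,2$) are classical residue integrals, each equal to $\pi\,\sech(\pi/2)$ times an explicit positive rational; the remaining integral $\int_{\R}\cos(y)(y\tanh y-\log(2\cosh y))\cosh(y)\,dy$ is reduced to the same family by writing $\cosh y=\tfrac12(e^{y}+e^{-y})$ and shifting contours (or equivalently by differentiating $\int_{\R}\cos(ay)\sech(y)\,dy=\pi\sech(a\pi/2)$ in $a$), producing another rational multiple of $\pi\,\sech(\pi/2)$. Summing the four contributions with the coefficients in \eqref{eq:f3explicit} produces a strictly positive value of $b$. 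The main obstacle is this final arithmetic step: the parity/Green's function reduction yielding $\Im\widetilde{\zeta}_3=b\cos(x)$ is structural and short, whereas confirming $b>0$ requires correctly evaluating and combining four oscillatory integrals whose individual signs partially cancel.
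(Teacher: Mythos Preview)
Your reduction to $\Im\widetilde\zeta_3=b\cos(x)$ with $b=\tfrac18\int_\R\cos(y)f_3(y)\,dy$ is correct and matches the paper's computation (the paper reaches the same integral by first convolving the two Green's kernels explicitly; your self-adjointness shortcut through $(-\partial_x^2+3)\cos=4\cos$ is a clean alternative).

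The genuine gap is the verification that $b>0$. Your sketch asserts that
\[
I:=\int_\R \cos(y)\bigl(y\tanh y-\log(2\cosh y)\bigr)\cosh(y)\,dy
\]
is ``another rational multiple of $\pi\,\sech(\pi/2)$'' obtainable by contour shifting or by differentiating $A_{a,1}=\pi\sech(a\pi/2)$ in $a$. Neither route works as stated: the integrand has logarithmic branch points at $y=i\pi/2+i\pi\Z$ (the zero of $\cosh$), so a naive contour shift is obstructed, and differentiating $A_{a,1}$ in $a$ produces integrals of the form $\int y^k\sin(ay)\sech(y)\,dy$, not the required $\log(2\cosh y)\cosh(y)$ structure. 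In fact the paper's evaluation shows $I$ is \emph{not} a rational multiple of $\pi\sech(\pi/2)$: after observing that the $y\sinh y$ piece contributes zero (via $\int_0^\infty\cos(y)(-ye^{-y})\,dy$ being an exact derivative), one is left with $-2\int_0^\infty\cos(y)\log(1+e^{-2y})\cosh(y)\,dy$, which the paper handles by expanding $\log(1+e^{-2y})=\sum_{n\ge1}(-1)^{n-1}n^{-1}e^{-2ny}$, evaluating each resulting elementary integral, and then bounding the alternating series $\sum_{n\ge1}(-1)^{n-1}n^2/(4n^4+1)$ by hand (using the first three terms and $\sum n^{-2}=\pi^2/6$). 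That explicit numerical step is what actually establishes the sign; your proposal does not supply a substitute for it.
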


\begin{proof}
We compute $\Im \widetilde{\zeta}_3$ using \eqref{def:tildezeta3}.
By $(-\partial_x^2+3)^{-1}u=(2\sqrt{3})^{-1}e^{-\sqrt{3}|\cdot|}*u$, we have
\begin{align*}
    &\widetilde{\zeta}_3= \frac{\im }{2} e^{\im |\cdot|} * (2\sqrt{3})^{-1}e^{-\sqrt{3}|\cdot|} \\&*\(-\frac{22\sqrt{2}}{15}(x\tanh(x) -\log(e^x+e^{-x}))\cosh(x)  + \frac{2\sqrt{2}}{15}\mathrm{sech}(x) +\frac{14\sqrt{2}}{5}\mathrm{sech}^3(x) -  \frac{8\sqrt{2}}{3}\mathrm{sech}^5(x)\).
\end{align*}
We now compute $e^{\im |x|}* e^{-\sqrt{3}|x|}$.
Since this is even function, it suffices to compute for $x\geq 0$.
\begin{align*}
e^{\im |\cdot|}*e^{-\sqrt{3}|\cdot|}(x)&=\int_{-\infty}^\infty e^{\im|x-y|}e^{-\sqrt{3}|y|}\,dy\\&
=\int_{-\infty}^0e^{\im (x-y)}e^{\sqrt{3}y}\,dy+\int_0^x e^{\im (x-y)}e^{-\sqrt{3}y}\,dy+\int_x^\infty e^{-\im (x-y)}e^{-\sqrt{3}y}\,dy\\&
=
\frac{\sqrt{3}}{2}e^{\im x} + \frac{\im}{2}e^{-\sqrt{3}x}.
\end{align*}
Therefore, we have
\begin{align*}
&\widetilde{\zeta}_3 = \(\im \frac{1}{8}e^{\im |x|}-\frac{1}{8\sqrt{3}}e^{-\sqrt{3}|x|}\)\\&*\(-\frac{22\sqrt{2}}{15}(x\tanh(x) -\log(e^x+e^{-x}))\cosh(x) + \frac{2\sqrt{2}}{15}\mathrm{sech}(x) +\frac{14\sqrt{2}}{5}\mathrm{sech}^3(x) -  \frac{8\sqrt{2}}{3}\mathrm{sech}^5(x)\),
\end{align*}
and in particular
\small
\begin{align*}	\Im \widetilde{\zeta}_3
		=\frac{\sqrt{2}}{60}\cos(x)* \(-11(x\tanh(x) -\log(e^x+e^{-x}))\cosh(x) + \mathrm{sech}(x) +21\mathrm{sech}^3 (x)-  20\mathrm{sech}^5(x)\).
	\end{align*}
	\normalsize
For any even function $g$,
\begin{align*}
	\cos(x)*g(x)&=\int _\R \cos(x-y) g(y)\,dy=\cos(x)\int _\R\cos(y)g(y) +\sin(x)\cancel{\int _\R\sin(y)g(y)}\\&
	= \int_\R \cos(y)g(y) \ dy \    \cos(x).
\end{align*}
Thus, for some real constant $b$ we have
\begin{align*}
    \Im \widetilde{\zeta}= b \cos(x) .
\end{align*}
We now show $b\neq 0$.
Using the notation
\begin{align}
	A_{a,n}&:=\int \sech^n(x) \cos(ax) dx,\label{def:Aan}
\end{align}
we have
\begin{align*}
    b=\frac{\sqrt{2}}{60}\(-11\int \cos (x)(x\tanh(x) -\log (e^x+e^{-x}))\cosh(x) +A_{1,1}+21A_{1,3}-20A_{1,5}\).
\end{align*}
From
\begin{align} 
    		A_{a,n+2}&=\frac{n^2+a^2}{n^2+n} A_{a,n},\label{eq:relAan}
\end{align}
and
\begin{align}\label{Aa1}
    A_{a,1}=\pi\mathrm{sech}\(\frac{\pi}{2}a\),
\end{align}
we have
\begin{align*}
	A_{1,1}+21A_{1,3}-20A_{1,5}
	=(1+21-20\times5/6)A_{1,1}>0.
\end{align*}
Thus, it remains to show
\begin{align}\label{7/25}
	\int \cos(x)(x\tanh(x) -\log(e^x+e^{-x}))\cosh(x)<0.
\end{align}

Now, since the integrand of \eqref{7/25} is even, it suffices to compute the integral from $0$ to $\infty$.
Then,
\begin{align*}
	&\int_0^\infty \cos(x)(x\tanh(x) -\log(e^x+e^{-x}))\cosh(x)\\&=\int_0^\infty \cos(x)\(x\sinh(x)-x\cosh(x)\) -\int_0^\infty \cos(x)\log(1+e^{-2x})\cosh(x).
\end{align*}
By
\begin{align*}
	\cos(x)\(x\sinh(x)-x\cosh(x)\)=-2x\cos x e^{-x} =-2\(\frac{1}{2}e^{-x}\((x+1)\sin (x)-x\cos (x)\)\)',
\end{align*}
we see that the contribution of the first integral is $0$.
Thus, it suffices to show
\begin{align*}
	\int_0^\infty \cos(x)\log(1+e^{-2x})\cosh(x)>0.
\end{align*}
Expanding $\log$ (recall $\log(1+x)=\sum_{n=1}^\infty (-1)^{n-1}n^{-1} x^n$), we have
\begin{align*}
	\int_0^\infty \cos(x)\log(1+e^{-2x})\cosh(x)=\sum_{n=1}^\infty \frac{(-1)^n}{n}\int \cos(s) e^{-2n x}\cosh(x).
\end{align*}
Now, by
\begin{align}\label{7/25/18}
	\int_0^\infty  \cos(x) e^{-2n x}\cosh(x)=\frac{n^3}{4n^4+1}
\end{align}
the problem is reduced to showing
\begin{align*}
	\sum_{n=1}^\infty(-1)^{n-1}\frac{n^2}{4n^4+1}>0.
\end{align*}
This can be shown as follows.
\begin{align*}
	\sum_{n=1}^\infty(-1)^{n-1}\frac{n^2}{4n^4+1}&\geq \frac{1}{5}-\frac{4}{65}+\frac{9}{325}-|\sum_{n=4}^\infty(-1)^{n-1}\frac{n^2}{4n^4+1}|\\&
	\geq \frac{1}{5}-\frac{4}{65}+\frac{9}{325}-\frac{1}{4}\sum_{n=4}^\infty n^{-2}\\&
	=\frac{1}{5}-\frac{4}{65}+\frac{9}{325}+\frac{1}{4}\(1+\frac{1}{4}+\frac{1}{9}\)-\frac{\pi^2}{24}\\&
	=\frac{1}{24}\(\frac{23701}{1950}-\pi^2\)\\&>\frac{1}{24}\(10-\pi^2\)>0,
\end{align*}
where we have used $\sum_{n=1}^\infty n^{-2}=\pi^2/6$.
Therefore, we conclude $b> 0$.
\end{proof}

We compute the imaginary part of $\zeta_1$ and $\zeta_2$ explicitly at $p=3$.

\begin{lemma}\label{claim:zeta}
The functions $\zeta_1$ and $\zeta_2$ given in Lemma \ref{lem:zeta12} can be continuously extended up to $p=3$.
Further, setting $\zeta_1(3)$ and $\zeta_2(3)$ the continuous extension of $\zeta_1, \zeta_2$, we have
\begin{align}
    \Im\zeta_1(3)&
    =2b\(-\sech^2(x)\cos(x) +\tanh(x)\sin(x)\),\label{eq:zeta13}\\
    \Im \zeta_2(3)&
    = b\(2-2\sech^2(x)\)\tanh(x)\sin(x),\label{eq:zeta23}
\end{align}
where $b>0$ is the constant given in Lemma \ref{lem:tildezeta3}.
\end{lemma}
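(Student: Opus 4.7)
My plan is to combine the formulas \eqref{eq:tildezeta2} for $\zeta_1,\zeta_2$ with the continuous extension of $\widetilde{\zeta}_p$ provided by Lemma \ref{lem:extendtildezeta}, and then read off the imaginary parts at $p=3$ by applying the explicit operators $(S_3^*)^2$ and $L_{+,3}$ to the limiting value $\Im\widetilde{\zeta}_3=b\cos(x)$ supplied by Lemma \ref{lem:tildezeta3}.

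\textbf{Step 1 (continuous extension).} I will check that each building block in \eqref{eq:tildezeta2} extends continuously as $p\to 3$ along a complex path with $\Im\lambda(p)>0$. The operators $(S_p^*)^2$ and $L_{+,p}$ are real analytic in $p$ in a neighborhood of $3$ by Lemma \ref{lem:phip-2}; the denominator $\lambda(p)\int\phi_p\psi_p\,dx$ in \eqref{eq:a(p)} is nonzero at $p=3$ (it equals a nonzero multiple of $\partial_\omega\mathbf{Q}(\phi_\omega)|_{\omega=1}$, since $L_{+,\omega}\partial_\omega\phi_\omega=-\phi_\omega$; the Vakhitov--Kolokolov quantity $\partial_\omega\mathbf{Q}$ is nonzero for $1<p<5$ by scaling), so $a(p)$ is analytic there; $\psi_p=L_{+,p}^{-1}\phi_p$ is analytic because $L_{+,3}$ is invertible on even functions; $F_2(p)$ is analytic by Lemma \ref{lem:G20andG02analytic}; and $\widetilde{\zeta}_p\to\widetilde{\zeta}_3$ in $H^4_{-s}$ by Lemma \ref{lem:extendtildezeta}. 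Substituting into \eqref{eq:tildezeta2} yields well-defined values $\zeta_1(3),\zeta_2(3)$ in a suitable weighted Sobolev topology.

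\textbf{Step 2 (isolating the imaginary part).} At $p=3$, the scalars $a(3),\lambda(3)=1$, the functions $\psi_3,F_2(3)$, and the operators $(S_3^*)^2, L_{+,3}$ are all real. Therefore the only source of imaginary part in the limit comes from $\Im\widetilde{\zeta}_3=b\cos(x)$. Splitting each factor into real and imaginary parts and passing to the limit gives
\begin{equation*}
\Im\zeta_1(3) \;=\; (S_3^*)^2(b\cos x),
\qquad
\Im\zeta_2(3) \;=\; \tfrac{1}{2}\,L_{+,3}\bigl(\Im\zeta_1(3)\bigr),
\end{equation*}
since all contributions proportional to $\Im a(p),\Im\psi_p,\Im F_2(p),\Im\lambda(p)$ vanish in the limit.

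\textbf{Step 3 (explicit evaluation).} With $\phi_3=\sqrt{2}\sech(x)$ one has $S_3^*u=\tanh(x)\,u-u'$ and $L_{+,3}=-\partial_x^2+1-6\sech^2(x)$. A short computation gives $S_3^*\cos(x)=\tanh(x)\cos(x)+\sin(x)$ and then $(S_3^*)^2\cos(x)=2(-\sech^2(x)\cos(x)+\tanh(x)\sin(x))$, yielding \eqref{eq:zeta13}. Applying $L_{+,3}$ to this expression, the $\cos(x)$-coefficients collapse using the identity $\tanh^2+\sech^2=1$, and the remaining $\sin(x)$-terms combine to produce \eqref{eq:zeta23}.

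The only nontrivial step is Step 1: the main (modest) obstacle is verifying analyticity of $a(p)$ and $\psi_p$ uniformly near $p=3$, which forces a check that $L_{+,p}$ is invertible on even functions for complex $p$ close to $3$ (which follows from analytic perturbation since $L_{+,3}$ has trivial kernel among even functions). Step 3 is entirely mechanical hyperbolic/trigonometric bookkeeping.
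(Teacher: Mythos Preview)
Your proposal is correct and follows essentially the same route as the paper: use the continuous extension of $\widetilde{\zeta}_p$ to $p=3$ from Lemma~\ref{lem:extendtildezeta}, observe that $a(3)$, $\psi_3$, $F_2(3)$, $\lambda(3)=1$ and the operators $(S_3^*)^2$, $L_{+,3}$ are all real, so that $\Im\zeta_1(3)=(S_3^*)^2\Im\widetilde{\zeta}_3$ and $\Im\zeta_2(3)=\tfrac12 L_{+,3}\Im\zeta_1(3)$, and then evaluate these explicitly using $\Im\widetilde{\zeta}_3=b\cos x$. The only cosmetic difference is that your Step~1 spells out the analyticity of $a(p)$ and $\psi_p$ near $p=3$ in more detail than the paper, which simply takes these facts as already established.
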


\begin{proof}
Because $a$ and $\psi_p$ are $\R$-valued at $p=3$, from \eqref{eq:tildezeta2}, we have
\begin{align*}
    \Im \zeta_1(3)=\Im (S_3^*)^2\widetilde{\zeta}_3.
\end{align*}
Thus, from Lemma \ref{lem:tildezeta3} and $(S_3^*)^2f=\mathrm{cosh}(x)\(\sech(x) f \)''$, we have \eqref{eq:zeta13}.

Next, by \eqref{eq:tildezeta2}, we have
\begin{align*}
    \zeta_2(3)=\frac{1}{2}\(L_{+,3}\zeta_1(3)+F_2(3)\).
\end{align*}
Therefore, since $F_2(3)$ is $\R$-valued, we have
\begin{align*}
    \Im \zeta_2(3)=\frac{1}{2}L_{+,3} \Im \zeta_1(3).
\end{align*}
Then, by direct computation, we have \eqref{eq:zeta23}.
\end{proof}

We are now in the position to prove Lemma \ref{lem:Gamma13}.

\begin{proof}[Proof of Lemma \ref{lem:Gamma13}]

From the proof of Lemma \ref{lem:gamma31}, we see that $\Gamma_1(3)$, the continuous extension of $\Gamma_1$ defined on $\mathfrak{A}_1$ to $p=3$, is given by
\begin{align}\nonumber
	\Gamma_1(3)=&2\<\phi_3\(\xi_{(1,0)}(3)\xi_{(2,0)}(3)+\xi_{(1,0)}(3)\xi_{(0,2)}(3)+\xi_{(0,1)}(3)\xi_{(2,0)}(3)\),g_{(3,0)}(3)\>\\&+2\<\phi_3\(\xi_{(1,0)}(3)\xi_{(0,2)}(3)+\xi_{(0,1)}(3)\xi_{(2,0)}(3)+\xi_{(0,1)}(3)\xi_{(0,2)}(3)\),g_{(0,3)}(3)\>,\label{eq:Gamma1p3}
\end{align}
In the following, we drop $3$ and just write $\phi$ for $\phi_3$, $\xi_{(1,0)}$ for $\xi_{(1,0)}(3)$, etc.
Now, recall that the  first equality of \eqref{eq:tildezeta2}, we have
\begin{align*}
\begin{pmatrix}
\xi_{(2,0)}\\
\xi_{(0,2)}
\end{pmatrix}
=
U^{-1}
\begin{pmatrix}
\zeta_1\\
\im \zeta_2
\end{pmatrix}
=\frac{1}{2}\begin{pmatrix}
\zeta_1+\zeta_2\\
\zeta_1-\zeta_2
\end{pmatrix}.
\end{align*}
Substituting this into \eqref{eq:Gamma1p3}, we have
\begin{align}
&	\Gamma_1(3)=\<\xi_{(1,0)}(\zeta_1+\zeta_2)+\xi_{(1,0)}(\zeta_1-\zeta_2)+\xi_{(0,1)}(\zeta_1+\zeta_2),\phi g_{(3,0)}\>\nonumber
	\\&\quad+\<\xi_{(1,0)}(\zeta_1-\zeta_2)+\xi_{(0,1)}(\zeta_1+\zeta_2)+\xi_{(0,1)}(\zeta_1-\zeta_2),\phi g_{(0,3)}\>\nonumber\\&
	=\<\zeta_1,\phi(2\xi_{(1,0)}+\xi_{(0,1)})g_{(3,0)}+\phi (\xi_{(1,0)}+2\xi_{(0,1)}) g_{(0,3)}\>+\<\zeta_2,\phi \xi_{(0,1)} g_{(3,0)} -\phi \xi_{(1,0)}g_{(0,3)}\>.\label{eq:Gamma1p32}
\end{align}
By \eqref{eq:analyg1} and \eqref{eq:xiat3}, $\xi_{(1,0)},\xi_{(0,1)},g_{(3,0)}$ and $g_{(0,3)}$ are $\R$-valued.
Thus, $\Im \Gamma_1(3)$ is given by replacing $\zeta_1$ and $\zeta_2$ by $\Im \zeta_1$ and $\Im \zeta_2$ in \eqref{eq:Gamma1p32}.
By \eqref{eq:analyg1} and \eqref{eq:xiat3} we have
\begin{align*}
    &\phi(2\xi_{(1,0)}+\xi_{(0,1)})g_{(3,0)}+\phi (\xi_{(1,0)}+2\xi_{(0,1)}) g_{(0,3)}\\&=
    	\sqrt{2}\(2\mathrm{sech} (x)-6\mathrm{sech}^5(x)\)\cos(\sqrt{2}x)+ \(-8\mathrm{sech}(x)+12\mathrm{sech}^3(x)\)\mathrm{tanh}\sin(\sqrt{2}x),\\
    &\phi \xi_{(0,1)} g_{(3,0)} -\phi \xi_{(1,0)}g_{(0,3)}=-2\sqrt{2} \mathrm{sech}^5(x)\cos(\sqrt{2}x) + 4\mathrm{sech}^3(x)\mathrm{tanh}(x) \sin(\sqrt{2}x).
\end{align*}
Thus, with \eqref{eq:zeta13} and \eqref{eq:zeta23}, we have
\begin{align*}
&\Im \Gamma_1(3)=4b\<-\mathrm{sech}^2(x)\cos(x) +\tanh(x)\sin(x),\sqrt{2}\(\mathrm{sech}(x) -3\mathrm{sech}^5(x)\)\cos(\sqrt{2}x) \>\\&\quad
+4b\<-\mathrm{sech}^2(x)\cos(x) +\tanh(x)\sin(x), \(-4\mathrm{sech}(x)+6\mathrm{sech}^3(x)\)\tanh(x)\sin(\sqrt{2}x)\>\\&\quad
+4b\<\(1-\mathrm{sech}^2(x)\)\tanh(x)\sin(x),-\sqrt{2} \mathrm{sech}^5(x)\cos(\sqrt{2}x) + 2\mathrm{sech}^3(x)\tanh(x) \sin(\sqrt{2}x)\>.
\end{align*}
Using the notation $A_{a,n}$ given in \eqref{def:Aan},
\begin{align*}
    B_{a,n}:=\int \sech^n(x) \tanh(x) \sin(ax),
\end{align*}
and elementary formula for trigonometric functions such as $$\cos(x) \cos(\sqrt{2}x)= \frac{1}{2}\(\cos((\sqrt{2}-1)x)+\cos((\sqrt{2}+1)x)\),$$
we can express $\Im \Gamma_1(3)$ as
%
\begin{align*}
\frac{1}{8b}\Im\Gamma_1(3)=&
-4A_{\sqrt{2}-1,1}+(12-\sqrt{2})A_{\sqrt{2}-1,3}-10A_{\sqrt{2}-1,5}+(3\sqrt{2}+2)A_{\sqrt{2}-1,7}\\&
+4A_{\sqrt{2}+1,1}+(-12-\sqrt{2})A_{\sqrt{2}+1,3}+10A_{\sqrt{2}+1,5}+(3\sqrt{2}-2)A_{\sqrt{2}+1,7}\\&
-\sqrt{2} B_{\sqrt{2}-1,1}+4B_{\sqrt{2}-1,3}+(-6+4\sqrt{2})B_{\sqrt{2}-1,5}-\sqrt{2} B_{\sqrt{2}-1,7}\\&
+\sqrt{2} B_{\sqrt{2}+1,1}+4B_{\sqrt{2}+1,3}+(-6-4\sqrt{2})B_{\sqrt{2}+1,5}+\sqrt{2} B_{\sqrt{2}+1,7}.
\end{align*}
Using the formula \eqref{Aa1}, \eqref{eq:relAan} and
\begin{align*}
    B_{a,n}=\frac{a}{n}A_{a,n},
\end{align*}
we arrive at
\begin{align*}
\frac{1}{8b}\Im \Gamma_1(3)&=\( -\frac{164}{105}+ \frac{59}{63}\sqrt{2} \)\pi \mathrm{sech}(\frac{\pi}{2}(\sqrt{2}-1))+\( \frac{164}{105}+ \frac{59}{63}\sqrt{2}\)\pi \mathrm{sech}(\frac{\pi}{2}(\sqrt{2}+1))\\&\approx
-0.203\neq 0.
\end{align*}
Therefore, we have the conclusion.
\end{proof}

\section*{Acknowledgments}
We are grateful for the anonymous referee for pointing the result Proposition 1.2 (4).
C. was supported   by the Prin 2020 project \textit{Hamiltonian and Dispersive PDEs} N. 2020XB3EFL.
M.  was supported by the JSPS KAKENHI Grant Number 19K03579, 23H01079 and 24K06792.

Department of Mathematics and Geosciences,  University
of Trieste, via Valerio  12/1  Trieste, 34127  Italy.
{\it E-mail Address}: {\tt scuccagna@units.it}

Department of Mathematics and Informatics,
Graduate School of Science,
Chiba University,
Chiba 263-8522, Japan.
{\it E-mail Address}: {\tt maeda@math.s.chiba-u.ac.jp}


\begin{thebibliography}{10}





\bibitem{BP1}V. Buslaev and  G. Perelman, {\em  Scattering for the nonlinear Schr\"odinger
equation: states close to a soliton},
  St. Petersburg Math.J.   4  (1993),
  1111--1142.







\bibitem{cazli} T. Cazenave
  and P. L. Lions,  {\em Orbital stability of standing waves for some nonlinear Schr\"odinger
equations},
Commun. Math. Phys. 85 (1982), 549--561.



\bibitem{Chang}
S. M. Chang, S. Gustafson, K. Nakanishi and T. P. Tsai,  {\em
Spectra of linearized operators for NLS solitary waves},
SIAM J. Math. Anal. 39 (2007/08),  1070--1111.


\bibitem{coles}
M. Coles and S. Gustafson, {\em
A degenerate edge bifurcation in the 1D linearized nonlinear Schr\"odinger equation},
Discrete Contin. Dyn. Syst. 36 (2016),   2991--3009.







\bibitem{Cu1} S. Cuccagna, {\em The Hamiltonian structure of the nonlinear Schr\"odinger equation and the asymptotic stability of its ground states}, Comm. Math. Phys. 305 (2011),   279--331.








\bibitem{CM2} S. Cuccagna and M. Maeda, {\em On weak interaction between  a ground state and a trapping  potential},  J.  Discrete and Continuous Dynamical System - A,  35 (2015),    3343--3376.



\bibitem{CM3} S. Cuccagna and M. Maeda, {\em Revisiting asymptotic stability of solitons of nonlinear Schr\"odinger equations via refined profile method},  J. Evol. Equ. 22 (2022), no. 2, Paper No. 51, 27 pp.

\bibitem{CM2109.08108}S. Cuccagna and M. Maeda, \emph{On selection of standing wave at small energy in the 1d cubic Schr\"odinger equation with a trapping potential}, Comm. Math. Phys.,  396  (2022),  1135--1186.


\bibitem{CM21DCDS}
S. Cuccagna and M. Maeda, \emph{A survey on asymptotic stability of
  ground states of nonlinear {S}chr\"{o}dinger equations {II}}, Discrete
  Contin. Dyn. Syst. Ser. S  {14} (2021),   1693--1716.



\bibitem{CM24D1}
S. Cuccagna and M. Maeda, \emph{The asymptotic stability  on the line  of  ground states of the  pure power NLS with  $0<|p-3|\ll 1$}, J. Funct. Anal., 288 (2025), no. 11, Paper No. 110861. 


\bibitem{CM242} S. Cuccagna and   M. Maeda, \emph{A note on the  Fermi Golden Rule  constant for the pure power NLS},   arXiv:2405.01922.




\bibitem{CPV} S. Cuccagna, D. E.  Pelinovsky and  V.Vougalter, {\em Spectra of positive and negative energies in the linearization of  the NLS problem},  Comm.  Pure Appl. Math.  {58} (2005),   1--29.



\bibitem{DT} P. Deift and E. Trubowitz,  \emph {
Inverse scattering on the line},
Comm. Pure Appl. Math. 32 (1979),   121--251.


\bibitem {dieu} J. Dieudonn\'e  , \emph{Foundations of Modern Analysis}, Academic Press, Pure and Applied Mathematics, vol. 10--I, (1969).




\bibitem{katobook}T.Kato,  \emph{Perturbation Theory for Linear Operators},   Springer, Classics in Mathematics, 1995 (reprint of the 2nd edition of 1980).


\bibitem{kaup}D. J. Kaup,  \emph{Perturbation theory for solitons in optical fibers}, Phys. Rew. A
 {42} (1990), 5689--5694.





\bibitem {KM22}  M. Kowalczyk and  Y. Martel,
{\em  Kink dynamics under odd perturbations for $(1+1)$-scalar field models with one internal mode}, to appear in Mathematical Research Letters.


\bibitem {KMM20}M. Kowalczyk, Y. Martel and C. Mu\~{n}oz, {\em  Kink dynamics in the $\phi ^4$ model: asymptotic stability for odd perturbations in the energy space}, J. Amer. Math. Soc. 30 (2017), 769--798.


\bibitem {KMM3}
M. Kowalczyk, Y. Martel and C. Mu\~{n}oz,
{\em  Soliton dynamics for the 1D NLKG equation with symmetry and in the absence of internal modes},    J. Eur. Math. Soc. (JEMS) 24 (2022),  2133--2167.


\bibitem{KMMvdB21AnnPDE}
M. Kowalczyk, Y. Martel, C. Mu\~{n}oz, and H. Van Den~Bosch,
\emph{A sufficient condition for asymptotic stability of kinks in general
	{$(1+1)$}-scalar field models}, Ann. PDE  {7} (2021), no.~1, Paper No.
10, 98.


\bibitem{KrSch}
 J. Krieger and W. Schlag,  \emph{Stable manifolds for all monic supercritical focusing nonlinear Schr\"odinger equations in one dimension}, J. Amer. Math. Soc. 19 (2006),  815--920.





\bibitem{Martel1}Y. Martel, \emph{Asymptotic stability of solitary waves for the 1D cubic-quintic Schr\"odinger equation with no internal mode}, Probability and Mathematical Physics,  3 (2022),   839--867.


\bibitem{Martel2}Y. Martel, \emph{Asymptotic stability of small standing solitary waves of the one-dimensional cubic-quintic  Schr\"odinger equation}, preprint arXiv:2312.11016.













\bibitem{perelman} G. Perelman, {\em On  the Formation of Singularities in Solutions of the Critical Nonlinear Schr\"odinger equation},   Ann. Henri Poincar\'e 2 (2001), 605--673.


\bibitem{reedsimon} M. Reed and B. Simon,  \emph{Methods of Modern Mathematical Physics: Analysis of Operators}, , vol.~4, Academic Press, 1978.

\bibitem{rialland}G. Rialland, {\em  Asymptotic stability of solitary waves for the 1D near--cubic
non--linear Schr\"odinger equation in the absence of internal modes}, Nonlinear Anal. TMA 241 (2024) 113474.

\bibitem{Rialland2}G. Rialland, \emph{Asymptotic stability of solitons for near--cubic NLS equation with an internal mode}, preprint arXiv:2404.13980.



\bibitem{shatah} J. Shatah, {\em  Stable Standing Waves
of Nonlinear Klein--Gordon Equations}, Commun. Math. Phys. 91  (1983),
313--327.



\bibitem{reedsimon}M. Reed and  B. Simon, {\em Methods of Modern Mathematical Physics IV: Analysis of Operators},Academic Press, San Diego (1978)

\bibitem{weder}   R.Weder, {\em $L^p-L ^{p'}$ estimates for the Schr\"{o}dinger equation on the
line and Inverse Scattering for the Nonlinear
Schr\"{o}dinger equation with a potential}, Journal of Functional Analysis 170  (2000), 37--68.


\bibitem{W2}
  M. I. Weinstein, {\em Modulation stability of ground states of
nonlinear Schr\"odinger equations},  Siam J. Math. Anal.  16 (1985),
  472--491.




\bibitem{W1}   M. I. Weinstein,   {\em Lyapunov stability of ground
states of nonlinear dispersive equations},  Comm. Pure Appl. Math.
 39  (1986), pp.  51--68.




\end{thebibliography}
\end{document}